\theoremstyle{plain}
\newtheorem{thm}{Theorem}[section]
\newtheorem{lem}[thm]{Lemma}
\newtheorem{pro}[thm]{Proposition}
\newtheorem{cor}[thm]{Corollary}
\newtheorem{sublemma}[thm]{Sublemma}
\newtheorem{lemma}[thm]{Lemma}
\newtheorem{con}[thm]{Conjecture}
\theoremstyle{remark}
\newtheorem{rem}[thm]{Remark}
\newtheorem{exm}[thm]{Example}
\newtheorem{asm}[thm]{Assumption}
\newtheorem{dfn}[thm]{Definition}
\newtheorem*{question*}{Question}
\newcommand{\N}{\mathbb{N}}
\newcommand{\Z}{\mathbb{Z}}
\newcommand{\Q}{\mathbb{Q}}
\newcommand{\C}{\mathbb{C}}
\newcommand{\tensor}{\otimes}
\newcommand{\p}{\mathfrak{p}}
\renewcommand{\epsilon}{\varepsilon}
\newcommand{\npr}{\ell}
\newcommand{\dimL}{n}
\newcommand{\overdim}{\overline{n}}
\newcommand{\unifmax}{r}
\DeclareMathOperator{\rk}{rk}
\DeclareMathOperator{\GL}{GL}
\DeclareMathOperator{\Aut}{Aut}
\DeclareMathOperator{\Hom}{Hom}
\def \bfn {{\bf n}}
\def \aAut {\mathfrak{Aut} \,}
\def \aEnd {\mathfrak{End} \,}
\def \p {\ensuremath{\mathfrak{p}}}
\author{Mark N.~Berman} 
\address{Department of Mathematics, Ort Braude College, P. O. Box 78, Snunit St. 51, Karmiel 2161002, Israel} \email{berman@braude.ac.il}
\author{Itay Glazer}
\address{Department of Mathematics, Northwestern University, 2033 Sheridan Road, Evanston, IL 60208, USA} \email{itayglazer@gmail.com}
\author{Michael M.~Schein} \address{Department of Mathematics,
  Bar-Ilan University, Ramat Gan 5290002,
  Israel}\email{mschein@math.biu.ac.il}
\subjclass[2010]{11M41, 20E07}
\begin{document}
 \title[Pro-isomorphic zeta functions under base extension]{Pro-isomorphic zeta functions of nilpotent groups and Lie rings under base extension} 

 \maketitle
\begin{abstract}
We consider pro-isomorphic zeta functions of the groups $\Gamma(\mathcal{O}_K)$, where $\Gamma$ is a unipotent group scheme defined over $\Z$ and $K$ varies over all number fields.  Under certain conditions, we show that these functions have a fine Euler decomposition with factors indexed by primes $\p$ of $K$ and depending only on the structure of $\Gamma$, the degree $[K:\Q]$, and the cardinality of the residue field $\mathcal{O}_K / \p$.  We show that the factors satisfy a certain uniform rationality and study their dependence on $[K:\Q]$.  Explicit computations are given for several families of unipotent groups.
\end{abstract}

\section{Introduction}

The field of subgroup growth studies connections between the structural features of a finitely generated group and its lattice of subgroups of finite index.  For instance, a celebrated theorem~\cite{LubotzkyMann/91, LubotzkyMannSegal/93} characterizes, in terms of their structural properties, the finitely generated groups having `polynomial subgroup growth,' namely those for which the number of subgroups of index $n$ grows polynomially in $n$.

We are interested in the confluence of two branches of this broad and rich topic. On the one hand, we study pro-isomorphic zeta functions of groups - these are analytic functions which keep track of finite index subgroups of a given group in a certain specialized setting. On the other hand, we consider base extension - a process whereby we enlarge the group by means of a field extension in a controlled way - and then ask what effect this has on the subgroup growth. Each of these aspects of subgroup growth is of independent interest, and their intersection is largely uncharted territory.

Let $G$ be a finitely generated group, and let $\mathcal{S}$ be a collection of subgroups of $G$.  Following~\cite{GSS/88}, for each $n \in \N$ define 
\[a^{\mathcal{S}}_n=\#\{H\in \mathcal{S}\mid [G:H]=n\},\]
which is finite since $G$ is finitely generated. 
Define the $\mathcal{S}$-zeta function of $G$ as
\begin{align} \label{equ:zeta.dfn}
\zeta_G^\mathcal{S}(s)=\sum_{n=1}^\infty a^\mathcal{S}_n n^{-s} = \sum_{H \in \mathcal{S} \atop [G:H] < \infty} [G:H]^{-s},
\end{align}
where $s$ is a complex variable.  Similarly, for each prime $p$ define the local $\mathcal{S}$-zeta function of $G$ at $p$ to be the analogous sum running over subgroups of $p$-power index:
\begin{align} \label{equ:zeta.local}
\zeta_{G,p}^\mathcal{S}(s)=\sum_{k=0}^\infty a^\mathcal{S}_{p^k} p^{-ks}.
\end{align}
Natural examples occur when $\mathcal{S}$ is the set of all subgroups or of all normal subgroups of $G$.  This paper studies another interesting case, where $\mathcal{S}$ is the collection of all {\emph{pro-isomorphic}} subgroups of $G$; a subgroup $H \leq G$ is called pro-isomorphic if its profinite completion is isomorphic to that of $G$.  One writes $\zeta^\leq_G (s)$, $\zeta^\vartriangleleft_G (s)$, and $\zeta^\wedge_G (s)$ for the zeta function~\eqref{equ:zeta.dfn}, where $\mathcal{S}$ is the set of all subgroups, normal subgroups, and pro-isomorphic subgroups, respectively.  This notation carries over to the local zeta functions~\eqref{equ:zeta.local}.
A rich theory of zeta functions of groups has been developed in the special case where $G$ is finitely generated, torsion-free, and nilpotent -- hereafter, a $\mathcal{T}$-group.

\subsection{Lie rings and uniformity}
A {\emph{Lie ring}} over a ring $R$ is a finitely generated $R$-module endowed with an $R$-bilinear anti-commutative multiplication satisfying the Jacobi identity.  A {\emph{Lie lattice}} over $R$ is a Lie ring over $R$ that is free as an $R$-module.  We refer to Lie lattices over $\Z$ simply as Lie lattices.  It is convenient to ``linearize'' by translating counting problems for subgroups of a $\mathcal{T}$-group to counting problems for subrings of a Lie ring; techniques of linear algebra may then be applied.  
Given a Lie ring $\mathcal{L}$, let $b_n(\mathcal{L})$ be the number of Lie subrings $\mathcal{M} \leq \mathcal{L}$ of index $n$ such that, for every prime $p$, there is an isomorphism $\mathcal{M} \tensor_\Z \Z_p \simeq \mathcal{L} \tensor_\Z \Z_p$ of Lie rings over $\Z_p$.  Define the global and local pro-isomorphic zeta functions
$$\begin{array}{lcr}
\zeta_{\mathcal{L}}^\wedge (s) = \sum_{n = 1}^\infty b_n (\mathcal{L}) n^{-s}, & &
\zeta^\wedge_{\mathcal{L},p} (s) = \sum_{k = 0}^\infty b_{p^k} (\mathcal{L}) p^{-ks}. \end{array}$$
One obtains an Euler decomposition $\zeta^\wedge_{\mathcal{L}} (s) = \prod_p \zeta^\wedge_{\mathcal{L},p}(s)$ as a consequence of the Chinese remainder theorem.
For every $\mathcal{T}$-group $G$ there exists a Lie ring $\mathcal{L}(G)$ such that for almost all primes (and for all primes if $G$ is of class two) one has 
$\zeta^\wedge_{G,p} (s) = \zeta^\wedge_{\mathcal{L}(G),p} (s)$~\cite[\S4]{GSS/88}; see Section~\ref{sec:linearization} below for more details.  
In this paper we will work directly with zeta functions of Lie rings rather than those of groups.

If $K$ is a number field of degree $d=[K:\Q]$ with ring of integers $\mathcal{O}_K$, and $\mathcal{L}$ is a Lie lattice of rank $\dimL$, then we may view $\mathcal{L} \tensor_\Z \mathcal{O}_K$ as a Lie lattice over $\Z$ of rank $d \dimL$. Our central aim is to investigate how the local pro-isomorphic zeta functions $\zeta^\wedge_{\mathcal{L} \tensor_\Z \mathcal{O}_K, p}(s)$ vary with $K$ and $p$. Prior to our work, complete answers to this question were known only for free nilpotent Lie rings~\cite{GSS/88}.  The idea of considering base extensions is implicit in work of du Sautoy and Lubotzky~\cite{duSLubotzky/96}; see Remark~\ref{rem:duSL} below for a comparison of their approach with ours.

We now discuss our main results.  Following~\cite[\S1.5]{Vergne/70}, a nilpotent Lie lattice of rank $n$ is called filiform if it has class $n-1$, which is the largest possible; these also appear in the literature under the name of maximal class Lie lattices.  

\begin{thm} \label{thm:intro.examples}
Let $\mathcal{L}$ be a nilpotent Lie lattice of one of the following types:
\begin{itemize}
\item Free nilpotent Lie lattices of class $c$ on $g$ generators, for $c \geq 2$ and $g \geq 1$; see Section~\ref{sec:free}.
\item Higher Heisenberg Lie lattices; see Section~\ref{sec:higher.heisenberg}.
\item The Lie lattices $\mathcal{L}_{m,n}$ introduced in~\cite{BKO/Dstar}; see Section~\ref{sec:lmn}.
\item A filiform Lie lattice of class $c \geq 2$ arising in a certain family; see Section~\ref{sec:max.class}.
\item The filiform Lie lattice $\mathcal{F}_4$ of class $4$; see Section~\ref{sec:filiform}.
\item The Lie lattice $\mathcal{Q}_5$ of class $3$ and rank $5$; see Section~\ref{sec:Q5}.
\item A Lie lattice of class $4$ constructed in~\cite{BK/15} whose local pro-isomorphic zeta functions do not satisfy functional equations; see Section~\ref{sec:bk}.
\end{itemize}
Let $d \in \mathbb{N}$.  Then there exists an explicit rational function $W_{\mathcal{L},d}(X,Y) \in \mathbb{Q}(X,Y)$, depending only on $\mathcal{L}$ and $d$, such that for any number field $K$ of degree $d$ and any rational prime $p$, the following holds:
\begin{equation} \label{equ:mini.euler.product}
\zeta^\wedge_{(\mathcal{L} \tensor_\Z \mathcal{O}_K),p}(s) = \prod_{\p | p} W_{\mathcal{L},d}(q_{\p}, q_{\p}^{-s}),
\end{equation}
where $q_{\p}$ is the cardinality of the residue field $\mathcal{O}_K / \p$.
\end{thm}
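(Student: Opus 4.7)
The strategy is to apply a single general framework uniformly to each Lie lattice on the list, and then to carry out a case-by-case computation of the resulting $\mathfrak{p}$-adic integrals.

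The starting point is the Grünewald--Segal--Smith integral representation of the local pro-isomorphic zeta function. For a Lie lattice $\mathcal{M}$ of $\Z$-rank $\dimL$ with algebraic automorphism group $\bfG = \Aut(\mathcal{M} \otimes \Q)$, one has
\[
\zeta^\wedge_{\mathcal{M},p}(s) = \int_{\bfG^+(\Zp)} |\det \rho(g)|_p^s \, d\mu(g),
\]
where $\bfG^+(\Zp)$ is the monoid of $\Zp$-points of $\bfG$ that preserve $\mathcal{M} \otimes \Zp$, $\rho$ is the natural representation on $\mathcal{M} \otimes \Q$, and $\mu$ is a suitably normalized Haar-type measure. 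Applying this to $\mathcal{M} = \mathcal{L} \otimes_\Z \mathcal{O}_K$ (viewed as a $\Z$-Lie lattice of rank $d\dimL$), one identifies the $\Q$-automorphism group with the Weil restriction $\Res_{K/\Q} \bfG_K$, and the $\Zp$-points split as
\[
\bigl(\Res_{K/\Q}\bfG_K\bigr)(\Zp) \; = \; \prod_{\mathfrak{p} \mid p} \bfG(\mathcal{O}_{K_\mathfrak{p}}).
\]
This immediately yields the Euler product \eqref{equ:mini.euler.product}, with each factor arising as a $\mathfrak{p}$-adic integral over $\bfG(\mathcal{O}_{K_\mathfrak{p}})$ against $|\det \rho(g)|_{\mathfrak{p}}^s$.

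The heart of the matter is then to show that this $\mathfrak{p}$-adic integral depends on $K_\mathfrak{p}$ only through the pair $(q_\mathfrak{p}, d)$, and to exhibit it as an explicit rational function in $q_\mathfrak{p}$ and $q_\mathfrak{p}^{-s}$. For each Lie lattice in the list, the key input is an explicit description of $\bfG$: free nilpotent lattices reduce to $\GL_g$ by work of \cite{GSS/88}; higher Heisenberg and $\mathcal{L}_{m,n}$ have automorphism groups expressible in terms of $\GL$'s and symplectic-type subgroups; the filiform and $\mathcal{Q}_5$ cases have small, explicitly presented automorphism groups computed in earlier references or in the dedicated section. Once $\bfG$ is pinned down, one parametrizes $\bfG^+(\mathcal{O}_{K_\mathfrak{p}})$ by an Iwasawa/Bruhat-style decomposition or by direct coordinates on an open cell, and reduces the integral to a finite sum of products of geometric series in $q_\mathfrak{p}^{-1}$ and $q_\mathfrak{p}^{-s}$. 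The resulting rational function is manifestly of the form $W_{\mathcal{L},d}(X,Y)$ evaluated at $(X,Y) = (q_\mathfrak{p}, q_\mathfrak{p}^{-s})$, with $d$ entering through the rank of the relevant torus or the dimension of the integration domain.

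I expect two main obstacles. The first, conceptual, is verifying that the $\mathfrak{p}$-adic integral truly depends only on $(q_\mathfrak{p}, d)$ and not on finer invariants of $K_\mathfrak{p}/\Qp$ such as ramification index or wild inertia; this requires that the algebraic group $\bfG$ be smooth with good reduction over $\Z$, so that Denef-style rationality and uniformity results apply over arbitrary local extensions. The second, computational, is the case-by-case work: for each lattice on the list, one must carry out an explicit $p$-adic integral on a non-reductive group, and in several of the cases (filiform, $\mathcal{Q}_5$, the \cite{BK/15} example) the automorphism group is not classical and the integrand mixes several characters on the Levi, so that the combinatorics of the cone decomposition is nontrivial. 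This last ingredient is what occupies each of the individual sections referenced in the statement.
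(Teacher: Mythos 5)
There is a genuine gap at the central step of your plan: the identification of the $\Q$-automorphism group of $\mathcal{L} \tensor_\Z \mathcal{O}_K$ (viewed over $\Z$) with the Weil restriction $\mathrm{Res}_{K/\Q}\,\mathbf{G}_K$ is false in general, so the fine Euler product does not ``immediately'' follow. The $\Q_p$-automorphism group of $L_p \tensor_{\Q_p} (K \tensor_\Q \Q_p)$ always contains, besides the $R$-linear automorphisms that make up the Weil restriction, the automorphisms acting trivially modulo the center (a unipotent group whose dimension grows like $d^2$, strictly larger than the restriction of scalars of the corresponding subgroup of $\aAut L$) and the automorphisms induced by $\Q_p$-algebra automorphisms of $R$; and for some nilpotent Lie lattices it contains still more (see the non-rigid examples in Section~\ref{sec:non-rigid}, where $L \tensor K$ decomposes and extra ``swap'' automorphisms appear). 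Whether the automorphism group is no bigger than the group generated by these three standard pieces is exactly the rigidity property of Definition~\ref{def:rigidity}; it is not automatic, it is not a smoothness/good-reduction issue of $\mathbf{G}$ over $\Z$ as you suggest, and in the paper it must be verified separately for each of the seven families (via Theorem~\ref{thm:rigidity} or Corollary~\ref{cor:segal}, and for $\mathcal{Q}_5$ by a bespoke argument, since the criterion itself fails there). Only once rigidity is in hand does Proposition~\ref{pro:rigid.computation} split $\zeta^\wedge_{\mathcal{L}\tensor\mathcal{O}_K,p}(s)$ into factors indexed by $\p \mid p$, and this step also has to handle the fact that the extra unipotent piece $\mathbf{J}$ does \emph{not} factor as a product over $\p \mid p$; its integral merely contributes the factor $|\det \varepsilon_\p(h)|_\p^{-d\dimL^\prime}$ to each local integrand.

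This last point also corrects your account of where the degree $d$ enters. The reductive domain of integration $\mathbf{H}^+(K_\p)$ and its torus do not change with $d$; the $d$-dependence of $W_{\mathcal{L},d}$ comes from the exponent $-d\dimL^\prime$ (equivalently, the factor $(\theta^{K_\p}_{t-1})^d$ in Corollary~\ref{cor:rigid.lifting}) produced by integrating over the central-unipotent automorphisms, which is why the exponents of $X$ depend linearly on $d$ in~\eqref{equ:rat.funct.form}. Your remaining outline --- GSS integral, reduction to the reductive part, $p$-adic Bruhat/Iwasawa decomposition, and case-by-case evaluation --- does match the paper's Sections~\ref{sec:methodology} and~\ref{sec:examples}, but without the rigidity input and the treatment of $\mathbf{J}$ the factorization~\eqref{equ:mini.euler.product} and the stated dependence on $(q_\p,d)$ are unproved.
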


For each Lie lattice in the list, we compute the explicit functions $W_{\mathcal{L},d}$ in Section~\ref{sec:examples}.  In the case of free nilpotent Lie lattices, this result was obtained more than thirty years ago by Grunewald, Segal, and Smith~\cite[Theorem~7.1]{GSS/88}.
It is interesting to observe that, for all the Lie lattices $\mathcal{L}$ studied in Theorem~\ref{thm:intro.examples}, the dependence of the rational functions $W_{\mathcal{L},d}$ on $d$ is quite tame: they have the form
\begin{equation} \label{equ:rat.funct.form}
W_{\mathcal{L},d}(X,Y) = \frac{ \sum_{j = 1}^M X^{A_{j,0} + dA_{j,1}} Y^{B_j} }{\prod_{j = 1}^N (1 - X^{C_{j,0} + dC_{j,1}} Y^{D_{j}})}
\end{equation}
for some $M,N \in \N$ and for integers $A_{j,0}, A_{j,1}, B_j, C_{j,0}, C_{j,1}, D_j \geq 0$.

A striking feature of the pro-isomorphic zeta functions $\zeta^\wedge_{\mathcal{L} \tensor_\Z \mathcal{O}_K}(s)$ seen in Theorem~\ref{thm:intro.examples} is the existence of a fine Euler decomposition, namely a decomposition into a product whose factors are naturally indexed by primes of $K$ rather than rational primes; factorizations as in~\eqref{equ:mini.euler.product} are called mini Euler products by some authors.  Another property is finite uniformity.  More precisely, fix a number field $K$ of degree $d$, and let $\mathbf{e} = (e_1, \dots, e_\npr)$ and $\mathbf{f} = (f_1, \dots, f_\npr)$ be $\npr$-tuples satisfying $\sum_{i = 1}^\npr e_i f_i = d$.  We say that a prime $p$ has decomposition type $(\mathbf{e}, \mathbf{f})$ in $K$ if $p \mathcal{O}_K = \p_1^{e_1} \cdots \p_\npr^{e_\npr}$, where $[ \mathcal{O}_K / \p_i : \mathbb{F}_p] = f_i$ for all $1 \leq i \leq \npr$.  Noting that $q_\p = p^{[ \mathcal{O}_K / \p : \mathbb{F}_p]}$ for any prime $\p$ of $K$ dividing $p$, we observe:
\begin{cor} \label{cor:finitely.uniform}
Let $\mathcal{L}$ be a Lie lattice to which Theorem~\ref{thm:intro.examples} applies, and let $\mathbf{e}, \mathbf{f} \in \N^r$.  There exists a rational function $W_{\mathcal{L}, \mathbf{e}, \mathbf{f}}(X,Y) \in \Q(X,Y)$ such that for any number field of degree $d = \sum_{i = 1}^\npr e_i f_i$ and for any prime $p$ of decomposition type $(\mathbf{e}, \mathbf{f})$ in $K$, the following holds:
$$ \zeta^\wedge_{(\mathcal{L} \tensor_\Z \mathcal{O}_K),p}(s) = W_{\mathcal{L}, \mathbf{e}, \mathbf{f}}(p, p^{-s}).$$
\end{cor}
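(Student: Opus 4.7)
The plan is to show that Corollary~\ref{cor:finitely.uniform} is an essentially immediate consequence of Theorem~\ref{thm:intro.examples}, by unwinding the definition of the decomposition type and substituting into the mini-Euler product~\eqref{equ:mini.euler.product}.

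First I would record the key observation that, for the pro-isomorphic zeta function at a prime $\p$ of $\mathcal{O}_K$ above $p$, the local factor $W_{\mathcal{L},d}(q_\p, q_\p^{-s})$ supplied by Theorem~\ref{thm:intro.examples} depends on $\p$ only through $q_\p = p^{f_\p}$, where $f_\p = [\mathcal{O}_K/\p : \F_p]$ is the residue degree. In particular, the ramification indices $e_i$ do not enter the individual factors; they enter the formula only through the enumeration of primes above $p$ and the constraint $\sum_{i=1}^r e_i f_i = d$.

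Given $\mathbf{e}, \mathbf{f} \in \N^r$ with $\sum_{i=1}^r e_i f_i = d$, I would then simply define
\[
W_{\mathcal{L}, \mathbf{e}, \mathbf{f}}(X,Y) \;:=\; \prod_{i=1}^r W_{\mathcal{L},d}\!\left(X^{f_i}, Y^{f_i}\right) \;\in\; \Q(X,Y),
\]
where $W_{\mathcal{L},d}$ is the rational function furnished by Theorem~\ref{thm:intro.examples}. For any number field $K$ of degree $d$ and any rational prime $p$ with decomposition type $(\mathbf{e}, \mathbf{f})$ in $K$, write $p\mathcal{O}_K = \p_1^{e_1} \cdots \p_r^{e_r}$ with $q_{\p_i} = p^{f_i}$. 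Applying Theorem~\ref{thm:intro.examples} and substituting $q_{\p_i} = p^{f_i}$, $q_{\p_i}^{-s} = p^{-f_i s} = (p^{-s})^{f_i}$ gives
\[
\zeta^\wedge_{(\mathcal{L} \tensor_\Z \mathcal{O}_K),p}(s) \;=\; \prod_{i=1}^r W_{\mathcal{L},d}\!\left(p^{f_i}, p^{-f_i s}\right) \;=\; W_{\mathcal{L}, \mathbf{e}, \mathbf{f}}(p, p^{-s}),
\]
as required.

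There is essentially no obstacle to overcome here: the corollary is a purely formal consequence of the mini Euler product~\eqref{equ:mini.euler.product} together with the fact that the local factors are functions of the residue field cardinality alone. The only thing worth remarking on is that the rational function $W_{\mathcal{L},\mathbf{e},\mathbf{f}}$ depends on the ramification data $\mathbf{e}$ only implicitly, via the degree $d$ and the number $r$ of primes, and this dependence is recorded in the constraint $\sum e_i f_i = d$ rather than in the analytic form of $W_{\mathcal{L},\mathbf{e},\mathbf{f}}$.
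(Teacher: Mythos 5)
Your proposal is correct and coincides with the paper's own proof: the paper likewise sets $W_{\mathcal{L}, \mathbf{e}, \mathbf{f}}(X,Y) = \prod_{i = 1}^r W_{\mathcal{L},d}(X^{f_i},Y^{f_i})$ and applies Theorem~\ref{thm:intro.examples}, exactly as you do. Your additional remarks on the role of the residue degrees $f_i$ and the ramification indices $e_i$ are accurate but simply make explicit what the paper leaves implicit.
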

\begin{proof}
Set $W_{\mathcal{L}, \mathbf{e}, \mathbf{f}}(X,Y) = \prod_{i = 1}^\npr W_{\mathcal{L},d}(X^{f_i},Y^{f_i})$ and apply Theorem~\ref{thm:intro.examples}.
\end{proof}
A global zeta function whose local factors are described by finitely many rational functions is called finitely uniform.  Finitely many decomposition types of primes appear in any given number field $K$, so Corollary~\ref{cor:finitely.uniform} implies the finite uniformity of $\zeta^\wedge_{\mathcal{L} \tensor_\Z \mathcal{O}_K}(s)$ for the Lie lattices considered in Theorem~\ref{thm:intro.examples}.  There exist Lie lattices whose zeta functions counting subrings and ideals are not finitely uniform~\cite{duS-ecI/01}; see also~\cite{Voll/04}.  It is not known whether such examples occur in the pro-isomorphic setting.

\subsection{Rigidity and the fine Euler decomposition}
The fine Euler decompositions appearing in Theorem~\ref{thm:intro.examples} occur more generally than for the list of Lie lattices considered there.   Let $\mathcal{L}$ be a Lie lattice and let $p$ be a rational prime.  Let $Z(L_p)$ denote the center of the Lie algebra $L_p = \mathcal{L} \tensor_\Z \Q_p$.  We say that $L_p$ is {\emph{$Z(L_p)$-rigid}} if, for any finite extension $F/\Q_p$, the $\Q_p$-automorphisms of $L_p \tensor_{\Q_p} F$ are essentially determined by those of $L_p$; see Section~\ref{sec:Overview} for a more detailed discussion and Definition~\ref{def:rigidity} for a precise description of this property, which was first studied by Segal~\cite{Segal/89} and is crucial for the current work.  

Given a Lie lattice $\mathcal{L}$, 
let $\mathcal{P}(\mathcal{L})$ be the set of rational primes $p$ such that $L_p$ is $Z(L_p)$-rigid.
For any $p \in \mathcal{P}(\mathcal{L})$, we prove that the local pro-isomorphic zeta function $\zeta^\wedge_{\mathcal{L} \tensor \mathcal{O}_K, p}(s)$ splits into a product, indexed by primes $\p $ of $K$ dividing $p$, of rational functions in $q_\p^{-s}$, where $q_\p$ is the cardinality of the residue field $k_\p = \mathcal{O}_K / \p$.

\begin{thm} \label{thm:uniformity}
Let $\mathcal{L}$ be a Lie lattice such that $Z(\mathcal{L}) \leq \gamma_2 \mathcal{L}$.
\begin{enumerate}
\item
Let $d \in \N$.  There exists a natural number $N_d$ and pairs of integers $(a_i^{(d)}, b_i^{(d)})$ for every $1 \leq i \leq N_d$ with $b_i^{(d)} \neq 0$, such that for every $p \in \mathcal{P}(\mathcal{L})$,
every number field $K$ of degree $d$, and every prime $\p | p$ of $K$, there is a polynomial $V_\p \in \Q[X]$ for which the following holds for all $s \in \C$ with $\mathrm{Re} \, s \gg 0$:
$$ \zeta^\wedge_{\mathcal{L} \tensor \mathcal{O}_K, p}(s) = \prod_{\p | p} \frac{V_\p (q_\p^{-s})}{\prod_{i = 1}^{N_d} (1 - q_\p^{a_i^{(d)} + b_i^{(d)} s})}.$$
\item
There exist $\unifmax, e \in \N$, a collection of $e$-ary formulae $\psi_{1}, \dots, \psi_{\unifmax}$ in the language of rings, and families of rational functions $W_1^{(d)}(X,Y), \dots, W_{\unifmax}^{(d)}(X,Y) \in \Q(X,Y)$, parametrized by $d \in \N$, of the form
\begin{equation*} \label{equ:general.form}
W_{i}^{(d)}(X,Y) = \frac{ \sum_{j = 1}^{M_i} \kappa_{ij} X^{A_{ij}^{(0)} + dA_{ij}^{(1)}} Y^{B_{ij}} }{\prod_{j = 1}^{N_i} (1 - X^{C_{ij}^{(0)} + dC_{ij}^{(1)}} Y^{D_{ij}})},
\end{equation*}
where $\kappa_{ij}$, $A_{ij}^{(0)}$, $A_{ij}^{(1)}$, $B_{ij}$, $C_{ij}^{(0)}$, $C_{ij}^{(1)}$, $D_{ij} \in \Z$, such that the following holds for all but finitely many $p \in \mathcal{P}(\mathcal{L})$:

Let $K$ be any number field, and let $d = [K:\Q]$ be its degree.  For all $s \in \C$ with $\mathrm{Re} \, s \gg 0$ we have
$$\zeta^\wedge_{\mathcal{L} \tensor \mathcal{O}_K, p}(s) = \prod_{\p | p} \left( \sum_{i = 1}^{\unifmax} m_i (k_\p) W_i^{(d)}(q_\p, q_\p^{-s}) \right),$$
where $m_i (k_\p) = | \{ \xi \in k_\p^{e} : k_\p \models \psi_i(\xi) \} |$.  
\end{enumerate}
\end{thm}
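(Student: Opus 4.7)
The plan is to combine the integral representation of pro-isomorphic zeta functions of Grunewald--Segal--Smith and Segal \cite{GSS/88,Segal/89} with the ring decomposition $\mathcal{O}_K \tensor_\Z \Z_p \cong \prod_{\p | p} \mathcal{O}_{K,\p}$. Write $\bfG$ for the $\Q_p$-algebraic group of Lie automorphisms of $L_p$ and $\bfG^+$ for the integral cone of automorphisms that map $\mathcal{L} \tensor \Z_p$ into itself, as in the GSS setup. The hypothesis $Z(\mathcal{L}) \leq \gamma_2 \mathcal{L}$ supplies the non-degeneracy needed for the Segal integral formula, and $Z(L_p)$-rigidity (Definition~\ref{def:rigidity}) asserts that, for any finite extension $F/\Q_p$, the $\Q_p$-linear Lie automorphisms of $L_p \tensor_{\Q_p} F$ are controlled by $\bfG(F)$ up to a correction governed by the center $Z(L_p) \tensor_{\Q_p} F$. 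Specialising to $F = K \tensor_\Q \Q_p \cong \prod_{\p | p} K_\p$ rewrites $\zeta^\wedge_{\mathcal{L} \tensor \mathcal{O}_K, p}(s)$ as a $p$-adic integral over $\bfG^+(\mathcal{O}_K \tensor \Z_p)$ of a determinantal factor $|\det_{\Z_p}|^s$, multiplied by an explicit center correction $C_d$ whose structure is dictated by $Z(\mathcal{L}) \tensor \mathcal{O}_K$.

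The product decomposition then factors the domain as $\bfG^+(\mathcal{O}_K \tensor \Z_p) = \prod_{\p | p} \bfG^+(\mathcal{O}_{K,\p})$, with Haar measure factoring accordingly. For $g = (g_\p)_\p$ in this product, the $\Z_p$-determinant on $\mathcal{L} \tensor \mathcal{O}_K \tensor \Z_p$ splits as $\det_{\Z_p}(g) = \prod_\p N_{K_\p/\Q_p}(\det_{\mathcal{O}_{K,\p}} g_\p)$, so $|\det_{\Z_p}(g)|_p^s = \prod_\p |\det g_\p|_\p^s$. This yields the fine Euler decomposition
\begin{equation*}
\zeta^\wedge_{\mathcal{L} \tensor \mathcal{O}_K, p}(s) \;=\; C_d \cdot \prod_{\p | p} \int_{\bfG^+(\mathcal{O}_{K,\p})} |\det g|_\p^s \, d\mu_\p(g).
\end{equation*}
For part~(1), applying Denef's theorem on $p$-adic integrals to each local factor shows that it is a rational function in $q_\p^{-s}$; the denominator factors $1 - q_\p^{a_i + b_i s}$ are dictated by the torus of the fixed algebraic group $\bfG^+$ together with the center correction, yielding a uniform list of pairs $(a_i^{(d)}, b_i^{(d)})_{i \leq N_d}$ valid for every $\p | p$ and every $p \in \mathcal{P}(\mathcal{L})$.

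For part~(2), I invoke the uniform $p$-adic rationality of definable integrals via Denef--Pas quantifier elimination. The integration domain $\bfG^+(\mathcal{O}_{K,\p}) \subseteq \mathcal{O}_{K,\p}^{\dim \bfG}$ and the integrand $|\det g|_\p^s$ are defined by formulae in the language of valued fields with $\Z$-coefficients that depend only on $\mathcal{L}$. For all but finitely many $p$, a uniform cell decomposition partitions the domain into finitely many pieces indexed by residue-field formulae $\psi_1, \dots, \psi_\unifmax$, and integration over each piece contributes a term of the form $m_i(k_\p) W_i^{(d)}(q_\p, q_\p^{-s})$ with $W_i^{(d)}$ of the claimed shape. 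The exponents $A_{ij}^{(0)} + d A_{ij}^{(1)}$ and $C_{ij}^{(0)} + d C_{ij}^{(1)}$ arise because $Z(\mathcal{L}) \tensor \mathcal{O}_K$ has $\Z$-rank $d \cdot \rk Z(\mathcal{L})$, so the center correction $C_d$ (together with certain cell volumes) contributes $q_\p$-powers with exponents linear in $d$.

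The principal obstacle I anticipate is the uniform bookkeeping of the $d$-dependence. One must show that $Z(L_p)$-rigidity controls, uniformly across all $F$ arising as completions of number fields of arbitrary degree $d$, the discrepancy between the $\Q_p$-automorphisms of $L_p \tensor F$ and those coming from $\bfG(F)$, and that this discrepancy contributes exactly the stated linear-in-$d$ exponents. A secondary technical point is arranging the Denef--Pas cell decomposition so that the finite data $(\psi_i, W_i^{(d)})$ can be chosen uniformly over all residue field sizes $q_\p = p^{f_\p}$ and all but finitely many $p$, independently of the field $K$.
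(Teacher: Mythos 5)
There is a genuine gap at the heart of your factorization step. Rigidity does not let you pull the center contribution out of the integral as a constant $C_d$: the automorphisms of $L_p \tensor_{\Q_p} R$ that are trivial modulo the center (the group $\mathbf{J}$ in Definition~\ref{def:rigidity}) contribute, for each element of the reductive part, a measure equal to $|\det \varepsilon_\p(h)|_{\p}^{-d \dimL^\prime}$ — a quantity depending on the integration variable $h$, not on $\mathcal{L}$ and $d$ alone. The correct fine Euler factor is therefore $\int_{\mathbf{H}^+(K_\p)} \widetilde{\theta}^{K_\p}(h)\, |\det \varepsilon_\p(h)|_{\p}^{-d\dimL^\prime} |\det h|_{\p}^{s}\, d\mu$ (Proposition~\ref{pro:rigid.computation}), or equivalently the twisted lattice sum $\sum_{\mathcal{M}} [\mathcal{L}_\p:\mathcal{M}]^{-s}[Z(\mathcal{L}_\p):\mathcal{M}\cap Z(\mathcal{L}_\p)]^{d\dimL^\prime}$ as in~\eqref{equ:reinterpretation}, and it is exactly this $h$-dependent (respectively $\mathcal{M}$-dependent) weight that produces the exponents linear in $d$. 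Your formula $C_d \cdot \prod_{\p|p} \int |\det g_\p|_\p^s\, d\mu_\p$ instead computes (up to the constant) a product of $\mathcal{O}_\p$-pro-isomorphic zeta functions, which is not $\zeta^\wedge_{\mathcal{L}\tensor\mathcal{O}_K,p}(s)$; moreover, since the zeta function is a Dirichlet series with constant term $1$, no constant prefactor $C_d \neq 1$ can appear, so the claimed identity cannot be repaired by adjusting $C_d$ — the $d$-dependence must sit inside the integrand.

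A second, smaller gap concerns part (1). You invoke "Denef's theorem" field by field, with denominators "dictated by the torus", but part (1) demands a single finite list $(a_i^{(d)}, b_i^{(d)})$ valid simultaneously for \emph{every} $p \in \mathcal{P}(\mathcal{L})$ (no excluded primes), every number field $K$ of degree $d$, and every $\p \mid p$; rationality for each fixed local field gives denominator data a priori depending on that field. The paper obtains the uniform denominator from the results of Cluckers--Halupczok on integrals of $\mathcal{C}_s$-class over all characteristic-zero local fields, applied to the definable integral coming from the good-basis parametrization of the twisted lattice sum. Your sketch of part (2) via Denef--Pas cell decomposition is close in spirit to the paper's route (which runs through Theorem~\ref{thm:bdop}, a variant of the Berman--Derakhshan--Onn--Paajanen theorem with $d$ carried as a parameter in the exponent), but as written it rests on the incorrect factorization above, and the claim that the linear-in-$d$ exponents come from "the center correction $C_d$ together with cell volumes" needs to be replaced by tracking the weight $[Z(\mathcal{L}_\p):\mathcal{M}\cap Z(\mathcal{L}_\p)]^{d\dimL^\prime}$ through the cell decomposition.
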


We remark upon some features of this theorem, whose proof interprets the factors at $\p | p$ as suitable $p$-adic integrals and then applies methods of $p$-adic integration.
An analogous expression for local pro-isomorphic zeta functions at rational primes $p$, in terms of rational functions and numbers of rational points on definable sets, is given in~\cite[Theorem~7.2]{HMR/18}, and similar results hold~\cite{duSG/00, HMR/18} for local factors of other group-theoretic zeta functions.  However, the \emph{linear} dependence on $d = [K:\Q]$ of the exponents of $X$ in the functions $W_i^{(d)}(X,Y)$ is new and specific to pro-isomorphic zeta functions under base extension.  It follows from the proof that the right half-plane on which the conclusion of Theorem~\ref{thm:uniformity}(2) holds depends on $[K:\Q]$ but not on $p$.

Observe that the first part of Theorem~\ref{thm:uniformity} applies for all $p \in \mathcal{P}(\mathcal{L})$ but provides no control over the numerators of the factors of $ \zeta^\wedge_{\mathcal{L} \tensor \mathcal{O}_K, p}(s)$, while the dependence of the denominators  on the degree $[K:\Q]$ is unspecified.  The second part of Theorem~\ref{thm:uniformity}, by contrast, provides much more control over the shape of the factors and their dependence on $[K : \Q]$, but at the cost of excluding finitely many primes.  We expect that the denominator in the first part of Theorem~\ref{thm:uniformity} should depend on $d$ in the same way as the denominator in the second part; see Remark~\ref{rem:variants}.

If $\mathcal{P}(\mathcal{L})$ is the set of all rational primes, then Theorem~\ref{thm:uniformity} ensures a fine Euler decomposition for all the functions $\zeta^\wedge_{\mathcal{L} \tensor \mathcal{O}_K}(s)$.  This occurs for the Lie lattices in Theorem~\ref{thm:intro.examples}.  The
uniformity of the local factors at $\p$ exhibited there amounts to the cardinalities $m_i^{(d)}(k_\p)$ being expressible as rational functions in $q_\p = |k_\p |$.  Theorem~\ref{thm:uniformity}(1) refines a consequence of an earlier result of du Sautoy~\cite[Corollary~3.3]{duS/94}, based on work of Macintyre~\cite{Macintyre/90}.  
The theorem of du Sautoy states that, given $n \in \N$, there exists some integer $N$ and pairs of integers $(a_i, b_i)$, for $1 \leq i \leq N$, such that $\zeta^\wedge_{\mathcal{L},p}(s)$ is a rational function in $p^{-s}$ with denominator $\prod_{i = 1}^N (1 - p^{a_i + b_is})$ for {\emph{all}} rings $\mathcal{L}$ whose additive group is isomorphic to $\Z^n$, and {\emph{all}} rational primes $p$.

\subsection{Functional equations} \label{sec:intro.funct.eq}
Let $\mathcal{L}$ be a Lie ring, let $S$ be a set of primes, and suppose that there exists a rational function $W(X,Y) \in \Q(X,Y)$ describing local pro-isomorphic zeta functions of $\mathcal{L}$, in the sense that $\zeta^\wedge_{\mathcal{L},p}(s) = W(p,p^{-s})$ for all $p \in S$.  If a relation of the form $W(X^{-1},Y^{-1}) = (-1)^m X^a Y^b W(X,Y)$ holds for some $a,b,m \in \Z$, then we say that $\zeta^\wedge_{\mathcal{L},p}(s)$ satisfies a functional equation with symmetry factor $(-1)^m p^{a-bs}$ for all $p \in S$, written 
\begin{equation} \label{equ:def.funct.eq}
\zeta^\wedge_{\mathcal{L},p}(s) |_{p \to p^{-1}} = (-1)^m p^{a-bs} \zeta^\wedge_{\mathcal{L},p}(s).
\end{equation}
If $S$ is finite, then $W(X,Y)$ is not unique and the symmetry factor in~\eqref{equ:def.funct.eq} depends on the choice of $W(X,Y)$.
See~\cite{Voll/10} for a more general notion of functional equations.  Suppose that there exists a rational function $W(X,Y)$ and a number field $K$ such that $\zeta^\wedge_{\mathcal{L},p}(s) = \prod_{\p | p} W(q_\p, q_\p^{-s})$ for all rational primes $p$; this occurs, for instance, if $\mathcal{L} = \mathcal{M} \tensor_\Z \mathcal{O}_K$, where $\mathcal{M}$ is a Lie lattice from Theorem~\ref{thm:intro.examples}.  If $W(X^{-1}, Y^{-1}) = (-1)^m X^a Y^b W(X,Y)$, then
\begin{equation} \label{equ:unram.funct.eq}
 \zeta^\wedge_{\mathcal{L},p}(s) |_{p \to p^{-1}} = (-1)^{m\npr} \left( \prod_{\p | p} q_\p \right)^{a-bs} \zeta^\wedge_{\mathcal{L},p}(s)
 \end{equation}
where $\npr$ is the number of primes $\p$ dividing $p$, as above.
The symmetry factor depends only on the decomposition type of $p$ in $K$, and we may speak of a functional equation at all $p$.
In particular, the symmetry factor is $(-1)^{m\npr} p^{[K:\Q](a-bs)}$ for almost all primes, namely the ones unramified in $K$.

The existence of functional equations apparently holds some important, although mysterious, clue about the structure of a Lie ring, or of an associated finitely generated group. Indeed, for other types of zeta functions some very general results were obtained by Voll~\cite{Voll/10, Voll/17}, in which local functional equations with specified symmetry factors were established for large classes of rings; see also~\cite{LeeVoll/20}. Nevertheless, there are known examples of Lie lattices whose local ideal zeta functions do not satisfy functional equations; the first were found by Woodward~\cite[Theorems~2.32 and~2.74]{duSWoodward/08}.  For local pro-isomorphic zeta functions, sufficient conditions for the existence of functional equations were proved in~\cite[\S6]{duSLubotzky/96} and~\cite[Theorem~1.1]{Berman/11}.  On the other hand, a Lie lattice whose local pro-isomorphic zeta functions do not satisfy functional equations was constructed by Klopsch and the first author~\cite{BK/15}. 

The first author, Klopsch and Onn have recently formulated the following conjecture regarding local functional equations of pro-isomorphic zeta functions of Lie lattices.  A grading on a Lie lattice $\mathcal{L}$ is a decomposition
$$ \mathcal{L} = \bigoplus_{i \geq 1} \mathcal{L}_i$$
of the underlying free $\Z$-module satisfying $[\mathcal{L}_i, \mathcal{L}_j] \subseteq \mathcal{L}_{i+j}$ for all $i,j \in \N$.  Denote the associated filtration by $\{ \mathcal{F}_i = \bigoplus_{j \geq i} \mathcal{L}_j \}$.  To each grading we associate a weight $\sum_{i\geq 1} i\rk_\Z \mathcal{L}_i = \sum_{i \geq 1} \rk_\Z \mathcal{F}_i$.  Let $\mathrm{wt}(\mathcal{L})$ be the minimal weight among all gradings on $\mathcal{L}$.  We call a grading minimal if its weight is $\mathrm{wt}(\mathcal{L})$ and natural if $\{ \mathcal{F}_i \}$ coincides with the lower central series of $\mathcal{L}$, and we say that $\mathcal{L}$ is naturally graded if it admits a natural grading.  Natural gradings, if they exist, are minimal.

\begin{con} \label{graded.conj}\cite[Conjecture~1.3]{BKO/even}
Let $\mathcal{L}$ be a graded Lie lattice.  Suppose that there exist rational functions $W_1, \dots, W_k \in \Q(X,Y)$ such that, for almost all primes $p$, there exists $1 \leq i \leq k$ satisfying $\zeta^\wedge_{\mathcal{L},p}(s) = W_i(p,p^{-s})$.  
Suppose that the $W_i$ satisfy functional equations with uniform symmetry factor, namely that
there exist $a,b,m \in \Z$ such that 
\begin{equation} \label{equ:intro.funct.eq}
W_i(X^{-1}, Y^{-1}) = (-1)^{m} p^{a - b s} W_i(X,Y)
\end{equation}
for all $1 \leq i \leq k$.  Then $b = \mathrm{wt}(\mathcal{L})$.	
\end{con}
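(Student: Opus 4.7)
The plan is to connect the symmetry factor $b$ to $\mathrm{wt}(\mathcal{L})$ via the $p$-adic integral representation of $\zeta^\wedge_{\mathcal{L},p}(s)$. Under the rigidity framework central to the paper, for almost all primes $p$ the local zeta function admits a Grunewald--Segal--Smith representation
\begin{equation*}
\zeta^\wedge_{\mathcal{L},p}(s) = \mu(\mathbf{G}(\Z_p))^{-1} \int_{\mathbf{G}^+(\Z_p)} |\det_{L_p}(g)|_p^s \, d\mu(g),
\end{equation*}
where $\mathbf{G} = \Aut(L_\Q)$ and $\mathbf{G}^+(\Z_p) = \mathbf{G}(\Q_p) \cap \End(\mathcal{L} \otimes \Z_p)$. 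The conjecture is phrased without a rigidity assumption, but all the examples in Section~\ref{sec:examples} supporting it are rigid, so working in this setting seems the right first step. A functional equation of such an integral arises from the involution $g \mapsto g^{-1}$ twisted by a modular character, and the factor $p^{a-bs}$ records how the determinant transforms under this symmetry.

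A grading $\mathcal{L} = \bigoplus_{i \geq 1} \mathcal{L}_i$ yields a distinguished cocharacter $T \colon \mathbb{G}_m \to \mathbf{G}$ defined by $T(t)|_{\mathcal{L}_i} = t^i \cdot \mathrm{id}$, which is an automorphism because $[\mathcal{L}_i, \mathcal{L}_j] \subseteq \mathcal{L}_{i+j}$. Its determinant on $L$ is $\det T(t) = t^w$ with $w = \sum_i i \cdot \rk_\Z \mathcal{L}_i$ the weight of the grading. The orbit $\{T(p^n) : n \geq 0\} \subseteq \mathbf{G}^+(\Z_p)$ contributes a geometric progression with $|\det T(p^n)|_p^s = p^{-nws}$ to the integral, which produces a denominator factor of the form $(1-p^{\ast-ws})^{-1}$ in $\zeta^\wedge_{\mathcal{L},p}(s)$ for every grading of $\mathcal{L}$.

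To conclude that the symmetry factor is exactly $\mathrm{wt}(\mathcal{L})$, the plan is to apply a Cartan-type decomposition $\mathbf{G}(\Q_p) = \mathbf{G}(\Z_p) \cdot \mathbf{T}(\Q_p) \cdot \mathbf{G}(\Z_p)$ for a maximal torus $\mathbf{T}$ of $\mathbf{G}$, working modulo the unipotent radical if needed. The integral reduces to a sum over dominant cocharacters, and the total $s$-exponent in the resulting denominator equals the sum of the weights of $\mathbf{T}$ on $L$. The admissible gradings of $\mathcal{L}$ correspond to the positive-integer-valued 1-parameter subgroups of $\mathbf{T}$ lying in a certain cone, and the extremal ray of this cone realizes $\mathrm{wt}(\mathcal{L})$. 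Matching this extremal cocharacter to the one governing the functional equation then gives $b = \mathrm{wt}(\mathcal{L})$.

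The principal obstacle is bridging the gap between the analytic symmetry factor $b$ and the combinatorial invariant $\mathrm{wt}(\mathcal{L})$. For a general nilpotent Lie lattice $\mathbf{G}$ is not reductive, and its unipotent radical may contribute poles in subtle ways; existing functional-equation theorems for $p$-adic integrals, such as those in~\cite{Voll/10, Voll/17}, are phrased most cleanly in the reductive setting and do not immediately apply. Moreover, identifying the cocharacter that governs the functional equation with the extremal one in the cone of admissible gradings---rather than a higher-weight suboptimal grading---requires a delicate analysis of the cocharacter lattice of $\mathbf{T}$ and its interaction with the Weyl involution. Extending the argument beyond the rigid case, where the integral formula is unavailable, is a further challenge; there one would need to exploit the hypothesized uniform functional equation more directly, perhaps through the asymptotics of the pro-isomorphic subring counts $b_{p^k}(\mathcal{L})$, which the grading torus forces to dominate on the arithmetic progression $k \in w\cdot \Z_{\geq 0}$.
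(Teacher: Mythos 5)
There is nothing in the paper to compare your argument against: the statement you are addressing is Conjecture~\ref{graded.conj}, which the paper does not prove. The paper only supplies evidence, namely the explicit computations of Section~\ref{sec:examples}, which show that for each Lie lattice $\mathcal{L}$ of Theorem~\ref{thm:intro.examples} the functions $W_{\mathcal{L},d}$ satisfy~\eqref{equ:intro.funct.eq} with $b = \mathrm{wt}(\mathcal{L})$, and the observation via~\eqref{equ:unram.funct.eq} and Remark~\ref{rem:graded.conj} that the conjecture then holds for the base extensions $\mathcal{L} \tensor \mathcal{O}_K$. So the relevant question is whether your sketch closes the conjecture, and it does not; you acknowledge this yourself, but the gaps are worth naming precisely.

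First, the step that would have to carry the whole argument is missing and, as written, circular. A grading does give a cocharacter $T$ with $\det T(t) = t^{w}$, and powers of $T(p)$ do lie in $\mathbf{G}^+(\Q_p)$, but this only exhibits one geometric progression inside the integral; it does not show that the resulting exponent $w$ (let alone the minimal one, $\mathrm{wt}(\mathcal{L})$) is the symmetry factor $b$. The factor $b$ in $W(X^{-1},Y^{-1}) = (-1)^c X^a Y^b W(X,Y)$ is a degree invariant of the whole rational function, to which every term of the Bruhat/Cartan sum contributes, and the assertion that ``the extremal cocharacter governs the functional equation'' is exactly the content of the conjecture rather than a consequence of anything you establish. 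Second, your reduction to a sum over dominant cocharacters of a maximal torus only sees the reductive part $\mathbf{H}$ of $\aAut L$. After the reduction of Proposition~\ref{pro:duSL} the integrand is $|\det h|^s \theta^F(h)$, and the function $\theta^F$, coming from the unipotent radical, is precisely what can destroy functional equations: the lattice of Section~\ref{sec:bk} (from~\cite{BK/15}) is graded, is covered by the rigid framework, and yet admits no local functional equation at any prime, because the $\min\{|b|^{-1},|c|^{-1}\}$ term in $\theta$ breaks the symmetry. Any proof of Conjecture~\ref{graded.conj} must explain how, \emph{when} a uniform functional equation happens to hold, the interaction of $\theta$ with the inversion symmetry forces the exponent $\mathrm{wt}(\mathcal{L})$; your plan does not engage with $\theta$ at all. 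Finally, the conjecture assumes neither rigidity nor Assumptions~\ref{first.assumption}--\ref{polynomial.lifting}, so even a complete argument along your lines would prove only a special case.
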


 Any grading on a Lie lattice $\mathcal{L}$ induces a grading on any base extension $\mathcal{L} \tensor \mathcal{O}_K$, viewed as a Lie lattice over $\Z$, and it is easy to see that $\mathrm{wt}(\mathcal{L} \tensor \mathcal{O}_K) = [K:\Q] \mathrm{wt}(\mathcal{L})$.  All the Lie lattices $\mathcal{L}$ considered in Theorem~\ref{thm:intro.examples} are graded.  The first four families, and consequently their base extensions, are naturally graded; by constrast, the remaining three Lie lattices are not.  For $\mathcal{L}$ in any one of the first six families,  
 and all $d \in \N$, our computations in Section~\ref{sec:examples} show that the rational functions $W_{\mathcal{L},d}$ admit functional equations as in~\eqref{equ:intro.funct.eq} with $b = \mathrm{wt}(\mathcal{L})$.    By~\eqref{equ:unram.funct.eq}, Conjecture~\ref{graded.conj} holds for the base extensions $\mathcal{L} \tensor \mathcal{O}_K$, where $K$ is any number field.

\subsection{Other zeta functions} \label{sec:siblings}
It is interesting to compare Theorems~\ref{thm:intro.examples} and~\ref{thm:uniformity} with analogous results 
for other group- and ring-theoretic zeta functions under base extension.  While fine Euler decompositions as in Theorem~\ref{thm:uniformity} only occur in exceptional cases for ideal zeta functions, they are the norm for representation zeta functions.  Given a group $G$, consider $\zeta^\mathrm{irr}_{G}(s) = \sum_{n = 1}^\infty \widetilde{r}_n n^{-s}$, where $\widetilde{r}_n$ is the number of isomorphism classes of $n$-dimensional complex representations of $G$, up to twist by one-dimensional representations.  If $G = \boldsymbol{\Gamma}(\mathcal{O}_K)$ for a unipotent group scheme $\boldsymbol{\Gamma}$ as in~\cite[\S2.1]{StasinskiVoll/14}, where $K$ is a number field, then $G$ is a $\mathcal{T}$-group, the sequence $\{ \widetilde{r}_n \}$ grows polynomially, and the Dirichlet series above converges in some right half-plane.  For such $G = \boldsymbol{\Gamma}(\mathcal{O}_K)$, Stasinski and Voll showed~\cite[Proposition~2.2]{StasinskiVoll/14} that $\zeta^\mathrm{irr}_{G}(s)$ has a fine Euler decomposition indexed by the primes of $\mathcal{O}_K$ and that its local factors are ``nearly uniform''~\cite[Theorem~A]{StasinskiVoll/14}.  Behind the scenes lies a well-behaved count of points on varieties; see also~\cite[Theorem~8.13]{HMR/18}.  If $\boldsymbol{\Gamma}$ lies in one of three families of unipotent group schemes associated to Lie lattices considered in~\cite[Theorem~B]{StasinskiVoll/14}, then there are explicit rational functions $W_{\boldsymbol{\Gamma}}(X,Y)$ such that
$ \zeta^{\mathrm{irr}}_{\boldsymbol{\Gamma}(\mathcal{O}_K)}(s) = \prod_{\p} W_{\boldsymbol{\Gamma}}(q_\p, q_\p^{-s})$
for all number fields $K$.  This is analogous to our Theorem~\ref{thm:intro.examples}, except that the functions $W_{\boldsymbol{\Gamma}}$ are independent of the degree $d = [K:\Q]$.  In fact, they have the form~\eqref{equ:rat.funct.form} with $A_{j,1} = C_{j,1} = 0$ for all $j$.

For ideal zeta functions, the most general statement currently available is~\cite[Theorem~4.21]{CSV/19}, which explicitly describes the local ideal zeta functions $\zeta^\vartriangleleft_{(\mathcal{L} \tensor_\Z \mathcal{O}_K),p}(s)$, when $p$ is unramified in $K$, for a collection of nilpotent Lie lattices $\mathcal{L}$ of class two that includes the free nilpotent Lie lattices of class two and the higher Heisenberg Lie lattices.  It includes some, but not all, members of the family studied in Section~\ref{sec:lmn}, and includes Lie lattices not covered by Theorem~\ref{thm:intro.examples}.  All these local ideal zeta functions satisfy functional equations with the symmetry factor specified by~\cite[Theorem~C]{Voll/10}.  See also~\cite{CSV/20} for a summary of the results of~\cite{CSV/19}.  It is conjectured that for 
any Lie lattice $\mathcal{L}$ considered in~\cite{CSV/19} and \emph{any} decomposition type $(\mathbf{e}, \mathbf{f})$ there exists an explicit rational function $W^\vartriangleleft_{\mathcal{L}, \mathbf{e}, \mathbf{f}}(X,Y)$ defined in terms of combinatorial data such that $\zeta^\vartriangleleft_{\mathcal{L} \tensor \mathcal{O}_K, p}(s) = W^\vartriangleleft_{\mathcal{L}, \mathbf{e}, \mathbf{f}}(p,p^{-s})$ for any prime $p$ with decomposition type $(\mathbf{e}, \mathbf{f})$ in $K$.  Corollary~\ref{cor:finitely.uniform} implies the analogue of this conjecture for pro-isomorphic zeta functions of some Lie lattices.  It is conjectured that all the functions $W^\vartriangleleft_{\mathcal{L}, \mathbf{e}, \mathbf{f}}(X,Y)$ satisfy functional equations and that the symmetry factors at ramified types are different from the generic one, but that their dependence on the decomposition type is not the same as in~\eqref{equ:unram.funct.eq}.  Finally, observe that Theorem~\ref{thm:intro.examples} covers two families of Lie lattices of unbounded nilpotency class.  By contrast, no subring or ideal zeta functions are known for any nilpotent Lie ring of class five or greater.

\subsection{Abscissa of convergence} \label{sec:abscissa}
An important analytic invariant of the global pro-isomorphic zeta function $\zeta^\wedge_{\mathcal{L}}(s)$ is its abscissa of convergence $\alpha^\wedge_{\mathcal{L}} = \inf S$, where $S$ is the set of real numbers $\beta$ such that $\zeta^\wedge_{\mathcal{L}}(s)$ converges on the right half-plane $\mathrm{Re} \, s > \beta$.  
The number $\alpha^\wedge_{\mathcal{L}}$ reflects the algebraic structure of $\mathcal{L}$ as a bound on the polynomial rate of growth of the sequence $s_n(\mathcal{L}) = b_1(\mathcal{L}) + \cdots + b_n(\mathcal{L})$.  Indeed, by a basic result in the theory of Dirichlet series, $\alpha^\wedge_{\mathcal{L}} = \limsup_{n \to \infty} \frac{\log s_n(\mathcal{L})}{\log n}$; see, for instance,~\cite[Theorem~1.3]{MV/07}.  For many of the Lie lattices $\mathcal{L}$ of Theorem~\ref{thm:intro.examples}, the abscissa of convergence $\alpha^\wedge_{\mathcal{L} \tensor_\Z \mathcal{O}_K}$ is given by a linear function in the degree $d = [K : \Q]$.  The abscissa of convergence $\alpha^\wedge_{\mathcal{L}}$ is a rational number by the proof of~\cite[Theorem~7.2]{HMR/18}; the analogous statement for the subring and ideal zeta functions was known earlier~\cite{duSG/00}.  

It is natural to ask whether $\zeta^\wedge_{\mathcal{L}}(s)$ may be meromorphically continued to a larger right half-plane $\mathrm{Re} \, s > \alpha^\wedge_{\mathcal{L}} - \delta$ for some $\delta > 0$.  If this is possible, then one can deduce a more precise description of the asymptotic growth of $s_n(\mathcal{L})$, as was done in~\cite[Theorem~1.7]{duSG/00} for the subring and ideal zeta functions and in~\cite[Theorem~A]{DungVoll/17} for representation zeta functions.

\subsection{Overview} \label{sec:Overview}
We will now give a brief overview of the methods of this paper.  A basic property of local pro-isomorphic zeta functions of Lie lattices is that they are readily interpreted as certain $p$-adic integrals.  
Indeed, let $\mathcal{L}$ be a Lie lattice and $p$ a prime.  As before, write $\mathcal{L}_p = \mathcal{L} \tensor_\Z \Z_p$ and $L_p = \mathcal{L} \tensor_\Z \Q_p$.  Let $\mathbf{G} = \aAut L_p$ be the algebraic automorphism group of $L_p$, so that $\mathbf{G}(F) = \Aut_F (L_p \tensor_{\Q_p} F)$ for any field extension $F/\Q_p$.  Let $\mathbf{G}(\Z_p)$ be the subgroup of $\mathbf{G}(\Q_p) = \Aut_{\Q_p} L_p$ consisting of elements that restrict to automorphisms of the $\Z_p$-lattice $\mathcal{L}_p$, and let $\mathbf{G}_p^+ \subset \mathbf{G}(\Q_p)$ be the submonoid of elements preserving $\mathcal{L}_p$.  Throughout this paper, automorphisms act from the right.  The map $\varphi \mapsto (\mathcal{L}_p)\varphi$ induces a bijection between the set $\mathbf{G}(\Z_p) \backslash \mathbf{G}_p^+$ of right cosets and the set of subrings of $\mathcal{L}_p$ that are isomorphic to $\mathcal{L}_p$ as Lie rings over $\Z_p$.  Let $\mu_p$ be the right Haar measure on $\mathbf{G}(\Q_p)$, normalized so that $\mu_p(\mathbf{G}(\Z_p)) = 1$.  Observe that $[ \mathcal{L}_p : (\mathcal{L}_p)\varphi] = | \det \varphi |_{\Q_p}^{-1}$, where the multiplicative valuation on $\Q_p$ is normalized so that $|p|_{\Q_p} = p^{-1}$.  We obtain the following relation, first appearing in~\cite[Proposition 3.4]{GSS/88}.

\begin{pro}[Grunewald-Segal-Smith] \label{pro:padic.integral}
For each prime $p$ and $s \in \mathbb{C}$ such that $\zeta^\wedge_{\mathcal{L}_p}(s)$ converges, we have the equality
\begin{equation} \label{equ:padic.integral}
\zeta^\wedge_{\mathcal{L}_p} (s) = \int_{\mathbf{G}_p^+} | \det (g) |_{\Q_p}^{s} d \mu_p (g).
\end{equation}
Moreover, the integral converges if and only if $\zeta^\wedge_{\mathcal{L}_p}(s)$ does.
\end{pro}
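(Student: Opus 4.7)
The plan is to unfold the integral on the right-hand side of~\eqref{equ:padic.integral} into a sum indexed by right cosets of $\mathbf{G}(\Z_p)$ in $\mathbf{G}_p^+$, and then use the given bijection to identify this sum with the defining series of $\zeta^\wedge_{\mathcal{L}_p}(s)$. The main ingredients are all supplied in the paragraph preceding the proposition: the coset-to-subring bijection $\mathbf{G}(\Z_p)\varphi \leftrightarrow (\mathcal{L}_p)\varphi$, the index formula $[\mathcal{L}_p : (\mathcal{L}_p)\varphi] = |\det \varphi|_{\Q_p}^{-1}$, and the normalization $\mu_p(\mathbf{G}(\Z_p)) = 1$ for the right Haar measure.

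First I would observe that $\mathbf{G}(\Z_p)$ is an open compact subgroup of $\mathbf{G}(\Q_p)$ and is contained in the monoid $\mathbf{G}_p^+$, so $\mathbf{G}_p^+$ decomposes as a disjoint union $\bigsqcup_{i \in I} \mathbf{G}(\Z_p) \varphi_i$ of right cosets (with $\varphi_i$ running over any chosen set of representatives). Two elementary facts then need to be recorded. First, any $\gamma \in \mathbf{G}(\Z_p)$ preserves the lattice $\mathcal{L}_p$, hence $\det \gamma \in \Z_p^\times$ and $|\det \gamma|_{\Q_p} = 1$; multiplicativity of the determinant shows that $g \mapsto |\det g|_{\Q_p}^{s}$ is constant on each right coset $\mathbf{G}(\Z_p)\varphi_i$, with value $|\det \varphi_i|_{\Q_p}^{s}$. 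Second, by right-invariance and the chosen normalization, $\mu_p(\mathbf{G}(\Z_p)\varphi_i) = \mu_p(\mathbf{G}(\Z_p)) = 1$ for every $i \in I$.

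Combining these observations, for $s \in \R$ we may integrate termwise over the partition into cosets and obtain
\[ \int_{\mathbf{G}_p^+} |\det g|_{\Q_p}^{s} \, d\mu_p(g) = \sum_{i \in I} |\det \varphi_i|_{\Q_p}^{s} \, \mu_p(\mathbf{G}(\Z_p)\varphi_i) = \sum_{i \in I} |\det \varphi_i|_{\Q_p}^{s}. \]
The cosets $\mathbf{G}(\Z_p)\varphi_i$ are in bijection with the $\Z_p$-Lie subrings of $\mathcal{L}_p$ that are isomorphic to $\mathcal{L}_p$, and under this bijection $|\det \varphi_i|_{\Q_p}^{s} = [\mathcal{L}_p : (\mathcal{L}_p)\varphi_i]^{-s}$. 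Grouping terms by the value of the index yields $\sum_{k \geq 0} b_{p^k}(\mathcal{L}_p) p^{-ks} = \zeta^\wedge_{\mathcal{L}_p}(s)$, which proves the identity for real $s$; the case of complex $s$ follows by analytic continuation from the real case, or equally by noting that $|\det g|_{\Q_p}^{s}$ has absolute value $|\det g|_{\Q_p}^{\mathrm{Re}\,s}$. The equivalence of convergence of the integral and of the zeta function is immediate from this termwise identification, since all summands are nonnegative reals and the unfolding is valid in $[0,\infty]$.

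The only point that requires genuine care is the coset-to-subring bijection, which is the content of the paragraph preceding the proposition and which I would briefly justify: two elements of $\mathbf{G}_p^+$ produce the same image lattice if and only if they differ by an element of $\mathbf{G}(\Z_p)$ on the left, and conversely any $\Z_p$-Lie subring isomorphic to $\mathcal{L}_p$ arises as the image of $\mathcal{L}_p$ under some $\Q_p$-automorphism of $L_p$ preserving $\mathcal{L}_p$, by extending the isomorphism scalarwise. I expect this to be the only subtle point; the measure-theoretic unfolding is otherwise standard.
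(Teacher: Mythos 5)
Your proposal is correct and is precisely the argument the paper relies on: the paper itself only cites \cite[Proposition 3.4]{GSS/88} after setting up the coset--subring bijection, the index formula $[\mathcal{L}_p : (\mathcal{L}_p)\varphi] = |\det\varphi|_{\Q_p}^{-1}$, and the normalization $\mu_p(\mathbf{G}(\Z_p))=1$, and your unfolding of the integral over the right cosets $\mathbf{G}(\Z_p)\backslash\mathbf{G}_p^+$ is exactly the Grunewald--Segal--Smith computation. The convergence equivalence is handled correctly as well, since the termwise identification involves only nonnegative terms and depends only on $\mathrm{Re}\,s$.
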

Integrals of the type appearing on the right-hand side of~\eqref{equ:padic.integral} have been studied by many authors, including Hey~\cite{Hey/29}, Satake~\cite{Satake/63}, Tamagawa~\cite{Tamagawa/63}, and Macdonald~\cite{Macdonald/95}. 

In order to study the local pro-isomorphic zeta functions of $\mathcal{L} \tensor_\Z \mathcal{O}_K$, it is thus essential to understand the automorphism groups of the $\Q_p$-Lie algebras $(\mathcal{L} \tensor_\Z \mathcal{O}_K) \tensor_\Z \Q_p = {L}_p \tensor_{\Q_p} (K \tensor_\Q \Q_p)$.  The crucial tool of the present paper is a certain rigidity property that was already identified by Segal~\cite{Segal/89} and used in~\cite{GSS/88} for the study of the pro-isomorphic zeta functions of base extensions of the free nilpotent Lie lattices.  To illustrate this property, write $R$ for the $\Q_p$-algebra $K \tensor_\Q \Q_p$, and observe that $\Aut_{\Q_p} (L_p \tensor_{\Q_p} R)$ certainly contains automorphisms of the following three types:
\begin{itemize}
\item $R$-linear automorphisms; these are understood if we know the algebraic automorphism group $\aAut L_p$.
\item Automorphisms that are trivial modulo the center of $L_p \tensor_{\Q_p} R$; these are easy to describe given the structure of $\mathcal{L}$.
\item Automorphisms induced by $\Q_p$-automorphisms of $R$; these form a finite group.
\end{itemize}
If, for any finite-dimensional semisimple $\Q_p$-algebra $R$, the group $\Aut_{\Q_p}(L_p \tensor_{\Q_p} R)$ is generated by automorphisms of the above three types, i.e.~is the smallest possible, then we say that $L_p$ is rigid over its center; see Definition~\ref{def:rigidity} below.  In Theorem~\ref{thm:rigidity} below we generalize a sufficient condition for rigidity proved by Segal in~\cite{Segal/89} and thereby establish rigidity, in particular, for most of the Lie algebras arising from the Lie rings in Theorem~\ref{thm:intro.examples}.  Section~\ref{sec:Q5} provides an example of a Lie algebra that, while rigid, fails to satisfy the criterion of Theorem~\ref{thm:rigidity}.

Fix a decomposition $\mathbf{G} = \mathbf{H} \ltimes \mathbf{N}$, where $\mathbf{N}$ is the unipotent radical of $\mathbf{G}$ and $\mathbf{H} \subseteq \mathbf{G}$ is a reductive subgroup.  The integral of Proposition~\ref{pro:padic.integral} may be reformulated as an integral over a suitable submonoid of $\mathbf{H} (\Q_p)$, at the cost of replacing the integrand by a more complicated function.  This simplified domain of integration allows the integral to be computed by means of a $p$-adic Bruhat decomposition.  
Moreover, du Sautoy and Lubotzky~\cite{duSLubotzky/96} have shown that, under a series of simplifying hypotheses, the integrand may be expressed as a product of functions, each of which can often be computed explicitly. 
The principal technical result of our paper is Proposition~\ref{pro:rigid.computation}, which says, roughly speaking, that if $L_p$ is rigid over its center, then the local pro-isomorphic zeta function $\zeta^\wedge_{\mathcal{L} \tensor \mathcal{O}_K, p}(s)$ splits into a product indexed by the primes of $K$ dividing $p$.  This is enough to imply Theorem~\ref{thm:uniformity} using techniques from $p$-adic integration.  
If, in addition, the simplifying hypotheses of~\cite{duSLubotzky/96} hold, then Corollary~\ref{cor:rigid.lifting} shows that the computation of $\zeta^\wedge_{\mathcal{L} \tensor \mathcal{O}_K, p}(s)$ is essentially the same as that of $\zeta^\wedge_{\mathcal{L},p}(s)$, with minor alterations depending on the degree $d = [K:\Q]$.

In the case of the higher Heisenberg Lie lattices, the explicit rational functions given in Theorem~\ref{pro:higher.final} below are new even when $K = \Q$, except for several small cases.  Along the way we prove a combinatorial identity involving the hyperoctahedral group, Lemma~\ref{lem:bm.to.sm}, that may have independent interest.  For the remaining examples mentioned in Theorem~\ref{thm:intro.examples}, the pro-isomorphic local zeta functions in the case $K = \Q$ were known previously; their computation is straightforward in some cases and quite intricate in others.  In Section~\ref{sec:examples} we apply Corollary~\ref{cor:rigid.lifting} to show that the same calculations, with slight modifications, treat arbitrary number fields $K$.

\subsection{Organization of the paper} 
In Section~\ref{sec:methodology}, we review the framework that will be used to analyze the integrals of Proposition~\ref{pro:padic.integral}; most of this material is due to du Sautoy and Lubotzky~\cite{duSLubotzky/96}.  In Section~\ref{sec:rigidity}, we define the rigidity property mentioned above and prove a sufficient condition for it to hold; this criterion generalizes one of Segal~\cite{Segal/89}, and its proof is a modification of Segal's argument.  Rigidity is used to deduce Proposition~\ref{pro:rigid.computation} and then combined with the setup of Section~\ref{sec:methodology} to prove Corollary~\ref{cor:rigid.lifting}.  
Some Lie lattices that do not satisfy the rigidity property are exhibited.  
In Section~\ref{sec:appendix} we deduce Theorem~\ref{thm:uniformity} from Proposition~\ref{pro:rigid.computation} using an argument of Grunewald, Segal, and Smith~\cite[\S2]{GSS/88} and variations of results on $p$-adic integration~\cite{BDOP/11, CluckersHalupczok/18}.
Finally, in Section~\ref{sec:examples} we study the Lie lattices $\mathcal{L}$ listed in Theorem~\ref{thm:intro.examples}, show that $L_p = \mathcal{L} \tensor_\Z \Q_p$ is rigid over its center for all primes $p$, and use Corollary~\ref{cor:rigid.lifting} to compute the pro-isomorphic zeta functions explicitly.  

\subsection{Notation}
For any finite extension $F / \Q_p$, the valuation $| \cdot |_F$ is normalized so that the valuation of a uniformizer is $1/q$, where $q$ is the cardinality of the residue field of $F$.  If $K$ is a number field, then $V_K$ will denote the set of non-Archimedean places of $K$.  If $\p \in V_K$, we let $K_\p$ denote the localization of $K$ at $\p$.  Let $\mathcal{O}_\p$ and $q_\p$ denote its ring of integers and residue cardinality, respectively, and write $|  \cdot  |_{\p}$ for its normalized valuation.

We write $\N = \{ 1, 2, 3, \dots \}$ and $\N_0 = \N \cup \{ 0 \}$.
For any $n \in \N$, we set $M_n$ to be the multiplicative algebraic monoid, over $\Z$, of $n \times n$ matrices.  If $m,n \in \N$, then $M_{m,n}$ denotes the additive algebraic group, over $\Z$, of $m \times n$ matrices.  We denote the set $\{ 1, 2, \dots, n \}$ by $[n]$ and the set $\{ 0, 1, \dots, n \}$ by $[n]_0$.  If $m$ and $n$ are integers, we write $[m,n]$ for the set $\{ k \in \Z : m \leq k \leq n \}$.

\subsection{Acknowledgements}
The third author was supported by grant 1246/2014 from the German-Israeli Foundation for Scientific Research and Development.  We are grateful to Yotam Hendel, Boris Kunyavskii, Christopher Voll, and the anonymous referee for helpful conversations and suggestions.

\section{Preliminaries} \label{sec:methodology}
In this section we recall some basic facts about pro-isomorphic zeta functions and $p$-adic integrals that will be used throughout the paper.

\subsection{Linearization} \label{sec:linearization}
We briefly discuss the correspondence between $\mathcal{T}$-groups and Lie rings mentioned in the introduction.  If $G$ is a $\mathcal{T}$-group of nilpotency class two, so that $[G,G] \leq Z(G)$, then define the Lie ring
\begin{equation} \label{equ:class.two}
\mathcal{L}(G) = G / Z(G) \times Z(G),
\end{equation}
with the natural multiplication $[(g_1 Z(G), z_1),(g_2 Z(G), z_2)] = (Z(G), [g_1, g_2])$.  Then associating a finite-index subgroup $H \leq G$ with the subring $H Z(G) / Z(G) \times (H \cap Z(G))$ gives an inclusion- and index-preserving bijection between finite-index subgroups of $G$ and finite-index subrings of $\mathcal{L}(G)$.  Normal subgroups correspond to ideals of $\mathcal{L}(G)$ under this bijection, whereas pro-isomorphic subgroups are associated with subrings $\mathcal{M} \leq \mathcal{L}(G)$ such that $\mathcal{M} \tensor_\Z \Z_p \simeq \mathcal{L}(G) \tensor_\Z \Z_p$ for all primes $p$.  For $\mathcal{T}$-groups $G$ of arbitrary nilpotency class, the Malcev correspondence gives a Lie ring $\mathcal{L}(G)$ affording a similar bijection between finite-index subgroups of $G$ and finite-index subrings of $\mathcal{L}(G)$, provided that the index is coprime to an effectively computable integer depending only on the Hirsch length of $G$; see~\cite[\S4]{GSS/88} for details.

Given a nilpotent Lie ring $\mathcal{L}$, define the subring and ideal zeta functions $\zeta^\leq_{\mathcal{L}}(s) = \sum_{n = 1}^\infty b^\leq_n(\mathcal{L}) n^{-s}$ and $\zeta^\vartriangleleft_{\mathcal{L}}(s) = \sum_{n = 1}^\infty b^\vartriangleleft_n(\mathcal{L}) n^{-s}$, where $b^\leq_n(\mathcal{L})$ and $b^\vartriangleleft_n(\mathcal{L})$ are the numbers of subrings and ideals of index $n$, respectively.  These zeta functions have Euler decompositions, and for any $\mathcal{T}$-group $G$ the equalities $\zeta^\leq_{G,p}(s) = \zeta^\leq_{\mathcal{L}(G),p}(s)$ and $\zeta^\vartriangleleft_{G,p}(s) = \zeta^\vartriangleleft_{\mathcal{L}(G),p}(s)$ are satisfied for all but finitely many primes $p$, or for all $p$ if $G$ is of class two.

\subsection{Simplification of the $p$-adic integral} \label{sec:assumptions}
A framework for treating $p$-adic integrals as in~\eqref{equ:padic.integral}, under favorable conditions, was described in~\cite[\S2]{duSLubotzky/96}.  Since we will make use of these methods repeatedly, we recall the main ideas here.  

Let $E$ be a number field with ring of integers $\mathcal{O}_E$, and let $\mathfrak{p}$ be a finite place of $E$.  Let $F/E_{\p}$ be a finite extension of the localization $E_{\p}$.  We denote by $\mathcal{O}$ the valuation ring of $F$.  Fix a uniformizer $\pi \in \mathcal{O}$, and let $q$ be the cardinality of the residue field $\mathcal{O} / \pi \mathcal{O}$.  
Let $\mathbf{G} \subseteq \mathbf{GL}_n$ be an affine group scheme over $E$.  Set $\mathbf{G}^\circ$ to be the connected component of the identity, and fix a decomposition $\mathbf{G}^\circ = \mathbf{N} \rtimes \mathbf{H}$, where $\mathbf{N}$ is the unipotent radical of $\mathbf{G}^\circ$ and $\mathbf{H}$ is reductive.  Define the group $G = \mathbf{G}(F)$, the subgroup $\mathbf{G}(\mathcal{O}) = G \cap \GL_n(\mathcal{O})$, and the submonoid $G^+ = G \cap M_n (\mathcal{O})$.  In addition, set $N = \mathbf{N}(F)$ and $H = \mathbf{H}(F)$.  
For any algebraic subgroup $\mathbf{S}$ of $\mathbf{G}$, set $\mathbf{S}(\mathcal{O}) = \mathbf{S}(F) \cap \mathbf{G}(\mathcal{O})$ and let $\mu_{\mathbf{S}(F)}$ denote the right Haar measure on $\mathbf{S}(F)$, normalized so that $\mu_{\mathbf{S}(F)}(\mathbf{S}(\mathcal{O})) = 1$.
We are interested in computing the integral
\begin{equation} \label{equ:padic.integral.discussion}
\mathcal{Z}_{\mathbf{G}, F} (s) = \int_{G^+} | \det g |_{F}^s d \mu_G (g).
\end{equation}

We will now introduce three conditions under which such integrals are treated in~\cite[\S2]{duSLubotzky/96}.  
In view of Proposition~\ref{pro:padic.integral}, for the purposes of this article we are most interested in the case where $\mathcal{L}$ is a Lie lattice and $\mathbf{G} = \aAut (\mathcal{L} \tensor_\Z \Q)$.  
The first two conditions hold for $\mathbf{G}(\Q_p)$ for almost all primes $p$ by general considerations.
A rigidity condition on $\mathcal{L} \tensor_\Z \Q_p$ ensures that they will hold for $\mathbf{G}(F)$, where $F/\Q_p$ is any finite extension.  The third condition, however, is much more restrictive; see Remark~\ref{rmk:almost.all.primes} below.

\begin{asm} \label{first.assumption}
We assume that $G = \mathbf{G}(\mathcal{O}) \mathbf{G}^\circ (F)$.
\end{asm}
It easily follows from Assumption~\ref{first.assumption} that $\mathcal{Z}_{\mathbf{G}, F}(s) = \mathcal{Z}_{\mathbf{G}^\circ, F}(s)$~\cite[Proposition~2.1]{duSLubotzky/96}.  Replace $\mathbf{G}$ with $\mathbf{G}^\circ$, so that $\mathbf{G}$ is a connected algebraic group.  

Furthermore, we assume that the embedding $\mathbf{G} \subseteq \mathbf{GL}_n$ has a particularly convenient form.  The group $\mathrm{GL}_n(F)$ naturally acts from the right on $F^n$; let $(e_1, \dots, e_n)$ be the standard basis.  Given a sequence $0 = d_0 < d_1 < \cdots d_t = n$, define $U_i$ to be the $F$-linear span of $e_{d_{i-1} + 1}, \dots, e_{d_i}$.  Setting $V_i = U_i \oplus \cdots \oplus U_t$ for every $i \in [t]$, note that $V_1 = F^n$ and $V_{t+1} = (0)$.  

\begin{asm} \label{second.assumption}
We assume that, for a suitable sequence as above, the following conditions are satisfied: 
\begin{itemize}
\item The subspace $U_i$ is $H$-stable for every $i \in [t]$.
\item The subspace $V_i$ is $N$-stable for every $i \in [t]$; moreover, $N$ acts trivially on the quotient $V_i / V_{i+1}$.
\end{itemize}
\end{asm}

Set $V = V_1$.  Under our assumptions, the group $G = N \rtimes H$ acts on the quotient $V/V_{i+1}$ for every $i \in [t]$; let $\psi_i^\prime : G \to \Aut(V/V_{i+1})$ be the corresponding homomorphism.  Since $V/V_{i+1}$ is spanned by the images of $e_1, \dots, e_{d_{i}}$, there is a natural identification of $\Aut(V/V_{i+1})$ with $\mathrm{GL}_{d_{i}}(F)$.  Putting $N_{i} = N \cap \ker \psi_i^\prime$, we note that $\psi_i^\prime$ factors through a map $\psi_i : G/N_{i} \to \Aut (V/V_{i+1})$.  

Let $\overline{n} \in N_i/N_{i+1}$ and $j \in [d_i]$.  Then $e_j \overline{n} - e_j \in V_{i+1}$ is well-defined modulo $V_{i+2}$; recall that $G$ acts on $V$ from the right.  Moreover, $H$ acts on $V_{i+1}/V_{i+2}$.  Hence for every $h \in H$ there is a map
\begin{alignat*}{2}
\tau(h) : N_i / N_{i+1} & \hookrightarrow &&(V_{i+1}/V_{i+2})^{d_i} \\
\overline{n} & \mapsto &&((e_1 \overline{n} - e_1)h, \dots, (e_{d_i} \overline{n} - e_{d_i})h).
\end{alignat*}
We identify $(V_{i+1}/V_{i+2})^{d_i}$ with $M_{d_i, \dim U_{i+1}}(F)$ by viewing a $d_i$-tuple $(v_1, \dots, v_{d_i}) \in (V_{i+1}/V_{i+2})^{d_i}$ as the matrix whose $j$-th row is $v_j$, expressed in terms of the basis of $V_{i+1}/V_{i+2}$ given by the images of $e_{d_i + 1}, \dots, e_{d_{i+1}}$.  Define the function $\theta_i^F : H \to \mathbb{R}$ by
\begin{equation} \label{equ:def.theta}
\theta_i^F (h) = \mu_{N_i / N_{i+1}}( \{ \overline{n} \in N_i / N_{i+1} : \tau(h)(\overline{n}) \in M_{d_i, d_{i+1} - d_i}(\mathcal{O}) \} ),
\end{equation}
where $\mu_{N_i / N_{i+1}}$ is the right Haar measure on $N_i / N_{i+1}$, normalized so that the set 
$\psi_{i+1}^{-1}(\psi_{i+1}(N_i / N_{i+1}) \cap M_{d_i}(\mathcal{O}))$ has measure $1$; recall that $\Aut (V / V_{i+1})$ has been identified with $\mathrm{GL}_{d_i}(F)$.  Thus $\mu_{N_i / N_{i+1}}$ is identified with the additive Haar measure on $F^{d_i(d_{i+1}-d_i)}$, normalized on $\mathcal{O}^{d_i(d_{i+1}-d_i)}$.  Note also that $\mu_G = \left( \prod_{i = 1}^{t-1} \mu_{N_i / N_{i+1}} \right) \mu_H$.
Define $H^+ = H \cap M_n(\mathcal{O})$ and $(G/N_{i})^+ = \psi_i^{-1}(\psi_i(G/N_{i}) \cap M_{d_{i}}(\mathcal{O}))$.  

\begin{asm} \label{lifting.condition}
We say that {\emph{the lifting condition holds}} if for every $i \in [2, t-1]$ and every $\overline{g} \in (NH^+ / N_i) \cap (G/N_i)^+$ there exists $\gamma \in G^+$ such that $\overline{g} = \gamma N_i$; note that this always holds for $i = 1$, hence the condition stated here is equivalent to~\cite[Assumption~2.3]{duSLubotzky/96}.
\end{asm}

\begin{pro} \cite[Theorem~2.2]{duSLubotzky/96} \label{pro:duSL}
For $h \in H$ define
$$ \theta^F(h) = \mu_N( \{ n \in N : nh \in G^+ \} ).$$
If Assumptions~\ref{first.assumption}, \ref{second.assumption}, and~\ref{lifting.condition} hold, then 
$$ \theta^F = \prod_{i = 1}^{t-1} \theta_i^F.$$
Furthermore,
$$ \mathcal{Z}_{\mathbf{G}, F}(s) = \int_{H^+} | \det h |_{F}^s \left( \prod_{i = 1}^{t-1} \theta_i^F (h) \right) d \mu_H.$$
\end{pro}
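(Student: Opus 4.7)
The plan is to exploit the semidirect product decomposition $G = N \rtimes H$ together with the fact that $\det n = 1$ for every $n \in N$ (since $N$ is unipotent). Since the right Haar measure factors as $\mu_G = \mu_N \mu_H$, Fubini gives
\begin{equation*}
\mathcal{Z}_{\mathbf{G}, F}(s) = \int_{H} \left( \int_{N} \mathbf{1}_{G^+}(nh)\, d\mu_N(n) \right) |\det h|_F^s \, d\mu_H(h) = \int_H |\det h|_F^s \theta^F(h) \, d\mu_H(h),
\end{equation*}
so the problem reduces to analyzing $\theta^F$. By Assumption~\ref{second.assumption}, each $h \in H$ is block diagonal with respect to $F^n = U_1 \oplus \cdots \oplus U_t$ and each $n \in N$ is block upper-triangular with identity diagonal blocks; consequently the diagonal blocks of $nh$ coincide with those of $h$, and $nh \in G^+$ forces $h \in H^+$. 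Hence $\theta^F$ vanishes off $H^+$.

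For $h \in H^+$, block multiplication yields $(nh)_{jk} = n_{jk} h_{kk}$ for $j < k$, so the condition $nh \in G^+$ is equivalent to $n_{jk} h_{kk} \in M(\mathcal{O})$ for every pair $j < k$. Grouping these constraints by $k = i+1$, the data $(n_{j,i+1})_{j \leq i}$ parametrize a coset $\overline{n} \in N_i/N_{i+1}$, and unwinding the definition of $\tau(h)$ identifies the conjunction of these constraints with $\tau(h)(\overline{n}) \in M_{d_i, d_{i+1}-d_i}(\mathcal{O})$. Working in matrix-entry coordinates on the unipotent group $N$, the measure $\mu_N$ is Lebesgue in these coordinates and factors as $\mu_N = \prod_{i=1}^{t-1} \mu_{N_i/N_{i+1}}$; Fubini then yields $\theta^F(h) = \prod_{i=1}^{t-1} \theta_i^F(h)$, and plugging into the displayed integral gives the second formula.

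The main subtlety, and the point at which Assumption~\ref{lifting.condition} is needed, is ensuring that the cumulative integrality condition $nh \in G^+$ really does decouple into the layer-wise conditions detected by the $\theta_i^F$. To pass inductively down the filtration one wants, at each stage, to trade the condition ``$\overline{nh} \in (G/N_i)^+$'' for the existence of an integral lift $\gamma \in G^+$ with $\gamma N_i = \overline{nh}$; this is precisely the content of the lifting condition for cosets in $NH^+/N_i$. Without it, the filtration-wise sets could impose strictly stronger requirements than the block conditions computed above, breaking the product decomposition. With the lifting condition in hand, the Fubini computation goes through as sketched, completing the proof.
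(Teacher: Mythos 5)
Your outer reductions are fine and coincide with what the paper actually does: the paper takes the factorization $\theta^F = \prod_{i=1}^{t-1}\theta_i^F$ on faith from~\cite[Theorem~2.2]{duSLubotzky/96} and only derives the displayed integral formula from it, via the unique decomposition $g = nh$ with $n \in N$, $h \in H$, the observation that $nh \in G^+$ forces $h \in H^+$ (diagonal blocks), and $\det(nh) = \det h$. Your version of those steps is correct.

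The gap is in your proof of the factorization itself, which is the real content of the cited theorem. Two specific claims fail. First, ``the measure $\mu_N$ is Lebesgue in matrix-entry coordinates'' is false in general: $N$ is a proper algebraic subgroup of the block-unitriangular group, so its matrix entries are algebraically dependent (in the free nilpotent example of Section~\ref{sec:free}, all off-diagonal blocks of an automorphism are determined by the first block row), and $\mu_N$ lives on that subvariety rather than on the affine space of all entries. Second, for a general $n \in N$ the tuple $(n_{j,i+1})_{j \leq i}$ does \emph{not} parametrize a coset in $N_i/N_{i+1}$: only elements of $N_i$ map naturally to $N_i/N_{i+1}$, and if one writes $n = u_1 u_2 \cdots u_{t-1}$ with $u_i \in N_i$ representing a coset of $N_{i+1}$ (the coordinates in which $\mu_N = \prod_i \mu_{N_i/N_{i+1}}$ as an iterated integral), then the block-column-$(i+1)$ entries of $n$ are polynomial expressions involving cross terms from $u_1, \dots, u_i$, not the entries of $u_i$ alone. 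Hence the entry-wise decoupling of the condition $nh \in G^+$ does not translate into independence of the layer coordinates, and Fubini does not immediately give the product $\prod_i \theta_i^F(h)$. This is exactly where du Sautoy and Lubotzky's inductive argument along the filtration $N = N_1 \supseteq N_2 \supseteq \cdots \supseteq N_t = 1$ enters: at each stage one uses Assumption~\ref{lifting.condition} to replace a partially integral coset in $(G/N_i)^+$ by an honest lift $\gamma \in G^+$, which is what makes the measure of the next fiber independent of the earlier choices. You correctly identify this as ``the main subtlety,'' but the sentence ``with the lifting condition in hand, the Fubini computation goes through as sketched'' is precisely the step that requires proof; note that the lifting condition is not a formality, since by Remark~\ref{rmk:almost.all.primes} there are Lie lattices admitting no decomposition for which it holds at any prime, and without it the product formula can genuinely fail.
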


The second claim of Proposition~\ref{pro:duSL} follows from the first by the observation that every $g \in G^+$ decomposes uniquely as a product $g = nh$ with $h \in H^+$ and $n \in N$.  Since $\det g = \det h$, it follows that 
\begin{equation} \label{equ:separate.hn}
 \mathcal{Z}_{\mathbf{G}, F}(s) = \int_{H^+} | \det h |_{F}^s \theta^F(h)  d \mu_H.
 \end{equation}

\subsection{Consequences of the $p$-adic Bruhat decomposition}
An important benefit of rewriting the integral of~\eqref{equ:padic.integral} in the form~\eqref{equ:separate.hn} is that the domain of integration of the latter is partitioned conveniently by the $p$-adic Bruhat decomposition.  Under certain conditions, this may be used to evaluate the integral.  We state here the results that will be needed later in the paper.  The idea is essentially due to Igusa~\cite{Igusa/89} and was developed by du Sautoy and Lubotzky~\cite[\S5]{duSLubotzky/96} and further by the first author~\cite[\S4]{Berman/11}; the reader is invited to consult these references for details.

To streamline the notation, we will replace $\mathbf{H} \times_E F$ with $\mathbf{H}$; thus we treat $\mathbf{H}$ as a reductive group over $F$.  Recall that $\mathbf{H}(\mathcal{O}) = \mathbf{H}(F) \cap \mathbf{GL}_\dimL(\mathcal{O})$, and similarly for algebraic subgroups of $\mathbf{H}$.  Let $\mathbf{T} \subset \mathbf{H}$ be a maximal torus, and suppose that $\mathbf{T}$ splits over $F$; this allows us to fix an isomorphism $\kappa: \mathbf{T} \to \mathbf{G}_a^{\rk \mathbf{H}}$ defined over $F$.  Let $\Phi$ be the root system of $\mathbf{H}$, and let $\Delta = \{ \alpha_1, \dots, \alpha_\ell \}$ be a set of simple roots.  Denote by $\Phi^+$ the consequent set of positive roots.  Let $\Xi = \mathrm{Hom}(\mathbf{G}_m, \mathbf{T})$ be the set of cocharacters of $\mathbf{T}$.  Recall the natural pairing between characters and cocharacters: if $\beta \in \mathrm{Hom}(\mathbf{T}, \mathbf{G}_m)$ and $\xi \in \Xi$, then $\langle \beta, \xi \rangle$ is the integer satisfying $\beta(\xi(t)) = t^{\langle \beta, \xi \rangle}$ for all $t \in \mathbf{G}_m(F)$.  For every root $\alpha \in \Phi$, let $\mathbf{U}_\alpha \subset \mathbf{H}$ be the corresponding root subgroup, and let $\psi_\alpha : \mathbf{G}_a \to \mathbf{U}_\alpha$ be an isomorphism; note that the isomorphisms $\kappa$ and $\psi_\alpha$ may all be chosen to be defined over $\mathcal{O}$.  It is crucial to assume that these data have ``very good reduction'' in the sense of~\cite[II.2]{Igusa/89}, namely that $\mathbf{G}$, $\mathbf{T}$, and all the isomorphisms $\kappa$ and $\psi_\alpha$ have good reduction modulo $\pi$; the latter condition means that reduction modulo $\pi$ induces isomorphisms $\overline{\kappa} : \mathbf{T}(k) \to k^{\rk \mathbf{H}}$ and $\overline{\psi_\alpha} : k \to \mathbf{U}_\alpha(k)$, where $k = \mathcal{O} / (\pi)$ is the residue field.  

Let $W = N_{\mathbf{H}(F)}(\mathbf{T}(F))/\mathbf{T}(F)$ be the Weyl group; it acts by conjugation on the collection of root subgroups and hence on $\Phi$.
Define $\Xi^+ = \{ \xi \in \Xi : \xi(\pi) \in \mathbf{H}(\mathcal{O}) \}$, and for every $w \in W$ and $\alpha \in \Delta$ set
\begin{equation*}
\delta_{w,\alpha} = \begin{cases}
1 &: \alpha \in \Delta \cap w(\Phi^-) \\
0 &: \alpha \in \Delta \setminus w(\Phi^-).
\end{cases}
\end{equation*}
Here $\Phi^- = \Phi \setminus \Phi^+$ is the set of negative roots.  
Partitioning the domain of integration according to the $p$-adic Bruhat decomposition and analyzing the behavior of the integrand on each piece, the first author established the following result, which is immediate from the proof of~\cite[Proposition~4.2]{Berman/11} and generalizes~\cite[(5.4)]{duSLubotzky/96}.

\begin{pro} \label{pro:bruhat.decomposition}
Suppose that the maximal torus $\mathbf{T} \subset \mathbf{H}$ is $F$-split and that very good reduction holds.   
Then
\begin{equation*}
\int_{\mathbf{H}^+(F)} | \det h |_F^s \theta^F (h) d \mu_H (h) = \sum_{w \in W} q^{- \ell (w)} \sum_{\xi \in w \Xi_w^+} q^{\langle \prod_{\beta \in \Phi^+} \beta, \xi \rangle} | \det \xi(\pi) |_F^s \theta^F (\xi(\pi)),
\end{equation*}
where $\ell$ is the length function on $W$ with respect to the Coxeter generating set corresponding to $\Delta$ and 
\begin{equation} \label{equ:wxi.dfn}
w \Xi_w^+ = \left\{ \xi \in \Xi^+ : 
\alpha(\xi(\pi)) \in \pi^{\delta_{w,\alpha}} \mathcal{O} \,\, \text{for all} \, \, \alpha \in \Delta \right\}.
\end{equation}
\end{pro}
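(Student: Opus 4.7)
The plan is to partition the integration domain $\mathbf{H}^+(F)$ using the $p$-adic Iwahori--Bruhat decomposition, verify that the integrand is constant along each cell, compute each cell's Haar measure, and sum. Let $I \subset \mathbf{H}(\mathcal{O})$ be the Iwahori subgroup obtained as the preimage of $\mathbf{B}(k)$ under reduction modulo $\pi$, where $\mathbf{B}$ is the Borel subgroup corresponding to $\Delta$. The very good reduction hypothesis delivers a disjoint decomposition
$$\mathbf{H}(F) = \bigsqcup_{w \in W,\, \xi \in \Xi} I\, w\, \xi(\pi)\, I,$$
parametrized (after the obvious identification) by the extended affine Weyl group $W \ltimes \Xi$.

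First I would determine which cells meet $\mathbf{H}^+(F) = \mathbf{H}(F) \cap M_n(\mathcal{O})$. Since $I \subseteq \mathbf{GL}_n(\mathcal{O})$ preserves $M_n(\mathcal{O})$ under both left and right multiplication, one has $I w \xi(\pi) I \subseteq M_n(\mathcal{O})$ precisely when $w \xi(\pi) \in M_n(\mathcal{O})$. Unpacking this via the weight-space decomposition associated with $\mathbf{T}$ and tracking how conjugation by $w$ permutes the Iwahori root subgroups yields exactly the inequalities $\alpha(\xi(\pi)) \in \pi^{\delta_{w,\alpha}} \mathcal{O}$ for each $\alpha \in \Delta$, namely the condition $\xi \in w\Xi_w^+$ from~\eqref{equ:wxi.dfn}.

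Next I would establish that $|\det h|_F^s \theta^F(h)$ is constant on each Iwahori double coset. The determinant is $\mathbf{H}(\mathcal{O})$-biinvariant in absolute value, so $|\det h|_F = |\det \xi(\pi)|_F$ throughout $I w \xi(\pi) I$. For $\theta^F$, the point is that $I \subseteq \mathbf{H}(\mathcal{O})$ preserves the lattice $\mathcal{L}_p$ and acts on $N$ by measure-preserving conjugation; hence the defining condition $nh \in G^+$ is invariant under replacing $h$ by $u_1 h u_2$ with $u_i \in I$, so $\theta^F(h) = \theta^F(w\xi(\pi)) = \theta^F(\xi(\pi))$, the last equality using that $w$ admits a lift to $\mathbf{H}(\mathcal{O})$ under very good reduction.

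Finally, the two weighting factors emerge from the computation of $\mu_H(I w \xi(\pi) I)$. The standard index formula $[I : I \cap w I w^{-1}] = q^{\ell(w)}$ for the Coxeter system $(W,\Delta)$ furnishes the contribution $q^{-\ell(w)}$, while the factor $q^{\langle \prod_{\beta \in \Phi^+}\beta,\, \xi \rangle}$ records the effect of conjugation by $\xi(\pi)$ on the positive unipotent part of $I$, whose modular character is $\prod_{\beta \in \Phi^+} |\beta(\xi(\pi))|_F^{-1}$. The main obstacle will be the careful volume bookkeeping for these Iwahori double cosets under very good reduction, matching the chosen normalization of $\mu_H$ with the product decomposition of $I$ and ensuring that the modular characters of the torus action on the positive and negative parts combine correctly; once these ingredients are in place, summing $|\det \xi(\pi)|_F^s \theta^F(\xi(\pi)) \cdot \mu_H(I w \xi(\pi) I)$ over all $(w,\xi)$ with $\xi \in w\Xi_w^+$ reproduces the right-hand side of the stated identity.
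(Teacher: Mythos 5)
Your overall plan (affine Bruhat cells, constancy of the integrand on cells, volume bookkeeping) is the same Igusa-style route that the paper invokes via the proof of Berman's Proposition~4.2, and your argument that $\theta^F$ is bi-$\mathbf{H}(\mathcal{O})$-invariant (right invariance from $\mathbf{G}^+(F)\mathbf{G}(\mathcal{O}_F)=\mathbf{G}^+(F)$, left invariance from measure-preserving conjugation on $N$) is sound. However, two of your central claims are false, and together they constitute a genuine gap. First, the identification of which Iwahori double cosets lie in $\mathbf{H}^+(F)$: once the Weyl representative $w$ is lifted to $\mathbf{H}(\mathcal{O})\subseteq\mathrm{GL}_n(\mathcal{O})$, one has $w\xi(\pi)\in M_n(\mathcal{O})$ if and only if $\xi(\pi)\in M_n(\mathcal{O})$, i.e.\ every $\mathbf{T}$-weight of the ambient representation pairs nonnegatively with $\xi$ --- a condition that is independent of $w$ and does not force $\xi$ to be dominant. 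For instance, in $\mathrm{GL}_2$ the coset $I\,\mathrm{diag}(1,\pi)\,I$ lies in $\mathbf{H}^+(F)$, yet its parameter $(w,\xi)=(1,(0,1))$ is excluded by $w\Xi_w^+$; so your family of cells does not cover $\mathbf{H}^+(F)$, and the inequalities $\alpha(\xi(\pi))\in\pi^{\delta_{w,\alpha}}\mathcal{O}$ are not what containment in $M_n(\mathcal{O})$ unpacks to. Second, the volumes: with $\mu_H(\mathbf{H}(\mathcal{O}))=1$ one has $\mu_H(I\widetilde{w}I)=q^{\widetilde{\ell}(\widetilde{w})}\mu_H(I)$, where $\widetilde{\ell}$ is the Iwahori--Matsumoto length on the extended affine Weyl group and $\mu_H(I)=\bigl(\sum_{w\in W}q^{\ell(w)}\bigr)^{-1}$; this is not $q^{-\ell(w)}q^{\langle\prod_{\beta\in\Phi^+}\beta,\xi\rangle}$ (your index formula $[I:I\cap wIw^{-1}]=q^{\ell(w)}$ would in any case produce a factor $q^{+\ell(w)}$, and the factor $\mu_H(I)$ is unaccounted for). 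So the term-by-term identification of cells, constraints and measures with the right-hand side of the proposition breaks down, even though the final formula happens to be correct.

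What is actually needed is the regrouping that your sketch replaces by the incorrect cell-measure assertion. Since the integrand $f(h)=|\det h|_F^s\theta^F(h)$ is bi-$\mathbf{H}(\mathcal{O})$-invariant and supported on $\mathbf{H}^+(F)$, the integral equals $\sum_{\xi}\mu_H\bigl(\mathbf{H}(\mathcal{O})\xi(\pi)\mathbf{H}(\mathcal{O})\bigr)f(\xi(\pi))$, the sum over dominant $\xi$ with $\xi(\pi)\in\mathbf{H}^+(F)$, and one must then prove the Macdonald-type volume identity $\mu_H\bigl(\mathbf{H}(\mathcal{O})\xi(\pi)\mathbf{H}(\mathcal{O})\bigr)=q^{\langle\prod_{\beta\in\Phi^+}\beta,\xi\rangle}\sum_{w}q^{-\ell(w)}$, where $w$ runs over the elements with $\Delta\cap w(\Phi^-)\subseteq\{\alpha\in\Delta:\langle\alpha,\xi\rangle\geq 1\}$ (equivalently, over minimal length coset representatives for $W/W_\xi$); this identity is exactly where the weight $q^{-\ell(w)}$ and the index set $w\Xi_w^+$ come from. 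Alternatively one can reproduce the cell decomposition used by Igusa, du~Sautoy--Lubotzky and Berman, whose cells are not Iwahori double cosets. A useful sanity check is $\mathrm{GL}_3$ with $\theta\equiv 1$: summing your claimed weights over genuine Iwahori double cosets does not return the coefficient of $q^{-2s}$ in $\prod_{i=0}^{2}(1-q^{i-s})^{-1}$, whereas the regrouped Cartan computation does.
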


\begin{exm} \label{exm:abelian}
Let $\mathcal{L}$ be the abelian Lie lattice $\Z^{\dimL}$ of rank $\dimL$.  It is clear that all finite-index sublattices of $\mathcal{L}$ are ideals isomorphic to $\mathcal{L}$, so the pro-isomorphic zeta function of $\mathcal{L}$ coincides with the subring and ideal zeta functions.  Moreover, the algebraic automorphism group of $L = \mathcal{L} \tensor_\Z \Q = \Q^{\dimL}$ is $\mathbf{GL}_{\dimL}$.  If $F / \Q_p$ is a finite extension as above, and $\zeta_{\mathcal{O}^\dimL}(s) = \sum_{\mathcal{M} \leq \mathcal{O}^\dimL} [\mathcal{O}^\dimL : \mathcal{M}]^{-s}$ counts $\mathcal{O}$-sublattices of finite index, then
\begin{equation*} 
\zeta_{\mathcal{O}^\dimL}(s) = \int_{\mathrm{GL}_{\dimL}^+(F)} | \det A |^{s}_{F} d \mu_{\mathrm{GL}_n(F)} (A) = \prod_{i = 0}^{\dimL - 1} \frac{1}{1 - q^{i - s}}.
\end{equation*}
In particular, $\zeta_{\mathcal{L}}(s) = \prod_{i = 0}^{\dimL - 1} \zeta(s - i)$, where $\zeta(s)$ is the Riemann zeta function.
The first equality above follows from the same reasoning as Proposition~\ref{pro:padic.integral}, while
it is a simple exercise to derive the second equality from Proposition~\ref{pro:bruhat.decomposition}.  See also~\cite[Proposition~1.1]{GSS/88}, \cite[Theorem~15.1]{LubotzkySegal/03}, and~\cite[Example~2.19]{Voll/Newcomer} for an assortment of proofs not relying on Proposition~\ref{pro:bruhat.decomposition}; most of these are stated for $F = \Q_p$ but generalize readily.
\end{exm}

Two applications of Proposition~\ref{pro:bruhat.decomposition} appear below, in Sections~\ref{sec:higher.heisenberg} and~\ref{sec:lmn}.  In both cases the hypothesis of very good reduction is readily verified, and we will not address it explicitly.

\section{Rigidity} \label{sec:rigidity}
\subsection{A rigidity criterion}
Let $L$ be a finite-dimensional Lie algebra over a field $k$, and let $R$ be a finite-dimensional semisimple commutative $k$-algebra.  As in~\cite{Segal/89}, for any subset $X \subseteq L$ and ideal $I \leq L$, we set
$$C_{L/I}(X) = \{ x \in L : [x,X] \subseteq I \},$$
where $[x,X] = \{ [x,y] : y \in X \}$.
Write $C_L(X)$ for the centralizer $C_{L/(0)}(X)$.
Note that $I + kx \subseteq C_{L/[I,L]}(C_{L/[I,L]}(x))$ for any $x \in L$ and any ideal $I \leq L$.
Set $I_1 = [I,L]$ and
\begin{equation} \label{equ:yz.def} 
\mathcal{Y}(I) = \{ x \in L \setminus I : C_{L / I_1} (C_{L / I_1} (x)) = I + kx \}.
\end{equation}
For instance, if $I = Z(L)$ is the center of $L$, then $I_1 = 0$ and $x \in \mathcal{Y}(I)$ if and only if $x$ is not central and the set of elements of $L$ commuting with the centralizer of $x$ is as small as possible, namely $Z(L) + kx$.  All tensor products in this section are over $k$.  

\begin{dfn}
We say that an ideal $I \leq L$ is {\emph{highly invariant}} if, for every semisimple commutative $k$-algebra $R$ and every $k$-linear automorphism $\varphi \in \mathrm{Aut}_k (L \tensor R)$, we have $(I \tensor R)\varphi = I \tensor R$.
\end{dfn}

Observe that the center $Z(L)$ is a highly invariant ideal, as is any verbal ideal.  Recall that an ideal $I \leq L$ is called verbal if there exists a variety $\mathcal{V}$ of Lie algebras over $k$ (cf.~\cite[\S2]{Vaughan-Lee/70}) such that $I$ is the (unique) minimal element of the set $\{ J \leq L : L / J \in \mathcal{V} \}$.  For instance, the ideals $\gamma_i L$ appearing in the lower central series of $L$ are verbal.

The following easy statement is noted after~\cite[Lemma~1]{Segal/89}.
\begin{rem} \label{rmk:segal.remark}
Let $E/k$ be a field extension of arbitrary dimension.  Then, for any ideal $I \leq L$ and any $X \subseteq L$, we have
$ C_{L \tensor E  / I \tensor E}(X \tensor 1) = C_{L/I}(X) \tensor E$ since centralizers are defined by linear conditions over $E$.  Hence the same conclusion holds for semisimple $k$-algebras $R$ as above.
\end{rem}

The following lemma somewhat weakens the hypotheses of~\cite[Lemma~1]{Segal/89}.

\begin{lemma} \label{lem:rigidity1}
Let $Z \leq \gamma_2 L$ be a highly invariant ideal of $L$.  Set $Z_1 = [Z,L]$.
Suppose that $L/Z_1$ is indecomposable, that $\dim_{k} L / Z_1 > 1$, and that $\mathcal{Y}(Z)$ generates $L$ as a Lie algebra.  Let $R$ be any finite-dimensional semisimple commutative $k$-algebra.  Then there exists a unique epimorphism 
$$ \widehat{} : \mathrm{Aut}_k (L \otimes R) \to \mathrm{Aut}_k R$$
such that
$$  (\alpha w)\varphi \equiv (\alpha)\widehat{\varphi} \cdot (w)\varphi   \, \mathrm{mod} \, Z \otimes R$$
for any $\varphi \in \mathrm{Aut}_k (L \otimes R)$ and any elements $\alpha \in R$ and $w \in L \otimes R$.  Moreover, the natural injection $\mathrm{Aut}_k R \to \mathrm{Aut}_k (L \otimes R)$ splits \, $\widehat{}$.
\end{lemma}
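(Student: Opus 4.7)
The strategy is to adapt the proof of \cite[Lemma 1]{Segal/89} to the present setting, which relaxes Segal's hypotheses in two ways: the highly invariant ideal $Z$ need only satisfy $Z \leq \gamma_2 L$ (and is not required to be the center), and $R$ is allowed to be any finite-dimensional semisimple commutative $k$-algebra rather than merely a field. Since $Z$ is highly invariant, $\varphi$ preserves $Z\otimes R$ and hence $Z_1\otimes R = [Z, L]\otimes R$, and it therefore preserves centralizers modulo $Z_1\otimes R$. Remark~\ref{rmk:segal.remark} was already observed in the paper to apply to semisimple $R$, so for any $x\in\mathcal{Y}(Z)$ we obtain
\[
C_{(L\otimes R)/(Z_1\otimes R)}\bigl(C_{(L\otimes R)/(Z_1\otimes R)}(x\otimes 1)\bigr) = Z\otimes R + R(x\otimes 1).
\]
Since the bracket on $L\otimes R$ is $R$-bilinear, these centralizers are in fact $R$-submodules, and consequently so is the image $(D_x)\varphi$ of $D_x := Z\otimes R + R(x\otimes 1)$, being itself a double centralizer.

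Next I would define $\widehat{\varphi}$. Writing $y = (x\otimes 1)\varphi$, $\bar L\otimes R := (L\otimes R)/(Z\otimes R)$, and $\bar x, \bar y$ for the corresponding residues, we have $D_x/Z\otimes R \cong R$ as an $R$-module, so $(D_x)\varphi/Z\otimes R$ is an $R$-submodule of $\bar L\otimes R$ of $k$-dimension $\dim_k R$ containing $\bar y$ and the cyclic submodule $R\bar y$. I would then argue that $(D_x)\varphi/Z\otimes R = R\bar y$ and that $\bar y$ has trivial $R$-annihilator; when $R$ is a field this is immediate from the $k$-dimension count, but for general semisimple $R$ it requires extra care, exploiting the symmetry between $\varphi$ and $\varphi^{-1}$ together with the decomposition $R = \prod_i F_i$, and using the indecomposability of $L/Z_1$ to rule out the possibility that $(D_x)\varphi/Z\otimes R$ is a non-cyclic $R$-module of the correct $k$-dimension. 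This produces a $k$-linear bijection $\widehat{\varphi}_x : R \to R$ determined by
\[
(x\otimes\alpha)\varphi \equiv \widehat{\varphi}_x(\alpha)\,y \pmod{Z\otimes R}.
\]

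Multiplicativity of $\widehat{\varphi}_x$ then follows from evaluating $(x\otimes\alpha\beta)\varphi$ two ways against the freeness of $R\bar y$. Independence of the choice of $x\in\mathcal{Y}(Z)$ is obtained from the Lie bracket: for $x, y'\in\mathcal{Y}(Z)$ with $[x,y']\notin Z$, comparing $([x,y']\otimes\alpha)\varphi = [x\otimes\alpha, y'\otimes 1]\varphi$ with $[(x\otimes\alpha)\varphi, (y'\otimes 1)\varphi]$ modulo $Z\otimes R$ forces $\widehat{\varphi}_x = \widehat{\varphi}_{y'}$; the indecomposability of $L/Z_1$, combined with the hypothesis that $\mathcal{Y}(Z)$ generates $L$, allows one to connect any two elements of $\mathcal{Y}(Z)$ by a chain of such pairs. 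Writing $\widehat{\varphi}$ for the common value, the defining congruence propagates $k$-linearly in $w$ from $\mathcal{Y}(Z)$ to all of $L$ (using that it is preserved under taking Lie brackets, since $[\,\cdot\,, Z\otimes R]\subseteq Z_1\otimes R\subseteq Z\otimes R$) and then $R$-bilinearly to $L\otimes R$. Uniqueness follows from $\dim_k L/Z_1 > 1$ together with the generation assumption. That $\varphi\mapsto\widehat{\varphi}$ is a group homomorphism is clear from composing congruences, and the embedding $\sigma\mapsto\mathrm{id}_L\otimes\sigma$ manifestly splits $\widehat{}\,$ and establishes surjectivity.

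I expect the main obstacle to lie in two places. First, showing that $(D_x)\varphi/Z\otimes R$ is a free cyclic $R$-module generated by $\bar y$ requires genuine additional work when $R$ is semisimple rather than a field, since a priori the quotient could be a non-cyclic $R$-module of the correct $k$-dimension; the indecomposability of $L/Z_1$ must be used in a non-obvious way to rule this out. Second, propagating the equality $\widehat{\varphi}_x = \widehat{\varphi}_{y'}$ along chains of noncentral brackets is the step where indecomposability enters most delicately and corresponds to the most substantial modification of Segal's original argument to the weaker hypothesis in which $Z_1$, rather than $Z$, defines the relevant quotient.
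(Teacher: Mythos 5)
Your overall strategy is indeed the paper's: compute double centralizers modulo $Z_1 \otimes R$ via Remark~\ref{rmk:segal.remark}, use the $R$-module structure of centralizers to define a coefficient map $\beta_x = \widehat{\varphi}_x$, compare different $x$ via brackets, and glue using generation by $\mathcal{Y}(Z)$ and indecomposability of $L/Z_1$. However, the first step you flag is not actually carried out, so as written the proposal does not prove the lemma. Existence and uniqueness of $\widehat{\varphi}_x(\alpha)$ require precisely that $C_{(L\otimes R)/(Z_1\otimes R)}(C_{(L\otimes R)/(Z_1\otimes R)}((x\otimes 1)\varphi)) = ((x\otimes 1)\varphi)R + Z\otimes R$ with $\mathrm{Ann}_R$ of the class of $(x\otimes 1)\varphi$ modulo $Z\otimes R$ trivial; you defer this ("requires extra care", "indecomposability in a non-obvious way") without an argument. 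The paper resolves it differently from what you suggest: it deduces from the displayed double-centralizer computation that $x\otimes 1 \in \mathcal{Y}(Z\otimes R)$ and then uses that $\mathcal{Y}(Z\otimes R)$ is stable under $\varphi$, since high invariance of $Z$ makes $Z\otimes R$ (hence $Z_1\otimes R$) $\varphi$-stable and double centralizers modulo $Z_1\otimes R$ are automatically $R$-submodules; indecomposability plays no role at this point. The annihilator statement the paper proves explicitly is the later one, for $\overline{([x,y]\otimes 1)\varphi}$ modulo $Z_1\otimes R$, by comparing the $(\dim_k R)$-dimensional image of $[x,y]\otimes R$ with that of $(([x,y]\otimes 1)\varphi)R$ using~\eqref{equ:commutator}; note that this dimension-comparison trick already presupposes the congruence defining $\beta_x$, so it cannot be used to bootstrap the existence of $\beta_x$ itself.

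The second genuine gap is your chain argument for independence of $x$. The claim that indecomposability of $L/Z_1$ together with generation allows one to connect any two elements of $\mathcal{Y}(Z)$ by a chain of pairs with non-central bracket is unjustified: if a generating set splits into two parts whose mutual brackets lie in $Z_1$, the subalgebras they generate commute modulo $Z_1$ and sum to $L/Z_1$, but the sum need not be direct, so indecomposability is not immediately contradicted; connectivity of this non-commuting graph is exactly the kind of statement that requires proof. The paper avoids it entirely: it fixes a generating set $\mathcal{S} \subseteq \mathcal{Y}(Z)\cap(L\setminus\gamma_2 L)$, shows only that each $x\in\mathcal{S}$ admits \emph{some} $y\in\mathcal{S}$ with $[x,y]\notin Z_1$ (otherwise $k\bar{x}$ splits off $L/Z_1$, using $\dim_k L/Z_1>1$), uses~\eqref{equ:commutator} for such pairs to get $\beta_x=\beta_y$ and multiplicativity (which also needs the partner $y$; it does not follow from evaluating $(x\otimes\alpha_1\alpha_2)\varphi$ with $x$ alone, as your sketch suggests), and then partitions $\mathcal{S}$ into the sets $\mathcal{S}_\beta$ according to the value of $\beta_x$. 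Distinct classes commute modulo $Z_1$, yielding $L/Z_1 = \bigoplus_\beta \overline{L}_\beta$ (this is where $Z\leq\gamma_2 L$ enters), and indecomposability forces a single class, so all $\beta_x$ coincide. Also note that the relevant condition throughout is $[x,y]\notin Z_1$, not $[x,y]\notin Z$ as in your sketch; partners are only guaranteed in the weaker sense. Replacing your connectivity claim by this grouping argument, and supplying the missing justification in the first step along the paper's lines, would close the gaps.
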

\begin{proof}
Fix an automorphism $\varphi \in  \mathrm{Aut}_k (L \otimes R)$, and consider an element $x \in \mathcal{Y}(Z)$.  For any subset $X \subseteq L$, write $X \tensor 1 = \{ x \tensor 1: x \in X \} \subseteq L \tensor R$.  Then
$C_{(L \otimes R)/(I \otimes R)} (X \tensor R) = C_{(L \otimes R)/(I \otimes R)} (X \tensor 1) = C_{L/I}(X) \otimes R$
for any ideal $I \leq L$ by Remark~\ref{rmk:segal.remark}.  
In particular,
\begin{multline*} 
C_{(L \otimes R)/(Z_1 \otimes R)} (C_{(L \otimes R)/(Z_1 \otimes R)} (x \otimes 1)) =  C_{(L \otimes R)/(Z_1 \otimes R)} (C_{L / Z_1} (x) \tensor R) = \\  C_{L / Z_1} (C_{L / Z_1} (x)) \otimes R = (kx + Z) \otimes R = (x \otimes 1)R + Z \otimes R,
\end{multline*}
since we assumed $x \in \mathcal{Y}(Z)$.  Hence, $x \tensor 1 \in \mathcal{Y}(Z \tensor R)$.  
By assumption $Z$ is highly invariant.  Thus $Z \otimes R$ is stable under $\varphi$, and so is $\mathcal{Y}(Z \tensor R)$.  Therefore,
$$ C_{(L \otimes R)/(Z_1 \otimes R)} (C_{(L \otimes R)/(Z_1 \otimes R)} ((x \otimes 1)\varphi)) = ((x \otimes 1)\varphi)R + Z \otimes R.$$
Let $\alpha \in R$.  Since $(x \otimes \alpha)\varphi \in C_{(L \otimes R)/(Z_1 \otimes R)} (C_{(L \otimes R)/(Z_1 \otimes R)} ((x \otimes 1)\varphi))$, there is a uniquely defined element $(\alpha)\beta_x \in R$ such that $(x \otimes \alpha)\varphi \equiv (\alpha)\beta_x \cdot (x \otimes 1)\varphi \, \mathrm{mod} \, Z \otimes R$.  
The map $\beta_x : R \to R$ is clearly $k$-linear and bijective.

Since $\mathcal{Y}(Z)$ generates $L$, we may fix a subset $\mathcal{S} \subset \mathcal{Y}(Z) \cap (L \setminus \gamma_2 L)$ such that $\mathcal{S}$ generates $L$.  If $x \in \mathcal{S}$, then there exists $y \in \mathcal{S}$ such that $[x,y] \not\in Z_1$; otherwise, $kx + Z_1$ would be a proper direct summand of $L/Z_1$, contradicting our hypotheses.  For any $\alpha \in R$ we have
\begin{multline} \label{equ:commutator}
(\alpha)\beta_x \cdot ([x,y] \otimes 1)\varphi = [(\alpha)\beta_x \cdot (x \tensor 1)\varphi, (y \tensor 1)\varphi]  \equiv ([x \otimes \alpha, y \otimes 1])\varphi = \\ ([x \otimes 1, y \otimes \alpha])\varphi \equiv [(x \tensor 1)\varphi, (\alpha)\beta_y \cdot (y \tensor 1)\varphi] = (\alpha) \beta_y \cdot ([x,y] \otimes 1) \varphi \, \mathrm{mod} \, Z_1 \otimes R.
\end{multline}
Observe that the image of $[x,y] \tensor R$ in $(L \tensor R)/(Z_1 \tensor R)$ is a $(\dim_k R)$-dimensional $k$-subspace.  The same is true of the image of $([x,y] \tensor R)\varphi$.  Since this image lies inside the image of $(([x,y] \tensor 1)\varphi)R$, which is at most $(\dim_k R)$-dimensional, it follows that $\mathrm{Ann}_R (([x,y] \tensor 1)\varphi) = 0$.  Hence $(\alpha)\beta_x = (\alpha)\beta_y$ for all $\alpha \in R$.
Further, we claim that $\beta_x \in \mathrm{Aut}_k R$.  To establish this, it remains only to show that $\beta_x$ is multiplicative.  Indeed, for $y$ as above and $\alpha_1, \alpha_2 \in R$, we see that
\begin{multline*} 
(\alpha_1 \alpha_2)\beta_x \cdot ([x,y] \otimes 1)\varphi \equiv ([x,y] \otimes \alpha_1 \alpha_2)\varphi = ([x \otimes \alpha_1, y \otimes \alpha_2])\varphi \equiv \\ [(\alpha_1)\beta_x \cdot (x \otimes 1)\varphi , (\alpha_2)\beta_x \cdot (y \otimes 1)\varphi] \equiv  (\alpha_1)\beta_x (\alpha_2)\beta_x \cdot  ([x,y] \otimes 1) \varphi \, \mathrm{mod} \, Z_1 \otimes R,
\end{multline*}
and hence $(\alpha_1 \alpha_2) \beta_x =  (\alpha_1)\beta_x  (\alpha_2)\beta_x$ as above.  

For any $\beta \in \mathrm{Aut}_k R$, set
$$ \mathcal{S}_\beta = \{ x \in \mathcal{S} : (x \otimes \alpha)\varphi \equiv (\alpha)\beta \cdot (x \otimes 1)\varphi \, \mathrm{mod} \, Z \otimes R \, \text{for all} \, \alpha \in R \},$$
and define $L_\beta$ to be the subalgebra of $L$ generated by $\mathcal{S}_\beta$.  It is evident from the above that any $x \in L_\beta$ satisfies $(x \otimes \alpha)\varphi \equiv (\alpha)\beta \cdot (x \otimes 1)\varphi \, \mathrm{mod} \, Z \otimes R$ for all $\alpha \in R$, and that the same congruence holds modulo $Z_1 \tensor R$ if $x \in \gamma_2 L_\beta$.  Moreover, if $\beta \neq \beta^\prime$, then $[L_\beta, L_{\beta^\prime}] \subseteq Z_1$ by~\eqref{equ:commutator}.  Writing $\overline{L}_\beta$ for the image of $L_\beta$ in $L / Z_1$, we find that 
$$L / Z_1 = \bigoplus_{\beta \in \mathrm{Aut}_k R} \overline{L}_\beta$$
as $k$-Lie algebras since $Z \leq \gamma_2 L$.  As $L /  Z_1$ is indecomposable by assumption, there must be a single $\beta \in \mathrm{Aut}_k R$ such that $L_\beta = L$.  Now set $\widehat{\varphi} = \beta$.  The resulting map \, $\widehat{} : \mathrm{Aut}_k (L \otimes R) \to \mathrm{Aut}_k R$ clearly has all the claimed properties.
\end{proof}

The argument deducing~\cite[Theorem~2]{Segal/89} from~\cite[Lemma~1]{Segal/89} transfers to our setting and will be used to deduce Theorem~\ref{thm:rigidity} below from Lemma~\ref{lem:rigidity1}.  We emphasize that the ideas are due to D.~Segal.  Fix a finite-dimensional semisimple commutative $k$-algebra $R$ as above, and consider $L$ as a subalgebra of $L \otimes R$ via the natural embedding $x \mapsto x \otimes 1$.

\begin{dfn} \label{def:varphi}
Let $\varphi \in \mathrm{Aut}_k (L \otimes R)$.  We denote by $\widetilde{\varphi}$ the unique $R$-linear map $\widetilde{\varphi} : L \otimes R \to L \otimes R$ satisfying $\widetilde{\varphi} |_L = \varphi |_L$.
\end{dfn}

\begin{rem}
It is clear that $\widetilde{\varphi}$ is a Lie algebra endomorphism of $L \otimes R$.  In fact, it is not hard to show that $\widetilde{\varphi} \in \overline{\aAut L}(R)$, where $\overline{\aAut L}$ is the Zariski closure of $\aAut L$ in the algebraic endomorphism monoid $\aEnd L$; by~\cite[Lemma 1.2]{Putcha/82}, $\overline{\aAut L}$ is an algebraic submonoid.  However, $\widetilde{\varphi}$ need not be an automorphism of $L \tensor R$.  For instance, consider a two-dimensional abelian Lie $k$-algebra $L$ with basis $(x_1, x_2)$, and let $R/k$ be a finite extension of fields.  Viewed as a $k$-algebra, $L \tensor R$ is an abelian Lie algebra of dimension $2d$, where $d = [R : k]$.  Thus $\Aut_{k} (L \tensor R) \simeq \GL_{2d} (k)$.  Let $\alpha \in R \setminus k$.  Since $x_1 \tensor 1$ and $x_1 \tensor \alpha$ are linearly independent over $k$, there exists $\varphi \in \Aut_{k} (L \tensor R)$ such that
\begin{eqnarray*}
 (x_1 \tensor 1)\varphi & = & x_1 \tensor 1 \\
 (x_2 \tensor 1)\varphi & = & x_1 \tensor \alpha.
\end{eqnarray*}
However, $(x_1 \tensor 1)\varphi$ and $(x_2 \tensor 1)\varphi$ are not linearly independent over $R$, so $\widetilde{\varphi}$ is not an automorphism of $L \tensor R$.  
\end{rem}

\begin{lemma} \label{lem:rigidity2}
Suppose that $L$ is a finite-dimensional nilpotent $k$-Lie algebra and that $Z \leq \gamma_2 L$ is a highly invariant ideal satisfying the hypotheses of Lemma~\ref{lem:rigidity1}.  Let $\varphi \in \mathrm{Aut}_k (L \otimes R)$.  Then $\widetilde{\varphi} \in \mathrm{Aut}_R(L \otimes R)$, where $\widetilde{\varphi}$ is as in Definition~\ref{def:varphi}.
\end{lemma}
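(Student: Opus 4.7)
The plan is to show that $\widetilde{\varphi}$ is an $R$-linear Lie endomorphism of $L \tensor R$ that induces an $R$-automorphism on the abelianization $(L/\gamma_2 L) \tensor R$, and then conclude bijectivity from the nilpotency of $L$. Since $\widetilde{\varphi}$ is $R$-linear by construction and $\widetilde{\varphi}|_L = \varphi|_L$ is a Lie homomorphism (as $\varphi$ is), the $R$-bilinearity of the bracket forces $\widetilde{\varphi}$ to respect brackets throughout $L \tensor R$, so $\widetilde{\varphi}$ is automatically an $R$-Lie endomorphism.

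To bridge the $k$-semilinear behavior of $\varphi$ and the genuine $R$-linearity of $\widetilde{\varphi}$, I would first reduce to the case $\widehat{\varphi} = \mathrm{id}_R$. Let $\sigma_L \in \mathrm{Aut}_k(L \tensor R)$ be the image of $\widehat{\varphi}$ under the splitting $\mathrm{Aut}_k R \hookrightarrow \mathrm{Aut}_k (L \tensor R)$ furnished by Lemma~\ref{lem:rigidity1}. Since $\sigma_L$ fixes $L \tensor 1$ pointwise, $R$-linearity forces $\widetilde{\sigma_L^{-1}\varphi} = \widetilde{\varphi}$; and since the assignment $\varphi \mapsto \widehat{\varphi}$ is a group homomorphism split by the natural injection, $\widehat{\sigma_L^{-1}\varphi} = \mathrm{id}_R$. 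Replacing $\varphi$ by $\sigma_L^{-1}\varphi$, we may therefore assume $\widehat{\varphi} = \mathrm{id}$.

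With this reduction, Lemma~\ref{lem:rigidity1} gives $(\alpha w)\varphi \equiv \alpha \cdot (w)\varphi \, \mathrm{mod} \, Z \tensor R$ for all $\alpha \in R$ and $w \in L \tensor R$. Applied to $w \in L$, this reads $(\alpha w)\widetilde{\varphi} = \alpha \cdot (w)\varphi \equiv (\alpha w)\varphi \, \mathrm{mod} \, Z \tensor R$, and $k$-linearity extends the congruence to $(x)\widetilde{\varphi} \equiv (x)\varphi \, \mathrm{mod} \, Z \tensor R$ for all $x \in L \tensor R$. Since $Z \leq \gamma_2 L$ and $\gamma_2 L$ is verbal, hence highly invariant, $\varphi$ preserves $\gamma_2 L \tensor R$ and descends to a $k$-Lie automorphism $\overline{\varphi}$ of $(L/\gamma_2 L) \tensor R$; reducing the semilinearity relation modulo $\gamma_2 L \tensor R$ shows that $\overline{\varphi}$ is in fact $R$-linear, so it is an $R$-automorphism. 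As $\overline{\widetilde{\varphi}} = \overline{\varphi}$, the induced map of $\widetilde{\varphi}$ on the abelianization is an $R$-automorphism.

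To finish, I would decompose $R \cong \prod_{i=1}^n K_i$ as a product of fields, reducing to each factor, where $\widetilde{\varphi}_i$ is a $K_i$-Lie endomorphism of the finite-dimensional nilpotent $K_i$-Lie algebra $L \tensor K_i$ inducing an automorphism on its abelianization. An inductive argument shows that the Lie subalgebra $M_i = (L \tensor K_i)\widetilde{\varphi}_i$ satisfies $\gamma_j(L \tensor K_i) \subseteq M_i + \gamma_{j+1}(L \tensor K_i)$ for every $j \geq 1$; nilpotency of $L$ then forces $M_i = L \tensor K_i$, so each $\widetilde{\varphi}_i$, and thus $\widetilde{\varphi}$, is an automorphism. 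The main obstacle is the bookkeeping required to reconcile the $k$-linear structure of $\varphi$ with the intrinsic $R$-linearity of $\widetilde{\varphi}$, which is cleanly resolved by the reduction to $\widehat{\varphi} = \mathrm{id}$.
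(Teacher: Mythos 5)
Your proof is correct and follows essentially the same route as the paper: you use the homomorphism $\widehat{\varphi}$ of Lemma~\ref{lem:rigidity1} to compare $\widetilde{\varphi}$ with an automorphism modulo $Z \otimes R \subseteq \gamma_2(L \otimes R)$, and then conclude surjectivity from nilpotency and injectivity from finite dimensionality; your reduction to $\widehat{\varphi} = \mathrm{id}$ is the same device as the paper's $\psi = \varphi \circ \widehat{\varphi}^{-1}$. The detour through the abelianization and the decomposition $R \cong \prod_i K_i$ is harmless but unnecessary, since the congruence $\widetilde{\varphi} \equiv \psi \bmod Z \otimes R$ already gives $\mathrm{Im}(\widetilde{\varphi}) + \gamma_2(L \otimes R) = L \otimes R$, which is exactly the hypothesis of the lower-central-series induction you carry out.
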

\begin{proof}
Since $\widetilde{\varphi}$ is $R$-linear by construction, we need only show that $\widetilde{\varphi}$ is an automorphism.  Set $\psi = \varphi \circ \widehat{\varphi}^{-1} \in \mathrm{Aut}_k (L \otimes R)$, where $\widehat{\varphi}$ is as in Lemma~\ref{lem:rigidity1} and $\mathrm{Aut}_k R$ is embedded into $\mathrm{Aut}_k (L \otimes R)$ in the natural way.  For any $v = \sum_i x_i \otimes \alpha_i \in L \otimes R$, we have
\begin{equation} \label{equ:semidirect.product}
\widetilde{\varphi}(v) =  \sum_i \varphi(x_i) \otimes \alpha_i \equiv \psi(v) \, \mathrm{mod} \, Z \otimes R
\end{equation}
by the definition of $\widehat{\varphi}$.  As $\psi$ is surjective, this implies $\mathrm{Im} (\widetilde{\varphi}) + Z \otimes R = L \otimes R$.  However, $Z \otimes R \subseteq \gamma_2 (L \otimes R)$ by assumption.  Thus $\mathrm{Im}(\widetilde{\varphi})$ generates $L \otimes R$, since $L \tensor R$ is nilpotent.  Hence $\widetilde{\varphi}$ is surjective.  Now $\dim_k L \otimes R < \infty$, so $\widetilde{\varphi}$ is also injective.
\end{proof}

\begin{cor} \label{cor:semidirect.product}
Let $L$ be a finite-dimensional nilpotent $k$-Lie algebra and let $Z \leq \gamma_2 L$ be a highly invariant ideal such that $L/Z_1$ is indecomposable and $\mathcal{Y}(Z)$ generates $L$.  Let 
$$J = \ker (\mathrm{Aut}_k (L \otimes R) \to \mathrm{Aut}_k ((L \otimes R)/(Z \otimes R))) $$
be the subgroup of automorphisms that are trivial modulo $Z \otimes R$.  Then
$$ \mathrm{Aut}_k (L \otimes R) = ( J \cdot \mathrm{Aut}_R (L \otimes R)) \rtimes \mathrm{Aut}_k R.$$
\end{cor}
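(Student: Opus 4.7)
The plan is to combine Lemmas~\ref{lem:rigidity1} and~\ref{lem:rigidity2} to read off the decomposition.  From Lemma~\ref{lem:rigidity1} we have the epimorphism $\widehat{}:\mathrm{Aut}_k(L\otimes R)\to \mathrm{Aut}_k R$, split by the natural inclusion $\iota:\mathrm{Aut}_k R\hookrightarrow \mathrm{Aut}_k(L\otimes R)$.  Since $\ker(\widehat{}\,)$ is normal (being the kernel of a homomorphism), this gives the internal semidirect product decomposition
\begin{equation*}
\mathrm{Aut}_k(L\otimes R) \;=\; \ker(\widehat{}\,)\rtimes \mathrm{Aut}_k R.
\end{equation*}
Hence the corollary reduces to identifying $\ker(\widehat{}\,)$ with $J\cdot \mathrm{Aut}_R(L\otimes R)$.

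The inclusion $J\cdot \mathrm{Aut}_R(L\otimes R)\subseteq \ker(\widehat{}\,)$ is immediate from the defining property of $\widehat{}\,$.  Any $\varphi\in J$ satisfies $(\alpha w)\varphi\equiv \alpha w\equiv \alpha\cdot(w)\varphi \pmod{Z\otimes R}$, and any $R$-linear automorphism satisfies the congruence on the nose with $\widehat{\varphi}=\mathrm{id}_R$; by the uniqueness of $\widehat{}$\, both lie in $\ker(\widehat{}\,)$.  Moreover $J\cdot \mathrm{Aut}_R(L\otimes R)$ is a subgroup because $Z\otimes R$ is stable under every element of $\mathrm{Aut}_k(L\otimes R)$ (as $Z$ is highly invariant), so $J$ is normal.

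For the reverse inclusion, take any $\varphi\in\ker(\widehat{}\,)$ and form $\widetilde{\varphi}$ of Definition~\ref{def:varphi}.  By Lemma~\ref{lem:rigidity2} we have $\widetilde{\varphi}\in \mathrm{Aut}_R(L\otimes R)$.  Since $\widehat{\varphi}=\mathrm{id}_R$, the congruence~\eqref{equ:semidirect.product} from the proof of Lemma~\ref{lem:rigidity2} specializes to $\widetilde{\varphi}(v)\equiv \varphi(v)\pmod{Z\otimes R}$ for every $v\in L\otimes R$.  Therefore $\varphi\circ\widetilde{\varphi}^{-1}$ acts trivially modulo $Z\otimes R$, so it belongs to $J$, and
\begin{equation*}
\varphi \;=\; (\varphi\circ\widetilde{\varphi}^{-1})\circ \widetilde{\varphi} \;\in\; J\cdot \mathrm{Aut}_R(L\otimes R).
\end{equation*}

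There is no real obstacle here beyond unpacking the right congruences; the work has already been done in Lemmas~\ref{lem:rigidity1} and~\ref{lem:rigidity2}.  The only point that warrants any attention is the observation, in the last paragraph, that the equation~\eqref{equ:semidirect.product} simplifies when $\widehat{\varphi}=\mathrm{id}_R$ (so that $\psi=\varphi$), which is precisely what allows $\widetilde{\varphi}$ to serve as an $R$-linear representative of $\varphi$ modulo $J$.
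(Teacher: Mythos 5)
Your proposal is correct and takes essentially the same route as the paper: both arguments run on the epimorphism $\widehat{}\,$ of Lemma~\ref{lem:rigidity1} (with its splitting), the $R$-linear map $\widetilde{\varphi}$ of Lemma~\ref{lem:rigidity2}, and the congruence~\eqref{equ:semidirect.product}, the only difference being that the paper factors an arbitrary $\varphi$ directly as $\widetilde{\varphi} \circ (\widetilde{\varphi}^{-1} \circ \psi) \circ \widehat{\varphi}$, whereas you first split off $\mathrm{Aut}_k R$ along $\ker(\widehat{}\,)$ and then identify that kernel with $J \cdot \mathrm{Aut}_R(L \otimes R)$, which if anything makes the normality and trivial-intersection parts of the semidirect product more explicit. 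The one small point to add is the paper's remark that nilpotency of $L$ forces $\dim_k L/Z_1 > 1$, so that the hypotheses of Lemma~\ref{lem:rigidity1} really are met under the corollary's assumptions.
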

\begin{proof}
Let $\varphi \in \Aut_k (L \tensor R)$.  Since $L$ is nilpotent, our hypotheses imply $\dim_k L/Z_1 > 1$.  Thus the hypotheses of Lemma~\ref{lem:rigidity1} hold, and we set $\psi = \varphi \circ \widehat{\varphi}^{-1} $ as in the proof of Lemma~\ref{lem:rigidity2}.   Then $\widetilde{\varphi}^{-1} \circ \psi \in J$ by~\eqref{equ:semidirect.product}.  But 
$\varphi = \widetilde{\varphi} \circ (\widetilde{\varphi}^{-1} \circ \psi) \circ \widehat{\varphi}$, 
and so $  \mathrm{Aut}_k (L \otimes R) = \mathrm{Aut}_R (L \otimes R) \cdot J \cdot \mathrm{Aut}_k R$.  The splitting noted in the statement of Lemma~\ref{lem:rigidity1} completes the proof.
\end{proof}

\begin{dfn} \label{def:rigidity}
Let $L$ be a Lie algebra over a field $k$ and $Z \leq L$ a highly invariant ideal.
\begin{enumerate}
\item The Lie algebra $L$ is said to be {\emph{absolutely indecomposable}} if $L \tensor_k E$ is indecomposable as an $E$-algebra for every field extension $E/k$.
\item The Lie algebra $L$ is said to be {\emph{$Z$-rigid}} if, for any finite separable extension $K/k$, the following equality of algebraic groups over $k$ holds:
\begin{equation*}
\aAut (L \tensor_k K) = (\mathfrak{J} \cdot \mathrm{Res}_{K/k} (\aAut L)) \rtimes \aAut K,
\end{equation*}
where $\mathfrak{J} = \ker (\aAut (L \tensor_k K) \to \aAut (L \tensor_k K)/(Z \tensor_k K))$.
\end{enumerate}
\end{dfn}

\begin{thm} \label{thm:rigidity}
Let $L$ be a finite-dimensional nilpotent $k$-Lie algebra, and let ${Z} \leq \gamma_2 {L}$ be a highly invariant ideal such that $\mathcal{Y}(Z)$ generates $L$, where $\mathcal{Y}(Z)$ is the set defined in~\eqref{equ:yz.def}.  Suppose that ${L} / Z_1$ is {absolutely indecomposable}, where $Z_1 = [Z,L]$.  Then $L$ is $Z$-rigid.
\end{thm}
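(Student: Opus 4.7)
The goal is an equality of closed algebraic subgroups of $\mathbf{GL}(L \tensor_k K)$ over $k$; the right-hand side of the asserted equality is manifestly contained in the left, so only the reverse inclusion requires proof. My plan is to verify this by checking equality on $E$-points for $E$ ranging over field extensions of $k$, obtained by re-applying Corollary~\ref{cor:semidirect.product} over each such $E$. In the characteristic-zero setting that drives the paper's applications, both sides are smooth, so it suffices to verify equality of $E$-points as $E$ ranges over finite separable extensions of $k$ (equivalently, of $\overline{k}$-points); a functorial version of the same argument should handle general bases.

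Fix such an extension $E/k$, and write $L_E = L \tensor_k E$, $Z_E = Z \tensor_k E$, $R_E = K \tensor_k E$; since $K/k$ is separable, $R_E$ is a finite-dimensional semisimple commutative $E$-algebra. The first step is to verify that the hypotheses of Corollary~\ref{cor:semidirect.product} persist after replacing $(k, L, Z, R)$ by $(E, L_E, Z_E, R_E)$. Nilpotency of $L_E$ and the inclusion $Z_E \le \gamma_2 L_E$ are immediate. Because iterated centralizers commute with the flat base change $k \to E$ (Remark~\ref{rmk:segal.remark}), one has $\mathcal{Y}(Z) \tensor 1 \subseteq \mathcal{Y}(Z_E)$ via the natural injection $L \hookrightarrow L_E$, so the generation of $L_E$ by $\mathcal{Y}(Z_E)$ is inherited from the generation of $L$ by $\mathcal{Y}(Z)$. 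Indecomposability of $L_E/(Z_E)_1 = (L/Z_1) \tensor_k E$ over $E$ is exactly the content of the absolute indecomposability of $L/Z_1$. Finally, high invariance of $Z_E$ in $L_E$ over $E$ reduces to high invariance of $Z$ over $k$: any finite-dimensional semisimple commutative $E$-algebra $R''$ is also finite-dimensional (since $E/k$ is finite) and reduced over $k$, hence a finite-dimensional semisimple commutative $k$-algebra; any $E$-linear Lie automorphism of $L_E \tensor_E R'' = L \tensor_k R''$ --- being in particular $k$-linear --- then stabilizes $Z \tensor_k R'' = Z_E \tensor_E R''$ by high invariance of $Z$.

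Corollary~\ref{cor:semidirect.product} applied over $E$ with $R = R_E$ now yields
\begin{equation*}
\Aut_E\bigl(L \tensor_k K \tensor_k E\bigr) = \bigl(J_E \cdot \Aut_{R_E}(L \tensor_k K \tensor_k E)\bigr) \rtimes \Aut_E(R_E),
\end{equation*}
where $J_E$ is the kernel of reduction modulo $Z \tensor_k K \tensor_k E$. By the universal property of Weil restriction, the four factors above are respectively the $E$-points of $\aAut(L \tensor_k K)$, $\mathfrak{J}$, $\Res_{K/k}(\aAut L)$, and $\aAut K$; this produces the required $E$-pointwise equality and hence the claimed equality of algebraic groups. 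The principal technical obstacle I anticipate is the transfer of high invariance after base change, especially if one prefers to work directly over $\overline{k}$: there $R''$ may fail to be finite-dimensional as a $k$-algebra, but a spreading-out argument restores control, since the coordinates of any $E$-linear automorphism of $L \tensor_k R''$ generate a finitely generated reduced $k$-subalgebra of $R''$ which, being contained in a product of fields algebraic over $k$, is finite-dimensional and thus semisimple commutative over $k$.
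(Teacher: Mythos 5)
Your argument is correct and is essentially the paper's own proof: base change to an extension $E/k$ (the paper takes arbitrary $E$, you take finite separable ones plus a smoothness/char-zero reduction), verify via Remark~\ref{rmk:segal.remark} that the hypotheses of Corollary~\ref{cor:semidirect.product} persist for $(L_E, Z_E, K\tensor_k E)$, apply that corollary and the universal property of Weil restriction to identify the $E$-points of the two sides, and conclude the equality of algebraic groups from points over the separable closure. The only difference is cosmetic: you spell out why $Z_E$ stays highly invariant after base change, a point the paper asserts without comment, which is a welcome but minor addition to the same route.
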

\begin{proof}
Let $K / k$ be a finite separable extension, and let $E/k$ be any extension of fields.  Consider the $E$-Lie algebra ${L}_E = L  \tensor E$ and its highly invariant ideal ${Z}_E = Z \tensor E \leq \gamma_2 (L_E)$.  Then $\mathcal{Y}(Z) \tensor E \subseteq \mathcal{Y}(Z_E)$ by Remark~\ref{rmk:segal.remark}, so $\mathcal{Y}({Z}_E)$ generates ${L}_E$.  Finally, ${L}_E/[{Z}_E,{L}_E] = (L / Z_1) \tensor E$ is indecomposable by assumption.  Thus Corollary~\ref{cor:semidirect.product}, applied to ${L}_E$, ${Z}_E$, and the $E$-algebra $R = K \tensor_k E$, tells us that $ (\aAut (L \tensor_k K))(E) =  \Aut_E ((L \tensor_k K) \tensor_k E) = \Aut_E ({L}_E \tensor_E R)$ is equal to
\begin{multline*}
 (J_E \cdot \Aut_R(L_E \tensor_E R)) \rtimes \Aut_E R = \mathfrak{J}(E) \cdot (\aAut L)(R) \rtimes \Aut_E (K \tensor_k E) = \\
 (\mathfrak{J} \cdot \mathrm{Res}_{K/k} (\aAut L))(E) \rtimes (\aAut K)(E),
\end{multline*}
where $J_E = \ker (\Aut_E (L_E \tensor_E R) \to \Aut_E (L_E \tensor_E R)/(Z_E \tensor_E R))$.  Just as in the proof of~\cite[Theorem~2]{Segal/89}, one shows that $\mathfrak{J} \cdot \mathrm{Res}_{K/k}(\aAut L) \cdot \aAut K$ is a semidirect product of algebraic groups.  The equality $\aAut (L \tensor_k K) = (\mathfrak{J} \cdot \mathrm{Res}_{K/k} (\aAut L)) \rtimes \aAut K$ of algebraic groups over $k$ then follows from the equality on points over the separable closure of $k$; cf.~\cite[Corollary~1.30]{Milne/17}.
\end{proof}

The following corollary is essentially~\cite[Theorem~2]{Segal/89}.

\begin{cor}[Segal] \label{cor:segal}
Let $M$ and $Z$ be highly invariant ideals of a finite-dimensional nilpotent $k$-Lie algebra $L$ such that $Z \leq M \leq \gamma_2 L$ and $\dim_k L/M > 1$.  Setting $M_1 = [M,L]$ and $Z_1 = [Z,L]$, define
\begin{eqnarray*}
\mathcal{X}(M) & = & \{ x \in L \setminus M : C_{L / M_1}(x) = M + kx \} \\
\mathcal{Y}(M,Z) & = & \{ x \in L \setminus M : C_{L / Z_1}(C_{L/Z_1}(x)) = Z + kx \}.
\end{eqnarray*}
Assume that $\mathcal{X}(M)$ and $\mathcal{Y}(M,Z)$ each generate $L$.  Then $L$ is $Z$-rigid.
\end{cor}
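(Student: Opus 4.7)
The plan is to reduce the corollary to Theorem~\ref{thm:rigidity}, applied with the same ideal $Z$. Since $Z$ is highly invariant and $Z \leq M \leq \gamma_2 L$, two hypotheses of the theorem remain to check: that $\mathcal{Y}(Z)$ generates $L$, and that $L/Z_1$ is absolutely indecomposable.

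The first is immediate. The containment $\mathcal{Y}(M,Z) \subseteq \mathcal{Y}(Z)$ follows directly from the definitions: the double-centralizer condition is identical in both sets, while the ambient condition $x \in L \setminus M$ is stronger than $x \in L \setminus Z$ (as $Z \leq M$). Hence the hypothesis that $\mathcal{Y}(M,Z)$ generates $L$ implies that $\mathcal{Y}(Z)$ does.

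The second is the main obstacle. To show that $L/Z_1$ is absolutely indecomposable, I would argue by contradiction: suppose $(L/Z_1) \otimes_k E = A \oplus B$ is a non-trivial decomposition into $E$-Lie ideals for some field extension $E/k$, and derive a contradiction using the hypotheses that $\mathcal{X}(M)$ generates $L$ and $\dim_k L/M > 1$. For $x \in \mathcal{X}(M)$ with image $\bar x = a + b$ in $A \oplus B$, Remark~\ref{rmk:segal.remark} gives $C_{(L/M_1)\otimes_k E}(x \otimes 1) = (M + kx) \otimes_k E / (M_1 \otimes_k E)$. Pulling this centralizer back to $A \oplus B$, its preimage is the subspace $(M/Z_1)\otimes_k E + E(a+b)$, while on the other hand the direct-sum structure forces the preimage to contain $C_A(a) \oplus C_B(b)$. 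Since the images of $\mathcal{X}(M)$ generate $(L/Z_1) \otimes_k E$ as an $E$-Lie algebra, their projections $\pi_A$ and $\pi_B$ generate the ideals $A$ and $B$ respectively, which allows a suitable $x \in \mathcal{X}(M)$ to be chosen whose components in $A$ and $B$ witness the contradiction. Comparing the two descriptions of the centralizer by projecting to $A$ and to $B$ separately — where the hypothesis $\dim_k L/M > 1$ rules out the degenerate situations in which both components could be swallowed by $(M/Z_1) \otimes_k E$ — produces an element forced to lie in $(M + kx) \otimes_k E$ but visibly outside it, the required contradiction. Once absolute indecomposability of $L/Z_1$ is established, Theorem~\ref{thm:rigidity} immediately yields the $Z$-rigidity of $L$.
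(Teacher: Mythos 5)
Your reduction to Theorem~\ref{thm:rigidity} and your first verification coincide with the paper's: $\mathcal{Y}(M,Z)\subseteq\mathcal{Y}(Z)$ is immediate from the definitions, so $\mathcal{Y}(Z)$ generates $L$. The gap is in the second step. Your argument for the absolute indecomposability of $L/Z_1$ is only a promise of a contradiction: the sentence asserting that comparing the two descriptions of the centralizer ``produces an element forced to lie in $(M+kx)\otimes_k E$ but visibly outside it'' is exactly the point that has to be proved, and the ingredients needed to prove it are missing from the sketch. The paper's proof shows that \emph{every} $x\in\mathcal{X}(M)$ is ruled out by a decomposition $L_E/(Z_1)_E = L_1/(Z_1)_E \oplus L_2/(Z_1)_E$, by writing $x\otimes 1\equiv x_1+x_2$ and distinguishing two cases. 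If, say, $x_1\in M_E$, then all of $L_1$ centralizes $x\otimes 1$ modulo $(M_1)_E$, and one must show $L_1\not\subseteq M_E+Ex_2$; this rests on the observation that a summand contained in $M_E$ would lie in the derived subalgebra of $L_E/(Z_1)_E$ and hence be trivial by nilpotency. If neither $x_1$ nor $x_2$ lies in $M_E$, one needs that no nontrivial $E$-combination $a_1x_1+a_2x_2$ lies in $M_E$, and this is precisely where the stability of $M_E$ under the projections onto the two summands enters ($M$ is verbal, hence preserved by all endomorphisms of $L_E$, not merely by automorphisms); only then does the count $\dim_E C_{L_E/(M_1)_E}(x\otimes 1)\geq \dim_E M_E+2$ contradict the equality $C_{L_E/(M_1)_E}(x\otimes 1)=M_E+E(x\otimes 1)$ supplied by Remark~\ref{rmk:segal.remark}. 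Your proposal never invokes any invariance of $M$ under the projections, and without it this case cannot be closed.

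A smaller but genuine misstep: you assign $\dim_k L/M>1$ the job of excluding the case where both components of $\bar x$ are ``swallowed'' by $(M/Z_1)\otimes_k E$. That case is impossible for a trivial reason: $x\notin M$ gives $x\otimes 1\notin M_E$, so $x_1$ and $x_2$ cannot both lie in $M_E$. The hypothesis on $\dim_k L/M$ does not do the work you assign to it, and relying on it there suggests the actual dichotomy (one component in $M_E$ versus neither) has not been identified. Note also that the natural conclusion is stronger than finding one ``suitable'' $x$: the decomposition forces $\mathcal{X}(M)=\varnothing$, which contradicts the hypothesis that $\mathcal{X}(M)$ generates $L$.
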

\begin{proof}
We verify the hypotheses of Theorem~\ref{thm:rigidity}.  Clearly, $\mathcal{Y}(M,Z) \subseteq \mathcal{Y}(Z,Z) = \mathcal{Y}(Z)$, and hence $\mathcal{Y}(Z)$ generates $L$.  It remains to verify that $L / Z_1$ is absolutely indecomposable.  Suppose not.  Then for some field extension $E/k$ there exist proper subalgebras $L_1, L_2 \leq L_E$ containing $(Z_1)_E$ such that $L_1 / (Z_1)_E \oplus L_2 / (Z_1)_E = L_E / (Z_1)_E$.  In particular, $[L_1, L_2] \subseteq (Z_1)_E$.  Since $M_E/(Z_1)_E$ lies in the derived subalgebra of $L_E/(Z_1)_E$, it cannot contain either of the proper direct summands $L_1 / (Z_1)_E$ and $L_2 / (Z_1)_E$.  

Let $x \in L \setminus M$.  If $x \in \mathcal{X}(M)$, then Remark~\ref{rmk:segal.remark} implies that $C_{L_E / (M_1)_E}(x \tensor 1) = M_E + E(x \tensor 1)$.  There exist $x_1 \in L_1$ and $x_2 \in L_2$ such that $x \tensor 1 = x_1 + x_2 \, \mathrm{mod} \, (Z_1)_E$.  
If $x_1 \in L_1 \cap M_E$, then clearly $L_1 \subseteq C_{L_E/(M_1)_E} (x \tensor 1)$.  However, $L_1 \not\subseteq M_E + E(x \tensor 1) = M + Ex_2$ and hence $x \not\in \mathcal{X}(M)$.  The case $x_2 \in L_2 \cap M_E$ is treated analogously.  If neither $x_1$ nor $x_2$ lie in $M_E$, then $a_1 x_1 + a_2 x_2 \not\in M_E$ for all $a_1, a_2 \in E$; indeed, $M_E$ is a verbal ideal and is thus preserved by all endomorphisms of $L_E$, in particular by the projections to the components $L_1$ and $L_2$.  The $E$-linear span of $x_1$ and $x_2$ is contained in $C_{L_E/(M_1)_E} (x \tensor 1)$, so $\dim_E C_{L_E/(M_1)_E}(x \tensor 1) \geq \dim_E M_E + 2$, and again $x \not\in \mathcal{X}(M)$.  Therefore $\mathcal{X}(M) = \varnothing$, contradicting the assumption that $\mathcal{X}(M)$ generates $L$.  Thus $L$ is $Z$-rigid by Theorem~\ref{thm:rigidity}.
\end{proof}

\begin{rem} \label{rem:segal}
In some of the examples considered in this paper, the hypotheses of Corollary~\ref{cor:segal} do not apply, whereas those of Theorem~\ref{thm:rigidity} do; see Sections~\ref{sec:higher.heisenberg} and~\ref{sec:lmn}.  However, if $M = Z$, then observe that $\mathcal{X}(M) \subseteq \mathcal{Y}(M,Z) = \mathcal{Y}(Z)$.  Hence, if $\mathcal{X}(Z)$ generates $L$, then $\mathcal{Y}(Z)$ does as well.  It is often easier to verify that $\mathcal{X}(Z)$ generates $L$, when this holds, than to verify the hypotheses of Theorem~\ref{thm:rigidity} directly.
\end{rem}

\begin{rem} \label{rem:extensions}
Let $L$ be a $k$-Lie algebra, and let $Z \leq L$ be a highly invariant ideal.  It is clear from Remark~\ref{rmk:segal.remark} that if $L$ satisfies the criterion for $Z$-rigidity of either Theorem~\ref{thm:rigidity} or Corollary~\ref{cor:segal}, then the same is true of the $K$-algebra $L \tensor_k K$ for any extension $K/k$ of fields.
\end{rem}

\subsection{Fixed notations and definitions} \label{sec:consequences}
Let $\mathcal{L}$ be a nilpotent Lie lattice over $\Z$ such that $Z(\mathcal{L}) \leq \gamma_2 \mathcal{L}$; thus $\mathcal{L}$ has no abelian direct summands.  Let $L = \mathcal{L} \tensor_Z \Q$ be the associated $\Q$-Lie algebra.  Fix a prime $p$ and set $\mathcal{L}_p = \mathcal{L} \tensor_\Z  \Z_p$ and $L_p = L \tensor_\Q \Q_p$.

The following notation will be used for the rest of the paper.  Put $\dimL = \dim_{\Q} L$ and $\overdim = \dim_\Q L / Z(L)$.  
Consider a number field $K$ of degree $d = [K:\Q]$ and the semisimple $\Q_p$-algebra $R = K \tensor_{\Q} \Q_p$.  If $p \mathcal{O}_K = \mathfrak{p}_1^{e_1} \cdots \mathfrak{p}_\npr^{e_\npr}$, where the $\mathfrak{p}_i$ are distinct prime ideals of $\mathcal{O}_K$ of inertia degree $f_i = [\mathcal{O}_K / \mathfrak{p}_i : \mathbb{F}_p]$, then $R = K_{\p_1} \times \cdots \times K_{\p_\npr}$, where $K_{\p_i} / \Q_p$ is a field extension of ramification index $e_i$ and inertia degree $f_i$.  Note that $d = \sum_{i = 1}^\npr e_i f_i$.  Write $\mathcal{O}_{\p_i}$ for the ring of integers of $K_{\p_i}$ and $q_i = p^{f_i}$ for the cardinality of its residue field.  
For each $i \in [\npr]$, let $\boldsymbol{\alpha}_i = (\alpha_{i,1}, \dots, \alpha_{i, e_i f_i})$ be a $\Z_p$-basis of $\mathcal{O}_{\p_i}$, and let $\boldsymbol{\alpha}$ be the concatenation of $\boldsymbol{\alpha}_1, \dots, \boldsymbol{\alpha}_\npr$.  Denote the elements of $\boldsymbol{\alpha}$ by $\alpha_1, \dots, \alpha_d$.

For every $\p | p$ there is a natural injection $\iota_\p: K_{\p} \hookrightarrow \mathrm{End}_{\Q_p} K_{\p}$ of rings sending $\beta \in K_{\p}$ to the multiplication-by-$\beta$ map $\alpha \mapsto \alpha \beta$.  Since we have fixed a $\Q_p$-basis $\boldsymbol{\alpha}_i$ of $K_{\p_i}$, this induces an injection $\iota_{\p_i} : K_{\p_i} \hookrightarrow M_{e_i f_i}(\Q_p)$.  Moreover, since $\boldsymbol{\alpha}_i$ is an integral basis, given $\beta \in K_{\p_i}$ we see that $\beta \in \mathcal{O}_{\p_i}$ if and only if $\iota_{\p_i}(\beta) \in M_{e_i f_i}(\Z_p)$.  Observe that $\det \iota_{\p_i}(\beta) = N_{K_{\p_i}/\Q_p}(\beta)$ for all $\beta \in K_{\p_i}$.  For every $m \in \N$ we get a ring monomorphism $M_m(K_{\p_i}) \hookrightarrow M_{me_if_i}(\Q_p)$; the image of $A \in M_m(K_{\p_i})$ is the matrix obtained by replacing each matrix element $a_{jk}$ of $A$ by the corresponding $\iota_{\p_i} (a_{jk})$.  Slightly abusing notation, we also denote this map by $\iota_{\p_i}$.  An exercise in linear algebra shows that $| \det \iota_\p (A) |_{\Q_p} = | N_{K_{\p} / \Q_p} (\det A) |_{\Q_p} = | \det A |_{{K_\p}}$ for any $\p | p$ and any $A \in M_m(K_{\p})$; cf.~\cite[Theorem~1]{KSW/99}.  The basis $\boldsymbol{\alpha}_i$ determines an embedding of algebraic groups $\mathrm{Res}_{K_{\p_i}/ \Q_p} \mathbf{GL}_m \subset \mathbf{GL}_{me_if_i}$ over $\Q_p$; the corresponding map on $\Q_p$-points coincides with $\iota_{\p_i}$.

Let $\mathbf{G} = \aAut {L}_p$ denote the algebraic automorphism group of ${L}_p$ and define $\mathbf{J} = \ker(\mathbf{G} \to \aAut (L_p/Z(L_p)))$.  For any finite extension $F / \Q_p$, write $\mathbf{G}^+(F)$ for the submonoid of $\mathbf{G}(F) = \Aut_F (L \tensor_\Q F)$ consisting of elements that map the $\mathcal{O}_F$-lattice $\mathcal{L} \tensor_\Z \mathcal{O}_F$ into itself and $\mathbf{G}(\mathcal{O}_F)$ for the subgroup of elements inducing an automorphism of $\mathcal{L} \tensor_\Z \mathcal{O}_F$.  As in Assumption~\ref{first.assumption}, assume that $\mathbf{G}(\Q_p) = \mathbf{G}(\Z_p) \mathbf{G}^\circ (\Q_p)$; by~\cite[Lemma~4.1]{duSLubotzky/96} this holds for almost all primes $p$.  Replacing $\mathbf{G}$ by its connected component does not affect the integral of~\eqref{equ:padic.integral.discussion} by~\cite[Proposition~2.1]{duSLubotzky/96}, so we may suppose that $\mathbf{G}$ is connected.  Then $\mathbf{G} = \mathbf{N} \rtimes \mathbf{H}$, where $\mathbf{N}$ is the unipotent radical of $\mathbf{G}$ and $\mathbf{H}$ is connected and reductive.
For every algebraic subgroup $\mathbf{S} \leq \mathbf{G}$ set $\mathbf{S}^+(F) = \mathbf{S}(F) \cap \mathbf{G}^+(F)$ and $\mathbf{S}(\mathcal{O}_F) = \mathbf{S}(F) \cap \mathbf{G}(\mathcal{O}_F)$.  Let $\mu_{\mathbf{S}(F)}$ denote the right Haar measure on $\mathbf{S}(F)$, normalized so that $\mu_{\mathbf{S}(F)}(\mathbf{S}(\mathcal{O}_F)) = 1$.

Since $\mathbf{H}$ is reductive, there exists a subspace $U \subset L_p$ such that $L_p = U \oplus Z(L_p)$ and such that $U \tensor_{\Q_p} F$ is stable under the action of $\mathbf{H}(F)$ for any field extension $F/\Q_p$; cf.~\cite[Theorem~22.138]{Milne/17}.  Moreover, since we assumed $Z(\mathcal{L}) \leq \gamma_2 \mathcal{L}$, there is a subspace $U^\prime \subset U$, possibly trivial, such that $U^\prime \oplus Z(L_p) = \gamma_2 L_p$.  
We may fix a $\Z_p$-basis $b_1, \dots, b_\dimL$ of $\mathcal{L}_p$ that, viewed as a $\Q_p$-basis of $L_p$, consists of a concatenation of a lift of a basis of $U/U^\prime$, a basis of $U^\prime$, and a basis of $Z(L_p)$.  This choice of basis determines an embedding $\mathbf{G} \subset \mathbf{GL}_\dimL$.  Clearly there is a $\Z$-basis of $\mathcal{L}$ that, for almost all primes, induces a $\Z_p$-basis of $\mathcal{L}_p$ satisfying our hypotheses.  In the examples of Section~\ref{sec:examples} below, this will be the case for all primes.

\subsection{Consequences of rigidity}
Suppose, in addition to the hypotheses of Section~\ref{sec:consequences}, that $L_p$ is $Z(L_p)$-rigid in the sense of Definition~\ref{def:rigidity}, where $Z(L_p)$ is the center of $L_p$.  A consequence of rigidity, in conjunction with the interpretation of pro-isomorphic zeta functions as $p$-adic integrals in Proposition~\ref{pro:padic.integral}, is the existence of a fine Euler decomposition as in Theorem~\ref{thm:uniformity}.
To make this precise, set $U_1^F = U \tensor_{\Q} F$ and $U_2^F = Z(L_p \tensor_{\Q_p} F)$ for any finite extension $F/\Q_p$.  Then $L_p \tensor_{\Q_p} F = U_1^F \oplus U_2^F$, and each component is stable under the action of $\mathbf{H}(F)$.  Note that $\mathbf{N}(F)$ need not act trivially on $V^F / U_2^F$, where $V^F$ is the underlying $F$-vector space of $L_p \tensor_{\Q_p} F$, so we are not necessarily in the precise setup of Section~\ref{sec:assumptions}.  Nevertheless, the map $(\psi^\prime_2)^F : \mathbf{G}(F) \to \Aut (V^F/U_2^F)$ can be defined as in Section~\ref{sec:assumptions}; as a consequence of the assumption that $Z(L_p) \leq \gamma_2 L_p$, every element of $\ker (\psi^\prime_2)^F$ is unipotent.  Hence $N_2^F = \mathbf{N}(F) \cap \ker (\psi^\prime_2)^F = \mathbf{J}(F)$ and, as in Section~\ref{sec:assumptions}, we obtain a map $\psi_2^F : \mathbf{G}(F)/N_2^F \to \Aut_F (V^F/U_2^F) \subset \mathrm{GL}_{\overdim} (F)$, where the final embedding is determined by our choice of basis of $L$.

Define $\overline{N}^F = \mathbf{N}(F)/\mathbf{J}(F)$, and let $\mu_{\overline{N}^F}$ be the right Haar measure on $\overline{N}^F$ normalized so that $\mu_{\overline{N}^F}((\psi_2^F)^{-1}(\psi_2^F(\overline{N}^F) \cap M_{\overdim} (\mathcal{O}_F))) = 1$.  Finally, define a function $\widetilde{\theta}^F : \mathbf{H}(F) \to \mathbb{R}$ as follows.  If $h \in \mathbf{H}(F)$, then we set
\begin{equation*}
\widetilde{\theta}^F(h) = \mu_{\overline{N}^F} ( \{ \nu \in \overline{N}^F : \psi_2^F (\nu h) \in M_{\overdim} (\mathcal{O}_F) \} ).
\end{equation*}
\begin{rem} \label{rem:trivial.theta}
It is easy to see that if $\widetilde{\theta}^F(h) \neq 0$, then necessarily $h \in \mathbf{H}^+(F)$.  Moreover, if $\mathbf{N}(F)$ acts trivially on $U_1^F$, then $\widetilde{\theta}^F(h) = 1$ for all $h \in \mathbf{H}^+(F)$.
\end{rem}

Any $h \in \mathbf{H}(F)$ maps the center of $L_p \tensor_{\Q_p} F$ onto itself and thus induces an element $\varepsilon^F(h) \in \Aut_F Z(L_p \tensor_{\Q_p} F) \simeq \mathrm{GL}_{\dimL - \overdim}(F)$.  For every $\p | p$, write $\varepsilon_\p$ for the map $\varepsilon^{K_{\p}}$.

\begin{pro} \label{pro:rigid.computation}
Let $\mathcal{L}$ be a nilpotent Lie lattice such that $Z(\mathcal L) \leq \gamma_2 \mathcal L$, and let $p$ be a prime such that $L_p$ is $Z(L_p)$-rigid.  Here $\mathcal{L}_p = \mathcal{L} \tensor_\Z \Z_p$ and $L_p = \mathcal{L} \tensor_Z \Q_p$.  Let $K$ be a number field.  Set $d = [K:\Q]$ and $\dimL^\prime = \dim_{\Q} L/\gamma_2 L$.
Then
\begin{equation*} \label{equ:integral.factors}
 \zeta^\wedge_{\mathcal{L} \tensor_\Z \mathcal{O}_K, p}(s) = \prod_{\p | p} \int_{\mathbf{H}^+(K_\p)} \widetilde{\theta}^{K_\p}(h) |\det \varepsilon_\p(h)|_{\p}^{-d \dimL^\prime} |\det h|^s_{\p} d \mu_{\mathbf{H}(K_\p)}(h). 
 \end{equation*}
\end{pro}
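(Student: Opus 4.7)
The approach is to combine Proposition~\ref{pro:padic.integral} with the $Z(L_p)$-rigidity hypothesis to factor the ambient automorphism group along the primes of $K$ above $p$, and then to evaluate the unipotent integral explicitly. By Proposition~\ref{pro:padic.integral} applied to $\mathcal{L} \tensor_\Z \mathcal{O}_K$,
$$\zeta^\wedge_{\mathcal{L} \tensor_\Z \mathcal{O}_K, p}(s) = \int_{\mathbf{G}^+(\Q_p)} |\det g|^s_{\Q_p} \, d\mu_\mathbf{G}(g),$$
where $\mathbf{G} = \aAut_{\Q_p}(L_p \tensor_{\Q_p} R)$ and $R = K \tensor_\Q \Q_p \simeq \prod_{\p|p} K_\p$. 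Both $L_p \tensor_{\Q_p} R = \bigoplus_\p L_p \tensor_{\Q_p} K_\p$ and the natural lattice $\bigoplus_\p \mathcal{L} \tensor_\Z \mathcal{O}_\p$ decompose as direct sums indexed by $\p | p$. Applying $Z(L_p)$-rigidity to each $K_\p$-factor, and extending Corollary~\ref{cor:semidirect.product} to the semisimple $\Q_p$-algebra $R$, gives (after passing to $\mathbf{G}^\circ$ via Assumption~\ref{first.assumption} to discard the finite group $\aAut_{\Q_p}(R)$ of factor permutations)
$$\mathbf{G}^\circ(\Q_p) = \mathbf{J}(\Q_p) \cdot \prod_{\p|p} \aAut_{K_\p}(L_p \tensor_{\Q_p} K_\p)^\circ.$$
Under the Levi decomposition $\mathbf{G}^\circ = \mathbf{N} \rtimes \mathbf{H}$ this identifies $\mathbf{H}(\Q_p) \simeq \prod_\p \mathbf{H}(K_\p)$ via Weil restriction, and the norm relation $|N_{K_\p/\Q_p}(\cdot)|_{\Q_p} = |\cdot|_\p$ factorizes the determinant as $|\det_{\Q_p} h|_{\Q_p} = \prod_\p |\det h_\p|_\p$.

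Every $g \in \mathbf{G}^+$ factors uniquely as $g = \nu h$ with $h \in \mathbf{H}^+$ and $\nu \in \mathbf{N}$, so Fubini converts the integral to $\int_{\mathbf{H}^+} |\det h|_{\Q_p}^s \Theta(h) \, d\mu_\mathbf{H}$ with $\Theta(h) = \mu_\mathbf{N}(\{\nu \in \mathbf{N} : \nu h \in \mathbf{G}^+\})$. Decompose $\mathbf{N} = \mathbf{J} \cdot \bar{\mathbf{N}}$, with $\bar{\mathbf{N}} \simeq \mathbf{N}/\mathbf{J}$ acting nontrivially on $L/Z$, and integrate $\nu = \nu_J \bar\nu$ in stages. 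The $\bar{\mathbf{N}}$-integral produces $\prod_\p \widetilde\theta^{K_\p}(h_\p)$ directly from the definition of $\widetilde\theta^F$, the product structure coming from $\bar{\mathbf{N}}(\Q_p) \simeq \prod_\p \bar{\mathbf{N}}(K_\p)$.

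The main technical step is the $\mathbf{J}$-integral, producing the unusual factor $\prod_\p |\det \varepsilon_\p(h_\p)|_\p^{-d\dimL^\prime}$. Elements of $\mathbf{J}(\Q_p)$ are canonically identified with $\Q_p$-linear maps $\phi : (L/\gamma_2 L) \tensor_\Q R \to Z(L_p \tensor_{\Q_p} R)$: writing a $\mathbf{J}$-element as $\mathrm{id} + \phi$, the Jacobi identity forces $\phi$ to annihilate $\gamma_2$. With the right-action convention, $(v)(\nu_J h) = (v)h + \varepsilon(h)(\phi(v))$, so on decomposing $\phi = (\phi_{\p',\p})_{\p',\p}$ with $\phi_{\p',\p} : (L/\gamma_2 L) \tensor_{\Q_p} K_\p \to Z \tensor_{\Q_p} K_{\p'}$, the condition $\nu_J h \in \mathbf{G}^+$ becomes $\varepsilon_{\p'}(h_{\p'}) \circ \phi_{\p',\p}\bigl((L/\gamma_2 \mathcal L) \tensor_\Z \mathcal{O}_\p\bigr) \subseteq Z(\mathcal{L}) \tensor_\Z \mathcal{O}_{\p'}$ for every ordered pair $(\p, \p')$. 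The change of variable $\phi_{\p',\p} \mapsto \varepsilon_{\p'}(h_{\p'}) \circ \phi_{\p',\p}$ has Jacobian $|\det \varepsilon_{\p'}(h_{\p'})|_{\p'}^{[K_\p : \Q_p] \dimL^\prime}$, and summing this exponent over all $\p | p$ gives $d \dimL^\prime$; taking the inverse yields the announced factor. The principal obstacle is precisely the bookkeeping of these off-diagonal cross-terms $\phi_{\p',\p}$ with $\p' \neq \p$: rigidity forces $\mathbf{H}$ to respect the product decomposition of $R$, whereas $\mathbf{J}$ does not, and it is these cross contributions that conspire to produce the degree $d$ rather than the local degree $[K_\p:\Q_p]$ in the exponent.
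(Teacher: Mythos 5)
Your proposal is correct and follows essentially the same route as the paper's proof: the Grunewald--Segal--Smith integral for $\mathcal{L}\tensor\mathcal{O}_K$ over the connected automorphism group of $L_p\tensor_{\Q_p}R$, rigidity to identify $\mathbf{H}$ and $\mathbf{N}/\mathbf{J}$ with products over $\p\mid p$, and staged integration in which the $\mathbf{J}$-integral produces the factor $\prod_{\p\mid p}|\det\varepsilon_\p(h_\p)|_{\p}^{-d\dimL^\prime}$. Your pairwise cross-term Jacobian bookkeeping is just a block-by-block version of the paper's computation that the condition on $J_\Gamma\cong M_{d\dimL^\prime,\,d(\dimL-\overdim)}(\Q_p)$ imposes $d\dimL^\prime$ independent row conditions, each of measure $|\det\varepsilon(h)|_{\Q_p}^{-1}$.
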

\begin{proof}
For brevity, set $\boldsymbol{\Gamma} = (\aAut (L_p \tensor_{\Q_p} R))^\circ$, where $L_p \tensor_{\Q_p} R$ is viewed as a $\Q_p$-Lie algebra.  
Let $\Gamma = \boldsymbol{\Gamma}(\Q_p)$,
and let $\Gamma^+ \subset \Gamma$ be the submonoid consisting of $\Q_p$-automorphisms of $L_p \tensor_{\Q_p} R$ that preserve the lattice $\mathcal{L}_p \tensor_{\Z_p} \mathcal{O}_K$.  
Let $\boldsymbol{\Gamma} = \mathbf{N}_{\boldsymbol{\Gamma}} \rtimes \mathbf{H}_{\boldsymbol{\Gamma}}$, where $\mathbf{N}_{\boldsymbol{\Gamma}}$ is the unipotent radical of $\boldsymbol{\Gamma}$ and $\mathbf{H}_{\boldsymbol{\Gamma}}$ is reductive.  This induces a decomposition
$\Gamma = N_\Gamma \rtimes H_\Gamma$, where $N_\Gamma = \mathbf{N}_{\boldsymbol{\Gamma}}(\Q_p)$ and $H_\Gamma = \mathbf{H}_{\boldsymbol{\Gamma}}(\Q_p)$.  We have an embedding $\Gamma \subseteq \mathrm{GL}_{d \dimL}(\Q_p)$ determined by the $\Q_p$-basis
$$(\alpha_1 b_1, \dots, \alpha_d b_1, \alpha_1 b_2, \dots, \alpha_d b_2, \dots, \alpha_1 b_{\dimL}, \dots, \alpha_d b_{\dimL})$$ 
of $L_p \tensor_{\Q_p} R$.  
By Proposition~\ref{pro:padic.integral},
\begin{equation} \label{equ:separate.variables}
\zeta^\wedge_{\mathcal{L} \tensor_\Z \mathcal{O}_K, p}(s) = \int_\Gamma \chi_{\Gamma^+}(g) |\det g|_{\Q_p}^{-s} d \mu_{\Gamma}(g) = \int_{H_\Gamma}  f(h) | \det h|_{\Q_p}^{-s}  d \mu_{H_\Gamma}(h),
\end{equation}
where $f(h) = \mu_{N_\Gamma} (\{ \nu \in N_\Gamma : \nu h \in \Gamma^+ \})$, while $\mu_{H_\Gamma}$ and $\mu_{N_\Gamma}$ are right Haar measures as above.  Observe that $f(h) = 0$ whenever $h \not\in H_\Gamma \cap \Gamma^+$.  Let $J_\Gamma = \ker \psi^\prime_\Gamma \leq N_\Gamma$, where
$$\psi_\Gamma^\prime: \Gamma \to \Aut_{\Q_p} ((L_p \tensor_{\Q_p} R)/(Z(L_p) \tensor_{\Q_p} R)) \subseteq \mathrm{GL}_{d \overdim}(\Q_p).$$  
This induces a map $\psi_\Gamma : \Gamma / J_\Gamma \to \mathrm{GL}_{d \overdim}(\Q_p)$.  Let $\mu_{J_\Gamma}$ and $\mu_{N_\Gamma / J_\Gamma}$ be right Haar measures on $J_\Gamma$ and $N_\Gamma / J_\Gamma$, respectively, normalized so that $\mu_{J_\Gamma}(J_\Gamma \cap M_{d \dimL}(\Z_p)) = 1$ and $\mu_{N_\Gamma / J_\Gamma}(\psi_\Gamma^{-1}(\psi_\Gamma(N_\Gamma / J_\Gamma) \cap M_{d \overdim}(\Z_p))) = 1$.  
Clearly $\mu_{N_\Gamma} = \mu_{N_\Gamma / J_\Gamma} \cdot \mu_{J_\Gamma}$.  

Every element of $J_\Gamma$ acts trivially on $\gamma_2 (L_p \tensor_{\Q_p} R)$ and therefore 
\begin{equation} \label{equ:j.gamma}
J_\Gamma = \left\{
\left( \begin{array}{ccc} I_{d \dimL^\prime} & 0 & B \\
0 & I_{d(\overdim - \dimL^\prime)} & 0 \\
0 & 0 & I_{d(\dimL - \overdim)} \end{array} \right) : 
B \in M_{d \dimL^\prime, d(\dimL - \overdim)}(\Q_p) \right\}.
\end{equation}
Fix elements $h \in H_\Gamma$ and $\nu \in N_\Gamma$.  For any $\gamma \in  \Gamma$, let $\varepsilon(\gamma)$ be the automorphism of $Z(L_p \tensor_{\Q_p} R)$ induced by $\gamma$.  It follows from the $Z(L_p)$-rigidity of $L_p$ that we may take
$H_\Gamma = \prod_{\p | p} \iota_\p ( \mathbf{H}(K_\p))$.  Then $h$ corresponds to an $\npr$-tuple $h = (\iota_{\p_1} (h_1), \dots, \iota_{\p_\npr} (h_\npr))$, where $h_i \in \mathbf{H}(K_{\p_i})$ for every $i \in [\npr]$.
It is easy to see from~\eqref{equ:j.gamma} that, for $j \in J_\Gamma$, the condition $j \nu h \in \Gamma^+$ amounts to $d \dimL^\prime$ independent conditions on the rows of $B$.  Hence
\begin{equation} \label{equ:j.measure}
\setlength{\thickmuskip}{.9\thickmuskip}
\setlength{\medmuskip}{.9\medmuskip}
\mu_{J_\Gamma}( \{ j \in J_\Gamma : j \nu h \in \Gamma^+ \}) = | \det \varepsilon(\nu h) |_{\Q_p}^{-d \dimL^\prime} = | \det \varepsilon(h) |_{\Q_p}^{-d \dimL^\prime} = \prod_{i = 1}^\npr | \det \varepsilon^{K_{\p_i}}(h_i) |_{\p_i}^{- d \dimL^\prime}.
\end{equation}
Note that this quantity depends only on $h$.
Again by rigidity, we have $N_\Gamma / J_\Gamma = \prod_{\p | p} \overline{N}^{K_{\p}}$.  Moreover, since we chose each $\boldsymbol{\alpha}_i$ to be an integral basis, it follows that for any $g_i \in \mathbf{G}(K_{\p_i})$ we have $\iota_{\p_i} (g_i) \in M_{e_i f_i \dimL}(\Z_p)$ if and only if $g_i \in \mathbf{G}(\mathcal{O}_{\p_i})$.  Thus $\mu_{N_\Gamma / J_\Gamma}$ is the product of the measures $\mu_{\overline{N}^{K_\p}}$ defined earlier.  Recall that $\mu_{N_\Gamma} = \mu_{N_\Gamma / J_\Gamma} \cdot \mu_{J_{\Gamma}}$.  Consequently, 
\begin{multline*}
 f(h) = \left( \prod_{i = 1}^\npr | \det \varepsilon_{\p_i}(h_i) |_{\p_i}^{-d \dimL^\prime} \right) \mu_{N_\Gamma / J_\Gamma}( \{ \overline{\nu} \in N_\Gamma / J_\Gamma : \psi_\Gamma(\overline{\nu}h) \in M_{d \overdim}(\Z_p) \}) = \\
\prod_{i = 1}^\npr  | \det \varepsilon_{\p_i}(h_i) |_{\p_i}^{-d \dimL^\prime} \mu_{\overline{N}^{K_{\p_i}}}\{ \overline{\nu}_i \in \overline{N}^{K_{\p_i}} : \psi^{K_{\p_i}}_2(\overline{\nu}_i h_i \in M_{\overdim}(\mathcal{O}_{\p_i})) \}  = \\
\prod_{i = 1}^\npr | \det \varepsilon_{\p_i}(h_i) |_{\p_i}^{-d \dimL^\prime} \widetilde{\theta}^{K_{\p_i}} (h_i).
\end{multline*}
From this it is obvious that the rightmost integral of~\eqref{equ:separate.variables} splits into a product as claimed, observing that the integrand is supported on $H_\Gamma \cap \Gamma^+ = \prod_{\p | p} \iota_\p (\mathbf{H}^+(K_\p))$.
\end{proof}

\subsection{Simplifying assumptions} \label{sec:simplifying.assumptions}
Suppose further that Assumption~\ref{second.assumption} holds for the action of $\mathbf{G}(\Q_p)$ on the underlying $\Q_p$-vector space $V$ of $L_p$ and for some decomposition
$ V = U_1 \oplus \cdots \oplus U_t$
compatible with the basis $(b_1, \dots, b_\dimL)$.  Moreover, suppose that $V_t = U_t = Z(L_p)$ and there exists some $t^\prime \leq t$ such that $V_{t^\prime} = \gamma_2 L_p$.  
Since we have assumed $L_p$ to be $Z(L_p)$-rigid, it follows that, for any finite extension $F/\Q_p$, the action of $\mathbf{G}(F)$ on
$$ L_p \tensor_{\Q_p} F = (U_1 \tensor_{\Q_p} F) \oplus \cdots \oplus (U_t \tensor_{\Q_p} F)$$
satisfies Assumption~\ref{second.assumption}.  
\begin{rem} \label{rmk:almost.all.primes}
Let $\mathcal{L}$ be a Lie lattice of rank $\dimL$ and let $\mathbf{G} = \aAut L$.  By the proof of~\cite[Lemma~4.1]{duSLubotzky/96}, $\mathbf{G}(F)$ satisfies Assumption~\ref{first.assumption} for almost all primes $p$ and any finite extension $F / \Q_p$.  Similarly, for almost all primes $p$, Assumption~\ref{second.assumption} holds for $L_p$; if in addition $L_p$ is $Z(L_p)$-rigid, Assumption~\ref{second.assumption} holds for any base extension $L_p \tensor_{\Q_p} F$, where $F / \Q_p$ is finite.  
Indeed, it is easy to see using Lemmas~4.2 and~4.3 of~\cite{duSLubotzky/96} that one can choose a $\Z$-basis $(b_1, \dots, b_{\dimL})$ of $\mathcal{L}$ such that, for almost all primes $p$, the induced basis of $\mathcal{L}_p$ satisfies the hypotheses in the previous paragraph.  The crucial step in the proof of~\cite[Lemma~4.3]{duSLubotzky/96} is the claim that the action of $\mathfrak{N}(\Q)$ on $L$ has a non-zero fixed point, where $\mathfrak{N}$ is the unipotent radical of $\aAut L$; this is true because $\mathfrak{N}$ is trigonalizable over $\Q$~\cite[Theorem~15.4(ii)]{Borel/91}, which in turn is a consequence of the Lie-Kolchin theorem.

By contrast, and contrary to~\cite[Corollary~4.5]{duSLubotzky/96}, there may be no decomposition of $L_p$ affording Assumption~\ref{lifting.condition} at any prime; see~\cite[p.~6]{Berman/11} for an example.
\end{rem}
We require a stronger version of Assumption~\ref{lifting.condition}.  Informally, 
we assume that the lifting condition for the action of $\mathbf{G}(\Q_p)$ on $L_p$ is realized by a polynomial map defined over $\Z_p$;  
this condition ensures that Assumption~\ref{lifting.condition} holds not only for $L_p$, but for the base extension $L_p \tensor_{\Q_p} F$, where $F / \Q_p$ is any finite extension.
Recall the maps $\psi_i$ and the notation $d_i = \dim_{\Q_p} (U_1 \oplus \cdots \oplus U_i)$ from Section~\ref{sec:assumptions}.  
Observe that if $t^\prime \leq 3$, then $\psi_i$ is injective for all $i \in [2,t-1]$.
If $i \in [2,t-1]$ and $\kappa_i: \mathbf{GL}_{d_i} \to \mathbf{GL}_{\dimL}$ is a morphism of algebraic varieties, defined over $\Z_p$, write 
$$\kappa_i^{\Q_p} :  \Aut_{\Q_p}(V/V_{i+1}) \simeq \mathrm{GL}_{d_i}(\Q_p) \to \mathrm{GL}_{\dimL}(\Q_p) \simeq \Aut_{\Q_p}(V)$$
for the induced map on $\Q_p$-points.  
\begin{asm} \label{polynomial.lifting}
We assume for all $i \in [2,t-1]$ that $\psi_i$ is injective and that there is a morphism $\kappa_i : \mathbf{GL}_{d_i} \to \mathbf{GL}_{\dimL}$, defined over $\Z_p$, such that if $\overline{g} \in NH^+/N_i \cap (G/N_i)^+$, then the image $\gamma = \kappa_i^{\Q_p}(\psi_i(\overline{g})) \in \mathrm{GL}_{\dimL}(\Q_p)$ lies in $\mathbf{G}^+(\Q_p)$ and satisfies $\overline{g} = \gamma N_i$.  
\end{asm}
Assumption~\ref{polynomial.lifting} implies, for any finite extension $F/\Q_p$, that the map $\kappa_i^F$ on $F$-points realizes Assumption~\ref{lifting.condition} for the action of $\mathbf{G}(F)$ on $L_p \tensor_{\Q_p} F$.  Hence, under the preceding series of assumptions, the setup of Section~\ref{sec:assumptions} applies to the action of $\Aut_{F}((L_p \tensor_{\Q_p} F)/Z(L_p \tensor_{\Q_p} F))$ on $(L_p \tensor_{\Q_p} F)/Z(L_p \tensor_{\Q_p} F)$.  By Proposition~\ref{pro:duSL},
\begin{equation} \label{equ:tilde.decomposition}
 \widetilde{\theta}^{F} (h) = \prod_{i = 1}^{t - 2} \theta_i^{F}(h)
 \end{equation}
for all $h \in \overline{\mathbf{H}}(F) \simeq \mathbf{H}(F)$, where $\overline{\mathbf{H}}$ is the reductive part of $\aAut L_p/Z(L_p)$.  Many of the examples considered in Section~\ref{sec:examples} rely on the following formula.  
\begin{cor} \label{cor:rigid.lifting}
Suppose that $Z(\mathcal L) \leq \gamma_2 \mathcal L$ and that $L_p = L \tensor_\Q \Q_p$ is $Z(L_p)$-rigid.  Suppose that Assumptions~\ref{first.assumption},~\ref{second.assumption}, and~\ref{polynomial.lifting} hold.  Let $K$ be a number field of degree $d$, and put $\dimL^\prime = \dim_{\Q} L/\gamma_2 L$.
Then
\begin{align} \label{equ:technical.splitting} \nonumber
\zeta^\wedge_{\mathcal{L} \tensor_\Z \mathcal{O}_K, p}(s) = \prod_{\p | p} \int_{\mathbf{H}^+(K_\p)} \left( \prod_{j = 1}^{t-2} \theta_j^{K_\p} (h) \right) |\det \varepsilon_\p(h)|_{\p}^{-d \dimL^\prime} |\det h|^s_{\p} d \mu_{\mathbf{H}(K_\p)}(h) = \\
\prod_{\p | p} \int_{\mathbf{H}^+(K_\p)} \left( \prod_{j = 1}^{t-2} \theta_j^{K_\p} (h) \right) (\theta_{t-1}^{K_\p}(h))^d |\det h|^s_{\p} d \mu_{\mathbf{H}(K_\p)}(h).
\end{align}
\end{cor}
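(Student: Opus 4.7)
Proof plan. The strategy is to apply Proposition~\ref{pro:rigid.computation} to obtain the factorization over primes $\p \mid p$, then decompose $\widetilde{\theta}^{K_\p}$ via Proposition~\ref{pro:duSL} applied to the action on the quotient by the center, and finally evaluate the remaining factor $\theta_{t-1}^{K_\p}$ in closed form.

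For the first equality, Proposition~\ref{pro:rigid.computation} already provides the factorization of $\zeta^\wedge_{\mathcal{L} \tensor_\Z \mathcal{O}_K, p}(s)$ with integrand $\widetilde{\theta}^{K_\p}(h) \, |\det \varepsilon_\p(h)|_\p^{-d\dimL^\prime} \, |\det h|_\p^s$, so it remains only to invoke the decomposition~\eqref{equ:tilde.decomposition}. To justify that decomposition in the base-extended setting, I would first verify that Assumption~\ref{polynomial.lifting}, combined with $Z(L_p)$-rigidity, forces Assumptions~\ref{first.assumption}, \ref{second.assumption}, and~\ref{lifting.condition} to hold for the action of $\mathbf{G}(K_\p)$ on $L_p \tensor_{\Q_p} K_\p$, and hence for the induced action of the reductive part $\overline{\mathbf{G}}(K_\p)$ on $(L_p \tensor_{\Q_p} K_\p)/Z(L_p \tensor_{\Q_p} K_\p)$; here the polynomial form of the lifting condition, rather than the weaker Assumption~\ref{lifting.condition}, is essential because the morphisms $\kappa_i$ must be defined over $\Z_p$ in order to specialize uniformly to each $K_\p$. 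Applying Proposition~\ref{pro:duSL} to the induced action then gives $\widetilde{\theta}^{K_\p}(h) = \prod_{j=1}^{t-2} \theta_j^{K_\p}(h)$, which is precisely~\eqref{equ:tilde.decomposition}.

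For the second equality, the plan is to prove that $\theta_{t-1}^{K_\p}(h) = |\det \varepsilon_\p(h)|_\p^{-\dimL^\prime}$ for every $h \in \mathbf{H}^+(K_\p)$; taking $d$-th powers then converts the factor $|\det \varepsilon_\p(h)|_\p^{-d\dimL^\prime}$ appearing in the first equality into $(\theta_{t-1}^{K_\p}(h))^d$. The key observation is that since $U_t \tensor_{\Q_p} K_\p = Z(L_p \tensor_{\Q_p} K_\p)$ and $V_{t+1} = 0$, one has $N_t = \{\mathrm{id}\}$ and $N_{t-1} = \mathbf{J}(K_\p)$, the group of $K_\p$-linear automorphisms that are trivial modulo the center. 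Relative to the basis of $\mathcal{L}_p$ fixed in Section~\ref{sec:consequences} (a concatenation of a lift of a basis of $U/U^\prime$, a basis of $U^\prime$, and a basis of $Z(L_p)$), every $\bar{n} \in \mathbf{J}(K_\p)$ is parametrized by a matrix $B \in M_{\dimL^\prime, \dimL - \overdim}(K_\p)$ recording the displacements of the first $\dimL^\prime$ basis vectors into the center, while the middle $\overdim - \dimL^\prime$ rows of displacements vanish because $U^\prime \subseteq \gamma_2 L_p$ is fixed pointwise by $\mathbf{J}$. Consequently
\begin{equation*}
\tau(h)(\bar{n}) = \begin{pmatrix} B \, \varepsilon_\p(h) \\ 0 \end{pmatrix},
\end{equation*}
so the condition $\tau(h)(\bar{n}) \in M_{\overdim, \dimL - \overdim}(\mathcal{O}_\p)$ becomes $B \in M_{\dimL^\prime, \dimL - \overdim}(\mathcal{O}_\p) \cdot \varepsilon_\p(h)^{-1}$. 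Under the normalization in~\eqref{equ:def.theta}, which gives $M_{\dimL^\prime, \dimL - \overdim}(\mathcal{O}_\p)$ measure one, a standard Jacobian computation on right multiplication by $\varepsilon_\p(h)^{-1}$ (acting independently on each of the $\dimL^\prime$ rows) yields the claimed value $|\det \varepsilon_\p(h)|_\p^{-\dimL^\prime}$. The principal obstacle will be the first of these two steps: the measure-theoretic bookkeeping in the second step is essentially forced once the basis is fixed, whereas inheriting Assumptions~\ref{first.assumption}--\ref{lifting.condition} by base extension is exactly what the polynomial lifting hypothesis is designed to secure, and this is the point where the argument would fail without Assumption~\ref{polynomial.lifting}.
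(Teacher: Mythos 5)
Your proposal is correct and follows essentially the same route as the paper: the first equality is Proposition~\ref{pro:rigid.computation} combined with the decomposition~\eqref{equ:tilde.decomposition} (itself obtained by applying Proposition~\ref{pro:duSL} to the action on the quotient by the center, with Assumption~\ref{polynomial.lifting} securing the lifting condition over each $K_\p$), and the second equality comes from the identity $\theta_{t-1}^{K_\p}(h) = |\det \varepsilon_\p(h)|_\p^{-\dimL^\prime}$. Your explicit computation of $\tau(h)$ on $\mathbf{J}(K_\p)$, using that $\mathbf{J}$ acts trivially on $\gamma_2$, is just a spelled-out version of the paper's appeal to the analogue of~\eqref{equ:j.measure}.
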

\begin{proof}
The first equality is immediate from Proposition~\ref{pro:rigid.computation} and~\eqref{equ:tilde.decomposition}.  We prove the second by showing that $\theta_{t-1}^{F}(h) = | \det \varepsilon^F(h) |_F^{- \dimL^\prime}$ for any finite extension $F / \Q_p$ and any $h \in \mathbf{H}^+(F)$.  In the notation of~\eqref{equ:def.theta}, we have $N_{t-1} = \mathbf{J}(F)$ and $N_t = (0)$.  Thus, 
\begin{alignat*}{1}
\theta_{t-1}^F(h) = \, & \mu_{\mathbf{J}(F)} ( \{ j \in \mathbf{J}(F) : \tau(h)(j) \in M_{\overdim, \dimL - \overdim}(\mathcal{O}_F) \}) = \\
& \mu_{\mathbf{J}(F)} ( \{ j \in \mathbf{J}(F) : jh \in \mathbf{G}^+(F) \} ) = | \det \varepsilon^F(h) |_F^{- \dimL^\prime}.
\end{alignat*}
Here the second equality holds because $h \in \mathbf{H}^+(F)$ and the third is analogous to~\eqref{equ:j.measure} in the proof of Proposition~\ref{pro:rigid.computation}.
\end{proof}

\begin{rem} \label{rem:main.proof}
To deduce Theorem~\ref{thm:intro.examples} from Corollary~\ref{cor:rigid.lifting}, it remains to show, for each of the seven classes of Lie lattices $\mathcal{L}$ considered in the statement of Theorem~\ref{thm:intro.examples}, that the integrals appearing in the right-hand side of~\eqref{equ:technical.splitting} can be expressed as $W_{\mathcal{L},d}(q_\p,q_\p^{-s})$, where $q_\p$ is the cardinality of the residue field of $K_\p$ and $W_{\mathcal{L},d}(X,Y) \in \Q(X,Y)$ is a rational function depending only on $\mathcal{L}$ and $d$.  This will be done in Section~\ref{sec:examples} by means of explicit computations.
\end{rem}

\begin{rem} \label{rem:duSL}
Our arguments in this section follow the path laid out by du Sautoy and Lubotzky~\cite{duSLubotzky/96}.  Assumptions~\ref{first.assumption} and~\ref{second.assumption} appear in~\cite{duSLubotzky/96}, as does Assumption~\ref{polynomial.lifting} implicitly.  
In~\cite[\S6]{duSLubotzky/96} the authors anticipate our line of inquiry by considering integrals of the form~\eqref{equ:separate.variables} where the reductive group $\mathbf{H}$ over the number field $E$ can be identified with the restriction of scalars of an algebraic matrix group defined over a finite extension $E^\prime / E$.  If $\mathcal{L}$ is a nilpotent Lie lattice such that $Z(\mathcal{L}) \leq \gamma_2 \mathcal{L}$ and $L = \mathcal{L} \tensor_\Z \Q$ is $Z(L)$-rigid, and if $K$ is a number field, then the situation of~\cite[\S6]{duSLubotzky/96} is obtained, with $E^\prime / E = K / \Q$, in the computation of $\zeta^\wedge_{\mathcal{L} \tensor_\Z \mathcal{O}_K, p}$ at almost all $p$.  Under further hypotheses (for instance, that the integrand of~\eqref{equ:separate.variables} is a character) it is shown~\cite[Theorem~6.9]{duSLubotzky/96} that the local pro-isomorphic zeta functions $\zeta^\wedge_{\mathcal{L} \tensor_\Z \mathcal{O}_K, p}$ satisfy functional equations for almost all $p$.  

By emphasizing the notion of rigidity and proving the sufficient condition of Theorem~\ref{thm:rigidity}, we provide a way of identifying Lie lattices $\mathcal{L}$ to which the framework of~\cite[\S6]{duSLubotzky/96} applies.  We show how the local zeta functions $\zeta^\wedge_{\mathcal{L} \tensor_\Z \mathcal{O}_K, p}(s)$ may be determined by a uniform calculation that depends only mildly on the number field $K$.  We do not make further assumptions about the integrand of~\eqref{equ:separate.variables}, and indeed our method applies to Lie lattices $\mathcal{L}$ such that $\zeta^\wedge_{\mathcal{L} \tensor_\Z \mathcal{O}_K, p}(s)$ does not satisfy a functional equation for any prime $p$; see Section~\ref{sec:bk}.
\end{rem}

\subsection{Some non-rigid Lie algebras} \label{sec:non-rigid}
Let $L$ be a nilpotent finite-dimensional Lie algebra over a field $k$.  If there exists a finite separable extension $K/k$ with non-trivial $\mathrm{Aut}_k(K)$ such that $L \tensor K$ is decomposable as a Lie algebra over $K$, then it is easy to see that $L$ is not $Z$-rigid for any highly invariant ideal $Z \leq \gamma_2 L$.  Indeed, suppose that $L \tensor K \simeq L_1 \oplus L_2$, where $L_1$ and $L_2$ are $K$-Lie algebras.  Let $\sigma \in \mathrm{Aut}_k(K)$ be a non-trivial element.  Consider the $k$-linear automorphism $\varphi \in \mathrm{Aut}_k(L \tensor K)$ corresponding to the automorphism $\mathrm{id} \oplus \sigma$ of $L_1 \oplus L_2$.  It is clear that, modulo $Z \tensor K$ for any $Z$ as above, the map $\varphi$ fails to be $\tau$-semilinear for any $\tau \in \mathrm{Aut}_k(K)$.  

For the simplest example of this phenomenon, suppose $\mathrm{char} \, k \neq 2$ and take $\alpha \in k$ such that $X^2 - \alpha \in k[X]$ is irreducible.  Consider the $k$-Lie algebra $L = \langle x_1, x_2, x_3, x_4, z_1, z_2 \rangle_k$, where $Z(L) = \langle z_1, z_2 \rangle_k$ and the remaining generators satisfy the relations
$$
\begin{array}{lcl}
[x_1, x_2] = z_1 & {[x_1, x_3] = z_2} & {[x_1, x_4] = 0} \\
{[x_3, x_4] = \alpha z_1} & [x_2, x_4] = z_2 & [x_2, x_3] = 0.
\end{array}
$$
While $L$ is indecomposable, it is easy to show that $L \simeq H \tensor k(\sqrt{\alpha})$ as $k$-Lie algebras, where $H = \langle x, y, z \rangle_k$ is the Heisenberg Lie algebra in which $[x,y] = z$ and $z$ is central; see Remark~\ref{rem:heisenberg}.  Hence, setting $K = k(\sqrt{\alpha})$, we find that $L \tensor K \simeq (H \tensor K) \oplus (H \tensor K)$ as $K$-Lie algebras.  Thus $L$ is not $Z$-rigid for any $Z \leq \gamma_2 L$.

In the case $k = \Q$ such Lie algebras were studied by Scheuneman~\cite[Proposition~1]{Scheuneman/67}.  They also appear as the Lie algebras $\mathcal{L}(G) \tensor_\Z \Q$ associated, via the correspondence of~\eqref{equ:class.two}, to the nilpotent groups $G$ arising from the irreducible polynomials $X^2 - \alpha \in \Z[X]$ by the construction of~\cite[Theorem~6.3]{GSegal/84}; see also~\cite{BKO/even}.  Using the description of $\aAut (\mathcal{L}(G) \tensor \Q)$ given in~\cite[Theorem~1.4]{BKO/even} and the observations above, the pro-isomorphic zeta functions $\zeta^\wedge_{\mathcal{L}(G)}(s)$ may be deduced from Remark~\ref{rem:heisenberg}.

\section{Uniform rationality} \label{sec:appendix}
In this section we prove Theorem~\ref{thm:uniformity}.  
Let $\mathrm{Loc}$ denote the collection of pairs $(F,\pi_F)$, where $F$ is a non-Archimedean local field with normalized additive valuation $v_F$ and $\pi_F \in F$ is a uniformizer; we will generally omit $\pi_F$ in the notation.  Write $\mathcal{O}_F$ for the ring of integers of $F \in \mathrm{Loc}$, and let $k_F = \mathcal{O}_F / (\pi_F)$ be the residue field and $q_F = |k_F|$ its cardinality.  The multiplicative valuation on $F$ is given by $| x |_F = q_F^{-v_F(x)}$ for $x \in F$.  Let $\mathrm{Loc}_{\gg} \subset \mathrm{Loc}$ denote the collection of $F \in \mathrm{Loc}$ with $\mathrm{char} \, k_F \geq M$ for a suitable $M$.  Let $\mathrm{Loc}^0 \subset \mathrm{Loc}$ denote the collection of local fields of characteristic zero.  

We briefly recall the Denef-Pas language $\mathfrak{L}_{\mathrm{DP}}$ in which we work; the reader is referred to~\cite[\S4.1]{CGH/14} or~\cite[\S2]{BDOP/11} for details. 
The language $\mathfrak{L}_{\mathrm{DP}}$ has three sorts: the valued field sort $\mathrm{VF}$ and the
residue field sort $\mathrm{RF}$ are endowed with the language of
rings, and the valued group sort $\mathrm{VG}$, which we will simply call
$\mathbb{Z}$, is endowed with the {Presburger language} $\mathfrak{L}_{\mathrm{Pres}}$ of ordered abelian groups.
The language $\mathfrak{L}_{\mathrm{DP}}$ also has two function
symbols: $v:\mathrm{VF}\backslash\{0\}\rightarrow\mathrm{VG}$ and
$\mathrm{ac}:\mathrm{VF}\rightarrow\mathrm{RF}$, interpreted as a
valuation map and an angular component map.

Any formula $\phi$ in $\mathfrak{L}_{\mathrm{DP}}$ with $m_{1}$
free $\mathrm{VF}$-variables, $m_{2}$ free $\mathrm{RF}$-variables,
and $m_{3}$ free $\mathbb{Z}$-variables yields a subset $\phi(F)\subseteq F^{m_{1}}\times k_{F}^{m_{2}}\times\mathbb{Z}^{m_{3}}$
for any $F\in\mathrm{Loc}$.  A collection $X=(X_{F})_{F\in\mathrm{Loc}_{\gg}}$
with $X_{F}=\phi(F)$ is called an \textsl{$\mathfrak{L}_{\mathrm{DP}}$-definable
set}.  A collection $f=(f_{F}:X_{F}\rightarrow Y_{F})_{F\in\mathrm{Loc}_{\gg}}$
is called an \textsl{$\mathfrak{L}_{\mathrm{DP}}$-}\textit{definable
function} if the associated collection of graphs is an \textsl{$\mathfrak{L}_{\mathrm{DP}}$-}definable
set. Similarly, any $\mathfrak{L}_{\mathrm{Pres}}$-formula $\theta$ in $m$ free variables
gives rise to an $\mathfrak{L}_{\mathrm{Pres}}$-definable subset $X\subseteq\mathbb{Z}^{m}$
(also called a \emph{Presburger set}).

We first establish a generalization of~\cite[Theorem~B]{BDOP/11}.  
\begin{thm} \label{thm:bdop}
Let $Y = (Y_F)_F$ be an $\mathfrak{L}_{\mathrm{DP}}$-definable subset of $\mathrm{VF}^{m}$,
let $d\in\mathbb{N}$, and let $\Phi_{i}:Y \rightarrow\mathbb{Z}$
be $\mathfrak{L}_{\mathrm{DP}}$-definable functions for $i\in\{1,2,3\}$.
Suppose that there exists some $\alpha \in\mathbb{R}$ such that for all $F \in \mathrm{Loc}_\gg$ the
integral $\int_{Y_{F}}q_{F}^{\Phi_{1}(y)s+\Phi_{2}(y)d+\Phi_{3}(y)}d\mu$
converges for all $s\in\mathbb{C}$ with $\mathrm{Re} \, s>\alpha$, where $\mu$ is the Haar measure on $F^m$, normalized so that $\mu(\mathcal{O}_F^m) = 1$. 

Then there exist $\unifmax, e \in \N$, a collection of $e$-ary formulae $\psi_1(\xi), \dots, \psi_{\unifmax}(\xi)$ in the language of rings independent of $d$, and rational functions
$W_{1}^{(d)}(X,Y),...,W_{\unifmax}^{(d)}(X,Y)\in\mathbb{Q}(X,Y)$ such that the following holds for all $F\in\mathrm{Loc}_{\gg}$
and all $s\in\mathbb{C}$ satisfying $\mathrm{Re} \, s>\alpha$:
\[
\int_{Y_{F}}q_{F}^{\Phi_{1}(y)s+\Phi_{2}(y)d+\Phi_{3}(y)}d\mu=\sum_{i=1}^{\unifmax}m_{i}(k_{F})\cdot W_{i}^{(d)}(q_{F},q_{F}^{-s}),
\]
where $m_{i}(k_{F})$ denotes the cardinality of the set $\{\xi\in k_{F}^{e}:k_{F}\models \psi_{i}(\xi)\}$
for all $i\in[\unifmax]$. Moreover, for each $i \in [\unifmax]$ there exist $M_i, N_i \in \N$ and integers $A_{ij}^{(0)}$, $A_{ij}^{(1)}$, $B_{ij}$, $\kappa^{(i)}_j$ for all $j \in [M_i]$ and $C_{ij}^{(0)}$, $C_{ij}^{(1)}$, $D_{ij}$ for all $j \in [N_i]$ such that for all $d \in \N$ the following holds:
\begin{equation} \label{equ:w.form}
W_{i}^{(d)}(X,Y) = \frac{ \sum_{j = 1}^{M_i} \kappa^{(i)}_j X^{A_{ij}^{(0)} + dA_{ij}^{(1)}} Y^{B_{ij}} }{\prod_{j = 1}^{N_i} (1 - X^{C_{ij}^{(0)} + dC_{ij}^{(1)}} Y^{D_{ij}})}.
\end{equation}
\end{thm}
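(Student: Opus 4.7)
My strategy is to reduce to the two-variable analogue of~\cite[Theorem~B]{BDOP/11}, treating $d$ as the specialization of a second complex variable. The key observation is that the integrand $q_F^{\Phi_1(y)s + \Phi_2(y)d + \Phi_3(y)}$ differs from the one in~\cite[Theorem~B]{BDOP/11} only in that the exponent involves a linear combination of \emph{two} complex parameters, $s$ and a new variable $\sigma$ to which we shall set $\sigma = -d$.

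Introduce a formal complex variable $\sigma$ and consider the family of parametric integrals
\[
I_F(s, \sigma) = \int_{Y_F} q_F^{\Phi_1(y) s + \Phi_2(y) \sigma + \Phi_3(y)} \, d\mu.
\]
The standard proof of~\cite[Theorem~B]{BDOP/11} proceeds via cell decomposition of $Y$ in the Denef--Pas language, followed by an explicit evaluation of the resulting Presburger sums; this argument tracks a single complex parameter $s$, but the same machinery applies verbatim with an arbitrary number of parameters, since each parameter enters only through the exponents of $q_F$ in the integrand. Thus one obtains, uniformly for $F \in \mathrm{Loc}_{\gg}$ and $(s, \sigma)$ in a suitable domain of convergence, an expression
\[
I_F(s, \sigma) = \sum_{i=1}^{\unifmax} m_i(k_F) \, \widetilde{W}_i(q_F, q_F^{-s}, q_F^{-\sigma}),
\]
where the formulae $\psi_i$ defining $m_i$ are independent of $s$ and $\sigma$, and each $\widetilde{W}_i \in \Q(X,Y,Z)$ has the shape
\[
\widetilde{W}_i(X,Y,Z) = \frac{\sum_{j=1}^{M_i} \kappa^{(i)}_j X^{A_{ij}^{(0)}} Y^{B_{ij}} Z^{A_{ij}^{(1)}}}{\prod_{j=1}^{N_i} \bigl(1 - X^{C_{ij}^{(0)}} Y^{D_{ij}} Z^{C_{ij}^{(1)}}\bigr)}
\]
with all exponents in $\Z$. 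This form arises because both numerator and denominator monomials come from the parametric cell-decomposition, and each parameter appears only as an integer-linear contribution to an exponent of $q_F$.

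Specializing $\sigma = -d$ (equivalently, $Z = X^d$) yields
\[
I_F(s, -d) = \int_{Y_F} q_F^{\Phi_1(y) s + \Phi_2(y) d + \Phi_3(y)} \, d\mu = \sum_{i=1}^{\unifmax} m_i(k_F) \, W_i^{(d)}(q_F, q_F^{-s}),
\]
where $W_i^{(d)}(X,Y) = \widetilde{W}_i(X, Y, X^d)$ has precisely the form~\eqref{equ:w.form}. The hypothesis guarantees convergence of the left-hand side for $\mathrm{Re}\,s > \alpha$, so the equality holds there; the specialization $\sigma = -d$ is legitimate whenever the original rational identity in $(s,\sigma)$ is valid, and the domain of validity contains a right half-plane in $s$ by hypothesis.

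The main obstacle is justifying the multi-parameter version of~\cite[Theorem~B]{BDOP/11}. If this is not already available in the stated form, it is obtained by a straightforward adaptation of the one-parameter proof, since the Denef--Pas cell decomposition and the Presburger geometric-sum evaluations are insensitive to how many complex parameters weight the integer-valued exponents; one simply tracks monomials in $q_F^{-s}$ and $q_F^{-\sigma}$ in parallel. A minor bookkeeping issue is ensuring that the numerator coefficients $\kappa^{(i)}_j$ lie in $\Z$ rather than just $\Q$, but this follows from the integrality of the combinatorial formulas arising in the Presburger evaluation.
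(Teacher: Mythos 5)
Your proposal follows essentially the same route as the paper: the paper's proof likewise re-runs the argument of \cite[Theorem~B]{BDOP/11} (uniform cell decomposition, Pas quantifier elimination, orthogonality of the sorts, then Cluckers' Presburger decomposition and geometric-series summation), with the new observation being exactly yours, namely that $d$ enters only as an integer-linear contribution to exponents of $q_F$ while all the definable and combinatorial data are independent of $d$; carrying the fixed integer $d$ through the proof is equivalent to your device of a second formal variable $\sigma$ specialized via $Z=X^{d}$. Two small repairs: fix the sign bookkeeping (setting $\sigma=-d$ does not reproduce the integrand's $+\Phi_{2}(y)d$; choose the sign of $\sigma$ consistently with $Z=X^{d}$), and justify the specialization as the paper does, by positivity of the terms at real $s>\alpha$ and at the actual value of $d$, which guarantees that each Presburger geometric series converges at that point and that no specialized denominator factor $1-X^{C_{ij}^{(0)}+dC_{ij}^{(1)}}Y^{D_{ij}}$ degenerates.
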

\begin{proof}
We follow the proof of~\cite[Theorem~B]{BDOP/11}, which uses methods
of Pas~\cite{Pas/89}. If $F\in\mathrm{Loc}_{\gg}$, then for all $s\in\mathbb{C}$
with $\mathrm{Re} \, s> \alpha$ we have

\begin{equation} \label{eq:(0.1)}
\int_{Y_{F}}q_{F}^{\Phi_{1}(y)s+\Phi_{2}(y)d+\Phi_{3}(y)}d\mu=\sum_{\bfn = (n_{1},n_{2},n_{3}) \in \mathbb{Z}^{3}} q_{F}^{n_{1}s+n_{2}d+n_{3}}\mu\left(A_{\bfn,F}\right),
\end{equation}
where 
$A_{\bfn,F}=\{y \in Y_{F}:\Phi_{i}(y)=n_{i}\text{ for }i \in [3] \}$.

As in the proof of~\cite[Theorem~B]{BDOP/11}, after $m$ applications
of~\cite[Theorem~4.1]{BDOP/11} we obtain a uniform expression for
the measures $\mu(A_{\bfn,F})$, valid for all $F\in\mathrm{Loc}_{\gg}$; note that the bound $M$ defining $\mathrm{Loc}_\gg$ may have increased.
Rewriting the formulas in this expression using quantifier elimination~\cite[Theorem~4.1]{Pas/89} (see also~\cite[Theorem~3.7]{BDOP/11})
and the model-theoretic orthogonality of the sorts $\mathrm{VG}$
and $\mathrm{RF}$~\cite[Lemma 5.3]{Pas/89},
for some natural numbers $e,N,\nu_{1},...,\nu_{N}$, we can express~\eqref{eq:(0.1)}
as:
\begin{multline} \label{eq:(0.2)}
q_F^{-m} \sum_{i = 1}^N \sum_{j = 1}^{\nu_i} \sum_{\xi \in k_F^e \atop k_F \models \psi_{ij}(\xi)} \sum_{(l_1, \dots, l_m, n_1, n_2, n_3) \in \Z^{m+3} \atop \Z \models \theta_{ij}(l_1, \dots, l_m, n_1, n_2, n_3)} q_F^{n_1 s + n_2 d + n_3 - l_1 - \cdots - l_m} = \\
q_{F}^{-m}\sum_{i=1}^{N}\sum_{j=1}^{\nu_{i}}m_{ij}(k_{F})\sum_{
(l_{1},...,l_{m},n_{1},n_{2},n_{3}) \in\mathbb{Z}^{m+3} \atop
\mathbb{Z}\models\theta_{ij}(l_{1},...,l_{m},n_{1},n_{2},n_{3})
}q_{F}^{n_{1}s+n_{2}d+n_{3}-l_{1}-...-l_{m}},
\end{multline}
where the $\theta_{ij}$ are $\mathfrak{L}_{\mathrm{Pres}}$-formulae, the $\psi_{ij}$ are formulae in the language of rings, 
and $m_{ij}= | \{\xi\in k_{F}^{e}:k_{F} \models \psi_{ij}(\xi)\} |$.  We emphasize that the formulae $\theta_{ij}$ and $\psi_{ij}$ do not depend on $d$.
It remains to show that, for fixed $i$ and $j$, the inner summation in~\eqref{eq:(0.2)}
is given by a rational function in $q_{F}$ and $q_{F}^{-s}$ of the form~\eqref{equ:w.form}.

Let $S_{ij}$ denote the $\mathfrak{L}_{\mathrm{Pres}}$-definable
subset of $\mathbb{Z}^{m+3}$ defined by $\theta_{ij}$.  Then by~\cite[Theorem~2]{Cluckers/03} (see also~\cite[Theorem~2.1.9]{CGH/14}),
there is a finite partition of $S_{ij}$ into $\mathfrak{L}_{\mathrm{Pres}}$-definable
parts such that, for every part $A$, there exists a Presburger linear bijection $\rho:A\rightarrow \N_0^u$~\cite[Definition~1]{Cluckers/03}, where
$u$ depends only on the part $A$.  Changing coordinates using $\rho$,
we have 
\begin{equation} \label{equ:star}
\sum_{(l_{1},...,l_{m},n_{1},n_{2},n_{3}) \in A}q_{F}^{n_{1}s+n_{2}d+n_{3}-\sum l_{i}}=q_{F}^{b_{1}s+b_{2}d+b_{3}}\sum_{(e_{1},...,e_{u})\in \N_0^u}q_{F}^{\sum_{\ell=1}^{u}(a_{1,\ell}s+a_{2,\ell}d+a_{3,\ell})e_{\ell}}
\end{equation}
for some rational numbers $b_{1},b_{2},b_{3}\in\mathbb{Q}$ and $\{a_{1,\ell}, a_{2,\ell}, a_{3,\ell} \}\in\mathbb{Q}$.
Since $A \subseteq S_{ij}$, and~\eqref{eq:(0.2)} is a sum of positive terms for all real $s > \alpha$,
we conclude that~\eqref{equ:star} converges for all real $s > \alpha$ and hence for all $s \in \C$ in the right half-plane $\mathrm{Re} \, s > \alpha$.  

Since $\rho$ induces a bijection between the summands on the right-hand side of~\eqref{equ:star} and those on the left-hand side, we find that $b_j + \sum_{\ell = 1}^u a_{j,\ell} e_\ell \in \Z$ for all $j \in [3]$ and all $(e_1, \dots, e_u) \in \N_0^u$. 
This immediately implies that $b_j \in \Z$ and $a_{j, \ell} \in \Z$ for every $j \in [3]$ and $\ell \in [u]$.
For all $\ell \in [u]$ the variable $e_\ell$ in~\eqref{equ:star} runs over $\N_0$, so we must have $a_{1, \ell} s + a_{2, \ell}d + a_{3, \ell} < 0$ for all real $s > \alpha$.  Summing the geometric series in these variables, we may rewrite the right-hand side of~\eqref{equ:star} as
\begin{equation} \label{equ:app.rat.funct}
\frac{q_{F}^{b_{1}s+b_{2}d+b_{3}}}{\prod_{\ell=1}^{u}\left(1-q_F^{a_{1,\ell}s+a_{2,\ell}d+a_{3,\ell}}\right)}.
\end{equation}
Note that~\eqref{equ:app.rat.funct} converges for all $s \in \C$ in the right half-plane $\mathrm{Re} \, s > \alpha$ and that the partition of $S_{ij}$, and hence the constants $a_{j, \ell}, b_j$ appearing in~\eqref{equ:app.rat.funct}, are independent of $d$.  Our claim follows.
\end{proof}

We use the notation of Section~\ref{sec:consequences} freely.  In particular, recall that if $\mathcal{L}$ is a Lie lattice, then we denote $L = \mathcal{L} \tensor_\Z \Q$, and that we write $\mathcal{L}_p = \mathcal{L} \tensor_\Z \Z_p$ and $L_p = \mathcal{L} \tensor_\Z \Q_p$.  

\begin{proof}[Proof of Theorem~\ref{thm:uniformity}]
Let $K$ be a number field of degree $d$.  
If $p$ is a rational prime such that $L_p$ is $Z(L_p)$-rigid, then Proposition~\ref{pro:rigid.computation} implies that $ \zeta^\wedge_{\mathcal{L} \tensor \mathcal{O}_K, p}(s)$ is a product, parametrized by the primes $\p | p$, of the following factors:
\begin{multline} \label{equ:gj.integral}
\int_{\mathbf{H}^+(K_\p)} \widetilde{\theta}^{K_\p}(h) |\det \varepsilon^{K_\p}(h)|_{\p}^{-d \dimL^\prime} |\det h|^s_{\p} d \mu_{\mathbf{H}(K_\p)}(h) = \\ 
 \int_{\psi_2^{K_\p}(\mathbf{G}(K_\p)/\mathbf{J}(K_\p)) \cap \mathbf{M}_{\overdim}(\mathcal{O}_{\p})} | \det \overline{g} |^s_{\p} |\det \overline{\varepsilon}^{K_\p} (\overline{g}) |^{s-d \dimL^\prime}_{\p} d \mu_{\mathbf{G}(K_\p) / \mathbf{J}(K_\p)} (\overline{g}).
 \end{multline}
Here $\overline{\varepsilon}^{K_\p} : (\aAut L_p / Z(L_p))(K_\p) \to (\aAut Z(L_p))(K_\p)$ is the natural map, which is well-defined since $Z(L_p) \leq \gamma_2 L_p$.

Our strategy is the following.  Analogously to~\cite[\S2]{GSS/88}, we recast the integrals of~\eqref{equ:gj.integral} as the integral~\eqref{equ:unif.integral}, which is defined uniformly in the Denef-Pas language $\mathfrak{L}_{\mathrm{DP}}$, as well as in the generalized Denef-Pas language $\mathfrak{L}_{\mathrm{gDP}}$, for all $F \in \mathrm{Loc}$.  Theorem~\ref{thm:bdop} applies for $F \in \mathrm{Loc}_\gg$ and immediately yields the second part of Theorem~\ref{thm:uniformity}.  To treat small primes and deduce the first part of Theorem~\ref{thm:uniformity}, we work in $\mathfrak{L}_{\mathrm{gDP}}$ and use results of~\cite{CluckersHalupczok/18}.

By the same argument as in the proof of Proposition~\ref{pro:padic.integral}, the integral of~\eqref{equ:gj.integral} equals
\begin{equation} \label{equ:reinterpretation}
\sum_{\mathcal{M} \in \Lambda(\mathcal{L}_\p)} [\mathcal{L}_\p : \mathcal{M}]^{-s} [Z(\mathcal{L}_\p) : \mathcal{M} \cap Z(\mathcal{L}_\p)]^{d \dimL^\prime},
\end{equation}
where $\mathcal{L}_\p = \mathcal{L}_p \otimes_{\Z_p} \mathcal{O}_\p$ and $\Lambda(\mathcal{L}_\p)$ is the set of finite-index $\mathcal{O}_\p$-submodules of $\mathcal{L}_\p$ that are isomorphic to $\mathcal{L}_\p$ as Lie lattices over $\mathcal{O}_\p$.  We now apply the ideas of~\cite[\S2]{GSS/88}, adapted to our linearized setting.  

Fix a $\Z$-basis $(b_1, \dots, b_\dimL)$ of $\mathcal{L}$ such that $\langle b_{\overdim + 1}, \dots, b_\dimL \rangle_\Z = Z(\mathcal{L})$ and $\langle b_{\dimL^\prime + 1}, \dots, b_{\dimL} \rangle_\Z = \{ x \in \mathcal{L} : \exists a \in \Z \setminus \{ 0 \}, \, ax \in \gamma_2 \mathcal{L} \}$.  For any rational prime $p$ satisfying the hypotheses of Theorem~\ref{thm:uniformity}, the induced $\Z_p$-basis of $\mathcal{L}_p$ meets the requirements of Section~\ref{sec:consequences}.  Now let $\p | p$, and let $\mathcal{M}$ be a finite-index $\mathcal{O}_\p$-sublattice of $\mathcal{L}_\p$.  We say that an $\mathcal{O}_\p$-basis $(c_1, \dots, c_\dimL)$ of $\mathcal{M}$ is a \emph{good basis} if, for every $i \in [ \dimL ]$, we have $\langle c_i, \dots, c_\dimL \rangle_{\mathcal{O}_\p} = \mathcal{M} \cap \mathcal{L}_{\p, i}$, where $\mathcal{L}_{\p, i} = \langle b_i, \dots, b_\dimL \rangle_{\mathcal{O}_\p} \subseteq \mathcal{L}_\p$.  Set $\mathcal{L}_{\p, \dimL + 1} = (0)$.  

Let $T_\dimL$ be the additive algebraic group of upper triangular $\dimL \times \dimL$ matrices, defined over $\Z$.  For any $F \in \mathrm{Loc}$ we identify $T_\dimL (F)$ with $F^{\binom{n+1}{2}}$ in the natural way and let $\mu$ be the additively invariant measure on $T_\dimL (F)$ normalized so that $\mu(T_\dimL (\mathcal{O}_F)) = 1$.  For $\mathcal{M} \in \Lambda(\mathcal{L}_\p)$ as above, let $T(\mathcal{M}) \subset T_\dimL (\mathcal{O}_\p)$ be the set of all matrices $C \in T_\dimL (\mathcal{O}_\p)$ such that the rows of $C$, interpreted as elements of $\mathcal{L}_\p$ with respect to the basis $(b_1, \dots, b_\dimL)$, constitute a good basis of $\mathcal{M}$.  Analogously to~\cite[Lemma~2.5]{GSS/88}, we find that $T(\mathcal{M})$ is an open subset of $T_\dimL (\mathcal{O}_\p)$ and that 
\begin{equation} \label{equ:measure}
 \mu (T(\mathcal{M})) = (1 - q_\p^{-1})^\dimL q_\p^{- \sum_{i = 1}^\dimL i \lambda_i},
\end{equation}
where
$$ q_\p^{\lambda_i} = [ \mathcal{L}_{\p,i} : (\mathcal{M} \cap \mathcal{L}_{\p,i}) + \mathcal{L}_{\p,i+1}]$$
for all $i \in [ \dimL ]$.  Indeed, given $\mathcal{M}$, we can construct $T(\mathcal{M})$ inductively from the bottom row up.  For any $i \in [ \dimL ]$ the $i$-th row of a matrix $C \in T(\mathcal{M})$ must have a leading term of (additive) valuation $v_\p(c_{ii}) = \lambda_i$, and, if the lower rows have been constructed already, then the $i$-th row is well-defined modulo addition of $\mathcal{O}_\p$-multiples of the lower rows.  In particular, for any $C \in T(\mathcal{M})$, the following holds:
\begin{alignat}{2} \label{equ:integral.rewrite}
[\mathcal{L}_\p : \mathcal{M}] & =  q_{\p}^{\lambda_1 + \cdots + \lambda_\dimL} & = & |c_{11} |_{\p}^{-1} \cdots | c_{\dimL \dimL} |_{\p}^{-1} \\ \nonumber
[Z(\mathcal{L}_\p) : \mathcal{M} \cap Z(\mathcal{L}_\p)] & =  q_{\p}^{\lambda_{\overline{\dimL} + 1} + \cdots + \lambda_\dimL} & = & |c_{\overline{\dimL} + 1, \overline{\dimL} + 1} |_{\p}^{-1} \cdots | c_{\dimL \dimL} |_{\p}^{-1}.
\end{alignat}

If $C \in T_\dimL (K_\p)$, denote the $\ell$-th row of $C$ by $\mathbf{c}_\ell \in \mathcal{L}_\p$.  Then $C \in T_\dimL (\mathcal{O}_\p)$ if and only if $v_\p (c_{ij}) \geq 0$ for all $(i,j) \in [\dimL]^2$, whereas the $\mathcal{O}_\p$-span of the rows of $C$ is an endomorphic image of the Lie lattice $\mathcal{L}_\p$ if and only if there exist $a_{i\ell} \in \mathcal{O}_\p$ such that
$$ \left[ \sum_{\ell = 1}^{\dimL} a_{i \ell} \mathbf{c}_\ell, \sum_{\ell = 1}^{\dimL} a_{j \ell} \mathbf{c}_{\ell} \right] = \sum_{k = 1}^{\dimL} a^\prime_{ijk} \sum_{\ell = 1}^{\dimL} a_{k \ell} \mathbf{c}_{\ell},$$
where $a^\prime_{ijk} \in \Z$ are the structure constants satisfying $[b_i, b_j] = \sum_{k = 1}^{\dimL} a^\prime_{ijk} b_k$.  Clearly these conditions are expressed by an $\mathfrak{L}_{\mathrm{DP}}$-formula $\psi(C)$.  Thus the collection $Y = (Y_F)_{F \in \mathrm{Loc}}$, where $Y_{F} = \left\{ C \in F^{\binom{\dimL + 1}{2}} : F \models \psi(C) \wedge (\det C \neq 0) \right\}$, is an $\mathfrak{L}_{\mathrm{gDP}}$- and $\mathfrak{L}_{\mathrm{DP}}$-definable set.
Observe that $Y_{K_\p} = \bigcup_{\mathcal{M} \in \Lambda(\mathcal{L}_\p)} T(\mathcal{M})$, where the union is disjoint.  Hence by~\eqref{equ:measure} and~\eqref{equ:integral.rewrite} the expression in~\eqref{equ:reinterpretation} may be rewritten as
\begin{multline} \label{equ:unif.integral}
\sum_{\mathcal{M} \in \Lambda(\mathcal{L}_\p)} \int_{T(\mathcal{M})} (\mu(T(\mathcal{M})))^{-1} [\mathcal{L}_\p : \mathcal{M}]^{-s} [Z(\mathcal{L}_\p) : \mathcal{M} \cap Z(\mathcal{L}_\p)]^{d \dimL^\prime} d \mu =
\\
(1 - q_\p^{-1})^{- \dimL} \int_{Y_{K_\p}} \left( \prod_{i = 1}^{\overline{\dimL}} |c_{ii} |_{\p}^{s - i} \right) \left( \prod_{i = \overline{\dimL} + 1}^{\dimL} |c_{ii} |_{\p}^{s - \dimL^\prime d - i} \right) d \mu.
\end{multline}

This integrand is a function of $\mathcal{C}_s$-class, in the terminology of~\cite[\S4.5]{CluckersHalupczok/18}, and the proof of~\cite[Corollary~4.5.2]{CluckersHalupczok/18} applies.  It follows that there exist universal $N \in \N$, $b \in \Z$, and a non-zero $c \in \Q$, and a collection of pairs of integers $(a_i, b_i)$, where $i \in [N]$ and $b_i \neq 0$, such that for every $F \in \mathrm{Loc}^0$ the following holds:
\begin{equation*} \label{equ:cluck.halup}
\frac{1}{(1 - q_F^{-1})^\dimL} \int_{Y_F} \left( \prod_{i = 1}^{\overline{\dimL}} |c_{ii}|_F^{s-i} \right) \left( \prod_{i = \overline{\dimL} + 1}^{\dimL} |c_{ii}|_F^{s - \dimL^\prime d - i} \right) d \mu = 
 \frac{P_F (q_F^{-s})}{q_F^{(b + v_F(c))s} \prod_{i = 1}^N (1 - q_F^{a_i + b_i s})},
\end{equation*}
where $P_F \in \Q[X]$ is a polynomial.  Multiplying the numerator and denominator by $-q_F^{-a_i - b_is}$ for all $i \in [N]$ such that $b_i > 0$, we may assume that $b_i < 0$ for all $i$.
If $F = K_\p$, where $K$ is a number field of degree $d$ and $\p \in V_K$ divides a rational prime $p$ such that $L_p$ is $Z(L_p)$-rigid, then we have seen that the left-hand side is equal to~\eqref{equ:reinterpretation}, which is a power series in $q_F^{-s}$ with constant term $1$.  Therefore, in this case $X^{b + v_F(c)} P_F(X)$ must be a polynomial with constant term $1$.
The first claim of Theorem~\ref{thm:uniformity} follows.

The second part of Theorem~\ref{thm:uniformity} follows from~\eqref{equ:unif.integral} and  Theorem~\ref{thm:bdop}.  
\end{proof}

\begin{rem} \label{rem:variants}
We expect that the results of~\cite[\S4.5]{CluckersHalupczok/18} could likely be extended to integrands of the form~\eqref{equ:unif.integral}, where $d$ is treated as a formal variable as in the proof of Theorem~\ref{thm:bdop}.  
We could then obtain a slightly stronger version of Theorem~\ref{thm:uniformity}(1):  there exists a universal $N \in \N$ and triples $(a_i, b_i, c_i) \in \Z^3$ for $i \in [N]$ such that for all number fields $K$, \emph{all} primes $p$ such that $L_p$ is $Z(L_p)$-rigid, and all primes $\p | p$ of $K$, the fine local factor of $ \zeta^\wedge_{\mathcal{L} \tensor \mathcal{O}_K, p}(s)$ has denominator $\prod_{i = 1}^N (1 - q_\p^{a_i + c_i [K:\Q] + b_i s})$.   A more refined description of the numerator in Theorem~\ref{thm:uniformity}(1) may also be obtained by considering the characterization of the class $\mathcal{C}_s(\ast)$~\cite[\S4.5]{CluckersHalupczok/18}, where $\ast$ is a point.
\end{rem}

\section{Calculations for base extensions of Lie rings} \label{sec:examples}
\subsection{Free nilpotent Lie lattices} \label{sec:free}
Let $\mathcal{F}_{c,g}$ be the free nilpotent Lie lattice over $\Z$ of class $c$ generated by $g$ elements.  The pro-isomorphic zeta functions of $\mathcal{F}_{c,g}$ and its base extensions were determined by Grunewald, Segal, and Smith~\cite[Theorem~7.1]{GSS/88}.  We compute them here as a first illustration of our method, noting that our argument is essentially equivalent to that of~\cite{GSS/88}, although expressed in somewhat different terms.

Fix a $\Z$-basis of $\mathcal{F}_{c,g}$ compatible with a decomposition $F_{c,g} = \mathcal{F}_{c,g} \tensor_\Z \Q = U_1 \oplus \cdots \oplus U_c$ such that $U_i \oplus \cdots \oplus U_c = \gamma_i F_{c,g}$ for all $i \in [c]$; see~\cite[\S 5]{Hall/69} for constructions.  In particular, our chosen basis of $U_1$ is a collection $x_1, \dots, x_g$ of elements of $\mathcal{F}_{c,g}$ that generate it as a Lie lattice.
For every $i \in [c]$, let $m_i$ denote the $\Z$-rank of $\gamma_i \mathcal{F}_{c,g} / \gamma_{i+1} \mathcal{F}_{c,g}$.  By a result of Witt~\cite[Theorem~5.7]{Hall/69}, we have
$$ m_i = \frac{1}{i} \sum_{j | i} \mu(j) g^{i/j},$$
where $\mu$ is the M\"{o}bius function.  Note that $g | m_i$ for all $i \in [c]$.   
Let $E$ be any field of characteristic zero.  By freeness, any $E$-linear map $\varphi : \langle x_1, \dots, x_g \rangle_E \to F_{c,g} \tensor_\Q E$ inducing an isomorphism of vector spaces $\langle x_1, \dots, x_g \rangle_E \simeq (F_{c,g} \tensor_\Q E)/\gamma_2 (F_{c,g} \tensor_\Q E)$ can be extended to an $E$-automorphism $\varphi \in (\aAut F_{c,g})(E)$.  Since we have fixed a basis, we get an embedding $\aAut F_{c,g} \hookrightarrow \mathbf{GL}_{\sum_{i = 1}^c m_i}$ whose image consists of block upper-triangular matrices of the form
$$ \left( \begin{array}{cccc}
A & B_2 & \cdots & B_c \\
& A^{(2)} & \ast & \ast \\
& & \ddots & \vdots \\
& & & A^{(c)} 
\end{array} \right),$$
where $A \in \mathbf{GL}_g$ and $B_i \in \mathbf{M}_{g, m_i}$ for $i \in [2,c]$ are arbitrary and determine all the other blocks.  Every diagonal block $A^{(i)} \in \mathbf{GL}_{m_i}$ depends only on $A$ via the natural map $\aAut F_{c,g} / \gamma_2 F_{c,g} \to \aAut \gamma_i F_{c,g} / \gamma_{i+1} F_{c,g}$, whereas the off-diagonal blocks depend also on $B_2, \dots, B_c$.  Observe that $\det A^{(i)} = (\det A)^{i m_i / g}$ for all $i \in [2,c]$.  Indeed, the map $(A \mapsto \det A^{(i)}) \in \mathrm{Hom}(\mathbf{GL}_g, \mathbf{G}_m)$ must be some power of the determinant.  To find the power, consider an arbitrary diagonal matrix $A = \mathrm{diag}(t_1, \dots, t_g) \in \mathrm{GL}_g(\Q)$.  Now $\gamma_i F_{c,g} / \gamma_{i+1} F_{c,g}$ is spanned by classes representing elements of the form $[x_{j_1}, [x_{j_2}, \cdots, [x_{j_{i-1}}, x_{j_i}] \cdots ]]$.  With respect to such a basis, the induced automorphism of $\gamma_i F_{c,g} / \gamma_{i+1} F_{c,g}$ is given by a diagonal matrix with products $t_{j_1} \cdots t_{j_i}$ on the diagonal.  Thus its determinant is a product of $i m_i$ diagonal elements of $A$, and our claim follows.

For any prime $p$, set $L_p = F_{c,g} \tensor_\Q \Q_p$.  Then $L_p$ is $Z(L_p)$-rigid by~\cite[Theorem~1]{Segal/89}, which is proved by means of Segal's rigidity criterion (quoted here as Corollary~\ref{cor:segal}); indeed,~\cite{Segal/89} appears to have been motivated by this example.  
It is clear from the structure of $\aAut F_{c,g}$ that, for any prime $p$, the assumptions of Section~\ref{sec:assumptions} are satisfied for the decomposition of $L_p$ induced by $F_{c,g} = U_1 \oplus  \cdots \oplus U_c$.  The lifting condition holds because, for any $i \in [2,c-1]$, specifying a class $\overline{g}$ in the notation of Assumption~\ref{lifting.condition} amounts to choosing $A, B_2, \dots, B_i$.  Thus a lifting $\gamma$ is determined by these data and arbitrary matrices $B_{i+1}, \dots, B_c$ with elements in $\Z_p$.  In particular we may take the morphisms $\kappa_i$ of Assumption~\ref{polynomial.lifting} to correspond to the choice of zero matrices for $B_{i+1}, \dots, B_c$.  Hence the hypotheses of Corollary~\ref{cor:rigid.lifting} hold.
We are now in a position to recover~\cite[Theorem~7.1]{GSS/88}.  Recall that $\zeta_q(s) = (1 - q^{-s})^{-1}$.

\begin{thm}[Grunewald-Segal-Smith] \label{pro:zeta.free}
Set $\alpha = \frac{1}{g} \sum_{i = 1}^c im_i$.  Let $d \in \N$ and define $\beta(d) = 2 m_2 + \dots + (c-1) m_{c-1} + dc m_c$.
Let $K$ be any number field of degree $d$.  
Then $\zeta^\wedge_{\mathcal{F}_{c,g} \tensor \mathcal{O}_K}(s) = \prod_{\p \in V_K} W_{\mathcal{F}_{c,g},d}(q_\p, q_\p^{-s})$, where
\begin{equation} \label{equ:local.fcg}
W_{\mathcal{F}_{c,g},d}(X,Y) = \prod_{j = 0}^{g-1} \frac{1}{1 - X^{\beta(d) + j} Y^\alpha}.
\end{equation}
The functional equation
\begin{equation} \label{equ:free.funct.eq}
W_{\mathcal{F}_{c,g},d}(X^{-1}, Y^{-1}) = (-1)^g X^{g \beta(d) + \binom{g}{2}}Y^{g \alpha} W_{\mathcal{F}_{c,g},d}(X,Y)
\end{equation}
holds.  Let $\zeta_K(s)$ denote the Dedekind zeta function of $K$.  Then
\begin{equation*}
\zeta^\wedge_{\mathcal{F}_{c,g} \tensor \mathcal{O}_K}(s) = \prod_{j = 0}^{g-1} \zeta_K(\alpha s - \beta(d) - j);
\end{equation*}
the abscissa of convergence is $\alpha^\wedge_{\mathcal{F}_{c,g} \tensor \mathcal{O}_K} = \frac{\beta(d) + g}{\alpha}$.  
\end{thm}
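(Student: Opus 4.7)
The plan is to apply Corollary~\ref{cor:rigid.lifting}, whose hypotheses are verified in the discussion preceding the theorem: Segal's rigidity theorem guarantees that $L_p = F_{c,g} \tensor_{\Q} \Q_p$ is $Z(L_p)$-rigid, while the block upper-triangular form of $\aAut F_{c,g}$ makes Assumptions~\ref{first.assumption},~\ref{second.assumption}, and~\ref{polynomial.lifting} transparent. The reductive part $\mathbf{H}$ is identified with $\mathbf{GL}_g$, acting on $F_{c,g}$ by freely extending a linear automorphism $A$ of the generators $x_1, \ldots, x_g$. Since each diagonal block $A^{(i)}$ is polynomial in $A$ with integer coefficients, $\mathbf{H}^+(K_\p) = \mathrm{GL}_g(K_\p) \cap M_g(\mathcal{O}_\p)$, and $|\det h|_\p = |\det A|_\p^\alpha$ by the identity $\det A^{(i)} = (\det A)^{im_i/g}$ established earlier in the excerpt.

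The central computation is $\theta_j^{K_\p}(h) = |\det A|_\p^{-(j+1)m_{j+1}}$ for each $j \in [c-1]$. The quotient $N_j/N_{j+1}$ is parametrized by the $g \times m_{j+1}$ matrix $B_{j+1}$ recording the $U_{j+1}$-components of $\varphi(x_k) - x_k$ for $\varphi = I + y \in N_j$ and $k \in [g]$. Because any basis element $e_\ell \in U_k$ with $k \geq 2$ is an iterated bracket of $x_a$'s, substituting $\varphi(x_a) = x_a + y_a$ with $y_a \in \gamma_{j+1}(L_p \tensor_{\Q_p} K_\p)$ produces $\varphi(e_\ell) - e_\ell \in \gamma_{j+k}(L_p \tensor_{\Q_p} K_\p) \subseteq V_{j+2}$, so the contribution from these rows vanishes modulo $V_{j+2}$. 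Hence $\tau(h)(\overline{n})$, viewed as a $d_j \times m_{j+1}$ matrix, has first $g$ rows equal to $B_{j+1} A^{(j+1)}$ and all other rows zero. After identifying $\mu_{N_j/N_{j+1}}$ with the additive Haar measure on $M_{g, m_{j+1}}(K_\p)$ normalized so that $M_{g, m_{j+1}}(\mathcal{O}_\p)$ has unit measure, the change of variables $C = B_{j+1} A^{(j+1)}$ has Jacobian $|\det A^{(j+1)}|_\p^g = |\det A|_\p^{(j+1)m_{j+1}}$, yielding the claimed formula.

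Substituting into Corollary~\ref{cor:rigid.lifting}, the integrand simplifies to $|\det A|_\p^{\alpha s - \beta(d)}$ by the very definition of $\beta(d)$, and the integral over $\mathrm{GL}_g^+(\mathcal{O}_\p)$ evaluates by Example~\ref{exm:abelian} to
\[
\prod_{j=0}^{g-1} \frac{1}{1 - q_\p^{\beta(d) + j - \alpha s}} = W_{\mathcal{F}_{c,g},d}(q_\p, q_\p^{-s}).
\]
This establishes $\zeta^\wedge_{\mathcal{F}_{c,g} \tensor \mathcal{O}_K, p}(s) = \prod_{\p \mid p} W_{\mathcal{F}_{c,g},d}(q_\p, q_\p^{-s})$, and the global Euler decomposition yields the factorization over $V_K$. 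The functional equation~\eqref{equ:free.funct.eq} is verified on each factor using $(1 - z^{-1})^{-1} = -z(1 - z)^{-1}$ together with $\sum_{j=0}^{g-1} j = \binom{g}{2}$, while the expression in Dedekind zeta values is immediate from the Euler product of $\zeta_K$. The stated abscissa of convergence $(\beta(d) + g)/\alpha$ is dictated by the rightmost pole, furnished by $\zeta_K(\alpha s - \beta(d) - (g-1))$ at $s = (g + \beta(d))/\alpha$.

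The main technical obstacle is the verification of $\theta_j^{K_\p}$: one must execute the degree count in the lower central series forcing most entries of $\tau(h)(\overline{n})$ to vanish modulo $V_{j+2}$, and confirm that the paper's normalization of $\mu_{N_j/N_{j+1}}$ matches additive Haar on $M_{g, m_{j+1}}(K_\p)$ under the parametrization by $B_{j+1}$.
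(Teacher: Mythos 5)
Your proposal is correct and takes essentially the same route as the paper's proof: verify the hypotheses of Corollary~\ref{cor:rigid.lifting} via Segal's rigidity result and the block structure of $\aAut F_{c,g}$, identify $\mathbf{H}\simeq\mathbf{GL}_g$ with $|\det h|_\p=|\det A|_\p^{\alpha}$, compute $\theta_j^{K_\p}(h)=|\det A|_\p^{-(j+1)m_{j+1}}$, and reduce to the $\mathrm{GL}_g$ integral of Example~\ref{exm:abelian}, from which the functional equation, Dedekind zeta expression, and abscissa follow exactly as you state. The only differences are cosmetic: you supply the lower-central-series degree count behind the $\theta_j$ formula, which the paper dismisses as easy to see, and your domain ``$\mathrm{GL}_g^+(\mathcal{O}_\p)$'' should read $\mathrm{GL}_g^+(K_\p)=\mathrm{GL}_g(K_\p)\cap M_g(\mathcal{O}_\p)$.
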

\begin{proof}
We use the notation of Section~\ref{sec:assumptions} freely.
Clearly an isomorphism $\mathbf{H} \simeq \mathrm{GL}_n$ of algebraic groups is afforded by the map 
$$h = \mathrm{diag}(A, A^{(2)}, \dots, A^{(c)}) \mapsto A.$$  
Observe that $\det h = (\det A)^\alpha$.  Moreover, for any finite extension $F / \Q_p$ and any $i \in [c-1]$, it is easy to see that if $h \in \mathbf{H}^+(F)$, then~\eqref{equ:def.theta} amounts to
\begin{equation*}
\theta_i^F (h) = \mu ( \{ B_{i+1} \in M_{g, m_{i+1}} (F) : B_{i+1} A^{(i+1)} \in M_{g, m_{i+1}}(\mathcal{O}_F) \} ),
\end{equation*}
where $\mu (M_{g, m_{i+1}}(\mathcal{O}_F)) = 1$.  As there are $g$ independent conditions on each of the rows of $B_{i+1}$, we get
$ \theta_i^F (h)  = | \det A^{(i+1)} |_F^{-g} = | \det A |_F^{-(i+1) m_{i+1}}$.
We now have all the ingredients to apply Corollary~\ref{cor:rigid.lifting}:
\begin{equation*}
\zeta^\wedge_{\mathcal{F}_{c,g} \tensor \mathcal{O}_K,p}(s) = \prod_{\p | p} \int_{\mathrm{GL}_g^+ (K_\p)} | \det A |_{\p}^{\alpha s - \beta(d)} d \mu_{\mathrm{GL}_g (K_\p)} (A).
\end{equation*}
Now~\eqref{equ:local.fcg} follows from Example~\ref{exm:abelian}, and the functional equation is clear.  The abscissa of convergence of $\zeta_K(s)$ is $1$, and the claim regarding $\alpha^\wedge_{\mathcal{F}_{c,g} \tensor \mathcal{O}_K}$ follows.
\end{proof}

\begin{rem} \label{rem:graded.conj}
It is clear that $\mathcal{F}_{c,g}$ is naturally graded in the sense of Section~\ref{sec:intro.funct.eq}.  Observe that $\rk_\Z \gamma_i \mathcal{F}_{c,g} = m_i + m_{i+1} + \cdots + m_c$ for every $i \in [c]$.  Thus $\mathrm{wt}(\mathcal{F}_{c,g}) = g \alpha$.  It follows from~\eqref{equ:free.funct.eq} and~\eqref{equ:unram.funct.eq} that all the base extensions $\mathcal{F}_{c,g} \tensor \mathcal{O}_K$ satisfy Conjecture~\ref{graded.conj}; in each case, the finitely many excluded primes are exactly the ones that ramify in $K$. 
One verifies similarly that the Lie lattices considered in Sections~\ref{sec:higher.heisenberg}, \ref{sec:lmn}, and~\ref{sec:max.class} are naturally graded and that Conjecture~\ref{graded.conj} holds for all their base extensions.
\end{rem}

For comparison, we briefly survey what is known about the ideal and subring zeta functions of the Lie lattices $\mathcal{F}_{c,g} \tensor_\Z \mathcal{O}_K$.  In contrast to the pro-isomorphic situation, where {\emph{all}} local factors of $\zeta^\wedge_{\mathcal{F}_{c,g} \tensor \mathcal{O}_K}$ have been computed, for the ``sibling'' zeta functions there are often finitely many local factors that cannot currently be treated.  We will see the same phenomenon for other Lie lattices considered in this section.

If $K$ is an arbitrary number field, $p$ is unramified in $K$, and $g$ is arbitrary, then an explicit 
rational function $W_{g,K,p}(X,Y)$, depending only on $g$ and the decomposition type of $p$ in $K$ and satisfying $\zeta^\vartriangleleft_{\mathcal{F}_{2,g} \tensor \mathcal{O}_K,p}(s) = W_{g,K,p}(p,p^{-s})$, is indicated in~\cite[Section~5.2]{CSV/19} by Carnevale, the third author, and Voll; these rational functions satisfy the generic functional equation of~\cite[Theorem~C]{Voll/10}.  Theorems~2.7 and~2.9 of~\cite{duSWoodward/08} provide explicit rational functions for $\zeta^\vartriangleleft_{\mathcal{F}_{2,2} \tensor \mathcal{O}_K,p}(s)$ when $d \in \{ 2,3 \}$ and $p$ is arbitrary; they are originally due to Grunewald, Segal, and Smith~\cite{GSS/88} and to G.~Taylor~\cite{Taylor/01}.  The only other published result about ramified primes is~\cite[Theorem~3.8]{SV2/16}, where rational functions giving $\zeta^\vartriangleleft_{\mathcal{F}_{2,2} \tensor \mathcal{O}_K,p}(s)$ are computed for $p$ non-split in $K$.  These are found to satisfy a modified functional equation; see also~\cite[Conjecture~1.4]{SV1/15}, where a functional equation for $\zeta^\vartriangleleft_{\mathcal{F}_{2,2} \tensor \mathcal{O}_K,p}(s)$ is conjectured at every prime $p$.  The subring zeta factors $\zeta^\leq_{\mathcal{F}_{2,g} \tensor \mathcal{O}_K,p}(s)$ are known if $g = 2$ and $d \in \{1, 2 \}$, provided $p$ splits in $K$, or if $g = 3$ and $d = 1$; see Theorems~2.3, 2.4, and~2.16 of~\cite{duSWoodward/08}.  The representation zeta functions $\zeta^{\mathrm{irr}}_{\mathbf{F}_{2,g}(\mathcal{O}_K)}$, where $\mathbf{F}_{2,g}$ is the unipotent group scheme associated to $\mathcal{F}_{2,g}$, are known for arbitrary $g$ and $K$~\cite[Theorem~B]{StasinskiVoll/14}; they have a fine Euler decomposition, and the local factors at all $\p \in V_K$ satisfy functional equations with a uniform symmetry factor.

The functions $\zeta^\vartriangleleft_{\mathcal{F}_{3,2},p}(s)$ and $\zeta^\leq_{\mathcal{F}_{3,2},p}(s)$ were computed by Woodward; see~\cite[Theorem~2.35]{duSWoodward/08}.  In addition, $\zeta^{\mathrm{irr}}_{\mathbf{F}_{3,2}(\Z),p}(s)$ appears in~\cite[Table~5.1]{Ezzat/thesis}.  Given an arbitrary pair $(c,g)$, functional equations for almost all local ideal zeta factors $\zeta^\vartriangleleft_{\mathcal{F}_{c,g},p}(s)$ were proved by Voll~\cite[Theorem~4.4]{Voll/17}.  If $K$ is a number field, then functional equations for almost all local subring zeta factors $\zeta^\leq_{\mathcal{F}_{c,g} \tensor \mathcal{O}_K,p}(s)$ follow from the general result of Voll~\cite[Corollary~1.1]{Voll/10} mentioned in the introduction.

\subsection{The higher Heisenberg Lie rings} \label{sec:higher.heisenberg}
Let $m \geq 1$ and consider the Lie lattice $\mathcal{H}_m$ with presentation
\begin{equation*}
\langle x_1, \dots, x_m, y_1, \dots, y_m, z | [x_i, y_i] = z, 1 \leq i \leq m \rangle_{\Z},
\end{equation*}
with the usual convention that all other pairs of generators commute.
This is a central amalgamation of $m$ Heisenberg Lie lattices.  Note that $Z(\mathcal{H}_m) = [\mathcal{H}_m, \mathcal{H}_m] = \langle z \rangle$.  Write $H_m$ for the $\Q$-Lie algebra $\mathcal{H}_m \tensor_\Z \Q$.

\begin{pro} \label{pro:hh.rigid}
Let $k$ be any field.  The $k$-Lie algebra $\mathcal{H}_m \tensor_\Z k$ is $Z(\mathcal{H}_m \tensor_\Z k)$-rigid.
\end{pro}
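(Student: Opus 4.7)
The plan is to invoke Theorem~\ref{thm:rigidity} with $L = \mathcal{H}_m \tensor_\Z k$ and the ideal $Z = Z(L) = kz$. Since $Z$ coincides with the derived subalgebra $\gamma_2 L = [L,L]$, it is a verbal ideal and hence highly invariant, trivially satisfying $Z \leq \gamma_2 L$. Moreover $Z_1 = [Z,L] = 0$, so $L/Z_1 = L$.

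The first hypothesis to check is absolute indecomposability of $L/Z_1 = L$. For any field extension $E/k$, the $E$-algebra $L \tensor_k E$ is again a higher Heisenberg algebra of dimension $2m+1$ with one-dimensional center $Ez$. A direct decomposition $L \tensor_k E = L_1 \oplus L_2$ into non-zero $E$-subalgebras would force both summands to have non-trivial center, since they are nilpotent, contradicting $\dim_E Z(L \tensor_k E) = 1$.

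The main computation is then to show that $\mathcal{Y}(Z) = \{ x \in L \setminus Z : C_L(C_L(x)) = kx + Z \}$ generates $L$. For $x = \sum_{i=1}^m (a_i x_i + b_i y_i) + cz \in L \setminus Z$, the relations $[x_j, y_j] = z$ (all other brackets vanishing) give
$$[x,w] = \left( \sum_{i=1}^m (a_i \beta_i - b_i \alpha_i) \right) z$$
for $w = \sum_i(\alpha_i x_i + \beta_i y_i) + \gamma z$, so $C_L(x)$ is the hyperplane in $L$ cut out by the non-zero linear form $\lambda_x(w) = \sum_i (a_i \beta_i - b_i \alpha_i)$. Applying the centralizer again, $u = \sum_i(p_i x_i + q_i y_i) + rz$ lies in $C_L(C_L(x))$ if and only if the analogous form $\lambda_u$ vanishes on $\ker \lambda_x$, i.e., is a scalar multiple of $\lambda_x$. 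This forces $(p_i, q_i) = \mu (a_i, b_i)$ for some $\mu \in k$, whence $u \in kx + Z$. Therefore $\mathcal{Y}(Z) = L \setminus Z$, which contains $x_1, \dots, x_m, y_1, \dots, y_m$ and so generates $L$.

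With all hypotheses verified, Theorem~\ref{thm:rigidity} yields the claimed $Z$-rigidity. The argument amounts to exploiting the non-degenerate symplectic pairing induced by the bracket on $L/Z$, so no substantial obstacle is anticipated. I observe that Corollary~\ref{cor:segal} (Segal's original criterion) fails here for $m \geq 2$, since $\dim_k C_L(x) = 2m$ exceeds $\dim_k (Z + kx) = 2$; this is precisely the situation that motivated the strengthened criterion of Theorem~\ref{thm:rigidity}, as foreshadowed in Remark~\ref{rem:segal}.
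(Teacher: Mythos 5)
Your proposal is correct and follows essentially the same route as the paper: both verify the hypotheses of Theorem~\ref{thm:rigidity} for $Z = Z(\mathcal{H}_m \tensor_\Z k) = \gamma_2(\mathcal{H}_m \tensor_\Z k)$, namely that $\mathcal{Y}(Z)$ generates (the paper merely notes $x_i, y_i \in \mathcal{Y}(Z)$, while you prove the stronger fact $\mathcal{Y}(Z) = L \setminus Z$ via the symplectic form) and that the algebra is absolutely indecomposable (the paper argues via the derived subalgebra equalling the one-dimensional center, you via the center of a direct sum having dimension at least two — both equally valid). Your closing observation about the failure of Corollary~\ref{cor:segal} for $m \geq 2$ likewise matches the remark the paper makes after its proof.
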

\begin{proof}
It is easy to check that $x_i, y_i \in \mathcal{Y}(Z(\mathcal{H}_m \tensor_\Z k))$ for all $i \in [m]$.  Therefore $\mathcal{Y}(Z(\mathcal{H}_m \tensor_\Z k))$ generates $\mathcal{H}_m \tensor_\Z k$ as a $k$-Lie algebra.
Furthermore, $\mathcal{H}_m \tensor_\Z k$ is absolutely indecomposable.  Indeed, for any field $E/k$, let $\mathcal{H}_m \tensor_\Z E = M_1 \oplus M_2$ be a decomposition.  At least one of the components, say $M_1$, must be non-abelian.  The derived subalgebra of $\mathcal{H}_m \tensor_\Z E$, which is equal to its center, is one-dimensional over $E$ and so contained in $M_1$.  Thus $M_2$ is abelian.  Hence $M_2 \subseteq Z(\mathcal{H}_m \tensor_\Z E) \subseteq M_1$, and we conclude that $M_2 = 0$.  Then $\mathcal{H}_m \tensor_\Z k$ is $Z(\mathcal{H}_m \tensor_\Z k)$-rigid by Theorem~\ref{thm:rigidity}.
\end{proof}

Before proceeding we note that if $m \geq 2$, then the results of~\cite{Segal/89} (recalled here as Corollary~\ref{cor:segal}) are insufficient to prove the rigidity of $\mathcal{H}_m \tensor_\Z k$; we genuinely need the more general Theorem~\ref{thm:rigidity}.  It is an exercise to show that there is no highly invariant ideal (or, indeed, any ideal) $M \leq \gamma_2 (\mathcal{H}_m \tensor_\Z k)$ such that $\mathcal{X}(M)$ generates $\mathcal{H}_m \tensor_\Z k$.

It is useful to consider an equivalent presentation of $H_m$, following du Sautoy and Lubotzky~\cite[Section 3.3]{duSLubotzky/96}.  Consider the $\Q$-vector space $V_m = \Q^{2m} \times \Q$ with Lie bracket given by
\begin{equation*}
[ (v_1, w_1), (v_2, w_2)] = (0, v_1 \Omega v_2^T)
\end{equation*}
for all $v_1, v_2 \in \Q^{2m}$ and $w_1, w_2 \in \Q$, where $\Omega$ is the symplectic form 
\begin{equation*}
\Omega = \left( \begin{array}{cc} 0 & I_m \\ - I_m & 0 \end{array} \right).
\end{equation*}
Here $I_m$ denotes the $m \times m$ identity matrix.  The map $V_m \to H_m$ of vector spaces that sends the standard bases of $\Q^{2m}$ and $\Q$ to $(x_1, \dots, x_m, y_1, \dots, y_m)$ and $z$, respectively, is an isomorphism of Lie algebras.  Fixing the basis $(x_1, \dots, x_m, y_1, \dots, y_m, z)$ provides an embedding $\aAut H_m \hookrightarrow \mathbf{GL}_{2m + 1}$.
As in~\cite[Lemma 3.12]{duSLubotzky/96}, we find that $\aAut H_m$ is the algebraic subgroup of $\mathbf{GL}_{2m+1}$ consisting of the matrices of the form
\begin{equation*} \label{equ:amalgam}
\left( \begin{array}{cc} A & B \\ 0 & \lambda \end{array} \right),
\end{equation*}
where $\lambda \in \mathbf{GL}_1$, the matrix $A \in \mathbf{GL}_{2m}$ satisfies $A \Omega A^T = \lambda \Omega$, and $B$ is an arbitrary $2m \times 1$ matrix.  Comparing the determinants of the two sides of the relation $A \Omega A^T = \lambda \Omega$ (which is equivalent to $A \in \mathbf{GSp}_{2m}$), we observe that $(\det A)^2 = \lambda^{2m}$.  
Let $K$ be a number field of degree $d = [K:\Q]$ and let $p$ be a prime.  For every $\p | p$, set $\mathrm{GSp}_{2m}^+(K_\p) = \mathrm{GSp}_{2m}(K_\p) \cap M_{2m}(\mathcal{O}_{\p})$, and consider the right Haar measure on $\mathrm{GSp}_{2m}(K_\p)$ normalized so that $\mu_{\mathrm{GSp}_{2m}(K_\p)}(\mathrm{GSp}_{2m}(\mathcal{O}_{\p})) = 1$. 
\begin{pro} \label{thm:gsp}
Let $m \geq 1$.  Let $K$ be a number field and $p$ be any prime.  Then
\begin{equation*}
\zeta^\wedge_{\mathcal{H}_m \tensor \mathcal{O}_K,p}(s) = \prod_{\p | p} \int_{\mathrm{GSp}_{2m}^+(K_\p)} | \det A |_{\p}^{(1 + 1/m)s - 2d} d \mu_{\mathrm{GSp}_{2m}(K_\p)}(A).
\end{equation*}
\end{pro}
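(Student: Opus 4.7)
The strategy is to apply Corollary~\ref{cor:rigid.lifting} directly. Rigidity of $L_p = H_m \tensor_\Q \Q_p$ for every prime $p$ was established in Proposition~\ref{pro:hh.rigid}, so it remains to verify Assumptions~\ref{first.assumption}, \ref{second.assumption}, and~\ref{polynomial.lifting}. Since the description of $\aAut H_m$ recalled above presents it as a connected algebraic group, Assumption~\ref{first.assumption} is automatic. I decompose $\aAut H_m = \mathbf{N} \rtimes \mathbf{H}$, where $\mathbf{N}$ consists of the elements with $A = I_{2m}$, $\lambda = 1$, and $B$ arbitrary, while $\mathbf{H}$ consists of those with $B = 0$. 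The projection $h = \mathrm{diag}(A, \lambda) \mapsto A$ identifies $\mathbf{H}$ with $\mathbf{GSp}_{2m}$, the similitude $\lambda$ being recovered from the relation $A \Omega A^T = \lambda \Omega$. Taking the decomposition $V = U_1 \oplus U_2$ with $U_1 = \langle x_1, \dots, x_m, y_1, \dots, y_m \rangle_{\Q_p}$ and $U_2 = Z(L_p)$, one has $t = t^\prime = 2$. Assumption~\ref{second.assumption} is then immediate: $\mathbf{H}$ stabilizes each $U_i$, and $\mathbf{N}$ acts trivially on $U_1 \simeq V/U_2$. Assumptions~\ref{lifting.condition} and~\ref{polynomial.lifting} concern only indices $i \in [2, t-1] = \varnothing$ and are therefore vacuous.

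With $\dimL^\prime = \dim_\Q L / \gamma_2 L = 2m$ and the empty product $\prod_{j = 1}^{t-2}(\cdot) = 1$, Corollary~\ref{cor:rigid.lifting} gives
$$\zeta^\wedge_{\mathcal{H}_m \tensor \mathcal{O}_K, p}(s) = \prod_{\p \mid p} \int_{\mathbf{H}^+(K_\p)} |\det \varepsilon_\p(h)|_\p^{-2md} \, |\det h|_\p^{s} \, d\mu_{\mathbf{H}(K_\p)}(h).$$
For $h = \mathrm{diag}(A, \lambda)$, the induced automorphism $\varepsilon_\p(h)$ of $Z(L_p \tensor_{\Q_p} K_\p) = \langle z \rangle_{K_\p}$ is multiplication by $\lambda$, so $|\det \varepsilon_\p(h)|_\p = |\lambda|_\p$. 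Taking determinants in $A \Omega A^T = \lambda \Omega$ yields $(\det A)^2 = \lambda^{2m}$, and hence $|\lambda|_\p = |\det A|_\p^{1/m}$. Consequently $|\det h|_\p = |\lambda \det A|_\p = |\det A|_\p^{1 + 1/m}$ and $|\det \varepsilon_\p(h)|_\p^{-2md} = |\det A|_\p^{-2d}$. Under the isomorphism $h \leftrightarrow A$, the submonoid $\mathbf{H}^+(K_\p)$ corresponds to $\mathrm{GSp}_{2m}^+(K_\p)$ (integrality of $\lambda$ follows automatically from integrality of $A$ via the similitude relation), and the normalized Haar measures correspond. Substituting yields the claimed formula.

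No serious obstacle is anticipated: the genuine work is carried out in Proposition~\ref{pro:hh.rigid} and in the general framework of Section~\ref{sec:rigidity}. What remains is a transparent translation of the Corollary~\ref{cor:rigid.lifting} formula through the isomorphism $\mathbf{H} \simeq \mathbf{GSp}_{2m}$, together with the elementary valuation arithmetic encoded by the similitude character.
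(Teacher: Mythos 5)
Your proposal is correct and follows essentially the same route as the paper: the paper invokes Proposition~\ref{pro:rigid.computation} together with Remark~\ref{rem:trivial.theta} (noting $\widetilde{\theta}^{K_\p} \equiv 1$ on $\mathbf{H}^+(K_\p)$ since $\mathbf{N}$ acts trivially on $U_1$), while you phrase the same fact via Corollary~\ref{cor:rigid.lifting} with $t=2$, where the $\theta_j$-product is empty and the lifting assumptions are vacuous. The identification $\mathbf{H} \simeq \mathbf{GSp}_{2m}$, the computation $|\det \varepsilon_\p(h)|_\p = |\lambda|_\p = |\det A|_\p^{1/m}$, $|\det h|_\p = |\det A|_\p^{1+1/m}$, and $\dimL^\prime = 2m$ all match the paper's argument.
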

\begin{proof}
By the rigidity established in Proposition~\ref{pro:hh.rigid}, we may use Proposition~\ref{pro:rigid.computation}.  From the description of $\aAut H_m$ given above, we see, for all $\p | p$, that $\mathbf{H}(K_\p) \simeq \mathrm{GSp}_{2m}(K_\p)$ consists of matrices of the form
$$
h = \left( \begin{array}{cc} A & 0 \\ 0 & \lambda \end{array} \right),$$
where $A \in \mathrm{GSp}_{2m}(K_\p)$.  Since $\mathbf{N}(K_\p)$ acts trivially on $\langle x_1, \dots, x_m, y_1, \dots, y_m \rangle_{K_\p}$, we have $\widetilde{\theta}^{K_\p}(h) = 1$ for all $h \in \mathbf{H}^+(K_\p)$ by Remark~\ref{rem:trivial.theta}.  Finally, $| \det h |_{\p} = | \det A |^{1 + 1/m}_{\p}$ and $| \det \varepsilon_\p(h) |_{\p} = | \lambda |_{\p} = | \det A |_{\p}^{1/m}$.  Since $\overdim = 2m$, our claim now follows from Proposition~\ref{pro:rigid.computation}.
\end{proof}

Note that in the case $d = 1$ this result matches the formula obtained in~\cite[Proposition 3.14]{Berman/05} and differs from the one obtained in~\cite[Lemma 3.14]{duSLubotzky/96} because of an error in the computation of the function $\theta(h)$ appearing in~\cite{duSLubotzky/96}.  

The integral of Proposition~\ref{thm:gsp} may be computed using Proposition~\ref{pro:bruhat.decomposition}.  We recall some basic facts about the root system $\mathbf{C}_n$ of the reductive group $\mathbf{GSp}_{2m}$ over a field $F$.  The subset of diagonal elements of $\mathbf{GSp}_{2m}$ is
$$\mathbf{T} = \left\{ \mathrm{diag} ( \nu t_1^{-1}, \nu t_2^{-1} \dots, \nu t_m^{-1}, t_1, t_2, \dots, t_m) : \nu, t_1, \dots t_m \in \mathbf{G}_m \right\} $$
and is a split maximal torus of rank $m+1$.  Consider the cocharacters
\begin{equation*}
\xi_k(t) = \begin{cases}
\mathrm{diag} (\underbrace{1, \dots, 1}_{k}, \underbrace{t, \dots, t}_{m - k}, \underbrace{1, \dots, 1}_k, \underbrace{t^{-1}, \dots, t^{-1}}_{m-k} ) &: k \in [m-1]_0 \\
\mathrm{diag} (\underbrace{t, \dots, t}_{m}, \underbrace{1, \dots, 1}_{m} ) &: k = m.
\end{cases}
\end{equation*}
They constitute a $\Z$-basis of $\Xi = \Hom(\mathbf{G}_m, \mathbf{T})$.  Indeed, any element $\xi \in \Xi$ has the form
\begin{equation} \label{equ:xiform}
\xi(t) = \mathrm{diag}(t^{\lambda - a_1}, t^{\lambda - a_2}, \dots, t^{\lambda - a_m}, t^{a_1}, t^{a_2}, \dots, t^{a_m} ),
\end{equation}
and one readily verifies that
\begin{equation} \label{equ:xibasis}
\xi = \xi_0^{-a_1} \xi_1^{a_1 - a_2} \dots \xi_{m-2}^{a_{m-2} - a_{m-1}} \xi_{m-1}^{a_{m-1} - a_{m}} \xi_m^{\lambda}.
\end{equation}
Thus $\{ \xi_0, \dots, \xi_m \}$ spans the $\Z$-module $\Xi$ of rank $m+1$.

Let $\mathrm{Sym}_m \subset M_m$ be the set of symmetric $m \times m$ matrices, and let $\mathbf{U}_m \subset \mathbf{GL}_m$ be the algebraic subgroup of upper triangular matrices.  It is well known that $\mathbf{Sp}_{2m}$ has the same root system as $\mathbf{GSp}_{2m}$, and that
$$ (\mathbf{T} \cap \mathbf{Sp}_{2m}) \ltimes \left\{ \left( \begin{array}{cc} (U^T)^{-1} & MU \\ 0 & U \end{array} \right) : U \in \mathbf{U}_m, M \in \mathrm{Sym}_n \right\}$$
is a Borel subgroup of $\mathbf{Sp}_{2m}$.  Let $\Phi^+$ be the set of $m^2$ positive roots given by this choice of Borel subgroup.  These consist of the $\binom{m}{2}$ roots $e_i - e_j$, for $1 \leq i < j \leq m$, associated to the root subgroups whose $F$-points are $I_{2m} + FE_{m+i,m+j}$, and $\binom{m+1}{2}$ roots $-e_i - e_j$, for $1 \leq i \leq j \leq m$, associated to the subgroups $I_{2m} + F(E_{i,m+j} + E_{j,m+i})$ when $i \neq j$, and $I_{2m} + FE_{i,m+i}$ when $i = j$.  Here $E_{i,j}$ is the usual notation for elementary matrices.  We fix the collection $\Delta = \{ \alpha_0, \alpha_1, \dots, \alpha_{m-1} \}$ of simple roots, where $\alpha_0 = -2e_1$ and $\alpha_i = e_i - e_{i+1}$ for $i \in [m-1]$.  For $\xi \in \Xi$ as in~\eqref{equ:xiform}, one verifies that $\langle e_i - e_j, \xi \rangle = a_i - a_j$ and $\langle -e_i - e_j, \xi \rangle = \lambda - (a_i + a_j)$, for $i,j$ in the suitable ranges as above.  In particular,
\begin{equation} \label{equ:root.pairing}
\langle \prod_{\beta \in \Phi^+} \beta, \xi_k \rangle =
\begin{cases}
2 \left( \binom{m+1}{2} - \binom{k+1}{2} \right) &: k \in [m-1]_0 \\
\binom{m+1}{2} &: k = m.
\end{cases}
\end{equation}
Let $F/\Q_p$ be a finite extension, fix a uniformizer $\pi \in \mathcal{O}_F$, and set $q = | \mathcal{O}_F / (\pi) |$.  Then the condition that $\xi \in \Xi^+$ and $\alpha(\xi(\pi)) \in \mathcal{O}_K$ for all $\alpha \in \Delta$ is equivalent to the conditions $\lambda \geq 2a_1$ and $a_1 \geq a_2 \geq \cdots \geq a_n \geq 0$.  

The Weyl group in this setting is naturally identified with the hyperoctahedral group $B_m$ of signed permutations; see~\cite[Section~8.1]{BjoernerBrenti/05} for an introduction to this group.  Elements $w \in B_m$ may be viewed as permutations of the set $\{ 1, \dots, m \} \cup \{ -1, \dots, -m \}$ satisfying $w(-j) = -w(j)$ for all $j \in [m]$.  We write $w \in B_m$ in the ``window notation'' $w = [w(1), \dots, w(m)]$.  Our choice of simple roots gives rise to the Coxeter generating set $S = \{s_0, s_1, \dots, s_{m-1} \}$ of $B_m$, where $s_0$ transposes $1$ and $-1$ and $s_i$ transposes $i$ and $i+1$ for $i \in [m-1]$.  Note that these are the Coxeter generators considered in~\cite{BjoernerBrenti/05}.  Let $\ell$ be the associated length function.  For every $w \in B_m$ we have the descent set
$$ \mathrm{Des}(w) = \{ i \in [m-1]_0 : \ell(w s_i) < \ell(w) \}.$$
To compute $\mathrm{Des}(w)$ in practice, observe that if $i \in [m-1]$, then $i \in \mathrm{Des}(w)$ if and only if $w(i) > w(i + 1)$.  Also, $0 \in \mathrm{Des}(w)$ if and only if $w(1) < 0$.  For $w \in B_m$, set
\begin{align*}
\mathrm{Inv}(w) & =  \{ (i,j) \in [m]^2 : i < j , w(i) > w(j) \} \\
\mathrm{Npr}(w) & =  \{ (i,j) \in [m]^2 : i \leq j , w(i) + w(j) < 0 \}
\end{align*}
and denote $\mathrm{inv}(w) = | \mathrm{Inv}(w) |$ and $\mathrm{npr}(w) = | \mathrm{Npr}(w) |$.  We refer to the elements of $\mathrm{Inv}(w)$ and of $\mathrm{Npr}(w)$ as {\emph{inversions}} and {\emph{negative pairs}} of $w$, respectively.  The length of $w$ is given by $\ell (w) = \mathrm{inv}(w) + \mathrm{npr}(w)$; cf.~\cite[Proposition~8.1.1]{BjoernerBrenti/05}.  Write $\mathrm{des}(w) = | \mathrm{Des}(w) |$ for the descent number of $w$.  Set $\varepsilon_1(w) = 1$ if $w(1) < 0$ and $\varepsilon_1(w) = 0$ otherwise.  Define the following statistic for $w \in B_m$:
$$ \sigma_C(w) = \binom{m+1}{2} \varepsilon_1(w) + \sum_{i \in \mathrm{Des}(w) \cap [m-1]} (m-i)(m+i+1).$$

\begin{rem} \label{rmk:permutation.descent}
We identify $S_m$ with the subgroup of $B_m$ consisting of signed permutations $w$ such that $w(i) > 0$ for all $i \in [m]$.  The Coxeter length function on $S_m$ with respect to the generators $\{ s_1, \dots, s_{m-1} \}$ coincides with $\ell$, and the usual descent set $\{ i \in [m-1] : \sigma(i) > \sigma(i+1) \}$ of a permutation $\sigma \in S_m$ coincides with $\mathrm{Des}(\sigma)$ as defined above.  
\end{rem}

We will also need two statistics for permutations $\sigma \in S_m$:
$$ \begin{array}{lcr}
\displaystyle{\sigma_A(\sigma) = \sum_{i \in \mathrm{Des}(\sigma)} i(m-i)} & & \displaystyle{\mathrm{rbin}(\sigma) = \sum_{i \in \mathrm{Des}(\sigma)} \binom{m-i+1}{2}.} \end{array} $$
It is readily checked that $\sigma_A$ and $\sigma_C$ are special cases of the statistic $\sigma$ on Weyl groups defined in~\cite{StembridgeWaugh/98}, whereas $\mathrm{rbin}$ is a variant of the statistic $\mathrm{bin}$ introduced in~\cite{BrightSavage/10}.

Recalling the notation of~\eqref{equ:xiform}, we put $r_0 = \lambda - 2a_1$ and $r_i = a_i - a_{i+1}$ for $i \in [m-1]$; in addition, we set $r_m = a_m$.  It is easy to see that
the set $w \Xi^+_w$ defined in~\eqref{equ:wxi.dfn} is parametrized by the conditions $r_i \geq 1$ if $i \in \mathrm{Des}(w)$ and $r_i \geq 0$ otherwise.  Moreover, given $w \in B_n$ and $\xi \in \Xi$, it follows from~\eqref{equ:xibasis} and~\eqref{equ:root.pairing} that
\begin{equation*}
q^{\langle \prod_{\beta \in \Phi^+} \beta, \xi \rangle} | \det \xi(\pi) |^s_F = \left( q^{\binom{m+1}{2} - ms} \right)^{r_0} \prod_{i = 1}^m \left( q^{2 \left( \binom{m+1}{2} - \binom{i+1}{2} \right) - 2ms} \right)^{r_i}.
\end{equation*}
Hence by Proposition~\ref{pro:bruhat.decomposition} we have
\begin{equation} \label{equ:integral.bn}
\int_{\mathrm{GSp}_{2m}^+(F)} | \det A |_{F}^{s} d \mu(A) = 
\frac{\sum_{w \in B_m} q^{- \ell(w)} \prod_{i \in \mathrm{Des}(w)} \widetilde{X}_i}{\prod_{i = 0}^m (1 - \widetilde{X}_i)},
\end{equation}
where $\widetilde{X}_0 = q^{\binom{m+1}{2} - ms}$ and $\widetilde{X}_i = q^{2 \left( \binom{m+1}{2} - \binom{i+1}{2} \right) - 2ms}$ for $i \in [m]$.

\begin{rem}
The integral of~\eqref{equ:integral.bn} was studied by Satake, who computed it explicitly in the case $m = 2$; see~(21) of~\cite[Appendix I]{Satake/63}.  This computation, in terms of spherical functions, was completed by Macdonald~\cite[V.(5.4)]{Macdonald/95} for arbitrary $m$.  In contrast to the formula of~\cite{Macdonald/95}, the right-hand side of~\eqref{equ:integral.bn} provides an expression from which one may readily deduce a functional equation; see Remark~\ref{rmk:higher.heisenberg} below.  
\end{rem}

It turns out that cancellation occurs in the rational function
of~\eqref{equ:integral.bn}, so that the sum over $2^m \cdot m!$ elements of $B_m$ in its numerator reduces to a sum over $m!$ elements of the symmetric group $S_m$.  This will follow from the following identity.  Similar phenomena are observed in~\cite[Proposition~1.7]{StasinskiVoll/14} and~\cite[Proposition~3.4]{CSV/18}; it would be interesting to obtain a theorem on Weyl group statistics admitting all three identities as special cases.

\begin{lem} \label{lem:bm.to.sm}
The following identity of polynomials holds in $\Q[X,Y]$:
\begin{equation*}
\sum_{w \in B_m} X^{(\sigma_C - \ell)(w)} Y^{(2 \,\mathrm{des} - \varepsilon_1)(w)} = \! \left( \prod_{j = 1}^m (1 + X^{\binom{m+1}{2} - \binom{j+1}{2}} Y) \right) \! \sum_{\sigma \in S_m} X^{(\sigma_A - \ell + \mathrm{rbin})(\sigma)} Y^{\mathrm{des}(\sigma)}.
\end{equation*}
\end{lem}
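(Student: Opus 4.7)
The plan is to use the length-additive coset decomposition of $B_m$ with respect to its parabolic subgroup $S_m=\langle s_1,\dots,s_{m-1}\rangle$. For each subset $T=\{a_1<\cdots<a_t\}\subseteq[m]$ with complement $\{b_1<\cdots<b_{m-t}\}$, the element $u_T=[-a_t,\dots,-a_1,b_1,\dots,b_{m-t}]\in B_m$ is the unique minimal-length representative of its coset in $B_m/S_m$. One checks directly that $\ell(u_T)=\sum_{a\in T}a$, and every $w\in B_m$ admits a unique factorization $w=u_T\sigma$ with $\sigma\in S_m$ and $\ell(w)=\ell(u_T)+\ell(\sigma)$.

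The crucial feature of $u_T$ is that it is strictly increasing as a function $[m]\to\mathbb{Z}$. This forces $\mathrm{Des}(u_T\sigma)\cap[m-1]=\mathrm{Des}(\sigma)$ and $\varepsilon_1(u_T\sigma)=[\sigma(1)\leq|T|]$. The elementary identity $(m-i)(m+i+1)=2i(m-i)+2\binom{m-i+1}{2}$ valid for $i\in[m-1]$ then yields $\sigma_C(u_T\sigma)=\binom{m+1}{2}\varepsilon_1(u_T\sigma)+2(\sigma_A+\mathrm{rbin})(\sigma)$ for $\sigma\in S_m$, which explains the factor of $2$ relating the RHS statistic $\sigma_A+\mathrm{rbin}$ on $S_m$ to the LHS statistic $\sigma_C$ on $B_m$. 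Substituting and interchanging the order of summation, the LHS becomes
\[
\sum_{\sigma\in S_m}X^{2(\sigma_A+\mathrm{rbin})(\sigma)-\ell(\sigma)}\,Y^{2\,\mathrm{des}(\sigma)}\cdot\mathcal{S}_{\sigma(1)}(X,Y),
\]
where $\mathcal{S}_k(X,Y)=\sum_{|T|<k}X^{-\sum_{a\in T}a}+X^{\binom{m+1}{2}}Y\sum_{|T|\geq k}X^{-\sum_{a\in T}a}$, which is tractable via the generating function identity $\sum_{T\subseteq[m]}z^{|T|}X^{-\sum_{a\in T}a}=\prod_{a=1}^m(1+zX^{-a})$. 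Expanding the RHS product as $\sum_{T'\subseteq[m]}Y^{|T'|}X^{\sum_{j\in T'}c_j}$ similarly recasts the RHS as a double sum over $S_m\times 2^{[m]}$.

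The remaining task, and the main obstacle, is to match these two double sums as a polynomial identity in $\mathbb{Q}[X,Y]$. I expect this to be handled either by exhibiting a weight-preserving bijection on $S_m\times 2^{[m]}$ that intertwines the two parametrizations, or by induction on $m$ using the decomposition $B_m=\bigsqcup_{k\in\{\pm 1,\dots,\pm m\}}\{w:w(m)=k\}$. The difficulty is that the LHS exponents of $X$ involve the linear quantity $\sum_{a\in T}a$ together with the binary indicator $[\sigma(1)\leq|T|]$, while the RHS exponents involve $\sum_{j\in T'}c_j$ with $c_j=\binom{m+1}{2}-\binom{j+1}{2}$ quadratic in $j$. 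Moreover, a check at $m=2$ reveals that no bijection of the form $\sigma'=\sigma$ can succeed, so the required correspondence must non-trivially mix the permutation and subset data. This parallels the phenomenon in \cite[Proposition 1.7]{StasinskiVoll/14} and \cite[Proposition 3.4]{CSV/18} referenced just before the statement, and suggests the combinatorial manipulation should follow their strategy.
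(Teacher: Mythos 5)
Your reduction via the parabolic coset decomposition is correct as far as it goes: the minimal coset representatives $u_T$ of $B_m/S_m$ are indeed the increasing signed permutations indexed by subsets $T\subseteq[m]$, with $\ell(u_T)=\sum_{a\in T}a$ and $\ell(u_T\sigma)=\ell(u_T)+\ell(\sigma)$; the descent computations $\mathrm{Des}(u_T\sigma)\cap[m-1]=\mathrm{Des}(\sigma)$, $\varepsilon_1(u_T\sigma)=[\sigma(1)\leq|T|]$ hold, and the identity $(m-i)(m+i+1)=2i(m-i)+2\binom{m-i+1}{2}$ correctly converts $\sigma_C$ into $\binom{m+1}{2}\varepsilon_1+2(\sigma_A+\mathrm{rbin})$. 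But what you arrive at,
\[
\sum_{\sigma\in S_m}X^{2(\sigma_A+\mathrm{rbin})(\sigma)-\ell(\sigma)}Y^{2\,\mathrm{des}(\sigma)}\,\mathcal{S}_{\sigma(1)}(X,Y)
\;=\;\prod_{j=1}^m\bigl(1+X^{\binom{m+1}{2}-\binom{j+1}{2}}Y\bigr)\sum_{\sigma\in S_m}X^{(\sigma_A+\mathrm{rbin}-\ell)(\sigma)}Y^{\mathrm{des}(\sigma)},
\]
is not proved in your proposal, and it is essentially as hard as the lemma itself: the left side carries doubled statistics $2(\sigma_A+\mathrm{rbin})$, $2\,\mathrm{des}$ and subset weights $X^{-\sum_{a\in T}a}$ that are linear in the elements of $T$, while the right side factors with the quadratic exponents $\binom{m+1}{2}-\binom{j+1}{2}$; you yourself note that the naive matching with $\sigma'=\sigma$ already fails at $m=2$ and only speculate that a bijection or induction ``should'' exist. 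This is the genuine gap — the key combinatorial mechanism that produces the product $\prod_j(1+X^{\binom{m+1}{2}-\binom{j+1}{2}}Y)$ is missing.

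For comparison, the paper's proof does not pass through the parabolic decomposition at all. It constructs explicit commuting involutions $\eta_1,\dots,\eta_m$ on $B_m$ (reverse the absolute values of the first $j$ window entries and flip their signs), generating a $(\Z/2\Z)^m$-action whose orbits each contain exactly one element of $S_m$, and then proves a sublemma asserting that applying $\eta_j$ to the element of the orbit satisfying a suitable property $(P_j)$ multiplies the monomial $X^{(\sigma_C-\ell)(w)}Y^{(2\,\mathrm{des}-\varepsilon_1)(w)}$ by exactly $X^{\binom{m+1}{2}-\binom{j+1}{2}}Y$. The factorization then falls out orbit by orbit, and a short calculation identifies the distinguished orbit element with the monomial $X^{(\sigma_A-\ell+\mathrm{rbin})(\sigma)}Y^{\mathrm{des}(\sigma)}$. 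Nothing in your outline plays the role of that sublemma, so to complete your route you would still have to supply an argument of comparable depth for the displayed identity above.
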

\begin{proof}
We start by defining, for every $j \in [m]$, an involution $\eta_j : B_m \to B_m$.  For every $w \in B_m$, consider the set $S_j(w) = \{ |w(k)| : k \in [j] \} \subseteq [m]$.  Let $c_1 < \cdots < c_j$ be the elements of $S_j(w)$, arranged in increasing order, and consider the signed permutation $w_j \in B_m$ determined by
\begin{equation*}
w_j(x) = \begin{cases}
- c_{j+1-k} &: x = c_k \, \text{for some} \, k \in [j] \\
x &: x \in [m] \setminus S_j(w). \end{cases}
\end{equation*}
Now set $\eta_j (w) = w_j \circ w$.  Informally, to construct $\eta_j(w)$ we take the $j$ leftmost entries in the window notation for $w$, ignoring their signs, and replace the largest of these with the smallest one, the second largest with the second smallest, and so forth.  Then we arrange signs so that $(\eta_j(w))(k)$ has the opposite sign from $w(k)$, for all $k \in [j]$.
For example, if $w = [3, -5, -1, 6, 2, 7, -4] \in B_7$ in window notation, then $\eta_5(w) = [-3, 2, 6, -1, -5, 7, -4]$.

It is easy to see that $\eta_j(\eta_j(w)) = w$ and that $\eta_j \circ \eta_i = \eta_i \circ \eta_j$ for any $i,j \in [m]$.  Thus we obtain an action on the set $B_m$ of the group $\Gamma \simeq (\Z / 2\Z)^m$ generated by $\eta_1, \dots, \eta_m$.  Henceforth write $\eta_j w$ instead of $\eta_j(w)$ for brevity.  Consider a subset $J \subseteq [m]$ and put
$$ J^\prime = \{ j \in [m-1] : j \not\in J, j + 1 \in J \} \cup \{ j \in [m] : j \in J, j + 1 \not\in J \},$$
where $m + 1 \not\in J$ for all $J$.  Setting $\gamma = \prod_{j \in J^\prime} \eta_j \in  \Gamma$, we observe, for every $w \in B_m$ and every $j \in [m]$, that $(\gamma w)(j)$ and $w(j)$ have opposite signs if $j \in J$ and the same sign otherwise.  Hence every orbit of our action contains exactly $| \Gamma | = 2^m$ elements, each with a different arrangement of signs.  In particular, every orbit contains a unique element of $S_m$.  Let $\mathcal{C}_\sigma \in \Gamma \backslash B_m$ denote the orbit containing $\sigma \in S_m$.  To establish our claim, it clearly suffices to prove the following for every $\sigma \in S_m$:
\begin{equation} \label{equ:orbit.equation}
\sum_{w \in \mathcal{C}_\sigma} X^{(\sigma_C - \ell)(w)} Y^{(2 \,\mathrm{des} - \varepsilon_1)(w)} \! = \!\left( \prod_{j = 1}^m (1 + X^{\binom{m+1}{2} - \binom{j+1}{2}} Y)\! \right) \! X^{(\sigma_A - \ell + \mathrm{rbin})(\sigma)} Y^{\mathrm{des}(\sigma)}.
\end{equation}
If $a,b,c \in \Z$, then we say that $c$ lies between $a$ and $b$ if either $a < c < b$ or $b < c < a$.
The remainder of the argument makes use of the following definition.
\begin{dfn} \label{def:property.pj}
Let $j \in [m-1]$.  We say that $w \in B_m$ satisfies property $(P_j)$ if the following statement is true:
$$ w(j) < 0  \textit{ if and only if } w(j+1) \textit{ lies between } w(j) \textit{ and } \eta_jw (j).$$
We say that $w$ satisfies property $(P_m)$ if $w(m) > 0$.  
\end{dfn}
It is clear, for every $j \in [m]$ and $w \in B_m$, that exactly one member of the pair $\{ w, \eta_jw \}$ satisfies property $(P_j)$.  The following statement, whose proof is very technical, is crucial to our argument.

\begin{sublemma} \label{bm.claim}
Suppose that $j \in [m]$ and that $w \in B_m$ satisfies property $(P_j)$.  Then
\begin{equation*}
X^{(\sigma_C - \ell)(\eta_j w)} Y^{(2 \,\mathrm{des} - \varepsilon_1)(\eta_j w)} =  \left(X^{\binom{m+1}{2} - \binom{j+1}{2}} Y \right) X^{(\sigma_C - \ell)(w)} Y^{(2 \,\mathrm{des} - \varepsilon_1)(w)}.
\end{equation*}
\end{sublemma}

We assume Sublemma~\ref{bm.claim} for the moment and show how to deduce Lemma~\ref{lem:bm.to.sm}.  It is obvious from the definitions that the property $(P_j)$, as well as its negation, are preserved by $\eta_k$ for all $k < j$.  This implies, for every $\sigma \in S_m$, that the orbit $\mathcal{C}_\sigma$ contains an element $w_\sigma$ having the property $(P_j)$ for all $j \in [m]$.  Indeed, we choose an arbitrary $w \in \mathcal{C}_\sigma$ and construct $w_\sigma$ recursively as follows: 
$w_\sigma = \eta_1^{\delta_1} \cdots \eta_m^{\delta_m} w,$ 
where $\delta_m = 0$ if $w$ has property $(P_m)$ and $\delta_m = 1$ otherwise.  Similarly, for $k \geq 1$, we take $\delta_{m-k} = 0$ if $\left( \prod_{i = 0}^{k-1} \eta_{m-i}^{\delta_{m-i}} \right) w$ has property $(P_{m-k})$ and $\delta_{m-k} = 1$ otherwise.  Once we know that such an element $w_\sigma$ exists, it is readily seen that for every $J \subseteq [m]$ there is a unique element of $\mathcal{C}_\sigma$ satisfying $(P_j)$ exactly when $j \in J$, namely $(\prod_{j \not\in J} \eta_j) w_\sigma$.

Let $\gamma \in \Gamma$.  Then $\gamma = \prod_{j \in J} \eta_j$ for some $J \subseteq [m]$.  Applying Sublemma~\ref{bm.claim} to the elements of $J$ in increasing order, we find that 
\begin{equation} \label{equ:gamma.action.bm}
X^{(\sigma_C - \ell)(\gamma w_\sigma)} Y^{(2 \,\mathrm{des} - \varepsilon_1)(\gamma w_\sigma)} = \left( \prod_{j \in J} X^{\binom{m+1}{2} - \binom{j+1}{2}} Y \right) X^{(\sigma_C - \ell)(w_\sigma)} Y^{(2 \,\mathrm{des} - \varepsilon_1)(w_\sigma)}.
\end{equation}
Hence the left-hand side of~\eqref{equ:orbit.equation} is equal to
\begin{equation*} \label{equ:lhs.bm}
\left( \prod_{j =1}^m ( 1 + X^{\binom{m+1}{2} - \binom{j+1}{2}} Y) \right) X^{(\sigma_C - \ell)(w_\sigma)} Y^{(2 \,\mathrm{des} - \varepsilon_1)(w_\sigma)}.
\end{equation*}
Thus, to establish~\eqref{equ:orbit.equation} it suffices to prove, for every $\sigma \in S_m$, that
\begin{equation} \label{equ:endgame}
X^{(\sigma_C - \ell)(w_\sigma)} Y^{(2 \mathrm{des} - \varepsilon_1)(w_\sigma)} = X^{(\sigma_A - \ell + \mathrm{rbin})(\sigma)} Y^{\mathrm{des}(\sigma)}.
\end{equation}

The permutation $\sigma \in S_m$, viewed as an element of $B_m$, clearly satisfies property $(P_m)$.  If $j \in [m-1]$, then $\sigma$ fails to satisfy $(P_j)$ if and only if $\sigma(j+1)$ lies between $\sigma(j)$ and $\eta_j \sigma(j) < 0$, which is equivalent to $j \in \mathrm{Des}(\sigma)$.  Hence $\sigma = \left( \prod_{j \in \mathrm{Des}(\sigma)} \eta_j \right) (w_\sigma)$.  Noting that $\varepsilon_1(\sigma) = 0$, we obtain
\begin{multline*}
X^{(\sigma_C - \ell)(w_\sigma)} Y^{(2 \mathrm{des} - \varepsilon_1)(w_\sigma)} = \\
X^{(\sigma_C - \ell)(\sigma)} Y^{2 \mathrm{des} (\sigma)} \left( \prod_{j \in \mathrm{Des}(\sigma)} X^{ - \left( \binom{m+1}{2} - \binom{j+1}{2} \right)}Y^{-1} \right) = 
X^{(\sigma_A - \ell + \mathrm{rbin})(\sigma)} Y^{\mathrm{des}(\sigma)},
\end{multline*}
where the first equality is immediate from~\eqref{equ:gamma.action.bm} and the second arises from a simple calculation.  This verifies~\eqref{equ:endgame}, and Lemma~\ref{lem:bm.to.sm} follows.

It remains to prove Sublemma~\ref{bm.claim}, which is done by a computation.  Let $j \in [m]$ and assume that $w$ satisfies property $(P_j)$.  We first claim that
\begin{equation} \label{equ:sublemma.length}
\ell(\eta_j w) = \ell (w) + \sum_{i \in [j] \atop w(i) > 0} i - \sum_{i \in [j] \atop w(i) < 0} i.
\end{equation} 
Consider pairs $(i_1, i_2) \in [j]^2$ with $i_1 \leq i_2$.  Observe that $|\eta_j w(i_1)| > |\eta_j w(i_2)|$ if and only if $| w(i_1)| < |w(i_2)|$.  Moreover, $\eta_j w(i)$ and $w(i)$ have opposite sign for all $i \in [j]$.  It follows that if $w(i_1)$ and $w(i_2)$ have the same sign, then $(i_1, i_2)$ is an inversion of $\eta_j w$ if and only if it was already an inversion of $w$.  Similarly, if $w(i_1)$ and $w(i_2)$ have opposite sign, then $(i_1, i_2)$ is a negative pair of $\eta_j w$ if and only if it already was a negative pair of $w$.  However:
\begin{itemize}
\item If $w(i_1) < 0$ and $w(i_2) > 0$, then $(i_1, i_2)$ is an inversion of $\eta_j w$ but not of $w$.
\item If $w(i_1) > 0$ and $w(i_2) > 0$, then $(i_1, i_2)$ is a negative pair of $\eta_j w$ but not of $w$.
\item If $w(i_1) > 0$ and $w(i_2) < 0$, then $(i_1, i_2)$ is an inversion of $w$ but not of $\eta_j w$.
\item If $w(i_1) < 0$ and $w(i_2) < 0$, then $(i_1, i_2)$ is a negative pair of $w$ but not of $\eta_j w$.
\end{itemize}
It follows that the contribution to $\ell(\eta_j w) - \ell(w) = \mathrm{inv}(\eta_j w) + \mathrm{npr}(\eta_j w) - \mathrm{inv}(w) - \mathrm{npr}(w)$ arising from pairs $(i_1, i_2) \in [j]^2$ is
\begin{align*}
| \{ (i_1, i_2) \in [j]^2: i_1 \leq i_2, w(i_2) > 0 \} | - | \{ (i_1, i_2) \in [j]^2: i_1 \leq i_2, w(i_2) < 0 \} | = \\\sum_{i \in [j] \atop w(i) > 0} i - \sum_{i \in [j] \atop w(i) < 0} i.
\end{align*}

Next we consider pairs $(i_1, i_2)$ such that $i_1 \leq j$ and $i_2 > j$.  For every $i \in [j]$ we denote by $i^\prime$ the unique element of $[j]$ such that $|\eta_j w(i^\prime)| = |w(i)|$.  If $| w(i_2) | < |w(i_1)|$ and if both of the conditions $w(i_1) < 0$ and $-w(i_1) \in \eta_j w([j])$ hold, then $(i_1, i_2)$ is not an inversion of $w$ yet gives rise to an inversion $(i_1^\prime, i_2)$ of $\eta_j w$.  On the other hand, $(i_1, i_2)$ is a negative pair of $w$, while $(i_1^\prime, i_2)$ is not a negative pair of $\eta_j w$.  Analogously, if $w(i_1) > 0$ and $| w(i_2) | < |w(i_1)|$, then $(i_1, i_2)$ is an inversion and not a negative pair of $w$, while $(i_1^\prime, i_2)$ is a negative pair and not an inversion of $\eta_j w$.  In all other cases, $(i_1^\prime, i_2)$ is an inversion of $\eta_j w$ if and only if $(i_1, i_2)$ is an inversion of $w$, and the same is true for negative pairs.  Thus, the pairs $(i_1, i_2)$ such that $i_1 \leq j$ and $i_2 > j$ make no contribution to $\ell(\eta_j w) - \ell(w)$.  Since the pairs $(i_1, i_2)$ such that $j < i_1 \leq i_2$ are obviously unaffected by $\eta_j$, we have established~\eqref{equ:sublemma.length}.

Now consider the descent sets of $\eta_j w$ and of $w$.  It follows from the observations above that:
\begin{itemize}
\item If $i \in D_- = \{ i \in [j-1]: w(i) > 0, w(i+1) < 0 \}$, then $i \in \mathrm{Des}(w)$ and $i \not\in \mathrm{Des}(\eta_j w)$.  
\item If $i \in D_+ = \{ i \in [j-1]: w(i) < 0, w(i+1) > 0 \}$, then $i \not\in \mathrm{Des}(w)$ and $i \in \mathrm{Des}(\eta_j w)$.  
\item If $i \in [j-1] \setminus (D_+ \cup D_-)$ or $i > j$, then $i \in \mathrm{Des}(w)$ if and only if $i \in \mathrm{Des}(\eta_j w)$.
\end{itemize}
The remaining case $i = j$ will be treated below.  Let $\beta_1 < \cdots < \beta_s$ be the elements of $D_-$ and $\gamma_1 < \cdots < \gamma_t$ the elements of $D_+$.

Suppose first that $w(1) > 0$.  In this case, $0$ is a descent of $\eta_j w$ but not of $w$.  If $w(j) > 0$, then $s = t$ and we have the arrangement
$ 1 \leq \beta_1 < \gamma_1 < \cdots < \beta_s < \gamma_s \leq j - 1$.
By property $(P_j)$ of Definition~\ref{def:property.pj}, observe that if $j \in [m-1]$, then $j \leq m - 1$ and $w(j+1)$ does not lie between $w(j)$ and $\eta_j w (j)$.  Hence $j \in \mathrm{Des}(w)$ if and only if $j \in \mathrm{Des}(\eta_j w)$.  
Letting $\Sigma[a,b]$ denote the sum of the elements of the set $[a,b]$, it follows from~\eqref{equ:sublemma.length} that
\allowdisplaybreaks{
\begin{multline*} 
\ell(\eta_j w) - \ell(w) = \sum_{i \in [j] \atop w(i) > 0} i - \sum_{i \in [j] \atop w(i) < 0} i = \\\Sigma[1,\beta_1] - \Sigma[\beta_1 + 1, \gamma_1] + \cdots - \Sigma[\beta_{s} + 1, \gamma_s] + \Sigma[\gamma_s + 1, j]  = \\ 
\binom{\beta_1 + 1}{2} - \left( \binom{\gamma_1 + 1}{2} - \binom{\beta_1 + 1}{2} \right) + \cdots + \left( \binom{j+1}{2} - \binom{\gamma_s + 1}{2} \right) = \\  
\sigma_C(\eta_j w) - \sigma_C(w) + \binom{j+1}{2} - \binom{m+1}{2}.
\end{multline*} }
Sublemma~\ref{bm.claim} follows for this case by a simple calculation.
If $w(j) < 0$, then $t = s - 1$ and $1 \leq \beta_1 < \gamma_1 < \dots < \gamma_{s-1} < \beta_s \leq j - 1$.  By property $(P_j)$, we see that $j \in [m-1]$ and that $j$ is a descent of $\eta_j w$ but not of $w$.  In this case,
\begin{multline*}
\ell(\eta_j w) - \ell(w) = \sum_{i \in [j] \atop w(i) > 0} i - \sum_{i \in [j] \atop w(i) < 0} i = - \binom{j+1}{2} + 2 \sum_{i = 1}^s \binom{\beta_i + 1}{2} - 2 \sum_{i = 1}^{s-1} \binom{\gamma_i + 1}{2} = \\
\sigma_C(\eta_j w) - \sigma_C(w) + \binom{j+1}{2} - \binom{m+1}{2}.
\end{multline*}
Again Sublemma~\ref{bm.claim} follows.
The case $w(1) < 0$ is treated analogously, completing the proof of Sublemma~\ref{bm.claim} and thus of Lemma~\ref{lem:bm.to.sm}.
\end{proof}

After this combinatorial digression, we return to the computation of the pro-isomorphic zeta function $\zeta^\wedge_{\mathcal{H}_m \tensor \mathcal{O}_K,p}(s)$.

\begin{thm} \label{pro:higher.final}
Let $m, d \in N$.  For any number field $K$ of degree $d$ we have $\zeta^\wedge_{\mathcal{H}_m \tensor \mathcal{O}_K}(s) = \prod_{\p \in V_K} W_{\mathcal{H}_m, d}(q_\p, q_\p^{-s})$, where
\begin{equation*}
W_{\mathcal{H}_m,d}(X,Y) = \frac{\sum_{\sigma \in S_m} X^{- \ell(\sigma)} \prod_{j \in \mathrm{Des}(\sigma)} Z_j}{\prod_{j = 0}^m (1 - Z_j)} \in \Q(X,Y)
\end{equation*}
and $Z_j = X^{\binom{m+1}{2} - \binom{j+1}{2} + 2md} Y^{m+1}$ for $j \in [m]_0$.  The following functional equation holds:
\begin{equation*}
W_{\mathcal{H}_m,d}(X^{-1},Y^{-1}) = (-1)^{m+1} X^{m^2 + 4md}Y^{2(m+1)} W_{\mathcal{H}_m,d}(X,Y).
\end{equation*}
The abscissa of convergence is $\alpha^\wedge_{\mathcal{H}_m \tensor_\Z \mathcal{O}_K} = \frac{m}{2} + \frac{2md + 1}{m+1}$.
\end{thm}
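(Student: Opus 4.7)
The plan is to begin with Proposition~\ref{thm:gsp}, which exhibits $\zeta^\wedge_{\mathcal{H}_m \tensor \mathcal{O}_K, p}(s)$ as a product over $\p \mid p$ of integrals $\int_{\mathrm{GSp}_{2m}^+(K_\p)} | \det A |_{\p}^{(1+1/m)s - 2d} \, d\mu$. To evaluate each such integral I would invoke formula~\eqref{equ:integral.bn} with $q = q_\p$ and with its parameter $s$ replaced by $(1+1/m)s - 2d$. A direct calculation identifies the resulting $\widetilde{X}_i$ in terms of the quantities $Z_j$ from the theorem statement (evaluated at $X = q_\p$, $Y = q_\p^{-s}$): one has $\widetilde{X}_0 = Z_0$, while $\widetilde{X}_i = Z_i^2$ for $i \in [m]$. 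The denominator of~\eqref{equ:integral.bn} therefore factors as $(1 - Z_0) \prod_{j=1}^m (1 - Z_j)(1 + Z_j)$.

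The combinatorial heart of the argument is the application of Lemma~\ref{lem:bm.to.sm} with $X = q_\p$ and $Y = q_\p^{2md - (m+1)s}$. Using the identity $(m-i)(m+i+1) = 2(\binom{m+1}{2} - \binom{i+1}{2})$ and the definition of $\sigma_C$, one verifies that with these substitutions $q_\p^{-\ell(w)} \prod_{i \in \mathrm{Des}(w)} \widetilde{X}_i = X^{(\sigma_C - \ell)(w)} Y^{(2\,\mathrm{des} - \varepsilon_1)(w)}$, so Lemma~\ref{lem:bm.to.sm} applies and converts the numerator of~\eqref{equ:integral.bn} into $\prod_{j=1}^m (1 + Z_j)$ times $\sum_{\sigma \in S_m} X^{(\sigma_A - \ell + \mathrm{rbin})(\sigma)} Y^{\mathrm{des}(\sigma)}$. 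A second identity, $i(m-i) + \binom{m-i+1}{2} = \binom{m+1}{2} - \binom{i+1}{2}$, rewrites the latter summand as $X^{-\ell(\sigma)} \prod_{j \in \mathrm{Des}(\sigma)} Z_j$. Cancelling the common factor $\prod_{j=1}^m (1 + Z_j)$ between numerator and denominator yields exactly $W_{\mathcal{H}_m,d}(q_\p, q_\p^{-s})$. I expect this bookkeeping --- arranging the substitutions so that Lemma~\ref{lem:bm.to.sm} applies verbatim, and tracing the two binomial identities --- to be the main technical obstacle.

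For the functional equation I would combine the elementary identity $1 - Z_j^{-1} = -Z_j^{-1}(1 - Z_j)$ on the denominator with the involution $\sigma \mapsto \sigma^*$ on $S_m$ defined by $\sigma^*(i) = m + 1 - \sigma(i)$, which satisfies $\ell(\sigma^*) = \binom{m}{2} - \ell(\sigma)$ and $\mathrm{Des}(\sigma^*) = [m-1] \setminus \mathrm{Des}(\sigma)$. A short computation then gives $W_{\mathcal{H}_m,d}(X^{-1}, Y^{-1}) = (-1)^{m+1} X^{\binom{m}{2}} Z_0 Z_m \, W_{\mathcal{H}_m,d}(X,Y)$, and one checks $X^{\binom{m}{2}} Z_0 Z_m = X^{m^2 + 4md} Y^{2(m+1)}$ using $\binom{m}{2} + \binom{m+1}{2} = m^2$. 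Finally, the abscissa of convergence is read off the Euler product $\prod_\p W_{\mathcal{H}_m,d}(q_\p, q_\p^{-s})$: the factor $(1 - Z_0)^{-1}$ contributes, by Dedekind-zeta comparison, divergence at $\mathrm{Re} \, s = (\binom{m+1}{2} + 2md + 1)/(m+1) = m/2 + (2md + 1)/(m+1)$, which strictly dominates the corresponding quantities arising from $Z_j = 1$ for $j \in [m]$. Since $\mathrm{Des}(\sigma) \subseteq [m-1]$ for $\sigma \in S_m$, the numerator involves only $Z_1, \dots, Z_{m-1}$ and cannot cancel this pole, yielding the claimed value.
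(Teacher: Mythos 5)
Your proposal is correct and follows essentially the same route as the paper: Proposition~\ref{thm:gsp} plus the Bruhat-decomposition formula~\eqref{equ:integral.bn}, the collapse from $B_m$ to $S_m$ via Lemma~\ref{lem:bm.to.sm}, multiplication by the longest element of $S_m$ for the functional equation, and the Euler-product comparison (the content of Lemma~\ref{lem:abscissa}) for the abscissa. The only differences are cosmetic: you substitute $s \mapsto \tfrac{m+1}{m}s - 2d$ before applying Lemma~\ref{lem:bm.to.sm} rather than after, and you argue the abscissa directly instead of citing Lemma~\ref{lem:abscissa}.
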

\begin{proof}
Let $F/ \Q_p$ be a finite extension with residue field of cardinality $q$, and set $X_j = q^{\binom{m+1}{2} - \binom{j+1}{2} - ms}$ for any $j \in [m]_0$.
Recall the quantities $\widetilde{X}_j$ defined in~\eqref{equ:integral.bn} for $j \in [m]_0$.  Since $\widetilde{X}_0 = X_0$ and $\widetilde{X}_j = X_j^2$ for $j \in [m]$, it follows from~\eqref{equ:integral.bn} and Lemma~\ref{lem:bm.to.sm} that
\begin{equation} \label{equ:new.form}
 \int_{\mathrm{GSp}_{2m}^+(F)} | \det A |_{F}^{s} d \mu(A) =  \frac{ \sum_{w \in S_m} q^{- \ell (w)} \prod_{j \in \mathrm{Des}(w)} X_j }{\prod_{j = 0}^m (1 - X_j )} .
 \end{equation}
Indeed, the equality of the right-hand sides of~\eqref{equ:integral.bn} and~\eqref{equ:new.form} is  verified by substituting $(X,Y) = (q, q^{-ms})$ into the statement of Lemma~\ref{lem:bm.to.sm} and performing elementary calculations.
Substituting $\frac{m+1}{m} s - 2d$ for $s$ in~\eqref{equ:new.form}, we obtain the first part of our claim by Proposition~\ref{thm:gsp}.  Let $\sigma_0 \in S_m$ be the longest word, and recall that $\ell(\sigma_0 \sigma) = \binom{m}{2} - \ell(\sigma)$ and that $\mathrm{Des}(\sigma_0 \sigma) = [m-1] \setminus \mathrm{Des}(\sigma)$ for all $\sigma \in S_m$.  Hence, 
\begin{multline*}
W_{\mathcal{H}_m,d}(X^{-1}, Y^{-1}) = \frac{\sum_{\sigma \in S_m} X^{\ell(\sigma)} \prod_{j \in \mathrm{Des}(\sigma)} Z_j^{-1}}{\prod_{j = 0}^m (1 - Z_j^{-1})} = \\ (-1)^{m+1} Z_0 Z_m \frac{\sum_{\sigma} X^{\binom{m}{2} - \ell(\sigma_0 \sigma)} \prod_{j \in \mathrm{Des}(\sigma_0 \sigma)} Z_j}{\prod_{j = 0}^m (1 - Z_j)} = \!
(-1)^{m+1} X^{m^2 + 4md} Y^{2(m+1)} W_{\mathcal{H}_m,d}(X,Y).
\end{multline*}
The functional equation follows immediately.  Finally, by Lemma~\ref{lem:abscissa} below we have
$$  \alpha^\wedge_{\mathcal{H}_m \tensor_\Z \mathcal{O}_K} = \max_{j \in [m]_0} \left\{ \frac{\binom{m+1}{2} - \binom{j + 1}{2} + 2md + 1}{m+1} \right\} = \frac{\binom{m+1}{2} + 2md + 1}{m+1}.\qedhere$$ 
\end{proof}

The functions $\zeta^\wedge_{\mathcal{H}_m \tensor \mathcal{O}_K,p}(s)$, in the case $d = 1$ and $m \in \{2,3 \}$, were computed explicitly by du Sautoy and Lubotzky~\cite[\S3.3 and Example~1.4(2)]{duSLubotzky/96}.  Otherwise, Theorem~\ref{pro:higher.final} is new even for $d = 1$.  For comparison we mention that, for arbitrary $m$ and $K$, the ideal zeta factors $\zeta^\vartriangleleft_{\mathcal{H}_m \tensor \mathcal{O}_K,p}(s)$ are known when $p$ is unramified in $K$ by work of Carnevale, the third author, and Voll~\cite[Section~5.4]{CSV/19}.

\begin{rem} \label{rmk:higher.heisenberg}
Observe that the functional equation of Theorem~\ref{pro:higher.final} could have been derived directly from~\eqref{equ:integral.bn} using a similar multiplication by the longest element of $B_m$.  The proof of the functional equation is essentially that of~\cite[Theorem~4]{Voll/05}; see~\cite[p.~707]{Igusa/89} and the proofs of~\cite[Theorem~5.9]{duSLubotzky/96} and~\cite[Corollary~2.7]{Berman/11} for analogous arguments.  It is easily verified that $W_{\mathcal{H}_m,d}(X,Y) = \frac{1}{1 - Z_0} I_m(X^{-1}; Z_1, \dots, Z_m)$,
where $I_m$ is the ``Igusa function'' of~\cite[Definition~2.5]{SV1/15}; the functional equation for these functions is stated as~\cite[Proposition~4.2]{SV1/15} and follows directly from~\cite[Theorem~4]{Voll/05}.  Igusa functions and their generalizations also play central roles in explicit computations of ideal zeta functions of Lie rings~\cite{CSV/19, SV2/16}.
\end{rem}

\begin{rem} \label{rem:heisenberg}
Consider the Heisenberg Lie lattice $\mathcal{H} = \langle x, y, z \rangle_\Z$ with $[x,y] = z$ and $Z(\mathcal{H}) = \langle z \rangle$; this is the smallest non-abelian nilpotent Lie lattice.  Since $\mathcal{H} = \mathcal{H}_1 = \mathcal{F}_{2,2}$, for any number field $K$ we obtain 
$$\zeta^\wedge_{\mathcal{H} \tensor \mathcal{O}_K}(s) = \zeta_K (2s - 2d) \zeta_K (2s - 2d - 1)$$
as a special case of either Theorem~\ref{pro:zeta.free} or Theorem~\ref{pro:higher.final}.  It is clear from Theorem~\ref{pro:higher.final} and from Theorems~\ref{thm:lmn} and~\ref{thm:no.funct.eq} below that $\zeta^\wedge_{\mathcal{L} \tensor \mathcal{O}_K}(s)$, for a general Lie lattice $\mathcal{L}$, cannot be expressed so neatly in terms of Dedekind zeta functions.
\end{rem}

The computation of the abscissa of convergence in Theorem~\ref{pro:higher.final} relies on the following claim.  It is well-known and follows from the proof of~\cite[Corollary~3.1]{Voll/19}.
\begin{lem} \label{lem:abscissa}
Let $m \in \N$.  Let $W(X,Y) \in \Q(X,Y)$ be a rational function of the form
$$ W(X,Y) = \frac{\sum_{\sigma \in S_m} X^{- \ell(\sigma)} \prod_{j \in \mathrm{Des}(\sigma)} X^{a_j}Y^{b_j}}{\prod_{j = 1}^{m-1} (1 - X^{a_j} Y^{b_j})},$$
where $a_j \in \N \cup \{ 0 \}$ and $b_j \in \N$ for all $j \in [m-1]$, while $\ell(\sigma)$ and $\mathrm{Des}(\sigma)$ are as in Remark~\ref{rmk:permutation.descent}.  Let $K$ be a number field. Then the abscissa of convergence of the function
$ F(s) = \prod_{\p \in V_K} W(q_{\p}, q_{\p}^{-s})$
is $\max_{j \in [m-1]} \left\{ \frac{a_j + 1}{b_j} \right\}$.
\end{lem}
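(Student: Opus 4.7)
The plan is to decompose $F(s)$ into a product of Dedekind zeta functions and an auxiliary Euler product that converges on a strictly larger half-plane, following the strategy of~\cite{Voll/19}. Using the identity $\prod_{\p \in V_K} (1 - q_\p^{a_j - b_j s})^{-1} = \zeta_K(b_j s - a_j)$, the denominator of $W$ contributes $\prod_{j=1}^{m-1} \zeta_K(b_j s - a_j)$ to $F(s)$, yielding the factorization
\[
F(s) = H(s) \cdot G(s), \quad H(s) := \prod_{j=1}^{m-1} \zeta_K(b_j s - a_j), \quad G(s) := \prod_{\p \in V_K} N(q_\p, q_\p^{-s}),
\]
where $N(X,Y) = \sum_{\sigma \in S_m} X^{-\ell(\sigma)} \prod_{j \in \Des(\sigma)} X^{a_j}Y^{b_j}$ is the numerator of $W$. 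Since $\zeta_K$ has a simple pole at $1$ and abscissa of convergence $1$, each factor $\zeta_K(b_j s - a_j)$ has a simple pole and abscissa $(a_j+1)/b_j$; hence $H(s)$ has abscissa of convergence $\alpha := \max_j (a_j+1)/b_j$ and a pole at $s = \alpha$.

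Next I would show that $G(s)$ is holomorphic and non-vanishing on some strictly larger half-plane $\{\mathrm{Re}(s) > \alpha - \delta\}$. Because $\mathrm{id} \in S_m$ is the unique permutation with $\Des = \emptyset$ and satisfies $\ell(\mathrm{id}) = 0$, its contribution to $N(q_\p, q_\p^{-s})$ equals $1$. For any $\sigma \neq \mathrm{id}$ one has both $\ell(\sigma) \geq 1$ and $|\Des(\sigma)| \geq 1$, while the definition of $\alpha$ gives $\alpha b_j - a_j \geq 1$ for every $j \in [m-1]$. Consequently, at $s = \alpha - \delta$ the contribution from $\sigma \neq \mathrm{id}$ is bounded by
\[
q_\p^{-\ell(\sigma)} \prod_{j \in \Des(\sigma)} q_\p^{a_j - b_j(\alpha-\delta)} \leq q_\p^{-\ell(\sigma) - |\Des(\sigma)| + C\delta} \leq q_\p^{-2 + C\delta},
\]
where $C := \sum_{j=1}^{m-1} b_j$. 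Summing over the $m!-1$ non-identity $\sigma$ and over $\p \in V_K$ shows that $\sum_\p |N(q_\p, q_\p^{-s}) - 1|$ converges for $\delta \in (0, 1/C)$, so $G(s)$ converges absolutely there. Non-vanishing for real $s$ in this half-plane follows because $N(q_\p, q_\p^{-s}) \geq 1 > 0$, as all its summands are non-negative.

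To conclude I would combine the steps: $F(s) = H(s) G(s)$ converges absolutely on $\{\mathrm{Re}(s) > \alpha\}$, so the abscissa of convergence of the Dirichlet series $F(s)$ is at most $\alpha$. Conversely, the factorization provides a meromorphic continuation of $F$ to a neighborhood of $\alpha$ with a pole at $s = \alpha$, since the pole of $H$ there is not cancelled by $G$ (as $G(\alpha) > 0$). A Dirichlet series is holomorphic on its open half-plane of convergence, so this pole forces divergence of $F$ at $s = \alpha$, yielding $\sigma_c \geq \alpha$. Combined, the abscissa of convergence equals $\alpha$ exactly, independently of any non-negativity assumption on the Dirichlet coefficients.

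The main technical challenge is the uniform estimate $q_\p^{-2+C\delta}$ for the non-identity contributions to $N$, which requires exploiting simultaneously the length bound $\ell(\sigma) \geq 1$, the descent count $|\Des(\sigma)| \geq 1$, and the defining inequality $\alpha b_j \geq a_j + 1$. The estimate is tight precisely at simple transpositions $\sigma = s_{j_0}$, where $j_0$ is a maximizer in the definition of $\alpha$, reflecting that the rightmost singularity of $F$ arises through these elementary Weyl-group contributions combined with the Dedekind pole of $\zeta_K(b_{j_0} s - a_{j_0})$.
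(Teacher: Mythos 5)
Your proposal is correct, and it is worth noting that the paper does not actually prove Lemma~\ref{lem:abscissa}: it only records that the statement ``is well-known and follows from the proof of~\cite[Corollary~3.1]{Voll/19}'', where precisely this kind of dominant-pole analysis for Weyl-group-indexed rational functions is carried out. Your write-up supplies that argument in full: factor $F(s)=\prod_{j=1}^{m-1}\zeta_K(b_js-a_j)\cdot G(s)$ with $G(s)=\prod_{\p}N(q_\p,q_\p^{-s})$, use $\ell(\sigma)\geq 1$, $|\Des(\sigma)|\geq 1$ and $\alpha b_j-a_j\geq 1$ to show $G$ converges absolutely, is holomorphic and non-vanishing on $\mathrm{Re}\,s>\alpha-\delta$, and read off the abscissa from the uncancelled pole of the Dedekind factors at $s=\alpha$ (the factors with $b_j\alpha-a_j>1$ being finite and non-zero there, and $G(\alpha)\geq 1$). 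Two small points of care. First, to pass from convergence of the Euler product on $\mathrm{Re}\,s>\alpha$ to convergence of the Dirichlet series $F(s)$ you do use that the expansion of $W(X,Y)$ in powers of $Y$ has non-negative coefficients (evident here, since the numerator and the geometric expansions of the denominator have non-negative coefficients, and each local factor has constant term $1$); so your closing remark that the conclusion is independent of any non-negativity assumption should be read as applying only to the lower bound, where your pole-plus-identity-theorem argument indeed replaces Landau's theorem. Second, the lower bound implicitly invokes that the Dirichlet series, if it converged beyond $\alpha$, would be holomorphic there and would agree with $H(s)G(s)$ by analytic continuation; this is fine, but deserves the one sentence you give it. With these caveats the proof is complete and matches in substance the argument of the cited reference.
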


\subsection{The Lie lattices $\mathcal{L}_{m,n}$} \label{sec:lmn}
We recall a family of Lie lattices introduced by the first author, Klopsch, and Onn~\cite[Definition~2.1]{BKO/Dstar}.  Let $m,n \in \N$, with $n \geq 2$, and consider the sets
\begin{eqnarray*}
\mathbf{E} & = & \{ \mathbf{e} = (e_1, \dots, e_n) \in \N_0^n : e_1 + \cdots + e_n = m - 1 \} \\
\mathbf{F} & = & \{ \mathbf{f} = (f_1, \dots, f_n) \in \N_0^n : f_1 + \cdots + f_n = m \}.
\end{eqnarray*}
Let $(\mathbf{b}_1, \dots, \mathbf{b}_n)$ be the standard generators of the additive monoid $\N_0^n$, so that $\mathbf{b}_i = (0, \dots, 0, 1, 0, \dots, 0)$, with $1$ in the $i$-th position. 
Then $\mathcal{L}_{m,n}$ is the Lie lattice with $\Z$-basis 
$$ \{ x_{\mathbf{e}} : \mathbf{e} \in \mathbf{E} \} \cup \{ y_{\mathbf{f}} : \mathbf{f} \in \mathbf{F} \} \cup \{ z_j : j \in [n] \},$$
and with the Lie bracket defined by the relations
$$ [x_{\mathbf{e}}, y_{\mathbf{f}}] = \begin{cases}
z_i &: \mathbf{f} - \mathbf{e} = \mathbf{b}_i \\
0 &: \mathbf{f} - \mathbf{e} \not\in \{ \mathbf{b}_1, \dots, \mathbf{b}_n \}
\end{cases}
$$
and where all other pairs of elements of the $\Z$-basis above commute.  Then $\mathcal{L}_{m,n}$ is nilpotent of class two, and $Z(\mathcal{L}_{m,n}) = [\mathcal{L}_{m,n}, \mathcal{L}_{m,n}] = \langle z_1, \dots, z_n \rangle$.  

\begin{rem} \label{rem:lmn.families}
As noted in~\cite{BKO/Dstar}, the Lie lattices $\mathcal{L}_{m,n}$ provide a common generalization of two well-known families of Lie lattices:
\begin{itemize}
\item The Lie lattices $\mathcal{L}_{1,n}$, for $n \geq 2$, are the Grenham Lie lattices 
$$G_n = \langle x_0, x_1, \dots, x_n, z_1, \dots, z_n \rangle,$$
with relations $[x_0, x_i] = z_i$ for $i \in [n]$; all other pairs of generators commute.  Their pro-isomorphic zeta functions were computed by the first author~\cite[\S3.3.2]{Berman/05}.
\item The Lie lattices $\mathcal{L}_{m,2}$ are associated via the correspondence of~\eqref{equ:class.two} to the $D^\ast$-groups of odd Hirsch length defined in~\cite[\S1.1]{BKO/Dstar}.  These represent commensurability classes introduced by Grunewald and Segal~\cite[\S6]{GSegal/84} in the course of their classification of torsion-free nilpotent radicable groups of class two with finite rank and center of rank two.
\end{itemize}
\end{rem}

\begin{lemma} \label{lem:bko.abs.indec}
Let $k$ be a field.  The $k$-Lie algebra $\mathcal{L}_{m,n} \tensor_{\Z} k$ is indecomposable.
\end{lemma}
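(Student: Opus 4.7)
The plan is to assume a non-trivial decomposition $V := \mathcal{L}_{m,n} \tensor_\Z k = V_1 \oplus V_2$ with both factors non-zero, and to derive a contradiction by showing that every element of each $V_i$ must have vanishing $x$-coefficients, so that $V$ itself cannot contain any of the generators $x_\mathbf{e}$.

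First I would collect the easy consequences of such a decomposition. Because $[V,V] = Z(V) = \langle z_1, \ldots, z_n \rangle_k$, one has $Z(V) = Z_1 \oplus Z_2$ with $Z_i := V_i \cap Z(V)$ and $[V_i, V_i] = Z_i$. If some $V_i$ were abelian it would commute with $V = V_1 + V_2$, hence sit in $Z(V) \cap V_i = Z_i = [V_i,V_i] = 0$, contradicting $V_i \neq 0$; so both $Z_i$ are non-zero.

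The main step is a polynomial argument. I would pick a non-zero $\lambda \in \Hom_k(Z(V), k)$ vanishing on $Z_1$ (available since $Z_2 \neq 0$), set $c_i := \lambda(z_i)$, and form the non-zero linear form $L := \sum_{i=1}^n c_i t_i \in k[t_1, \ldots, t_n]$. Given $u \in V_1$, write $u = \sum_{\mathbf{e} \in \mathbf{E}} a_\mathbf{e} x_\mathbf{e} + \sum_{\mathbf{f} \in \mathbf{F}} b_\mathbf{f} y_\mathbf{f} + z$ with $z \in Z(V)$, and put $Q_u := \sum_\mathbf{e} a_\mathbf{e} t^\mathbf{e}$. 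The containment $[u,V] = [u,V_1] \subseteq [V_1,V_1] = Z_1 \subseteq \ker \lambda$ gives $\lambda([u, y_{\mathbf{f}'}]) = 0$ for every $\mathbf{f}' \in \mathbf{F}$; unfolding the relation $[x_\mathbf{e}, y_{\mathbf{f}'}] = z_i$ when $\mathbf{f}' - \mathbf{e} = \mathbf{b}_i$ shows this pairing equals $\sum_{i : f'_i \geq 1} c_i a_{\mathbf{f}' - \mathbf{b}_i}$, which is exactly the coefficient of $t^{\mathbf{f}'}$ in $L \cdot Q_u$. Thus $L \cdot Q_u = 0$ in $k[t_1, \ldots, t_n]$, and because the ring is an integral domain with $L \neq 0$ we conclude $Q_u = 0$, i.e.~$a_\mathbf{e} = 0$ for every $\mathbf{e}$. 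Hence $V_1 \subseteq Y' := \langle y_\mathbf{f} : \mathbf{f} \in \mathbf{F} \rangle_k + Z(V)$; the symmetric choice of $\lambda$ vanishing on $Z_2$ yields $V_2 \subseteq Y'$ as well.

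Combining these inclusions gives $V = V_1 + V_2 \subseteq Y'$, which contradicts the fact that $V/Y' \cong \langle x_\mathbf{e} : \mathbf{e} \in \mathbf{E} \rangle_k$ has positive dimension $|\mathbf{E}| = \binom{m+n-2}{n-1} \geq 1$. The key hurdle, and the reason the statement is genuinely non-trivial, is the translation step: recognising that the Lie-theoretic condition ``$\lambda$ annihilates all brackets $[u, y_{\mathbf{f}'}]$'' encodes precisely the algebraic identity $L \cdot Q_u = 0$, so that the absence of zero-divisors in the polynomial ring forces every $x$-coefficient of $u$ to vanish.
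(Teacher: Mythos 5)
Your proof is correct, and it reaches the contradiction by the same overall strategy as the paper (every element of each summand is forced into the span of the $y_{\mathbf{f}}$ and the centre, so the $x_{\mathbf{e}}$ cannot be recovered), but the key technical step is genuinely different. The paper first proves that any $v$ whose $x$-part is nonzero satisfies $\dim_k [v, L_k] = n$, by an explicit combinatorial extraction: taking the lexicographically maximal exponent $\widehat{\mathbf{e}}$ with $a_{\widehat{\mathbf{e}}} \neq 0$ and its images under cyclic shifts of the coordinates, one produces each $z_j$ in $[v, L_k]$; it then observes that in a decomposition $L_1 \oplus L_2$ neither derived subalgebra can be all of the centre, so every element of $L_1 \cup L_2$ has bracket image of dimension $< n$, whence $L_k$ would be spanned by elements with vanishing $x$-part, a contradiction. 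You prove the dual statement: choosing a nonzero functional $\lambda$ on $Z(V)$ annihilating $[V_1,V_1]$, the conditions $\lambda([u, y_{\mathbf{f}'}]) = 0$ for all $\mathbf{f}'$ translate into the identity $L \cdot Q_u = 0$ in $k[t_1, \dots, t_n]$, and integrality of the polynomial ring kills the $x$-part of $u$; in other words, $[u, \langle y_{\mathbf{f}} \rangle_k]$ cannot lie in a hyperplane of the centre unless $Q_u = 0$. This replaces the lexicographic/cyclic-shift combinatorics by a one-line algebraic identity and makes visible that the bracket pairing of $\mathcal{L}_{m,n}$ is precisely multiplication of homogeneous polynomials by linear forms, an idea close in spirit to the elements $v_{\mathbf{c}}$ used in the rigidity argument of Proposition~\ref{pro:lmn.rigid}. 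In a full write-up you should record the (easy) verifications you use implicitly: that $Z(\mathcal{L}_{m,n} \tensor_\Z k) = [\mathcal{L}_{m,n} \tensor_\Z k, \mathcal{L}_{m,n} \tensor_\Z k] = \langle z_1, \dots, z_n \rangle_k$ over an arbitrary field, and that for a direct sum of Lie algebras one has $Z(V) = Z_1 \oplus Z_2$ with $[V_i, V_i] = Z_i$; both are immediate, and the rest of your argument is complete.
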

\begin{proof}
Write $L_k$ for $\mathcal{L}_{m,n} \tensor_{\Z} k$.  By slight abuse of notation, we denote the elements of the natural $k$-basis of $L_k$ by $x_{\mathbf{e}}, y_{\mathbf{f}}, z_i$.  Let
$$ v = \sum_{\mathbf{e} \in \mathbf{E}} a_{\mathbf{e}} x_{\mathbf{e}} + \sum_{\mathbf{f} \in \mathbf{F}} c_{\mathbf{f}} y_{\mathbf{f}} + \sum_{i = 1}^n d_i z_i \in L_k,$$
where the coefficients lie in $k$.
Suppose that $a_{\mathbf{e}} \neq 0$ for some $\mathbf{e} \in \mathbf{E}$.  We claim that $\dim_k [L_k, v] = n$.  Indeed, consider the lexicographical total ordering on $\mathbf{E}$, for which $\mathbf{e} \leq \mathbf{e}^\prime$ if there exists some $i \in [n]$ such that $e_i \leq e^\prime_i$ and $e_j = e^\prime_j$ for all $j < i$.  Let $\widehat{\mathbf{e}}$ be maximal, with respect to this ordering, among all $\mathbf{e} \in \mathbf{E}$ such that $a_{\mathbf{e}} \neq 0$.  It is easy to see that $[v, y_{\widehat{\mathbf{e}} + \mathbf{b}_1}] = a_{\widehat{\mathbf{e}}} z_1$, since $a_{\widehat{\mathbf{e}} + \mathbf{b}_1 - \mathbf{b}_j} = 0$ for all $j > 1$ by the maximality of $\widehat{\mathbf{e}}$.

Consider the map $\xi : \mathbf{E} \to \mathbf{E}$ given by $\xi(\mathbf{e}) = (e_2, e_3, \dots, e_n, e_1)$.  Similarly to the above, we see that for all $j \in [n]$, if $\widehat{\mathbf{e}}_j$ is such that $\xi^{j-1}(\widehat{\mathbf{e}}_j)$ is maximal in the set $\{ \xi^{j-1}(\mathbf{e}) : a_{\mathbf{e}} \neq 0 \}$, then $[v, y_{\widehat{\mathbf{e}}_j + \mathbf{b}_j}] = a_{\widehat{\mathbf{e}}_j} z_j$.  Thus $\dim_k [v, L_k] = \dim_k Z(L_k) = n$.

Suppose that $L_k = L_1 \oplus L_2$ is a direct sum of non-trivial $k$-Lie subalgebras.  If $[L_2, L_2] = [L_k, L_k] = Z(L_k)$, then $L_1$ must be abelian.  But then $L_1 \subseteq Z(L_k) \subseteq L_2$, which is impossible since we assumed $L_1 \neq 0$.  Hence $\dim_k [L_2, L_2] < n$, and similarly $\dim_k [L_1, L_1] < n$.
If $w \in L_1$, then $\dim_k [w, L_k] = \dim_k [w, L_1] \leq \dim_k [L_1, L_1] < n$.  Similarly, $\dim_k [w, L_k] < n$ for all $w \in L_2$.  Thus $L_k$ is spanned by the set $\{ v \in L_k : \dim_k [v, L_k] < n \}$.  However, we just showed that this set is contained in the proper subspace spanned by $\{ y_{\mathbf{f}} : \mathbf{f} \in \mathbf{F} \} \cup \{ z_i : i \in [n] \}$, giving rise to a contradiction.
\end{proof}

As in the case of the higher Heisenberg Lie algebras considered in Section~\ref{sec:higher.heisenberg}, there is no ideal $M \leq \gamma_2 L_k$ such that $\mathcal{X}(M)$ generates $L_k$.  Thus the $Z(L_k)$-rigidity of $L_k$ cannot be shown using Corollary~\ref{cor:segal}.  Write $L_{m,n} = \mathcal{L}_{m,n} \tensor_\Z \Q$.

\begin{pro} \label{pro:lmn.rigid}
Let $k$ be a field of characteristic zero.  Then the $k$-Lie algebra $L_{m,n} \tensor_\Q k$ is $Z(L_{m,n} \tensor_\Q K)$-rigid.
\end{pro}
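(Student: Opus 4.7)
The plan is to apply the rigidity criterion of Theorem~\ref{thm:rigidity} with $Z = Z(L_{m,n} \tensor_\Q k)$, the center of $L_{m,n} \tensor_\Q k$. Since $L_{m,n}$ has nilpotency class two, this ideal coincides with $\gamma_2(L_{m,n} \tensor_\Q k) = \langle z_1, \dots, z_n\rangle_k$, is highly invariant (centers always are), and satisfies $Z_1 = [Z, L_{m,n} \tensor_\Q k] = 0$. Under these conditions, the hypotheses of Theorem~\ref{thm:rigidity} reduce to two assertions: (a) absolute indecomposability of $L_{m,n} \tensor_\Q k$, and (b) that the set $\mathcal{Y}(Z) = \{v \in L \setminus Z : C_L(C_L(v)) = kv + Z\}$ generates $L_{m,n} \tensor_\Q k$ as a Lie algebra.

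For (a), I would observe that the argument of Lemma~\ref{lem:bko.abs.indec} is purely formal and applies verbatim over any field extension $E/k$ to give indecomposability of $L_{m,n} \tensor_\Q E$ as an $E$-algebra. Hence $L_{m,n} \tensor_\Q k$ is absolutely indecomposable.

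The heart of the argument is (b). First, I would show that each $x_{\mathbf{e}}$, for $\mathbf{e} \in \mathbf{E}$, lies in $\mathcal{Y}(Z)$ via direct centralizer calculation. The centralizer $C_L(x_{\mathbf{e}})$ is spanned by all $x_{\mathbf{e}'}$, the central elements, and those $y_{\mathbf{f}}$ with $\mathbf{f} - \mathbf{e} \notin \{\mathbf{b}_1, \dots, \mathbf{b}_n\}$. Computing the double centralizer, commutation with every $x_{\mathbf{e}'}$ forces the $y$-coefficients of a candidate element $w$ to vanish, and commutation with the remaining $y_{\mathbf{f}}$ then forces its $x_{\mathbf{e}'}$-coefficients to vanish for $\mathbf{e}' \neq \mathbf{e}$. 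The key combinatorial input here is that $n \geq 2$ guarantees: for every $\mathbf{e}' \neq \mathbf{e}$ in $\mathbf{E}$, there exists $i \in [n]$ with $\mathbf{e}' + \mathbf{b}_i \neq \mathbf{e} + \mathbf{b}_j$ for any $j$, providing a $y_{\mathbf{e}' + \mathbf{b}_i}$ that lies in $C_L(x_{\mathbf{e}})$ but does not commute with $x_{\mathbf{e}'}$.

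Analogous computations show $y_{\mathbf{f}} \in \mathcal{Y}(Z)$ for all $\mathbf{f} \in \mathbf{F}$ except those of the form $\mathbf{f} = (m-1)\mathbf{b}_i + \mathbf{b}_j$ with $i \neq j$ (and, when $m = 1$, all $\mathbf{f} \in \mathbf{F}$). To handle these ``bad" cases, I would exhibit elements of the form $v = x_{\mathbf{e}_0} + y_{\mathbf{f}} \in \mathcal{Y}(Z)$ for a carefully chosen $\mathbf{e}_0 \in \mathbf{E}$; since $x_{\mathbf{e}_0} \in \mathcal{Y}(Z)$ already, this places $y_{\mathbf{f}} = v - x_{\mathbf{e}_0}$ inside the Lie algebra generated by $\mathcal{Y}(Z)$. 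The brackets $[x_{\mathbf{e}}, y_{\mathbf{e} + \mathbf{b}_i}] = z_i$ then furnish the central generators, so $\mathcal{Y}(Z)$ generates $L_{m,n} \tensor_\Q k$ as desired. The main obstacle is the combinatorial bookkeeping required to choose $\mathbf{e}_0$ so that $C_L(v)$ is small enough to force $C_L(C_L(v)) = kv + Z$; this is a finite case analysis once the structure of the bad $\mathbf{f}$'s is pinned down, and the same calculation handles the Grenham case $m=1$ with $v = x_{\mathbf{0}} + y_{\mathbf{b}_j}$.
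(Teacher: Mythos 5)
Your proposal follows the same overall frame as the paper's proof -- verify the hypotheses of Theorem~\ref{thm:rigidity} with $Z$ the center (so $Z_1=0$), get absolute indecomposability from Lemma~\ref{lem:bko.abs.indec}, and show $x_{\mathbf{e}} \in \mathcal{Y}(Z)$ for all $\mathbf{e} \in \mathbf{E}$ -- but your handling of the generation step is genuinely different. The paper never decides which individual $y_{\mathbf{f}}$ lie in $\mathcal{Y}(Z)$ (it only records the counterexample $y_{(m-1,1,0,\dots,0)}$); instead it shows that each member of the family $v_{\mathbf{c}} = \sum_{\mathbf{f}} c_1^{f_1}\cdots c_{n-1}^{f_{n-1}} y_{\mathbf{f}}$, whose centralizer has a transparent description, lies in $\mathcal{Y}(Z)$, and then spans $\langle y_{\mathbf{f}} : \mathbf{f} \in \mathbf{F}\rangle$ by finitely many $v_{\mathbf{c}(\lambda)}$ via a generalized Vandermonde determinant (nonvanishing of a Schur polynomial). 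You instead classify the exceptional $\mathbf{f}$ exactly -- your list $\mathbf{f} = (m-1)\mathbf{b}_j + \mathbf{b}_i$ with $i \neq j$ (all of $\mathbf{F}$ when $m=1$) is correct, since $y_{m\mathbf{b}_j}$ is then the unique extra survivor in the double centralizer -- and absorb the bad $y_{\mathbf{f}}$ via mixed elements. The step you leave as ``bookkeeping'' does go through, and is in fact a single computation uniform in $(m,n)$ rather than a finite case analysis: take $\mathbf{e}_0 = \mathbf{f} - \mathbf{b}_i = (m-1)\mathbf{b}_j$ and $v = x_{\mathbf{e}_0} + y_{\mathbf{f}}$; then $C(v)$ consists of all $x_{\mathbf{e}}$ with $\mathbf{e} \notin \{\mathbf{f}-\mathbf{b}_i, \mathbf{f}-\mathbf{b}_j\}$, all $y_{\mathbf{f}'}$ with $\mathbf{f}' \neq (m-1)\mathbf{b}_j + \mathbf{b}_l$, the two mixed vectors $v$ and $x_{\mathbf{f}-\mathbf{b}_j} + y_{m\mathbf{b}_j}$, and $Z$; commuting a candidate $w$ with the pure $y$'s kills every $x$-coefficient except that of $x_{\mathbf{e}_0}$, commuting with the pure $x$'s leaves only $c_{\mathbf{f}}$, $c_{m\mathbf{b}_j}$ (and $c_{2\mathbf{b}_i}$ if $m=2$), and commuting with the two mixed vectors forces $w \in kv + Z$; the case $m=1$ is the easy computation you indicate. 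In sum, both routes work: yours buys a complete description of which basis-type elements lie in $\mathcal{Y}(Z)$ and avoids the Schur-polynomial/linear-independence input, at the cost of a double-centralizer analysis of mixed elements; the paper's choice of $v_{\mathbf{c}}$ sidesteps the exceptional $\mathbf{f}$'s entirely because $C(v_{\mathbf{c}})$ is easy to compute, at the cost of the Vandermonde argument (harmless over an infinite field).
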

\begin{proof}
By virtue of Remark~\ref{rem:extensions}, it suffices to verify the hypotheses of Theorem~\ref{thm:rigidity} for $L_{m,n}$.  Since $Z(L_{m,n})$ is central, the absolute indecomposability of $L_{m,n} / [Z(L_{m,n}), L_{m,n}]$ is given by Lemma~\ref{lem:bko.abs.indec}.  Thus it suffices to show that $\mathcal{Y}(Z(L_{m,n}))$ generates $L_{m,n}$.  We first check that $x_{\mathbf{e}} \in \mathcal{Y}(Z(L_{m,n}))$ for all $\mathbf{e} \in \mathbf{E}$.  Indeed, let $v \in C_{L_{m,n}}(C_{L_{m,n}}(x_\mathbf{e}))$.  Since the linear span of $\{ x_{\mathbf{e}^\prime} : \mathbf{e}^\prime \in \mathbf{E} \}$ is contained in $C_{L_{m,n}}(x_\mathbf{e})$, it is clear that
$ v \equiv \sum_{\mathbf{e}^\prime \in \mathbf{E}} a_{\mathbf{e}^\prime} x_{\mathbf{e}^\prime} \, \mathrm{mod} \, Z(L_{m,n})$.  Suppose that $v \not\in \Q x_{\mathbf{e}} + Z(L_{m,n})$.  Then $a_{\mathbf{e}^\prime} \neq 0$ for some $\mathbf{e}^\prime \neq \mathbf{e}$.  Recall the order on $\mathbf{E}$ and the map $\xi : \mathbf{E} \to \mathbf{E}$ defined in the proof of Lemma~\ref{lem:bko.abs.indec}.  There is some $i \in [n]$ such that $e_i < e_i^\prime$.  Let $\widehat{\mathbf{e}} \in \mathbf{E}$ be such that $\xi^{i-1}(\widehat{\mathbf{e}}) = \max \{ \xi^{i-1}(\mathbf{e}^\prime) : a_{\mathbf{e}^\prime} \neq 0 \}$.  Then $\widehat{\mathbf{e}} \neq \mathbf{e}$ and $v$ does not commute with $y_{\widehat{\mathbf{e}} + \mathbf{b}_i} \in C_{L_{m,n}}(x_{\mathbf{e}})$, contradicting the assumption $v \in C_{L_{m,n}}(C_{L_{m,n}}(x_\mathbf{e}))$.  It follows that $x_{\mathbf{e}} \in \mathcal{Y}(Z(L_{m,n}))$ as claimed.

However, it is not true that $y_{\mathbf{f}} \in \mathcal{Y}(Z(L_{m,n}))$ for all $\mathbf{f} \in \mathbf{F}$.  Indeed, 
$C_{L_{m,n}}(y_{(m-1,1,0,\dots,0)})$ is the $\Q$-linear span of 
$$\{ x_{\mathbf{e}} : \mathbf{e} \in \mathbf{E} \setminus \{ (m-2, 1, 0, \dots, 0), (m-1,0,\dots,0) \} \} \cup \{ y_{\mathbf{f}} : \mathbf{f} \in \mathbf{F} \} \cup \{z_i : i \in [n] \}.$$  
Therefore $y_{(m,0,\dots, 0)} \in C_{L_{m,n}} (C_{L_{m,n}} (y_{(m-1, 1, 0, \dots, 0)}))$, whence $y_{(m-1,1,0, \dots, 0)} \not\in \mathcal{Y}(Z(L_{m,n}))$.
Instead, fix an arbitrary vector $\mathbf{c} = (c_1, \dots, c_{n-1}) \in \Q^{n-1}$ and set
$$ v_{\mathbf{c}} = \sum_{\mathbf{f} \in \mathbf{F}} c_1^{f_1} c_2^{f_2} \cdots c_{n-1}^{f_{n-1}} y_{\mathbf{f}}.$$
First we show that $v_{\mathbf{c}} \in \mathcal{Y}(Z(L_{m,n}))$.  Obviously, the linear span of $\{ y_{\mathbf{f}} : \mathbf{f} \in \mathbf{F} \} \cup Z(L_{m,n})$ centralizes $v_{\mathbf{c}}$.  Consequently, $C_{L_{m,n}}(C_{L_{m,n}}(v_{\mathbf{c}}))$ is contained in the $\Q$-linear span of $\{ y_{\mathbf{f}} : \mathbf{f} \in \mathbf{F} \} \cup Z(L_{m,n})$.  Set $\boldsymbol{\delta}_j = \mathbf{b}_j - \mathbf{b}_n$ for $j \in [n-1]$.  Then 
$$ [x_{\mathbf{e}}, v_\mathbf{c}] = c_1^{e_1} \cdots c_{n-1}^{e_{n-1}} (c_1 z_1 + \cdots + c_{n-1} z_{n-1} + z_n)$$
for all $\mathbf{e} \in \mathbf{E}$,
from which it is clear that $x_{\mathbf{e} + \boldsymbol{\delta}_j} - c_j x_{\mathbf{e}} \in C_{L_{m,n}}(v_\mathbf{c})$ for all $j \in [n-1]$ and all $\mathbf{e} \in \mathbf{E}$ such that $e_n > 0$.  
Hence if 
$$ v = \sum_{\mathbf{f} \in \mathbf{F}} a_{\mathbf{f}} y_{\mathbf{f}} \in C_{L_{m,n}}(C_{L_{m,n}}(v_{\mathbf{c}})),$$
then $a_{\mathbf{f} + \boldsymbol{\delta}_j} = c_j a_{\mathbf{f}}$ for all $j \in [n-1]$ and all $\mathbf{f} \in \mathbf{F}$ such that $f_n > 0$.  Since $(f_1, \dots, f_n) = (0,\dots, 0,m) + \sum_1^{j-1} f_j \boldsymbol{\delta}_j$ and the coefficient of $y_{(0, \dots, 0,m)}$ in $v_{\mathbf{c}}$ is $1$, it is easy to see that $v$ is necessarily a $\Q$-scalar multiple of $v_{\mathbf{c}}$.  It follows that $v_{\mathbf{c}} \in \mathcal{Y}(Z(L_{m,n}))$ for all $\mathbf{c} \in \Q^{n-1}$.  Thus it remains only to show that the $\Q$-linear span of $\{ y_{\mathbf{f}} : \mathbf{f} \in \mathbf{F} \}$ is spanned by elements of the form $v_{\mathbf{c}}$.

Fix a natural number $N > m$.  If $\mathbf{f} = (f_1, \dots, f_n) \in \mathbf{F}$, set 
$$\sigma(\mathbf{f}) = f_1 + f_2 N + \cdots + f_{n-1} N^{n-2}.$$  
Since $\mathbf{f}$ is determined by its first $n - 1$ coordinates, the $\sigma(\mathbf{f})$ are all distinct.  Note that $\sigma((0, \dots, 0,m)) = 0$.  Given $\lambda \in \Q$, set $\mathbf{c}(\lambda) = (\lambda, \lambda^N, \lambda^{N^2}, \dots, \lambda^{N^{n-2}}) \in \Q^{n-1}$, so that $v_{\mathbf{c}(\lambda)} = \sum_{\mathbf{f} \in \mathbf{F}} \lambda^{\sigma(\mathbf{f})} y_{\mathbf{f}}$.  Let $\lambda_1, \dots, \lambda_{| \mathbf{F} |}$ be distinct elements of $\Q$.  We order the elements of $\mathbf{F} = \{ \mathbf{f}_1, \dots, \mathbf{f}_{| \mathbf{F} |} \}$ so that the sequence $\sigma(\mathbf{f}_i)$ is decreasing.  The $| \mathbf{F} | \times | \mathbf{F} |$ matrix whose rows are $v_{\mathbf{c}(\lambda_1)}, \dots, v_{\mathbf{c}(\lambda_{| \mathbf{F} |})}$, with respect to the basis $( y_{\mathbf{f}_1}, \dots, y_{\mathbf{f}_{| \mathbf{F} |}} )$, has the form
$$
\left(  \begin{array}{llcr}
\lambda_1^{\sigma(\mathbf{f}_1)} & \lambda_1^{\sigma(\mathbf{f}_2)} & \cdots & \lambda_1^{\sigma(\mathbf{f}_{| \mathbf{F} |})} \\
\lambda_2^{\sigma(\mathbf{f}_1)} & \lambda_2^{\sigma(\mathbf{f}_2)} & \cdots & \lambda_2^{\sigma(\mathbf{f}_{| \mathbf{F} |})} \\
\ddots & \ddots && \ddots \\
\lambda_{| \mathbf{F} |}^{\sigma(\mathbf{f}_1)} & \lambda_{| \mathbf{F} |}^{\sigma(\mathbf{f}_2)} & \cdots & \lambda_{| \mathbf{F} |}^{\sigma(\mathbf{f}_{| \mathbf{F} |})} \end{array} \right).
$$
This is a generalized Vandermonde matrix whose determinant is, by definition, 
$$ S_\nu(\lambda_1, \dots, \lambda_{| \mathbf{F} |}) \prod_{1 \leq i < j \leq | \mathbf{F} |} (\lambda_i - \lambda_j),$$
where $S_\nu$ is the Schur polynomial associated to the partition $\nu = (\nu_1, \dots, \nu_{| \mathbf{F} | })$ whose parts are the non-negative integers $\{ \sigma(\mathbf{f}_i) - | \mathbf{F}| + i : i \in [ | \mathbf{F} | ] \}$.  Since $S_\nu$ is well-known to be a non-zero symmetric polynomial with integer coefficients (see, for instance,~\cite[Section~I.3]{Macdonald/95}),
we may choose the distinct $\lambda_i$ so that $S(\lambda_1, \dots, \lambda_{| \mathbf{F} |}) \neq 0$, in which case the above matrix is invertible.  For any $\mathbf{f} \in \mathbf{F}$ it follows that $y_{\mathbf{f}}$ lies in the $\Q$-linear span of the $v_{\mathbf{c}(\lambda_i)}$, and we conclude that $L_{m,n}$ is indeed generated by $\mathcal{Y}(Z(L_{m,n}))$.
\end{proof}

The remaining hypotheses of Corollary~\ref{cor:rigid.lifting} are verified in~\cite{BKO/Dstar}.  Moreover, the functions $\theta_1^F$ and $\theta_2^F$, for arbitrary finite extensions $F/\Q_p$, are computed in $(4.8)$ and $(4.9)$ of~\cite{BKO/Dstar}.  As in that paper, set $r_1 = \binom{m+n-2}{m-1}$ and $r_2 = \binom{m+n-1}{m}$.  Since the computation of the integral resulting from Corollary~\ref{cor:rigid.lifting} is completely analogous to the case $d = 1$, which relies on Proposition~\ref{pro:bruhat.decomposition} and is performed at the end of~\cite[\S4]{BKO/Dstar}, we omit it and only record the final result.

\begin{thm} \label{thm:lmn}
Let $m,n \in \mathbb{N}$, with $n \geq 2$.  Let $d \in \N$.  If $K$ is a number field of degree $d$, then
$
\zeta^\wedge_{\mathcal{L}_{m,n} \tensor \mathcal{O}_K}(s) = \prod_{\p \in V_K} W_{\mathcal{L}_{m,n},d}(q_\p,  q_\p^{-s})$,
where 
$$ W_{\mathcal{L}_{m,n},d}(X,Y) = \frac{\sum_{w \in S_n} X^{- \ell(w)} \prod_{i \in \mathrm{Des}(w)} X_i}{\prod_{i = 0}^{n} (1 - X_i)}.$$
The monomials $X_i = X^{\beta_i}  Y^{\gamma_i}$ are given by
\allowdisplaybreaks{
\begin{eqnarray*}
\beta_i & = & 
\begin{cases} 
\begin{aligned}[b] & i(n-i) + d(r_1 + r_2)((m-1)n + i) + \\ &\sum_{j = 1}^i \left( 1 + \frac{(m-1)(i - j + 1)}{n-j+1} \right) \binom{m+j-2}{m-1} \binom{m+n-j-1}{m-1} \end{aligned} &: i \in [n-1] \\
dn (r_1 + r_2) &: i = 0 \\
dn (r_1 + r_2)  + \binom{2m + n - 2}{2m - 1} &: i = n
\end{cases} \\
\gamma_i & = &
\begin{cases}
(1 + r_1)((m-1)n + i) - m(m-1)r_1 &: i \in [n-1] \\
r_1 + n &: i = 0 \\
r_2 + n &: i = n.
\end{cases}
\end{eqnarray*} }

\end{thm}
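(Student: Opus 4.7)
The plan is to apply Corollary~\ref{cor:rigid.lifting}, whose hypotheses are nearly all in hand: the $Z(L_{m,n} \tensor_\Q k)$-rigidity for any characteristic zero field $k$ is supplied by Proposition~\ref{pro:lmn.rigid}, while Assumptions~\ref{first.assumption}, \ref{second.assumption}, and~\ref{polynomial.lifting} for the action of $\aAut L_{m,n}$ on $L_{m,n}$ itself are verified, in the case $K = \Q$, in \cite{BKO/Dstar}; by rigidity these transfer to every base extension $L_{m,n} \tensor_\Q K_\p$. First I would fix the standard decomposition $L_{m,n} = U_1 \oplus U_2 \oplus U_3$, with $U_1 = \langle x_{\mathbf{e}} : \mathbf{e} \in \mathbf{E}\rangle_\Q$, $U_2 = \langle y_{\mathbf{f}} : \mathbf{f} \in \mathbf{F}\rangle_\Q$, and $U_3 = Z(L_{m,n})$. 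Since $\mathcal{L}_{m,n}$ has nilpotency class two, $t = t^\prime = 3$ in the notation of Section~\ref{sec:simplifying.assumptions}, so Corollary~\ref{cor:rigid.lifting} gives
\[
\zeta^\wedge_{\mathcal{L}_{m,n} \tensor \mathcal{O}_K, p}(s) = \prod_{\p | p} \int_{\mathbf{H}^+(K_\p)} \theta_1^{K_\p}(h) \bigl(\theta_2^{K_\p}(h)\bigr)^{d} |\det h|^s_{\p} \, d \mu_{\mathbf{H}(K_\p)}(h).
\]

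Next, I would invoke the explicit description of $\mathbf{H}$ from \cite{BKO/Dstar}, which exhibits an $F$-split maximal torus of the appropriate rank with Weyl group $S_n$ and confirms that very good reduction holds for all $p$. The formulae for $\theta_1^F$ and $\theta_2^F$ at an arbitrary finite extension $F/\Q_p$ are precisely equations~(4.8) and~(4.9) of \cite{BKO/Dstar}; I would apply them with $F = K_\p$, so that each occurrence of $q$ becomes $q_\p$. Substituting these into the integral and applying the $p$-adic Bruhat decomposition (Proposition~\ref{pro:bruhat.decomposition}), the sum over the cocharacter lattice $w\Xi_w^+$ collapses to a geometric series in the monomials $X_i = X^{\beta_i} Y^{\gamma_i}$. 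This yields the denominator $\prod_{i=0}^{n}(1-X_i)$ together with a numerator $\sum_{w \in S_n} X^{-\ell(w)} \prod_{i \in \mathrm{Des}(w)} X_i$, exactly as in the $d = 1$ computation performed at the end of \cite[\S4]{BKO/Dstar}.

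The main obstacle is then the combinatorial bookkeeping of exponents, and in particular verifying that the only effect of raising $\theta_2^{K_\p}$ to the $d$-th power is to multiply by $d$ every contribution originating from the action on the centre $U_3$. Since $\dimL^\prime = \dim_\Q L_{m,n}/\gamma_2 L_{m,n} = r_1 + r_2$, this produces exactly the new summands $d(r_1+r_2)\bigl((m-1)n + i\bigr)$ in $\beta_i$ for $i \in [n-1]$, as well as $dn(r_1+r_2)$ in $\beta_0$ and $\beta_n$, while the terms in $\gamma_i$ and the remaining contributions to $\beta_i$ are unaffected by base extension beyond the substitution of $q_\p$ for $p$. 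Matching these with the formulae computed in \cite[\S4]{BKO/Dstar} for the case $K = \Q$ would then complete the proof.
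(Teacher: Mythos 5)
Your proposal is correct and follows essentially the same route as the paper: rigidity via Proposition~\ref{pro:lmn.rigid}, verification of Assumptions~\ref{first.assumption}, \ref{second.assumption}, and~\ref{polynomial.lifting} via~\cite{BKO/Dstar}, application of Corollary~\ref{cor:rigid.lifting} with $t=3$ and $\dimL^\prime = r_1+r_2$, the formulae $(4.8)$--$(4.9)$ of~\cite{BKO/Dstar} for $\theta_1^{K_\p}$ and $\theta_2^{K_\p}$, and the Bruhat-decomposition evaluation of Proposition~\ref{pro:bruhat.decomposition} exactly as in the $d=1$ computation at the end of~\cite[\S4]{BKO/Dstar}. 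The paper likewise omits the final exponent bookkeeping and merely records the result, so your outline matches its proof in both structure and level of detail.
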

\begin{cor} \label{cor:lmn}
The following functional equation holds:
\begin{equation*}
W_{\mathcal{L}_{m,n},d}(X^{-1}, Y^{-1}) = (-1)^{n+1} X^{\binom{n}{2} + \binom{2m + n - 2}{2m - 1} + 2dn(r_1 + r_2)}Y^{r_1 + r_2 + 2n} W_{\mathcal{L}_{m,n},d}(X,Y).
\end{equation*}
The abscissa of convergence of $\zeta^\wedge_{\mathcal{L}_{m,n} \tensor \mathcal{O}_K}(s)$ is $\alpha^\wedge_{\mathcal{L}_{m,n} \tensor_\Z \mathcal{O}_K} = \max_{i \in [n]_0} \left\{ \frac{\beta_i + 1}{\gamma_i} \right\}$.
\end{cor}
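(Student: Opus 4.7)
The plan is to derive both parts of the corollary directly from the explicit formula for $W_{\mathcal{L}_{m,n},d}(X,Y)$ in Theorem~\ref{thm:lmn}, by imitating the approach used in the proof of the functional equation in Theorem~\ref{pro:higher.final}.

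For the functional equation, I would proceed by straightforward manipulation. First, for each $i \in [n]_0$ the identity $(1 - X_i^{-1}) = -X_i^{-1}(1 - X_i)$ gives
\[
\prod_{i = 0}^n (1 - X_i^{-1}) = (-1)^{n+1}\Bigl(\prod_{i=0}^n X_i^{-1}\Bigr)\prod_{i=0}^n (1 - X_i).
\]
Next, let $w_0 \in S_n$ denote the longest element, with $\ell(w_0) = \binom{n}{2}$, and use the standard facts $\ell(w_0 w) = \binom{n}{2} - \ell(w)$ and $\mathrm{Des}(w_0 w) = [n-1] \setminus \mathrm{Des}(w)$ for all $w \in S_n$. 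Substituting $w \mapsto w_0 w$ in the numerator of $W_{\mathcal{L}_{m,n},d}(X^{-1}, Y^{-1})$ transforms it into
\[
X^{\binom{n}{2}}\Bigl(\prod_{i=1}^{n-1} X_i^{-1}\Bigr) \sum_{w \in S_n} X^{-\ell(w)} \prod_{i \in \mathrm{Des}(w)} X_i.
\]
Combining these two computations collapses the result to $(-1)^{n+1} X_0 X_n X^{\binom{n}{2}} W_{\mathcal{L}_{m,n},d}(X,Y)$. Substituting the values $\beta_0 = dn(r_1+r_2)$, $\beta_n = dn(r_1+r_2) + \binom{2m+n-2}{2m-1}$, $\gamma_0 = r_1 + n$, and $\gamma_n = r_2 + n$ from Theorem~\ref{thm:lmn} reads off the claimed symmetry factor.

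For the abscissa of convergence, I would factor
\[
W_{\mathcal{L}_{m,n},d}(X,Y) = \frac{1}{(1-X_0)(1-X_n)} \cdot \frac{\sum_{w \in S_n} X^{-\ell(w)} \prod_{i \in \mathrm{Des}(w)} X_i}{\prod_{i = 1}^{n-1}(1 - X_i)},
\]
isolating the two denominator factors indexed by $i \in \{0, n\}$, which never appear in a numerator term. The second factor now fits the hypotheses of Lemma~\ref{lem:abscissa} (with $m = n$ and $(a_i,b_i) = (\beta_i, \gamma_i)$ for $i \in [n-1]$), so its Euler product over $\p \in V_K$ has abscissa $\max_{i \in [n-1]}\{(\beta_i+1)/\gamma_i\}$. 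The isolated factor $\prod_{\p}(1-q_\p^{-\gamma_0 s + \beta_0})^{-1}(1-q_\p^{-\gamma_n s + \beta_n})^{-1}$ is a product of two Dedekind-type zeta functions with abscissae $(\beta_0+1)/\gamma_0$ and $(\beta_n+1)/\gamma_n$. Taking the maximum of all three contributions yields $\alpha^\wedge_{\mathcal{L}_{m,n} \tensor_\Z \mathcal{O}_K} = \max_{i \in [n]_0}\{(\beta_i+1)/\gamma_i\}$.

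No step here presents a genuine obstacle: the functional equation is a formal manipulation exploiting the involution $w \mapsto w_0 w$ on $S_n$, and the abscissa is a direct application of the previously established Lemma~\ref{lem:abscissa} together with the standard abscissa of $\zeta_K$. The only mild bookkeeping is checking that the exponents $\beta_0, \beta_n, \gamma_0, \gamma_n$ combine correctly to produce the stated exponent $\binom{n}{2} + \binom{2m+n-2}{2m-1} + 2dn(r_1+r_2)$ of $X$ in the symmetry factor.
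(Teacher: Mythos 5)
Your proposal is correct and follows essentially the same route as the paper: the functional equation is obtained by the longest-element substitution $w \mapsto w_0 w$ exactly as in the proof of Theorem~\ref{pro:higher.final}, and the abscissa is read off from Theorem~\ref{thm:lmn} together with Lemma~\ref{lem:abscissa}. Your explicit separation of the factors $(1-X_0)(1-X_n)$ is just a careful way of applying that same lemma (the paper invokes it directly, as it does for Theorem~\ref{pro:higher.final}), so there is no substantive difference.
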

\begin{proof}
The functional equation is found as in the proof of Theorem~\ref{pro:higher.final}.  The claim regarding the abscissa of convergence follows from Theorem~\ref{thm:lmn} and Lemma~\ref{lem:abscissa}.  
\end{proof}

Determining in general which fraction from the set of Corollary~\ref{cor:lmn} is maximal is laborious already when $K = \Q$; see~\cite[\S5]{BKO/Dstar}.  The set $\{ \alpha^\wedge_{\mathcal{L}_{m,n}} : m \geq 1, n \geq 2 \}$ has infinitely many accumulation points~\cite[Corollary~1.8]{BKO/Dstar}, and it would be interesting to study the structure of the sets $\{ \alpha^\wedge_{\mathcal{L}_{m,n} \tensor \mathcal{O}_K} : m \geq 1, n \geq 2, [K:\Q] < \infty \}$.  By contrast, the set of abscissae of convergence of subgroup zeta functions of $\mathcal{T}$-groups has no accumulation points~\cite[Proposition~1.1]{duSG/06}.

The ideal zeta functions $\zeta^\vartriangleleft_{\mathcal{L}_{m,n},p}(s)$ were determined by Voll~\cite[Theorem~1.1]{Voll/19} for arbitrary $(m,n)$ and all primes $p$, when $K = \Q$.  More is known for the Grenham Lie lattices $\mathcal{L}_{1,n}$: for any number field $K$ and any unramified prime $p$, the functions $\zeta^\vartriangleleft_{\mathcal{L}_{1,n} \tensor \mathcal{O}_K, p}(s)$ were computed by Carnevale, the third author, and Voll~\cite[Proposition~5.8]{CSV/19}.  Snocken~\cite[Theorem~5.11 and Example~6.2]{Snocken/12} determined the representation zeta functions of the $\mathcal{T}$-groups of class two associated to the two families of Remark~\ref{rem:lmn.families}.

\subsection{A family of filiform Lie lattices} \label{sec:max.class}
Let $c \geq 2$ and let $\mathcal{M}_c$ be the Lie lattice over $\Z$ with the following presentation:
\begin{equation} \label{equ:max.class.present}
\mathcal{M}_c = \langle z, x_1, \dots, x_c | [z, x_i] = x_{i+1}, i \in [c-1] \rangle.
\end{equation}
Here, as always, we follow the convention that all pairs of generators not explicitly mentioned commute.  Then $\mathcal{M}_c$ is a nilpotent Lie lattice of class $c$, which is the maximal possible  class of a Lie lattice of rank $c + 1$.  Observe that $\mathcal{M}_2$ is the Heisenberg Lie lattice; henceforth, let $c \geq 3$.  Let $M_c = \mathcal{M}_c \tensor_\Z \Q$ denote the associated $\Q$-Lie algebra.  

We claim that $M_{c} \tensor_\Q k$ is $Z(M_{c} \tensor_\Q k)$-rigid for any field $k$ of characteristic zero.  Indeed, it is easily verified that $z$ and $z + x_1$ each belong to $\mathcal{X}(Z(M_{c}))$, and hence $\mathcal{X}(Z(M_{c}))$ generates $M_{c}$ as a Lie algebra.  Then rigidity follows by Segal's criterion (Corollary~\ref{cor:segal}; note also Remarks~\ref{rem:segal} and~\ref{rem:extensions}).
Consider the decomposition $M_{c} \tensor k = U_1 \oplus \cdots \oplus U_{c+1}$, where $U_1 = \langle z \rangle_k$ and $U_i = \langle x_{i-1} \rangle_k$ for all $i \in [2,c+1]$.  The subspace $V_i = U_i \oplus \cdots \oplus U_{c+1}$, for any $i \in [c+1]$, is preserved by any $k$-automorphism of $M_{c} \tensor k$; indeed, $V_2$ is the unique maximal abelian subalgebra (since $c \geq 3$), whereas $V_i = \gamma_{i-1} (M_{c} \tensor k)$ for all $i \in [3,c+1]$.  Let
$y = \lambda z + \sum_{i = 1}^c a_i x_i \in M_{c} \tensor k$ and $y^\prime = \mu x_1 + \sum_{i = 2}^c b_i x_i \in V_2$ be arbitrary, where $\lambda, \mu \in k^\times$ and $a_i, b_i \in k$.  The matrix
\begin{equation} \label{equ:max.class.matrix}
\left(
\begin{array}{cccccc}
\lambda & a_1 & a_2 & a_3 & \cdots & a_c \\
& \mu & b_2 & b_3 & \cdots & b_{c} \\
& & \lambda \mu & \lambda b_2  & \cdots & \lambda b_{c-1} \\
&&& \lambda^2 \mu & \cdots & \lambda^2 b_{c-2} \\
&&&& \ddots & \vdots \\
&&&&& \lambda^{c-1} \mu
\end{array} \right),
\end{equation}
with respect to the basis $(z, x_1, \dots, x_c)$, corresponds to $\varphi \in \Aut_k (M_{c} \tensor k)$ such that $(z)\varphi = y$ and $(x_1)\varphi = y^\prime$; it is the unique automorphism with this property since $z$ and $x_1$ generate $M_{c} \tensor k$.  We have thus determined the structure of the algebraic automorphism group $\aAut M_{c}$, and it is clear that the three assumptions of Section~\ref{sec:assumptions} are satisfied.

Recall the notation of Section~\ref{sec:consequences}; 
for instance, $\mathbf{H}$ is the reductive subgroup of $\aAut M_c$ corresponding to diagonal matrices in~\eqref{equ:max.class.matrix}.
If $F/\Q_p$ is a finite extension and $h = \mathrm{diag}(\lambda, \mu, \lambda \mu, \dots, \lambda^{c-1} \mu) \in \mathbf{H}^+(F)$, then 
\begin{equation*}
\theta_j^F (h) = 
\begin{cases}
| \mu |_F^{-1} &: j = 1 \\ 
| \lambda^{j-1} \mu |^{-2}_F &: 2 \leq j \leq c.
\end{cases}
\end{equation*}
Indeed, $\theta^F_j (h)$ measures the size of the set of elements $a_1 \in F$, if $j = 1$, or of pairs $(a_j, b_j) \in F^2$ if $j > 1$, such that $\lambda^{j-1} \mu a_j \in \mathcal{O}_F$ and $\lambda^{j-1} \mu b_j \in \mathcal{O}_F$.  Note that $\det h = \lambda^{\binom{c}{2} + 1} \mu^c$.
Moreover, Assumption~\ref{lifting.condition} is realized by a polynomial map in the sense of Assumption~\ref{polynomial.lifting}: the class $\overline{g} \in NH^+ / N_i \cap (G/N_i)^+$, in the notation of Section~\ref{sec:consequences}, determines $\lambda, \mu, a_1, \dots, a_{i-1}, b_2, \dots, b_{i-1} \in \mathcal{O}_F$ in~\eqref{equ:max.class.matrix}, and one may take the lifting $g \in G^+$ to correspond to the matrix with $a_j = b_j = 0$ for all $j \in [i,c]$.  

The remaining hypotheses of Corollary~\ref{cor:rigid.lifting} obviously hold, enabling us to conclude, for any prime $p$ and any number field $K$ of degree $d = [K : \Q]$, that
\begin{align*}
\zeta^\wedge_{\mathcal{M}_c \tensor \mathcal{O}_K, p} (s) & = 
& \prod_{\p | p} \int_{\mathrm{GL}^+_1(K_\p)^2} | \lambda |_{\p}^{\left( \binom{c}{2} + 1 \right) s - (c-1)(2d + c - 2)} |\mu|_{\p}^{cs - (2d + 2c - 3)} d \mu_{\mathrm{GL}_1(K_\p)^2} (\lambda, \mu).
\end{align*}
Using Example~\ref{exm:abelian} to evaluate the integral, we arrive at the following statement.
\begin{thm}
Let $c \geq 2$ and $d \geq 1$.  For any number field $K$ of degree $d$, we have $\zeta^\wedge_{\mathcal{M}_c \tensor \mathcal{O}_K}(s) = \prod_{\p \in V_K} W_{\mathcal{M}_c, d}(q_\p, q_\p^{-s})$, where
\begin{equation*}
W_{\mathcal{M}_c, d}(X,Y) = \frac{1}{( 1 - X^{(c-1)(2d + c - 2)} Y^{\binom{c}{2} + 1})(1 - X^{2d + 2c - 3}Y^c)}.
\end{equation*}
This rational function satisfies the functional equation
\begin{equation*}
W_{\mathcal{M}_c, d}(X^{-1} Y^{-1}) = X^{c(2d + c - 2) - 1} Y^{\binom{c+1}{2} + 1} W_{\mathcal{M}_c, d}(X,Y).
\end{equation*}
The global pro-isomorphic zeta function 
\begin{equation*}
\zeta^\wedge_{\mathcal{M}_c \tensor \mathcal{O}_K} (s) = \zeta_K \left( \left( \binom{c}{2} + 1 \right) s - (c-1)(2d + c - 2) \right) \zeta_{K} \left( cs - (2d + 2c - 3) \right)
\end{equation*}
has abscissa of convergence 
$$\alpha^\wedge_{\mathcal{M}_c \tensor \mathcal{O}_K} = 
\begin{cases}
2 &: d = 1 \\
\frac{(c-1)(2d + c - 2) + 1}{\binom{c}{2} + 1} &: d \geq 2.
\end{cases} 
$$
\end{thm}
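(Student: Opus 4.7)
The preliminary discussion in the paragraphs just before the theorem has done nearly all of the substantive work: the $Z(M_c \otimes k)$-rigidity (via Segal's criterion and the elements $z,\, z+x_1 \in \mathcal{X}(Z(M_c))$), the explicit description of $\aAut M_c$ via the matrices of~\eqref{equ:max.class.matrix}, the verification of Assumptions~\ref{first.assumption}--\ref{polynomial.lifting}, and the computation of the maps $\theta_j^F$ are all in place. My plan is therefore to apply Corollary~\ref{cor:rigid.lifting} and evaluate the resulting integral by hand, then read off the remaining claims.

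First I would apply Corollary~\ref{cor:rigid.lifting} with $t = c+1$, so that $t - 2 = c - 1$ and $t - 1 = c$. Since $\mathbf{H} \simeq \mathbf{G}_m^2$ (via $h \mapsto (\lambda,\mu)$), the corollary yields
\[
\zeta^\wedge_{\mathcal{M}_c \otimes \mathcal{O}_K,\, p}(s) = \prod_{\p \mid p} \int_{\mathrm{GL}_1^+(K_\p)^2}\!\!\!\! \Bigl(\prod_{j=1}^{c-1} \theta_j^{K_\p}(h)\Bigr) (\theta_c^{K_\p}(h))^d \, |\det h|_\p^{s}\, d\mu_{\mathbf{H}(K_\p)}(\lambda,\mu).
\]
Substituting the formulas $\theta_1^F(h) = |\mu|_F^{-1}$ and $\theta_j^F(h) = |\lambda^{j-1}\mu|_F^{-2}$ for $2 \le j \le c$, a short bookkeeping calculation ($\sum_{j=2}^{c-1} 2(j-1) = (c-1)(c-2)$ and $(2c-3)+2d$ from the $\mu$-exponents) combined with $|\det h|_\p^s = |\lambda|_\p^{(\binom{c}{2}+1)s} |\mu|_\p^{cs}$ reduces the integrand to
\[
|\lambda|_\p^{(\binom{c}{2}+1)s - (c-1)(2d+c-2)} \, |\mu|_\p^{cs - (2d+2c-3)}.
\]

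Next I would factor the integral over $\mathrm{GL}_1^+(K_\p)^2$ as a product of two one-dimensional integrals over $\mathrm{GL}_1^+(K_\p)$. By Example~\ref{exm:abelian} (the $n = 1$ case), each such integral evaluates to a single geometric-series factor, giving exactly
\[
W_{\mathcal{M}_c,d}(q_\p,q_\p^{-s}) = \frac{1}{(1 - q_\p^{(c-1)(2d+c-2) - (\binom{c}{2}+1)s})(1 - q_\p^{(2d+2c-3) - cs})}.
\]
Taking the Euler product over all $\p \in V_K$ yields the first assertion, and recognizing each factor as $\zeta_K\bigl((\binom{c}{2}+1)s - (c-1)(2d+c-2)\bigr)$ and $\zeta_K(cs - (2d+2c-3))$ respectively gives the stated global expression.

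For the functional equation, I would simply clear denominators in $W_{\mathcal{M}_c,d}(X^{-1},Y^{-1})$ by multiplying numerator and denominator of each of the two factors by the appropriate monomial, reading off the symmetry factor as a monomial in $X,Y$. For the abscissa of convergence I would invoke the elementary fact that $\zeta_K(\alpha s - \beta)$ has abscissa $(\beta+1)/\alpha$, so that
\[
\alpha^\wedge_{\mathcal{M}_c \otimes \mathcal{O}_K} = \max\!\left\{ \frac{(c-1)(2d+c-2)+1}{\binom{c}{2}+1},\ \frac{2(d+c-1)}{c} \right\};
\]
a routine comparison of these two fractions shows that the second dominates precisely when $d = 1$ (giving $\alpha^\wedge = 2$) and that the first dominates when $d \ge 2$, yielding the claimed dichotomy. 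The only step that requires any care is this final comparison, but it amounts to a linear inequality in $d$ (with $c$ fixed), so no real obstacle arises.
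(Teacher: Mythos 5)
Up to the final step your route is exactly the paper's: apply Corollary~\ref{cor:rigid.lifting} with $t=c+1$, insert the precomputed $\theta_j^F$ and $\det h=\lambda^{\binom{c}{2}+1}\mu^c$, split the integral over $\mathrm{GL}_1^+(K_\p)^2$ into two one-variable integrals evaluated by Example~\ref{exm:abelian}, and read off $W_{\mathcal{M}_c,d}$, the functional equation, and the abscissa as a maximum of two fractions. Your exponent bookkeeping is correct and reproduces the paper's intermediate integral verbatim.

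The genuine gap is in the last comparison. Clearing denominators, the first fraction dominates the second if and only if
$$ d\,(c-2)(c+1) \;\geq\; c^2-2 ,$$
and the coefficient of $d$ here is $(c-2)(c+1)$, which \emph{vanishes} when $c=2$. So your claim that the comparison is ``a linear inequality in $d$ with no real obstacle'' and that ``the first dominates when $d\geq 2$'' fails precisely at $c=2$: there the second fraction $\frac{2(d+c-1)}{c}=d+1$ exceeds the first, $d+\tfrac12$, for every $d$, consistent with the Heisenberg formula $\zeta_K(2s-2d)\zeta_K(2s-2d-1)$ (Remark~\ref{rem:heisenberg}, or Theorem~\ref{pro:higher.final} with $m=1$), whose abscissa is $d+1$. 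The paper's proof avoids this by disposing of $c=2$ separately via Remark~\ref{rem:heisenberg} and carrying out the fraction comparison only for $c\geq 3$ (where your inequality does give the stated dichotomy: at $d=1$ the difference is $-c<0$, and for $d\geq 2$, $c\geq 3$ it is positive). You need the same case split; as written, your argument asserts a false inequality in the boundary case $c=2$, $d\geq 2$.
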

\begin{proof}
The case $c = 2$ follows from Remark~\ref{rem:heisenberg}, so let $c \geq 3$.
We only discuss the abscissa of convergence, as the rest of the statement is clear.  From the properties of the Dedekind zeta function,
$$ \alpha^\wedge_{\mathcal{M}_c \tensor \mathcal{O}_K} = \max \left\{ \frac{2(d + c - 1)}{c}, \frac{(c-1)(2d + c - 2) + 1}{\binom{c}{2} + 1} \right\}.$$
Computing the difference between these two fractions and observing that its numerator is a quadratic polynomial in $c$, an elementary analysis allows us to determine its sign.  Observe that $\frac{2(d + c - 1)}{c} = 2$ if $d = 1$.
\end{proof}

In the case $K = \Q$, this result was obtained in the first author's thesis~\cite[Section 3.3.1]{Berman/05}.  The functions $\zeta^\vartriangleleft_{\mathcal{M}_c}(s)$ and $\zeta^\leq_{\mathcal{M}_c}(s)$ are known for $c \in \{ 3, 4 \}$ by work of Taylor and Woodward, as well as the local factors of $\zeta^\vartriangleleft_{\mathcal{M}_3 \tensor \mathcal{O}_K}(s)$ for quadratic number fields $K$ at split primes $p$; see Theorems~2.26, 2.29, and 2.37 of~\cite{duSWoodward/08}.  We are not aware of explicit computations of ideal or subring zeta functions for any nilpotent Lie lattice of class greater than $4$.  However, functional equations for almost all local factors of $\zeta^\vartriangleleft_{\mathcal{M}_c}(s)$, for arbitrary $c$, were proved by Voll~\cite[Theorem~4.8]{Voll/17}; the analogous statement for any $\zeta^\leq_{\mathcal{M}_c \tensor \mathcal{O}_K}(s)$ is a special case of~\cite[Corollary~1.1]{Voll/10}.  If $\mathcal{M}_c$ is taken to be the nilpotent {\emph{group}} of class $c$ with presentation~\eqref{equ:max.class.present} (as before, with all other commutators trivial), then Ezzat~\cite{Ezzat/nexc, Ezzat/exc} computed the representation zeta functions $\zeta^{\mathrm{irr}}_{\mathcal{M}_3}(s)$ and $\zeta^{\mathrm{irr}}_{\mathcal{M}_4}(s)$, as well as almost all local factors of $\zeta^{\mathrm{irr}}_{\mathcal{M}_c}(s)$ for $c \geq 5$.

\subsection{The Lie lattice $\mathcal{F}_4$} \label{sec:filiform}
Consider the Lie lattice $\mathcal{F}_4$ of class $4$ with the following presentation:
$$ \mathcal{F}_4 = \left\langle z, x_1, x_2, x_3, x_4 \mid [x_1, x_2] = x_4, [z, x_i] = x_{i + 1} \, \text{for all} \, i \in [3]  \right\rangle_\Z,$$
with the convention that all other pairs of generators commute.  Let $\mathrm{F}_4 = \mathcal{F}_4 \tensor_\Z \Q$, and let $k$ be any field of characteristic zero.  Observe that the underlying $\Z$-module of $\mathcal{F}_4$ is the same as that of $\mathcal{M}_4$, and consider the decomposition $\mathrm{F}_4 \tensor_\Q k = U_1 \oplus U_2 \oplus U_3 \oplus U_4 \oplus U_5$ that was defined in the previous section for $M_{4} \tensor_\Q k$.  For every $i \in [5]$, the subspace $V_i = U_i \oplus \cdots \oplus U_5$ is preserved by any $\varphi \in \mathrm{Aut}_k (\mathrm{F}_4 \tensor_\Q k)$; indeed, $V_2 = \{ v \in \mathrm{F}_4 \tensor_\Q k : \dim_k C_{\mathrm{F}_4 \tensor k}(v) \geq 3 \}$, whereas $V_i = \gamma_{i-1} (\mathrm{F_4} \tensor_\Q k)$ for $i \in \{ 3,4,5 \}$.  Thus $\varphi$ corresponds to an upper triangular matrix with respect to the basis $(z, x_1, x_2, x_3, x_4)$, with the same first two rows and diagonal elements as in~\eqref{equ:max.class.matrix}.  The relation $[x_1, x_2] = x_4$ implies that $\lambda^3 \mu x_4 =  (x_4)\varphi = [(x_1)\varphi, (x_2)\varphi] = \lambda \mu^2 x_4$.  Since $\lambda, \mu \in k^\times$, we obtain $\mu = \lambda^2$.  One checks that $\mathrm{Aut}_k (\mathrm{F}_4 \tensor_\Q K)$ is exactly the following:
\begin{equation} \label{equ:filiform.aut}
\left\{
\left( \begin{array}{ccccl}
\lambda & a_1 & a_2 & a_3 & a_4 \\
0 & \lambda^2 & b_2 & b_3 & b_4 \\
0 & 0 & \lambda^3 & \lambda b_2 & \lambda b_3 + a_1 b_2 - \mu a_2 \\
0 & 0 & 0 & \lambda^4 & \lambda^2 b_2 + a_1 \lambda^3 \\
0 & 0 & 0 & 0 & \lambda^5
\end{array} \right) \, \middle| \,
\begin{array}{rrl}
\lambda & \in & k^\times \\ a_1, a_2, a_3, a_4 & \in & k \\ b_2, b_3, b_4 & \in & k \end{array} \right\} .
\end{equation}
It is readily verified that $z$ and $z + x_1$ are contained in $\mathcal{X}(Z(\mathrm{F}_4 \tensor_\Q k))$ and hence that $\mathrm{F}_4 \tensor_\Q k$ is $Z(\mathrm{F}_4 \tensor_\Q k)$-rigid by Corollary~\ref{cor:segal} and Remark~\ref{rem:segal}.  For any prime $p$, the decomposition of $\mathrm{F}_4$ considered above satisfies the hypotheses of Corollary~\ref{cor:rigid.lifting}.  For any finite extension $F / \Q_p$ with residue field of cardinality $q$, we read off from~\eqref{equ:filiform.aut} that the elements $h \in \mathbf{H}^+(F)$ are those of the form $h = \mathrm{diag}(\lambda, \lambda^2, \lambda^3, \lambda^4, \lambda^5)$ for some $\lambda \in \mathrm{GL}_1^+(F) = \mathcal{O}_F \setminus \{ 0 \}$.  By essentially the same computation as for $\mathcal{M}_4$, keeping in mind that $\mu = \lambda^2$, we obtain that 
\begin{equation*}
\theta_j^F (h) = \begin{cases}
| \lambda |_F^{-2} &: j = 1 \\
| \lambda |_F^{-2(j+1)} &: 2 \leq j \leq 4.
\end{cases}
\end{equation*}
Since $\det h = \lambda^{15}$, it follows from Corollary~\ref{cor:rigid.lifting} that for any number field $K$ of degree $d$ and any prime $p$, we have
\begin{equation*}
\zeta^\wedge_{\mathcal{F}_4 \tensor \mathcal{O}_K, p}(s) = \prod_{\p | p} \int_{\mathrm{GL}_1^+(K_\p)} | \lambda |^{15s - 16 - 10d} d \mu_{K_\p^\times} (\lambda) = \prod_{\p | p} (1 - q_\p^{16 + 10d - 15s})^{-1}.
\end{equation*}

We have thus established the following.
\begin{thm} \label{thm:filiform}
Let $\mathcal{F}_4$ be the filiform Lie lattice defined above, and let $d \in \N$.  If $K$ is a number field of degree $d$, then
\begin{equation*}
\zeta^\wedge_{\mathcal{F}_4 \tensor \mathcal{O}_K}(s) = \zeta_K(15s - 16 - 10d) = \prod_{\p \in V_K} W_{\mathcal{F}_4, d}(q_p, q_p^{-s}),
\end{equation*}
where $W_{\mathcal{F}_4, d}(X,Y) = \frac{1}{1 - X^{16 + 10d}Y^{15}}$ satisfies the functional equation $W_{\mathcal{F}_4, d}(X^{-1}, Y^{-1}) = - X^{16 + 10d}Y^{15} W_{\mathcal{F}_4, d}(X,Y)$ and the abscissa of convergence of $\zeta^\wedge_{\mathcal{F}_4 \tensor \mathcal{O}_K}(s)$ is $\frac{17 + 10d}{15}$.
\end{thm}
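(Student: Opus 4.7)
My plan is to mirror the approach used for $\mathcal{M}_c$ in Section~\ref{sec:max.class}, adapted to account for the extra relation $[x_1,x_2]=x_4$. First I would determine $\aAut \mathrm{F}_4$ explicitly. Given any field $k$ of characteristic zero, I would consider the filtration $V_1 \supset V_2 \supset \cdots \supset V_5$ with $V_2 = U_2 \oplus \cdots \oplus U_5$ and $V_i = \gamma_{i-1}(\mathrm{F}_4 \tensor_\Q k)$ for $i \geq 3$. The key observation is that $V_2$ is characterized intrinsically as $\{ v : \dim_k C_{\mathrm{F}_4 \tensor k}(v) \geq 3 \}$, hence is $\Aut_k$-stable, while the $V_i$ for $i \geq 3$ are verbal. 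Thus any automorphism $\varphi$ is upper triangular in the basis $(z, x_1, x_2, x_3, x_4)$, and the first two rows can be filled in exactly as in~\eqref{equ:max.class.matrix}, with $\varphi(z) = \lambda z + \sum a_i x_i$ and $\varphi(x_1) = \mu x_1 + \sum_{i\geq 2} b_i x_i$. The remaining rows are forced by $[z,x_i] = x_{i+1}$, and the relation $[x_1,x_2] = x_4$ then imposes $\lambda^3 \mu = \lambda \mu^2$, i.e.\ $\mu = \lambda^2$, yielding the description~\eqref{equ:filiform.aut}.

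Next I would establish the rigidity of $\mathrm{F}_4 \tensor_\Q k$ over its center by verifying Segal's criterion (Corollary~\ref{cor:segal} and Remark~\ref{rem:segal}) with $M = Z(\mathrm{F}_4 \tensor k)$: a direct computation shows that both $z$ and $z+x_1$ lie in $\mathcal{X}(Z)$, since $C_{\mathrm{F}_4\tensor k / Z_1}(z) = Z + k z$ and similarly for $z+x_1$, and these two elements generate $\mathrm{F}_4\tensor k$ as a Lie algebra. Having chosen the decomposition $U_1\oplus\cdots\oplus U_5$ compatible with the basis, I would check Assumptions~\ref{first.assumption}, \ref{second.assumption}, and \ref{polynomial.lifting}: the first two are evident from~\eqref{equ:filiform.aut}, and the polynomial lifting in Assumption~\ref{polynomial.lifting} is realized by setting the free parameters $a_j, b_j$ at higher stages equal to zero, just as for $\mathcal{M}_c$.

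Then Corollary~\ref{cor:rigid.lifting} applies. The reductive part $\mathbf{H}(F)$ consists of the diagonal matrices $h = \diag(\lambda, \lambda^2, \lambda^3, \lambda^4, \lambda^5)$ with $\lambda \in F^\times$, so $\mathbf{H}^+(F) = \{ h : \lambda \in \mathcal{O}_F \setminus \{0\}\}$, and $\det h = \lambda^{15}$. Using the shape of~\eqref{equ:filiform.aut} with $\mu = \lambda^2$, I would read off $\theta_1^F(h) = |\lambda|_F^{-2}$ (one free coordinate $a_1$ scaled by $\lambda^2$) and, for $j \in [2,4]$, $\theta_j^F(h) = |\lambda|_F^{-2(j+1)}$ (two free coordinates each scaled by $\lambda^{j+1}$). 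Summing exponents gives a total weight of $-(2+6+8+10) = -26$ for $\prod_{j=1}^{3}\theta_j^F$ plus $d \cdot (-10)$ for $(\theta_{t-1}^F)^d$, so the integrand in~\eqref{equ:technical.splitting} becomes $|\lambda|_{\p}^{15s - 16 - 10d}$.

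Finally I would evaluate the resulting one-dimensional integral $\int_{\mathcal{O}_\p\setminus\{0\}} |\lambda|_\p^{15s-16-10d}\, d\mu$ against the normalized Haar measure on $K_\p^\times$, which by Example~\ref{exm:abelian} equals $(1 - q_\p^{16+10d-15s})^{-1}$. Multiplying over $\p \mid p$ and over all rational primes produces the stated Dedekind zeta factor $\zeta_K(15s-16-10d)$. The functional equation of $W_{\mathcal{F}_4,d}$ is immediate from its explicit form, and the abscissa of convergence $(17+10d)/15$ follows from the standard fact that $\zeta_K(s)$ has abscissa $1$. The only mildly delicate step is the bookkeeping of the exponents in the $\theta_j^F$; everything else is a direct application of the machinery developed in Sections~\ref{sec:methodology} and~\ref{sec:rigidity}.
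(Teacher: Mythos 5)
Your proposal is correct and takes essentially the same route as the paper: the same determination of $\aAut \mathrm{F}_4$ via the stable filtration and the relation $[x_1,x_2]=x_4$ forcing $\mu=\lambda^2$, the same rigidity verification via $z, z+x_1 \in \mathcal{X}(Z(\mathrm{F}_4 \tensor k))$ and Corollary~\ref{cor:segal}, and the same evaluation through Corollary~\ref{cor:rigid.lifting} with $\theta_1^F(h)=|\lambda|_F^{-2}$, $\theta_j^F(h)=|\lambda|_F^{-2(j+1)}$ for $j\in[2,4]$, and $\det h=\lambda^{15}$. The only blemish is the intermediate bookkeeping ``$-(2+6+8+10)=-26$ for $\prod_{j=1}^{3}\theta_j^F$'', which mistakenly includes the exponent of $\theta_4^F$ (that factor enters only through $(\theta_{t-1}^F)^d$); the correct sum is $-(2+6+8)=-16$, consistent with your (correct) final integrand $|\lambda|_{\p}^{15s-16-10d}$.
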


The Lie lattice $\mathcal{F}_4$ is not naturally graded.  However, it has a grading given by: $(\mathcal{F}_4)_1 = \langle x_2 \rangle$, $(\mathcal{F}_4)_2 = \langle x_1 \rangle$, $(\mathcal{F}_4)_3 = \langle x_4 \rangle$, $(\mathcal{F}_4)_4 = \langle x_3 \rangle$, $(\mathcal{F}_4)_5 = \langle x_5 \rangle$.  It can be shown that this grading is minimal.  Hence $\mathrm{wt}(\mathcal{F}_4) = 15$, and Conjecture~\ref{graded.conj} holds for all base extensions $\mathcal{F}_4 \tensor \mathcal{O}_K$ by the discussion in Section~\ref{sec:intro.funct.eq}.

Theorem~\ref{thm:filiform} generalizes a computation of the first author~\cite[\S3.3.11]{Berman/05} for the case $K = \Q$.  The ideal zeta function $\zeta^\vartriangleleft_{\mathcal{F}_4}(s)$ was computed by Woodward~\cite[Theorem~2.39]{duSWoodward/08}.  Its local factors do not satisfy functional equations.  By~\cite[Theorem~1.2]{Voll/17} this implies that $\mathcal{F}_4$ does not have a grading affording the homogeneity condition of~\cite[Condition~1.1]{Voll/17}.  

\subsection{The Lie lattice $\mathcal{Q}_5$} \label{sec:Q5}
We now consider the Lie lattice 
\begin{equation} \label{equ:Q5.present}
\mathcal{Q}_5 = \langle x_1, x_2, x_3, x_4, x_5 | [x_1,x_2] = x_4, [x_1, x_4] = [x_2, x_3] = x_5 \rangle,
\end{equation}
whose associated Lie algebra $Q_5 = \mathcal{Q}_5 \tensor_\Z \Q$ is so denoted in~\cite[\S3.3.12]{Berman/05}, up to a relabeling of the generators; this Lie lattice is called $\mathfrak{g}_{5,3}$ in~\cite[\S2.14]{duSWoodward/08}.  The novel feature of this example is that, while $Q_5 \tensor k$ is $Z(Q_5 \tensor k)$-rigid for any field $k$ of characteristic zero, it does not satisfy the rigidity criterion of Theorem~\ref{thm:rigidity}.  Indeed, set $U_i = \langle x_i \rangle$ for $i \in [5]$, and let $V_i = U_i \oplus \cdots \oplus U_5$ as usual.  It is easy to verify that, if $y = \sum_{i = 1}^5 x_i \tensor a_i \in Q_5 \tensor k \setminus (V_3  \tensor k)$, then $C_{Q_5 \tensor k}(y) = C_{Q_5 \tensor k}(y - x_3 \tensor ba_1 + x_4 \tensor ba_2)$ for any $b \in k$.  Therefore, $y \not\in \mathcal{Y}(Z(Q_5) \tensor k)$, so the hypotheses of Theorem~\ref{thm:rigidity} fail.  Instead, we will compute $\Aut_k(Q_5 \tensor K)$ directly for any finite extension $K/k$.  First we require an elementary lemma.

\begin{lem} \label{lem:lin.funct}
Let $k$ be a field of characteristic zero and $K/k$ a finite extension.  Let $f, g : K \to K$ be $k$-linear functions.  Suppose, for all $\kappa \in K$, that $\alpha g(\beta) - \beta f(\alpha)$ is constant for all pairs $\alpha, \beta \in K$ such that $\alpha \beta = \kappa$.  Then $f$ and $g$ are $K$-linear.
\end{lem}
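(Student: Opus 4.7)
The plan is to exploit the hypothesis by varying $\alpha,\beta$ along curves of constant product $\alpha\beta=\kappa$. For any $t\in K^\times$, the pair $(\alpha t,\beta t^{-1})$ has the same product as $(\alpha,\beta)$, so the function $F(\alpha,\beta):=\alpha g(\beta)-\beta f(\alpha)$ is invariant under $(\alpha,\beta)\mapsto(\alpha t,\beta t^{-1})$ whenever $\alpha,\beta\in K^\times$. Specializing $t=\beta$ and $t=\alpha^{-1}$ respectively will give me the two useful identities
\begin{align*}
\alpha g(\beta)-\beta f(\alpha)&=g(\alpha\beta)-\alpha\beta\,f(1),\\
\alpha g(\beta)-\beta f(\alpha)&=\alpha\beta\,g(1)-f(\alpha\beta),
\end{align*}
valid for all $\alpha,\beta\in K^\times$.

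Next I would swap the roles of $\alpha$ and $\beta$ in the first identity and subtract. This yields $\alpha(f+g)(\beta)=\beta(f+g)(\alpha)$, which forces $(f+g)(\alpha)=\lambda\alpha$ for all $\alpha\in K^\times$, with $\lambda=f(1)+g(1)$; since $f,g$ are $k$-linear the identity extends to $\alpha=0$. In particular $f=\lambda\cdot\mathrm{id}-g$ as $k$-linear maps.

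Substituting $f=\lambda\cdot\mathrm{id}-g$ back into the first identity and rearranging, I expect to obtain
\[
g(\alpha\beta)=\alpha g(\beta)+\beta g(\alpha)-\alpha\beta\,g(1)
\]
for all $\alpha,\beta\in K^\times$, i.e.\ after setting $\tilde g(\alpha):=g(\alpha)-\alpha g(1)$, the map $\tilde g:K\to K$ is $k$-linear and satisfies the Leibniz rule $\tilde g(\alpha\beta)=\alpha\tilde g(\beta)+\beta\tilde g(\alpha)$. In other words $\tilde g$ is a $k$-derivation of $K$.

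The hard (or rather, crucial) input is then the fact that a finite separable field extension admits no nonzero derivations into itself; since $\mathrm{char}\,k=0$, every finite extension $K/k$ is separable, so $\tilde g\equiv 0$ and hence $g(\alpha)=\alpha g(1)$ is $K$-linear. Then $f=\lambda\cdot\mathrm{id}-g$ is automatically $K$-linear as well. The main obstacle is packaging the symmetry argument cleanly; the algebra itself is short once the right substitutions are made.
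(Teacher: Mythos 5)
Your argument is correct, but it takes a genuinely different route from the paper's. You reduce the problem to derivations: from the two specializations $F(\alpha,\beta)=F(\alpha\beta,1)=F(1,\alpha\beta)$ of the invariance you first extract $f+g=\lambda\cdot\mathrm{id}$ with $\lambda=f(1)+g(1)$, then show that $\tilde g(\alpha)=g(\alpha)-\alpha g(1)$ satisfies the Leibniz rule, hence is a $k$-derivation of $K$, and conclude via $\mathrm{Der}_k(K)=0$ for the finite (automatically separable, as $\mathrm{char}\,k=0$) extension $K/k$; all the intermediate identities check out, and the zero cases extend trivially by $k$-linearity. The paper argues differently: it rewrites the hypothesis as the statement that $\iota(\gamma)\circ f\circ\iota(\gamma^{-1})-f$ is a scalar multiplication for every $\gamma\in K^{\times}$, where $\iota:K\hookrightarrow\mathrm{End}_k(K)$ is the multiplication embedding, deduces that $f$ centralizes $\iota(K)$, and then invokes the Double Centralizer Theorem (self-centralizing property of $\iota(K)$ in the central simple algebra $\mathrm{End}_k(K)$) to conclude that $f$, and likewise $g$, is multiplication by an element of $K$. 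Your version is more elementary, treats $f$ and $g$ symmetrically through the relation $f+g=\lambda\,\mathrm{id}$, and pinpoints exactly where separability enters: for an inseparable extension in positive characteristic a nonzero derivation $D$ gives the counterexample $(f,g)=(-D,D)$, since $\alpha D(\beta)+\beta D(\alpha)=D(\alpha\beta)$ depends only on $\alpha\beta$; the paper's centralizer argument is shorter once one quotes the structure of $\mathrm{End}_k(K)$ and avoids introducing derivations at all. One cosmetic slip: your two displayed identities are attached to the wrong substitutions (taking $t=\beta$ sends $(\alpha,\beta)$ to $(\alpha\beta,1)$ and yields $\alpha\beta\,g(1)-f(\alpha\beta)$, while $t=\alpha^{-1}$ yields $g(\alpha\beta)-\alpha\beta\,f(1)$); since both identities are correct and both are used, nothing in the proof is affected.
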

\begin{proof}
By assumption, we have
$ \alpha \gamma g(\gamma^{-1} \beta) - \gamma^{-1} \beta f(\alpha \gamma) = \alpha g(\beta) - \beta f(\alpha)$ for all $\alpha, \beta \in K$ and all $\gamma \in K^\times$.  Equivalently,
$\alpha( \gamma g(\gamma^{-1} \beta) - g(\beta)) = \beta ( \gamma^{-1} f(\alpha \gamma) - f(\alpha))$ for all such $\alpha, \beta, \gamma$.  Hence there is a constant $C \in K$ such that $\gamma g(\gamma^{-1} \alpha) - g(\alpha) = \gamma f(\gamma^{-1} \alpha) - f(\alpha) = C \alpha$ for all $\alpha \in K$ and $\gamma \in K^\times$.

Consider the $k$-algebra embedding $\iota: K \hookrightarrow \mathrm{End}_k(K)$, where $\iota(\alpha)$ is the map $\beta \mapsto \alpha \beta$ for every $\alpha \in K$.  By the previous paragraph, $\iota(\gamma) \circ f \circ \iota(\gamma^{-1}) - f = C$ for all $\gamma \in K^\times$.  Taking $\gamma = 1$, we see that $C = 0$.  Thus $f \in \mathrm{End}_k(K)$ centralizes $\iota(K)$.  Since $\iota(K)$ is self-centralizing, by the Double Centralizer Theorem applied to the central simple $k$-algebra $\mathrm{End}_k(K)$, the map $f$ is multiplication by some fixed element of $K$.  The same holds for $g$.
\end{proof}

\begin{pro}
Let $k$ be a field of characteristic zero.  Then the $k$-Lie algebra $Q_5 \tensor_\Q k$ is $Z(Q_5 \tensor_\Q k)$-rigid.
\end{pro}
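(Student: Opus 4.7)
Since the sufficient condition of Theorem~\ref{thm:rigidity} fails for $Q_5$, as the authors already observe, the plan is to compute $\Aut_k(Q_5 \tensor_\Q K)$ directly for each finite separable extension $K/k$ and to verify the factorization $\Aut_k(Q_5 \tensor_\Q K) = \mathfrak{J}(k) \cdot \Aut_K(Q_5 \tensor_\Q K) \cdot \Aut_k K$ required by Definition~\ref{def:rigidity}. The first step is structural: $Z(Q_5) = \langle x_5\rangle$, $\gamma_2 Q_5 = \langle x_4,x_5\rangle$, and $V_3 = \langle x_3,x_4,x_5\rangle$ are characteristic ideals. The last is stable under $k$-automorphisms because $V_3\tensor K$ coincides with $\{v : \dim C_{Q_5\tensor K}(v)\ge 4\}$, a condition insensitive to base change and preserved by $k$-linear automorphisms. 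Hence any $\varphi\in\Aut_k(Q_5\tensor_\Q K)$ admits a block-triangular description via $k$-linear maps $K\to K$: quintuples $(F_i,G_i,H_i,U_i,W_i)$ for $i=1,2$ encode $\varphi(\alpha x_i)$, while $(h,P,Q)$, $(u,v)$, and $w$ encode $\varphi(\alpha x_j)$ for $j=3,4,5$ respectively.

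Next, the identities $\varphi([\alpha x_i,\beta x_j]) = [\varphi(\alpha x_i),\varphi(\beta x_j)]$ for the nontrivial brackets $[x_1,x_2]=x_4$, $[x_1,x_4]=x_5$, $[x_2,x_3]=x_5$ (together with $[x_2,x_4]=0$ and $[x_1,x_3]=0$) produce: (i) $F_2=0$; and (ii) a $k$-algebra endomorphism $\tau:K\to K$ such that $F_1, G_1, G_2, u, w, h, P$ are each $k$-scalar multiples of $\tau$. Since $K/k$ is finite separable, $\tau$ is in fact in $\Aut_k K$. This step essentially lifts the computation of $\Aut_K(Q_5\tensor K)$ performed just before the proposition to the $k$-linear setting, where products of elements of $K$ are replaced by appropriate products of values of $k$-linear maps.

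The main obstacle is that, after this step, the four components $H_1, H_2, U_1, U_2$ remain insufficiently constrained by the brackets used so far. The crucial observation is that the ``diagonal'' brackets $[\alpha x_i,\beta x_i]=0$ encode nontrivial information about $k$-linear maps when $\alpha\ne\beta$. From $[x_2,x_2]=0$, the vanishing of the $x_5$-coefficient of $[\varphi(\alpha x_2),\varphi(\beta x_2)]$ gives $G_2(\alpha)H_2(\beta)=H_2(\alpha)G_2(\beta)$, forcing (upon $\alpha=1$) $H_2=H_2(1)\tau$. Feeding this into the $x_5$-coefficient identity from $[x_1,x_2]=x_4$ and specializing $\beta=1$, then $\alpha=1$, successively forces $H_1=H_1(1)\tau$ and $U_2=U_2(1)\tau$. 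Finally, $[x_1,x_1]=0$, combined with the $\tau$-semilinearity of $F_1,G_1,H_1$, reduces to $\tau(\alpha)U_1(\beta)=\tau(\beta)U_1(\alpha)$, from which $U_1=U_1(1)\tau$ (directly by setting $\alpha=1$, or by applying Lemma~\ref{lem:lin.funct} to $U_1\circ\tau^{-1}$). With every component outside $Z\tensor K$ thus shown $\tau$-semilinear, the composition $\sigma_\tau^{-1}\circ\varphi$ becomes $K$-linear modulo $Z\tensor_k K$ and so lies in $\mathfrak{J}(k)\cdot\Aut_K(Q_5\tensor_\Q K)$, yielding the factorization on $k$-points. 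The equality of algebraic groups demanded by Definition~\ref{def:rigidity} then follows by running the same argument with $K$ replaced by $K\tensor_k E$ for arbitrary field extensions $E/k$ and descending from the equality of point sets over the separable closure to the scheme level, exactly as in the last paragraph of the proof of Theorem~\ref{thm:rigidity}.
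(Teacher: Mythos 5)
Your overall plan is the same as the paper's (characteristic flag $\Rightarrow$ triangular shape with $k$-linear component maps, extraction of $\tau\in\Aut_k K$ from the diagonal, then $\tau$-semilinearity of the remaining components), but there is a genuine gap at the decisive step, namely the claim that specializing $\beta=1$ and then $\alpha=1$ in the $x_5$-coefficient of $\varphi(\alpha\beta\, x_4)=[\varphi(\alpha x_1),\varphi(\beta x_2)]$ forces $H_1=H_1(1)\tau$ and $U_2=U_2(1)\tau$. That coefficient identity reads $v(\alpha\beta)=F_1(\alpha)U_2(\beta)-G_2(\beta)H_1(\alpha)+G_1(\alpha)H_2(\beta)$, where $v$ is the $x_5$-component of $\varphi$ on $x_4$; note that $v$, $H_1$, $U_1$, $U_2$ occur in no bracket relation other than this one and the $x_5$-coefficient of $[\varphi(\alpha x_1),\varphi(\beta x_1)]=0$. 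Setting $\beta=1$ only expresses $H_1$ through the unconstrained function $v$, setting $\alpha=1$ only expresses $U_2$ through $v$, and eliminating $v$ yields merely that the single combination $F_1(1)U_2+G_2(1)H_1$ is a multiple of $\tau$ (just as the $[x_1,x_1]$ relation yields only that $F_1(1)U_1+G_1(1)H_1$ is). These specialized instances cannot separate $H_1$ from $U_2$: for $K=\Q(i)$, $k=\Q$, $\tau=\mathrm{id}$, $F_1=G_2=\mathrm{id}$, $G_1=H_2=0$, the non-semilinear choice $H_1(\alpha)=\overline{\alpha}$, $U_2(\beta)=\beta-\overline{\beta}$, $v(\alpha)=-\overline{\alpha}$ satisfies every instance of the identity with $\alpha=1$ or $\beta=1$ (and all the other bracket constraints). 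What kills such maps is the full two-variable statement that $F_1(\alpha)U_2(\beta)-G_2(\beta)H_1(\alpha)$ depends only on the product $\alpha\beta$, and converting that into semilinearity of $H_1$ and $U_2$ \emph{separately} is precisely the content of Lemma~\ref{lem:lin.funct} (applied to $H_1\circ\tau^{-1}$ and $U_2\circ\tau^{-1}$), whose proof goes through the double centralizer theorem and is not a formal specialization. Ironically, you invoke that lemma only for $U_1$, where it is dispensable once $H_1$ is known, and omit it at the one point where it is indispensable.

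With that step repaired, your argument is essentially the paper's proof, modulo normalization: the paper first composes $\varphi$ with a $K$-linear automorphism and an element of $\Aut_k K$ so that the two diagonal maps on $x_1,x_2$ become the identity, and then proves $K$-linearity of each remaining component via Lemma~\ref{lem:lin.funct}; you keep $\tau$ explicit, which changes nothing of substance. Two small further points: the constants multiplying $\tau$ lie in $K$, not in $k$; and in your final descent the coefficient ring $K\tensor_k E$ is in general only a product of fields, so ``the same argument'' must be formulated for such semisimple algebras (or one argues field by field, as the paper implicitly does, using that the statement is proved for every base field of characteristic zero).
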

\begin{proof}
Let $K/k$ be a finite extension, and let $\varphi \in \Aut_k(Q_5 \tensor K)$.  Write $Q_{5,K} = Q_5 \tensor K$ and $V_{i,K} = V_i \tensor K$ for $i \in [5]$.  Observe that $\varphi$ stabilizes the subspace $V_{i,K}$ for every $i \in [5]$.  Indeed, $V_{3,K} = \{ y \in Q_{5,K} : \dim_K C_{Q_{5,K}}(y) \geq 4 \}$, whereas $V_{4,K} = \gamma_2 Q_{5,K}$ and $V_{5,K} = Z(Q_{5,K}) = \gamma_3 Q_{5,K}$.  Finally, $V_{2,K} = C_{Q_{5,K}}(V_{4,K})$.  Thus for every $i \in [5]$ and every $\alpha \in K$ we may write $\varphi(x_i \tensor \alpha) = \sum_{j = i}^5 x_j \tensor f_{ij}(\alpha)$, where the functions $f_{ij} : K \to K$ are $k$-linear.  Moreover, $f_{ii}$ is bijective for every $i \in [5]$.  Define $g_{11} = f_{11} / f_{11}(1)$ and $g_{22} = g_{22}/g_{22}(1)$.

Observe that $(x_4 \tensor \alpha \beta)\varphi = [(x_1 \tensor \alpha)\varphi, (x_2 \tensor \beta)\varphi] \equiv x_4 \tensor f_{11}(\alpha) f_{22}(\beta) \, \mathrm{mod} \, V_{5,K}$ for all $\alpha, \beta \in K$.  In particular, $f_{11}(\alpha \beta) f_{22}(1) = f_{11}(\alpha) f_{22}(\beta) = f_{11}(1) f_{22}(\alpha \beta)$.  Dividing by $f_{11}(1)f_{22}(1)$, we conclude that $g_{11} = g_{22}$ is a multiplicative function.  It is easy to see that there is a $K$-linear automorphism $\psi \in (\aAut Q_5)(K)$ such that $(x_i \tensor 1)\psi = x_i \tensor f_{ii}(1)$ for $i \in \{1, 2 \}$.  Composing $\varphi$ with $\psi$ and with a suitable element of $\mathrm{Gal}(K/k)$, we may assume without loss of generality that $f_{11}(\alpha) = f_{22}(\alpha) = \alpha$ for all $\alpha \in K$.

For all $\alpha, \beta \in K$ we have $0 = [(x_1 \tensor \alpha)\varphi, (x_1 \tensor \beta)\varphi] \equiv x_4 \tensor (\alpha f_{12}(\beta) - \beta f_{12}(\alpha)) \, \mathrm{mod} \, V_{5,K}$.  Hence $f_{12}$ is $K$-linear by Lemma~\ref{lem:lin.funct}.  Similarly, considering $[(x_1 \tensor \alpha)\varphi, (x_1 \tensor \beta)\varphi]$ we obtain that $f_{23}$ is $K$-linear.  Now observe that
$$ (x_4 \tensor \alpha \beta)\varphi = [(x_1 \tensor \alpha)\varphi,(x_2 \tensor \beta)\varphi] = x_4 \tensor \alpha\beta + x_5 \tensor (\alpha f_{24}(\beta) + f_{12}(\alpha)f_{23}(\beta) - \beta f_{13}(\alpha)).$$
Since we already know that $f_{12}$ and $f_{23}$ are $K$-linear, it follows that $\alpha f_{24}(\beta) - \beta f_{13}(\alpha)$ depends only on the product $\alpha\beta$.  Hence $f_{13}$ and $f_{24}$ are $K$-linear by Lemma~\ref{lem:lin.funct}.  Moreover, $f_{44}$ is the identity function.  By analogous considerations, the functions $f_{14}$, $f_{33}$, and $f_{34}$ are $K$-linear.  Thus the automorphism of $Q_{5,K}/Z(Q_{5,K})$ induced by $\varphi$ is $K$-linear, proving our claim.
\end{proof}

The algebraic automorphism group $\aAut Q_5$ is determined in~\cite[\S3.3.12]{Berman/05}; alternatively, this is left as an exercise for the reader along with the details of the computation below.  The decomposition of $Q_5$ defined above satisfies the hypotheses of Corollary~\ref{cor:rigid.lifting} at all primes $p$.  For any finite extension $F/\Q_p$ with residue field of cardinality $q$, we find that $\mathbf{H}(F)$ consists of the diagonal matrices of the form 
$h = \mathrm{diag}(a,b,a^2,ab,a^2b)$ for $a,b \in F^\times$.  We determine that
$$ (\theta_1^F(h), \theta_2^F(h), \theta_3^F(h), \theta_4^F(h)) = (|b|^{-1}_F, |a|^{-4}_F, |a^2 b^2|^{-1}_F, |a^6 b^3|^{-1}_F).$$
By Corollary~\ref{cor:rigid.lifting}, at all primes $p$ the local pro-isomorphic zeta function is given by
$$ \zeta^\wedge_{\mathcal{Q}_5 \tensor \mathcal{O}_K, p}(s) =  \prod_{\p | p} \int_{K_\p^\times \times K_\p^\times} |a|^{6s - (6 + 6d)}_{\p} |b|^{3s - (3 + 3d)}_{\p} d\mu_{K_\p^\times \times K_\p^\times}(a,b).$$

\begin{thm}
Let $d \in \N$.  If $K$ is a number field of degree $d$, then $\zeta^\wedge_{\mathcal{Q}_5 \tensor \mathcal{O}_K}(s) =  \zeta_K(6s - (6+6d)) \zeta_K(3s - (3+3d)) = \prod_{\p \in V_K} W_{\mathcal{Q}_5, d}(q_{\p}, q_{\p}^{-s})$, where
$$ W_{\mathcal{Q}_5, d}(X,Y) = \frac{1}{(1 - X^{6 + 6d} Y^6)(1 - X^{3 + 3d} Y^3)}.$$
The functional equation $W_{\mathcal{Q}_5, d}(X^{-1}, Y^{-1}) = X^{9+9d}Y^9 W_{\mathcal{Q}_5,d}(X,Y)$ holds, and the abscissa of convergence is $\alpha^\wedge_{\mathcal{Q}_5 \tensor \mathcal{O}_K} = d + \frac{4}{3}$.
\end{thm}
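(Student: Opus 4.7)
The plan is to start directly from the integral expression
$$\zeta^\wedge_{\mathcal{Q}_5 \tensor \mathcal{O}_K, p}(s) = \prod_{\p | p} \int_{\mathbf{H}^+(K_\p)} |a|_\p^{6s-(6+6d)} \, |b|_\p^{3s-(3+3d)} \, d\mu(a,b)$$
already displayed just before the theorem, where the integration domain $\mathbf{H}^+(K_\p)$ corresponds via the parametrization $h = \diag(a,b,a^2,ab,a^2 b)$ to pairs $(a,b) \in (\mathcal{O}_\p \setminus \{0\})^2$. Since $\mathbf{H} \simeq \mathbf{GL}_1 \times \mathbf{GL}_1$, the Haar measure factors as $d\mu_{K_\p^\times}(a) \, d\mu_{K_\p^\times}(b)$, and because the integrand is a product of a function of $a$ and a function of $b$, Fubini decouples the two-variable integral into a product of two independent one-variable integrals.

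Each of these is of the standard form $\int_{\mathcal{O}_\p \setminus \{0\}} |x|_\p^{As-B} \, d\mu_{K_\p^\times}(x)$, which I would evaluate exactly as in Example~\ref{exm:abelian}: stratifying by the valuation $n = v_\p(x) \geq 0$, each stratum has measure $1$ and the integrand restricts to the constant $q_\p^{n(B-As)}$, and summing the geometric series gives $(1 - q_\p^{B-As})^{-1}$ whenever $\mathrm{Re}(s) > B/A$. Specializing to $(A,B) \in \{(6, 6+6d),(3, 3+3d)\}$ yields precisely $W_{\mathcal{Q}_5,d}(q_\p, q_\p^{-s})$. Taking the product over $\p | p$ and then over all rational primes (equivalently, over all $\p \in V_K$) and recognizing the Euler product of the Dedekind zeta function immediately gives the asserted identity $\zeta^\wedge_{\mathcal{Q}_5 \tensor \mathcal{O}_K}(s) = \zeta_K(6s - (6+6d))\,\zeta_K(3s - (3+3d))$.

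The functional equation follows from the elementary identity $(1 - X^{-a}Y^{-b})^{-1} = -X^a Y^b(1 - X^a Y^b)^{-1}$ applied to each of the two denominator factors of $W_{\mathcal{Q}_5,d}$; the two sign flips cancel and the monomial contributions combine to $X^{(6+6d)+(3+3d)}Y^{6+3} = X^{9+9d}Y^9$. For the abscissa of convergence, $\zeta_K(t)$ has abscissa $t = 1$, so the two factors converge respectively for $\mathrm{Re}(s) > (7+6d)/6$ and $\mathrm{Re}(s) > (4+3d)/3 = d + 4/3$; since $d + 4/3 > (7+6d)/6$, the latter dominates and gives $\alpha^\wedge_{\mathcal{Q}_5 \tensor \mathcal{O}_K} = d + 4/3$. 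I expect no serious obstacle at this stage: the substantive work, namely the $Z(Q_5 \tensor k)$-rigidity (notable because the criterion of Theorem~\ref{thm:rigidity} fails here) and the explicit computations of $\mathbf{H}$, $\mathbf{H}^+$, and the $\theta_j^F$, has already been carried out in the discussion preceding the theorem, and what remains is purely a routine integral evaluation.
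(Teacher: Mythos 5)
Your proposal is correct and follows essentially the same route as the paper: the theorem there is obtained precisely by evaluating the displayed integral over $\mathbf{H}^+(K_\p) \simeq (\mathcal{O}_\p \setminus \{0\})^2$ factor by factor as in Example~\ref{exm:abelian}, recognizing the Euler product of $\zeta_K$, and reading off the functional equation and the abscissa $d + \tfrac{4}{3} = \tfrac{4+3d}{3} > \tfrac{7+6d}{6}$ exactly as you do.
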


In the case $d = 1$, we recover results of the first author~\cite[\S3.3.12]{Berman/05}.  It is not hard to show that the grading $(\mathcal{Q}_5)_1 = \langle x_1, x_2 \rangle$, $(\mathcal{Q}_5)_2 = \langle x_3, x_4 \rangle$, $(\mathcal{Q}_5)_3 = \langle x_5 \rangle$ is minimal.  Hence $\mathrm{wt}(\mathcal{Q}_5) = 9$, and Conjecture~\ref{graded.conj} holds for all base extensions $\mathcal{Q}_5 \tensor \mathcal{O}_K$.  The ideal and subring zeta functions $\zeta^\vartriangleleft_{\mathcal{Q}_5}(s)$ and $\zeta^\leq_{\mathcal{Q}_5}(s)$ were computed by Woodward~\cite[Theorem~2.41]{duSWoodward/08}.  The representation zeta function of the nilpotent group with presentation~\eqref{equ:Q5.present} may be found in~\cite[Table~5.1]{Ezzat/thesis}.

\subsection{A family of Lie lattices lacking functional equations} \label{sec:bk}
The first author and Klopsch~\cite{BK/15} have constructed a nilpotent Lie lattice $\mathcal{L}$, which they denote as $\Lambda$, none of whose local pro-isomorphic zeta functions satisfy functional equations in the sense defined in the introduction to the present paper.  We show that this property is retained by the base extensions $\mathcal{L} \tensor_\Z \mathcal{O}_K$, for any number field $K$, by computing their pro-isomorphic zeta functions explicitly.

As in Section~\ref{sec:free}, let $\mathcal{F}_{4,3}$ be the free nilpotent Lie lattice of class four on three generators $X,Y,Z$.  For brevity we use the left-normed simple product notation for the Lie bracket, so that, for instance, we write $XYZ$ for $[[X,Y],Z]$.  Let $\mathcal{I} \leq \mathcal{F}_{4,3}$ be the ideal generated by $YXXX - YZY$ and $ZXXX - ZYZ$, and set $\mathcal{L} = \mathcal{F}_{4,3} / \mathcal{I}$.  Write $x,y,z$ for the projections to $\mathcal{L}$ of $X,Y,Z$, respectively.  In~\cite[(3.4)]{BK/15} it is determined that $\mathcal{L}$ is a free $\Z$-module of rank $25$ and that a basis is given by 
\begin{eqnarray*}
(b_1, \dots, b_{25}) & = & (x,y,z,xy,xz,yz,xyy,xzz,xyz,xzy,xyx,xzx,xyyy,xzzz,xyxx,xzxx, \\ && xyxy,xzxz,xyxz,xzxy,xyzx,xyzz,xzyy,xyzy,xzyz).
\end{eqnarray*}  

As usual, write $L = \mathcal{L} \tensor_\Z \Q$ and $L_p = \mathcal{L} \tensor_\Z \Q_p$ for any prime $p$; it is easy to verify that $Z(L_p) = \gamma_4 L_p$.  By explicit computations involving repeated application of the identities $(3.2)$ and $(3.3)$ of~\cite{BK/15}, we find that $x,y,z \in \mathcal{X}(Z(L_p))$.  Hence $L_p$ is $Z(L_p)$-rigid by Corollary~\ref{cor:segal} and Remark~\ref{rem:segal}.
The algebraic automorphism group $\aAut L$ was determined in~\cite[Theorem 4.2]{BK/15}.  It follows from the description of $\aAut L$ given there that the decomposition $L_p = U_1 \oplus U_2 \oplus U_3 \oplus U_4$, where 
$$ \begin{array}{cccc}
U_1 = \langle b_1, b_2, b_3 \rangle_{\Q_p}, & U_2 = \langle b_4, b_5, b_6 \rangle_{\Q_p}, & U_3 = \langle b_{7}, \dots, b_{12} \rangle_{\Q_p}, & U_4 = \langle b_{13}, \dots, b_{25} \rangle_{\Q_p},
\end{array}
$$
satisfies Assumptions~\ref{first.assumption} and~\ref{second.assumption}; note that $V_i = \gamma_i L_p$ for all $i \in [4]$.  The argument on~\cite[p.~505]{BK/15} shows that Assumption~\ref{polynomial.lifting} holds.  Thus Corollary~\ref{cor:rigid.lifting} is applicable.

As in~\cite[(5.1)]{BK/15}, we see that $\mathbf{H}(F)$, for any field $F / \Q_p$ with residue field of cardinality $q$, consists of the diagonal matrices of the form
\begin{align} \label{equ:bk.example}
h = \mathrm{diag} & (a,b,c,ab,ac,bc,ab^2, ac^2, abc,abc,a^2 b, a^2 c, ab^3, ac^3, a^3 b, a^3 c, \\ \nonumber
& a^2 b^2, a^2 c^2, a^2 bc, a^2 bc, a^2 bc, abc^2, a b^2 c, ab^2 c, abc^2),
\end{align}
where $a,b,c \in F^\times$ satisfy $a^3 = bc$.  
Analogously to the computations of~\cite[Section 5]{BK/15}, where the case $F = \Q_p$ is treated, we determine that, for $h \in \mathbf{H}^+(F)$ as in~\eqref{equ:bk.example}, we have
\begin{eqnarray*}
\theta_1^F (h) & = &  | a^3 b^4 c^4 |_{F}^{-1} \min \{ |b|_{F}^{-1} , |c|^{-1}_{F} \} \\
\theta_2^F (h) & = &  | a^{24} b^{15} c^{15} |^{-1}_{F} \\
\theta_3^F (h) & = &  | a^{66} b^{45} c^{45} |^{-1}_{F}.
\end{eqnarray*}
It is clear from~\eqref{equ:bk.example} that $|\det h |_{F} = | a^{33} b^{23} c^{23} |_{F}$.  Thus, for any number field $K$ of degree $d$ and any prime $p$, by Corollary~\ref{cor:rigid.lifting} we find that $\zeta^\wedge_{\mathcal{L} \tensor \mathcal{O}_K,p}(s)$ is given by:
$$
\prod_{\p | p} \int_{\mathbf{H}^+(K_\p)} |a|_{\p}^{33s - (27 + 66d)} |b|_{\p}^{23s - (19 + 45d)} |c|_{\p}^{23s - (19 + 45d)} \min \{ |b|_{\p}^{-1}, |c|_{\p}^{-1} \} d \mu_{\mathbf{H}(K_\p)}(h).
$$

Since the computation of this integral is very similar to the one performed in~\cite{BK/15}, we omit the details and proceed to state the final result.

\begin{thm} \label{thm:no.funct.eq}
Let $K$ be a number field of degree $d = [K : \Q]$.  Then $\zeta^\wedge_{\mathcal{L} \tensor \mathcal{O}_K}(s) = \prod_{\p \in V_K} W_{\mathcal{L},d}(q_\p, q_\p^{-s})$, where
\begin{equation*}
W_{\mathcal{L},d}(X,Y) = \frac{1 + X^{84 + 201d}Y^{102} + 2 X^{85 + 201d} Y^{102} + 2 X^{170 + 402d} Y^{204}}{(1 - X^{84 + 201d} Y^{102})(1 - X^{171 + 402d}Y^{204})}.
\end{equation*}
\end{thm}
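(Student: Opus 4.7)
The plan is to invoke Corollary~\ref{cor:rigid.lifting}, whose hypotheses have been verified in the paragraph preceding the theorem statement, in order to reduce $\zeta^\wedge_{\mathcal{L} \tensor \mathcal{O}_K,p}(s)$ to a product, indexed by the primes $\p \mid p$, of the integrals
\begin{equation*}
I_\p(s) = \int_{\mathbf{H}^+(K_\p)} |a|_\p^{33s - 27 - 66d}\, |b|_\p^{23s - 19 - 45d}\, |c|_\p^{23s - 19 - 45d}\, \min\{|b|_\p^{-1}, |c|_\p^{-1}\}\, d\mu_{\mathbf{H}(K_\p)}.
\end{equation*}
It then remains to evaluate each $I_\p(s)$ in closed form and identify the result with $W_{\mathcal{L},d}(q_\p, q_\p^{-s})$.

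To compute $I_\p(s)$, I would parametrize $\mathbf{H}$ via $(a,b) \in \mathbf{G}_m^2$ with $c = a^3 b^{-1}$; the condition $h \in \mathbf{H}^+(K_\p)$ then translates to $v_\p(a) \geq 0$ and $0 \leq v_\p(b) \leq 3 v_\p(a)$. Since the unit parts of $a$ and $b$ integrate trivially against the normalized Haar measure on $\mathbf{H}(\mathcal{O}_\p)$, the integral collapses to a double sum over $(\alpha,\beta) = (v_\p(a),v_\p(b)) \in \Z_{\geq 0}^2$ subject to $\beta \leq 3\alpha$. The next step is to split this sum along the two branches of the minimum: writing $\min\{q_\p^\beta, q_\p^{3\alpha - \beta}\}$ and exploiting the involution $\beta \mapsto 3\alpha - \beta$ reduces the problem to summing over the sub-cone $2\beta \leq 3\alpha$, with a correction along the diagonal line $2\beta = 3\alpha$ handled by inclusion-exclusion.

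Each sub-cone is a rational polyhedral cone in $\Z_{\geq 0}^2$, so the resulting sum decomposes as a geometric series in two monomials in $q_\p$ and $q_\p^{-s}$, whose denominator factors come from the primitive lattice vectors along its extremal rays. A direct calculation would show that the ray $\beta = 0$, with primitive generator $(\alpha,\beta) = (1,0)$, contributes $(1 - X^{84 + 201d}Y^{102})$, while the diagonal ray $2\beta = 3\alpha$, with primitive generator $(2,3)$ (along which the $q_\p^{\min}$ factor equals $q_\p^{3}$ per step), contributes $(1 - X^{171 + 402d}Y^{204})$. The numerator terms will then emerge by collecting the initial lattice points of each cone together with the inclusion-exclusion correction along the diagonal. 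The main obstacle will be the careful bookkeeping around that diagonal, whose lattice points $(2k, 3k)$ are responsible, via the parity constraint they impose, for the doubling of the exponents in the second denominator factor. Fortunately, the entire computation has been carried out for $d = 1$ in~\cite[\S5]{BK/15}; for general $d$, Corollary~\ref{cor:rigid.lifting} simply replaces $\theta_3^F$ with $(\theta_3^F)^d$ in the integrand, which only rescales certain ``constant'' exponents of $q_\p$ by multiples of $d$ and leaves the combinatorial structure of the sum invariant, so no essentially new difficulty arises.
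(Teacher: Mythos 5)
Your proposal is correct and follows essentially the same route as the paper: both reduce via Corollary~\ref{cor:rigid.lifting} to the product over $\p \mid p$ of the integrals $I_\p(s)$ and then evaluate them by the valuation-count method of~\cite[\S5]{BK/15}, with $d$ entering only through the exponents of $q_\p$. Your explicit sketch (sum over the cone $0 \le v_\p(b) \le 3v_\p(a)$, split along the $\min$, rays $(1,0)$ and $(2,3)$ giving the two denominator factors) is exactly the computation the paper omits, and it does produce the stated $W_{\mathcal{L},d}$.
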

Neither the functions $W_{\mathcal{L},d}$ nor the local pro-isomorphic zeta functions $\zeta^\wedge_{\mathcal{L} \tensor \mathcal{O}_K, p}(s)$ admit functional equations.  Indeed, while the denominator of the rational function $\prod_{i = 1}^r W_{\mathcal{L},d}(X^{f_i}, Y^{f_i})$ as in the proof of Corollary~\ref{cor:finitely.uniform} clearly admits a functional equation, the numerator has constant term $1$ and leading coefficient $2^r$ and therefore does not satisfy a functional equation.  
One can show that $\mathcal{L}$ is a graded Lie lattice with $\mathrm{wt}(\mathcal{L}) = 102$.  The ``reduced pro-isomorphic zeta function'' $W_{\mathcal{L},d}(1,Y)$ as in~\cite[(1)]{Evseev/09} still fails to satisfy a functional equation.  However, observe that
$$ \frac{W_{\mathcal{L},d}(1,Y^{-1})}{W_{\mathcal{L},d}(1,Y)} \sim \frac{Y^{102}}{2} =  \frac{Y^{\mathrm{wt}(\mathcal{L})}}{2}$$
as $Y \to \infty$.  It is an interesting challenge to formulate and prove a result extending Conjecture~\ref{graded.conj} to such cases.

\bibliographystyle{amsplain}

\end{document}